\newtheorem{thm}{Theorem}[section]
\newtheorem{lem}[thm]{Lemma}
\newtheorem{prop}[thm]{Proposition}
\newtheorem{cor}[thm]{Corollary}
\theoremstyle{definition}
\newtheorem{dfn}[thm]{Definition}
\newtheorem{ques}[thm]{Question}
\newtheorem{setup}[thm]{Setup}
\newtheorem{rem}[thm]{Remark}
\newtheorem{conv}[thm]{Convention}
\theoremstyle{remark}
\newtheorem*{claim*}{Claim}
\newtheorem*{ac}{Acknowlegments}
\numberwithin{equation}{thm}
\def\A{\mathcal{A}}
\def\aa{\boldsymbol{a}}
\def\add{\operatorname{\mathsf{add}}}
\def\ann{\operatorname{ann}}
\def\C{\mathcal{C}}
\def\c{\mathsf{C}}
\def\cm{\mathsf{CM}}
\def\codepth{\operatorname{codepth}}
\def\codim{\operatorname{codim}}
\def\cone{\operatorname{cone}}
\def\D{\mathcal{D}}
\def\d{\operatorname{\mathsf{D}}}
\def\dd{\mathrm{D}}
\def\db{\operatorname{\mathsf{D^b}}}
\def\depth{\operatorname{depth}}
\def\dpf{\operatorname{\mathsf{D^{perf}}}}
\def\ds{\operatorname{\mathsf{D_{sg}}}}
\def\E{\mathcal{E}}
\def\edim{\operatorname{edim}}
\def\ext{\operatorname{\mathsf{ext}}}
\def\Ext{\operatorname{Ext}}
\def\F{\mathsf{F}}
\def\fpd{\operatorname{\mathsf{fpd}}}
\def\G{\mathsf{E}}
\def\g{\mathsf{G}}
\def\gdim{\operatorname{Gdim}}
\def\ge{\geqslant}
\def\gp{\mathsf{GP}}
\def\grade{\operatorname{grade}}
\def\H{\mathsf{H}}
\def\h{\mathrm{H}}
\def\height{\operatorname{ht}}
\def\Hom{\operatorname{Hom}}
\def\I{\mathcal{I}}
\def\id{\mathrm{id}}
\def\im{\operatorname{Im}}
\def\inc{\mathrm{inc}}
\def\ipd{\operatorname{IPD}}
\def\K{\mathrm{K}}
\def\k{\operatorname{\mathsf{K}}}
\def\lcm{\underline{\mathsf{CM}}}
\def\ld{\operatorname{\underline{\mathsf{D}}}}
\def\le{\leqslant}
\def\lgp{\underline{\mathsf{GP}}}
\def\lten{\otimes^\mathbf{L}}
\def\m{\mathfrak{m}}
\def\Mod{\operatorname{Mod}}
\def\mod{\operatorname{\mathsf{mod}}}
\def\N{\mathbb{N}}
\def\NE{\operatorname{NE}}
\def\nf{\operatorname{NF}}
\def\OO{\mathcal{O}}
\def\P{\mathsf{P}}
\def\p{\mathfrak{p}}
\def\pd{\operatorname{pd}}
\def\PP{\mathbb{P}}
\def\Proj{\operatorname{Proj}}
\def\proj{\operatorname{\mathsf{proj}}}
\def\Q{\mathsf{Q}}
\def\q{\mathfrak{q}}
\def\R{\mathsf{R}}
\def\res{\operatorname{\mathsf{res}}}
\def\rfd{\operatorname{Rfd}}
\def\rhom{\mathbf{R}\mathrm{Hom}}
\def\s{\operatorname{\mathsf{S}}}
\def\sing{\operatorname{Sing}}
\def\spec{\operatorname{Spec}}
\def\supp{\operatorname{Supp}}
\def\syz{\Omega}
\def\T{\mathcal{T}}
\def\thick{\operatorname{\mathsf{thick}}}
\def\tr{\operatorname{Tr}}
\def\V{\operatorname{V}}
\def\X{\mathcal{X}}
\def\xx{\boldsymbol{x}}
\def\Y{\mathcal{Y}}
\def\Z{\mathbb{Z}}
\def\ZZ{\mathcal{Z}}
\begin{document}
\title[Classifying preaisles of derived categories of complete intersections]{Classifying preaisles of derived categories\\
of complete intersections}
\author{Ryo Takahashi}
\address{Graduate School of Mathematics, Nagoya University, Furocho, Chikusaku, Nagoya 464-8602, Japan}
\email{takahashi@math.nagoya-u.ac.jp}
\urladdr{https://www.math.nagoya-u.ac.jp/~takahashi/}
\subjclass{Primary 13D09; Secondary 13C60}
\keywords{derived category, complete intersection, (pre)(co)aisle, module category, resolving subcategory, thick subcategory, singularity category, hypersurface, perfect complex, maximal Cohen--Macaulay module/complex, $t$-structure, Koszul complex, projective dimension, G-dimension, filtration by supports (sp-filtration)}
\thanks{The author was partly supported by JSPS Grant-in-Aid for Scientific Research 23K03070}
\begin{abstract}
Let $R$ be a commutative noetherian ring.
Denote by $\mod R$ the category of finitely generated $R$-modules, by $\db(R)$ the bounded derived category of $\mod R$, and by $\ds(R)$ the singularity category of $R$.
The main result of this paper provides, when $R$ is a complete intersection, a complete classification of the preaisles of $\db(R)$ containing $R$ and closed under direct summands, which includes as restrictions the classification of thick subcategories of $\ds(R)$ due to Stevenson, and the classification of resolving subcategories of $\mod R$ due to Dao and Takahashi.
\end{abstract}
\maketitle
\section{Introduction}

A {\em thick subcategory} of a triangulated category is a full triangulated subcategory closed under direct summands.
Classifying thick subcategories of a triangulated category has been one of the most central subjects shared by many areas of mathematics including representation theory, homotopy theory, algebraic geometry and commutative/noncommutative algebra; see \cite{BS,BCR,BIK,BIKP,DHS,FP,H,HS,dm,N,St,St2,stcm,thd,dlr,Th} for instance.
A significant work in commutative algebra is a classification of thick subcategories of derived categories of complete intersections by Stevenson \cite{St}.
We introduce a setup to explain Stevenson's theorem.

\begin{setup}\label{95}
Let $(R,V)$ be a pair, where $R$ and $V$ satisfy either of the following two conditions.
\begin{enumerate}[(1)]
\item
$R$ is a commutative noetherian ring which is locally a hypersurface, and $V$ is the singular locus of $R$.
\item
$R$ is a quotient ring of the form $S/(\aa)$ where $S$ is a regular ring of finite Krull dimension and $\aa=a_1,\dots,a_c$ is a regular sequence, and $V$ is the singular locus of the zero subscheme of $a_1x_1+\cdots+a_cx_c\in\Gamma(X,\OO_X(1))$ where $X=\PP_S^{c-1}=\Proj(S[x_1,\dots,x_c])$.
\end{enumerate}
\end{setup}

Under this setup, Stevenson \cite{St} proved the following classification theorem of thick subcategories.

\begin{thm}[Stevenson]\label{90}
Let $(R,V)$ be as in Setup \ref{95}.
Then there are one-to-one correspondences
$$
{\left\{\begin{matrix}
\text{thick subcategories}\\
\text{of $\ds(R)$}
\end{matrix}\right\}}
\cong
{\left\{\begin{matrix}
\text{thick subcategories}\\
\text{of $\db(R)$ containing $R$}
\end{matrix}\right\}}
\overset{\rm(a)}{\cong}
{\left\{\begin{matrix}
\text{specialization-closed}\\
\text{subsets of $V$}
\end{matrix}\right\}}.
$$
\end{thm}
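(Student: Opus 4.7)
The plan is to prove the two bijections separately: the left one is formal from Verdier localization, while the right one (a) requires a cohomological support theory.

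For the left bijection, since $\dpf(R)=\thick_{\db(R)}(R)$ and $\ds(R)=\db(R)/\dpf(R)$ by definition, I would apply the standard Verdier-quotient correspondence: thick subcategories of $\db(R)/\dpf(R)$ correspond bijectively to thick subcategories of $\db(R)$ containing $\dpf(R)$, via preimage under the quotient functor and its inverse sending a subcategory to its image. Containing $\dpf(R)$ is equivalent to containing $R$ since $R$ generates $\dpf(R)$ thickly.

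For bijection (a), I would define mutually inverse assignments. To a thick subcategory $\mathcal{T}$ of $\db(R)$ containing $R$, assign $\sigma(\mathcal{T}):=\bigcup_{X\in\mathcal{T}}\supp_V(X)$, where $\supp_V(X)$ denotes the singular support of $X$ inside $V$: in case (1), the set of primes where $X$ fails to be perfect, which is a subset of $\sing R=V$; in case (2), the cohomological support in $V\subseteq\PP_S^{c-1}$ defined via the action of the Eisenbud ring of cohomology operators $S[\chi_1,\dots,\chi_c]$ on $\Ext^\ast_R(X,X)$. Conversely, to a specialization-closed $W\subseteq V$ assign $\tau(W):=\{X\in\db(R):\supp_V(X)\subseteq W\}$, which contains every perfect complex (hence $R$) and is visibly thick.

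The substantive content is to verify that $\sigma$ and $\tau$ are mutually inverse. Surjectivity of $\sigma$, i.e.\ $\sigma(\tau(W))=W$, reduces to producing, for each $v\in V$, an explicit object with support exactly $\overline{\{v\}}\cap V$: in case (1) a Koszul complex on a system of parameters of $R_v$ works, and in case (2) one builds a matrix-factorization/Koszul hybrid attached to the point on the projective hypersurface. The harder direction, $\tau(\sigma(\mathcal{T}))\subseteq\mathcal{T}$, asks that every $X$ with $\supp_V(X)\subseteq\sigma(\mathcal{T})$ already lies in $\mathcal{T}$. For this I would follow Stevenson: pass to a compactly generated ambient triangulated category, equip it with the graded $S[\chi_1,\dots,\chi_c]$-action, apply the Benson--Iyengar--Krause local-global principle to reduce to a single closed point $v\in V$, and handle the residual category at $v$ via the structure of matrix factorizations over the complete local hypersurface at that point.

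The hard part will be this stratification step. It requires carefully setting up the ring action (nontrivial in case (2), where cohomology operators replace a monoidal tensor), verifying the local-global and minimality hypotheses of the stratification framework, and classifying thick subcategories at an isolated hypersurface singularity, which is the analytic core of the theorem.
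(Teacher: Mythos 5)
Your treatment of the first bijection coincides with the paper's: Theorem \ref{90} is restated later as Theorem \ref{77}, and the correspondence between thick subcategories of $\ds(R)$ and thick subcategories of $\db(R)$ containing $R$ is obtained there exactly as you propose, from the general Verdier-quotient correspondence (Lemma \ref{74}(1), citing \cite{V}) together with the equality $\dpf(R)=\thick_{\db(R)}R$ (Proposition \ref{43}(3)). For the bijection (a), however, the paper gives no proof at all: it is imported from Stevenson, namely \cite[Theorem 6.13]{St} in case (1) of Setup \ref{95} and \cite[Theorem 8.8]{St} in case (2). So your proposal goes beyond the paper by attempting to reprove the cited theorem.

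Your sketch of (a) does follow Stevenson's actual route, but it defers precisely the steps that constitute the theorem, and a few of them need more care than the outline suggests. (i) In case (2), defining the support of $X$ via the action of the Eisenbud operators $S[\chi_1,\dots,\chi_c]$ on $\Ext_R^*(X,X)$ is not by itself enough to run the classification machinery: Stevenson's argument needs the support to be realized through an action of the derived category of $\PP_S^{c-1}$ (equivalently of the generic hypersurface $Y$) on $\ds(R)$, and this is available only after the identification of $\ds(R)$ with the singularity category of $Y$ (matrix factorizations) due to Orlov/Burke--Walker \cite{BW}; it is this identification that makes the supports live in $V=\sing Y$ rather than in $\spec R$, and it is nowhere addressed in your plan. (ii) The local-to-global principle you invoke cannot be the Benson--Iyengar--Krause theorem as stated, since $\ds(R)$ is not tensor triangulated in case (2); one needs Stevenson's relative version for a category with an action, and its hypotheses (local-to-global for the action, minimality of the local categories at each point of $V$) have to be verified, which is the analytic core of \cite{St}. (iii) The surjectivity objects in case (2) and the classification ``at a point'' via matrix factorizations over the complete local hypersurface are asserted, not constructed. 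None of this makes the strategy wrong --- it is Stevenson's strategy --- but as written your text is an outline of the cited proof rather than a proof; either these steps must be carried out in detail, or one should simply cite \cite{St} for (a), which is exactly what the paper does.
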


\noindent
Here, $\db(R)$ denotes the bounded derived category of the category $\mod R$ of finitely generated $R$-modules, and $\ds(R)$ stands for the {\em singularity category} of $R$, that is to say, the Verdier quotient of $\db(R)$ by the full subcategory $\dpf(R)$ of perfect complexes, i.e., $\ds(R)=\db(R)/\dpf(R)$.

A {\em resolving subcategory} of an abelian cateory is a full subcategory containing projectives and closed under direct summands, extensions and syzygies.
This notion has been studied in various approaches so far; see \cite{APST,AS,radius,dim,crspd,HKV,H-Z,KS,sg,SSW,res,stcm,arg,crs} for instance.
In commutative algebra, Dao and Takahashi \cite{crspd} gave a complete classification of the resolving subcategories of $\mod R$ under the setup introduced above.

\begin{thm}[Dao--Takahashi]\label{91}
Let $(R,V)$ be as in Setup \ref{95}.
Then there is a one-to-one correspondence
$$
{\left\{\begin{matrix}
\text{resolving subcategories}\\
\text{of $\mod R$}
\end{matrix}\right\}}
\overset{\rm(b)}{\cong}
{\left\{\begin{matrix}
\text{grade-consistent}\\
\text{functions on $\spec R$}
\end{matrix}\right\}}
\times
{\left\{\begin{matrix}
\text{specialization-closed}\\
\text{subsets of $V$}
\end{matrix}\right\}}.
$$
\end{thm}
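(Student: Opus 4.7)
The plan is to reduce Theorem \ref{91} to two ingredients: Stevenson's classification (Theorem \ref{90}), and an auxiliary classification of resolving subcategories consisting entirely of modules of finite projective dimension by grade-consistent functions (itself a theorem of Dao--Takahashi, proved by localization arguments, which I would invoke as a black box). The underlying idea is that a resolving subcategory $\mathcal{X}$ of $\mod R$ splits into two essentially independent pieces of data: its \emph{finite projective dimension part} $\mathcal{X}\cap\fpd(R)$, and its \emph{singularity part} obtained by passing to the thick closure inside $\db(R)$ (equivalently, by taking the image in $\ds(R)$).

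First I would define the forward map $\mathcal{X}\mapsto(f_\mathcal{X},W_\mathcal{X})$ as follows. Let $W_\mathcal{X}$ be the specialization-closed subset of $V$ associated via Theorem \ref{90}(a) to $\thick_{\db(R)}(\mathcal{X})$, which contains $R$ because $\mathcal{X}$ does; and let $f_\mathcal{X}$ be the grade-consistent function associated to $\mathcal{X}\cap\fpd(R)$ by the fpd-classification. Well-definedness, in particular grade-consistency of $f_\mathcal{X}$, follows from that classification. In the other direction, given $(f,W)$, I would build $\mathcal{X}$ as the resolving closure of $\mathcal{F}_f\cup(\mathcal{T}_W\cap\mod R)$, where $\mathcal{F}_f\subseteq\fpd(R)$ is the resolving subcategory realizing $f$ and $\mathcal{T}_W\subseteq\db(R)$ is the thick subcategory realizing $W$ via Stevenson.

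Verifying bijectivity then splits into two parts. Surjectivity is essentially formal once the two coordinate classifications are in hand, since the construction above is built so as to map onto the prescribed pair. For injectivity, one needs a structure theorem asserting that every $M\in\mathcal{X}$ is already detected by the pair $(f_\mathcal{X},W_\mathcal{X})$: that $M$ can be reconstructed, using extensions, direct summands and syzygies, from modules in $\mathcal{X}\cap\fpd(R)$ together with modules in the thick closure of $\mathcal{X}$ inside $\db(R)$. In the complete intersection (in particular Gorenstein) setting, this is the natural place to invoke the Auslander--Buchweitz maximal Cohen--Macaulay approximation: every finitely generated $M$ fits in a short exact sequence $0\to Y\to X\to M\to 0$ with $X$ maximal Cohen--Macaulay and $\pd Y<\infty$, which allows one to separate the singular and fpd contributions cleanly.

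The main obstacle will be precisely this gluing step: showing that the resolving closure of $\mathcal{F}_f$ together with $\mathcal{T}_W\cap\mod R$ recovers all of $\mathcal{X}$, not merely a proper subcategory. The easy direction -- that everything reconstructed lies inside $\mathcal{X}$ -- is immediate from closure under extensions, summands and syzygies. The hard direction requires a careful comparison between the resolving closure operation inside $\mod R$ and the thick closure inside $\db(R)$, with MCM approximation serving as the bridge; Gorensteinness of complete intersections, which guarantees that G-dimension is everywhere finite, is exactly what makes this bridge exist and behave uniformly across $\spec R$.
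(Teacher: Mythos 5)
Your overall skeleton—split a resolving subcategory into a finite-projective-dimension part and a singular part, classify the first by grade-consistent functions and the second via Stevenson, and glue with Auslander--Buchweitz-type exact sequences—is indeed the shape of the actual argument (Dao--Takahashi's separation theorem, recovered in this paper through Theorem \ref{72}, Proposition \ref{94} and Theorem \ref{89}). But your realization of the singular coordinate is wrong, and it breaks the bijection. Any thick subcategory $\T_W$ of $\db(R)$ containing $R$ contains $\thick_{\db(R)}R=\k(R)$, hence $\T_W\cap\mod R\supseteq\fpd R$. Therefore $\res_{\mod R}(\mathcal{F}_f\cup(\T_W\cap\mod R))$ contains all of $\fpd R$, so its fpd part is $\fpd R$ and the grade-consistent function it induces is the maximal one $\p\mapsto\grade\p$, regardless of $f$: your forward map composed with your backward map sends $(f,W)$ to $(f_{\max},W)$, so the two maps are not mutually inverse, and neither surjectivity nor injectivity comes out of this construction. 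The singular coordinate must be carried by the maximal Cohen--Macaulay part $\X\cap\cm(R)$ (equivalently by a thick subcategory of $\lcm(R)\simeq\ds(R)$, or by $\nf^{-1}_{\cm}(W)$), not by $\thick_{\db(R)}(\X)\cap\mod R$; and the bijection between specialization-closed subsets of $V$ and resolving subcategories of $\mod R$ contained in $\cm(R)$ (Proposition \ref{94}, i.e.\ \cite[Proposition 6.2]{stcm} combined with Stevenson) is a genuine step you must supply, not a formality absorbed into Theorem \ref{90}.

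The gluing step also needs different tools than the ones you name. The maximal Cohen--Macaulay approximation $0\to Y\to X\to M\to0$ points the wrong way: resolving subcategories are closed under kernels of epimorphisms, not cokernels of monomorphisms, so this sequence does not place $M$ in the resolving closure of $\{X,Y\}$. What the reconstruction uses is the finite projective hull $0\to M\to L\to D\to0$ with $\pd_RL<\infty$ and $D$ maximal Cohen--Macaulay, together with the nontrivial fact that $L$ and $D$ may be taken inside $\X$ whenever $M\in\X$; that fact rests on the complete-intersection-specific statement that $\syz_R^{-1}N\in\res_{\mod R}N$ for a maximal Cohen--Macaulay module $N$ (Proposition \ref{71}(1), via \cite[Theorem 4.15]{radius}), not merely on finiteness of G-dimension over Gorenstein rings---over general Gorenstein rings this separation is exactly what is not known (compare the question following Proposition \ref{101}). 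So the obstacle you flag at the end is real, but it is resolved by the hull-plus-cosyzygy argument inside $\mod R$ (or its derived analogue, Lemma \ref{67} and Proposition \ref{71}(3)), not by the MCM approximation and Gorensteinness alone.
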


\noindent
Here, a {\em grade-consistent function} on $\spec R$ is an order-preserving map $f:\spec R\to\N$ which satisfies the inequality $f(\p)\le\grade(\p)$ for every $\p\in\spec R$.

The notion of a {\em $t$-structure} in a triangulated category has been introduced by Be\u{\i}linson, Bernstein and Deligne \cite{BBD} in the 1980s.
As with classifying thick subcategories and resolving subcategories mentioned above, classifying $t$-structures in a given triangulated category $\T$, which is equivalent to classifying {\em aisles} of $\T$, has been an important fundamental problem.
Actually, this problem has almost been settled for $\db(R)$ for a commutative noetherian ring $R$.
Indeed, if $R$ has a dualizing complex, then the aisles of $\db(R)$ were completely classified by Alonso Tarr\'{i}o, Jerem\'{i}as L\'opez and Saor\'{i}n \cite{AJS} in terms of the filtrations by supports that satisfy the weak Cousin condition.
Recently, this has been extended by Takahashi \cite{ft} to the case where $R$ has finite Krull dimension such that $\spec R$ is a CM-excellent scheme in the sense of \v{C}esnavi\v{c}ius \cite{Ce}.

Now that classifying aisles of $\db(R)$ has almost been completed, what we should consider next is classifying {\em preaisles} of $\db(R)$, which are defined as full subcategories closed under extensions and positive shifts.
An aisle is none other than a preaisle whose inclusion functor has a right adjoint, but there exists a big difference between being an aisle and being a preaisle.
Classifying preaisles is thus much harder than classifying aisles, and so it would be reasonable to impose some appropriate assumptions on the preaisles we try to classify.

The main result of this paper is the following theorem.
This theorem provides a classification of preaisles of $\db(R)$ that satisfy some mild and natural conditions.
Also, the theorem includes both the classification of thick subcategories by Stevenson and the classification of resolving subcategories by Dao and Takahashi.

\begin{thm}\label{1}
Let $(R,V)$ be a pair as in Setup \ref{95}.
Then there are one-to-one correspondences
$$
{\left\{\begin{matrix}
\text{preaisles of $\db(R)$}\\
\text{containing $R$ and closed}\\
\text{under direct summands}
\end{matrix}\right\}}
\cong
{\left\{\begin{matrix}
\text{resolving}\\
\text{subcategories}\\
\text{of $\db(R)$}
\end{matrix}\right\}}
\overset{(*)}{\cong}
{\left\{\begin{matrix}
\text{order-preserving}\\
\text{maps from $\spec R$}\\
\text{to $\N\cup\{\infty\}$}
\end{matrix}\right\}}\times
{\left\{\begin{matrix}
\text{specialization-closed}\\
\text{subsets of $V$}
\end{matrix}\right\}}.
$$
The restriction of $(*)$ to the thick subcategories of $\db(R)$ containing $R$ is identified with {\rm(a)} in Theorem \ref{90}.
Each resolving subcategory $\X$ of $\mod R$ equals the restriction to $\mod R$ of the smallest resolving subcategory $\widetilde\X$ of $\db(R)$ containing $\X$.
The composition of $(*)$ with the map $\X\mapsto\widetilde\X$ coincides with {\rm(b)} in Theorem \ref{91}.
\end{thm}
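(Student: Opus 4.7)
I would prove Theorem~\ref{1} in three stages, using Theorems~\ref{90} and~\ref{91} as key inputs. First, I would show that inside $\db(R)$, a full subcategory $\X$ containing $R$ and closed under direct summands is a preaisle if and only if it is a resolving subcategory of $\db(R)$. With $R \in \X$ and summand-closure in hand, the positive-shift closure characterizing a preaisle and the syzygy closure characterizing a derived resolving subcategory become interchangeable via standard distinguished triangles built from finite free resolutions; this first equivalence is therefore essentially formal.

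For the main bijection $(*)$, I would associate to each resolving subcategory $\X$ of $\db(R)$ a pair $(f_\X, W_\X)$ where $f_\X \colon \spec R \to \N \cup \{\infty\}$ is defined by
\[
f_\X(\p) \;=\; \inf\{\, n \ge 0 \mid \syz^n k(\p) \in \X_\p \,\},
\]
measuring how many syzygies of the residue field at $\p$ are required to enter $\X_\p$, and $W_\X \subseteq V$ is the specialization-closed subset corresponding via Theorem~\ref{90} to the image of $\X$ in the singularity category $\ds(R) = \db(R)/\dpf(R)$. Order-preservingness of $f_\X$ would follow by localization along specializations. In the reverse direction, given $(f, W)$, I would define $\X_{(f,W)}$ as the smallest resolving subcategory of $\db(R)$ containing the syzygy-representatives of $k(\p)$ for each $\p$ (shifted according to $f$) together with lifts of generators for the thick subcategory of $\ds(R)$ corresponding to $W$. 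Bijectivity amounts to the equality $\X_{(f_\X, W_\X)} = \X$: the inclusion $\subseteq$ is immediate from closure properties, while the reverse inclusion is the technical heart.

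The compatibility claims would then be checked as follows. For thick subcategories of $\db(R)$ containing $R$, closure under $[-1]$ makes the data $f_\X$ degenerate (determined entirely by $W_\X$), identifying $(*)$ with Stevenson's bijection~(a). For a resolving subcategory $\X \subseteq \mod R$, I would prove $\widetilde{\X} \cap \mod R = \X$ by induction on the amplitude of complexes in $\widetilde{\X}$, using hard-truncation triangles in $\db(R)$ together with closure of $\X$ under syzygies and extensions at the module level. The grade bound $f(\p) \le \grade(\p)$ of Theorem~\ref{91} then emerges automatically, since a module-level syzygy of $k(\p)$ becomes null at $\p$ after at most $\grade(\p)$ iterations.

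\textbf{Main obstacle.}
The crucial difficulty will be the reverse inclusion $\X_{(f_\X, W_\X)} \supseteq \X$ in Stage~2: showing that every object of $\X$ is already built from the discrete data $(f_\X, W_\X)$ via the resolving-subcategory operations. This demands separating the perfect-complex contribution (controlled by $f_\X$) from the singular-category contribution (controlled by $W_\X$), and reassembling arbitrary objects of $\X$ from these two pieces. The complete intersection hypothesis of Setup~\ref{95} enters essentially here through the support-theoretic machinery behind Stevenson's Theorem~\ref{90}, which provides the bridge between the two layers of combinatorial data and allows one to control the local-global behavior of $\X$ across $\spec R$.
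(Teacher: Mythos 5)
Your Stage 1 is already wrong, and not just in presentation. A preaisle is closed under \emph{positive} shifts, a resolving subcategory of $\db(R)$ under \emph{negative} shifts, and for subcategories containing $R$ and closed under direct summands these are genuinely different classes of subcategories, not two descriptions of the same class that can be traded via free-resolution triangles. Concretely, $\res_{\d(R)}(\mod R)=\{X\in\d(R)\mid\h^{<0}X=0\}$ is resolving but not closed under positive shifts (Proposition \ref{79}), while dually $\{X\in\d(R)\mid\h^{>0}X=0\}$ is a preaisle containing $R$ and closed under summands that does not contain $R[-1]$, hence is not resolving. The first bijection in Theorem \ref{1} is therefore not an identity of classes but the duality $\X\mapsto\rhom_R(\X,R)$ of Lemma \ref{74}(2), and it is exactly here that the Gorenstein (hence complete intersection) hypothesis is used, to guarantee that $\rhom_R(-,R)$ preserves $\db(R)$; your ``essentially formal'' argument offers no substitute for this.

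Your Stage 2 invariant also fails. Setting $f_\X(\p)=\inf\{n\ge0\mid\syz^n\kappa(\p)\in\X_\p\}$ is blind to the perfect-complex layer precisely at singular primes, because there no syzygy of $\kappa(\p)$ ever becomes perfect over $R_\p$. Already for $R=k[x]/(x^2)$ (a hypersurface, so within Setup \ref{95}) the pairwise distinct resolving subcategories $\k^n(R)=\k_0^n(R)$, $n\ge0$, of Theorem \ref{13} all have zero image in $\s(R)$, while $\syz^n k\cong k$ never acquires finite projective dimension, so your assignment sends all of them to $(f\equiv\infty,\ \emptyset)$ and is far from injective. The paper instead uses $\Phi(\X)(\p)=\sup_{X\in\X}\{\pd_{R_\p}X_\p\}$ on $\X\cap\k(R)$ and realizes arbitrary prescribed values by shifted Koszul complexes (Theorem \ref{2}); moreover the genuine technical heart is not a recovery statement of the form $\X_{(f_\X,W_\X)}=\X$, but the separation $\X\leftrightarrow(\X\cap\k(R),\X\cap\c(R))$ of Theorem \ref{72} together with Theorem \ref{76}, whose proofs (Propositions \ref{70} and \ref{71}) use the complete intersection hypothesis through cosyzygies of maximal Cohen--Macaulay modules and finite projective hulls; Stevenson's support theory enters only afterwards, to convert thick subcategories into subsets of $V$, so it cannot carry the load you assign to it. Finally, the compatibility $\widetilde\X\cap\mod R=\X$ (Proposition \ref{84}) does not follow from an induction on amplitude: membership in $\res_{\d(R)}\X$ carries no amplitude-indexed filtration to induct on, and the paper proves it by comparing the classifications on the module and derived levels (Lemma \ref{37}, Theorem \ref{36}, Propositions \ref{80} and \ref{94}) after splitting into the $\fpd$ and $\cm$ parts.
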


\noindent
Here, a {\em resolving subcategory} of $\db(R)$ is a full subcategory containing $R$ and closed under direct summands, extensions and negative shifts, which we shall newly introduce in this paper.
We adopt this name because it is viewed as a triangulated category version of a resolving subcategory of an abelian cateory stated above.

This paper is organized as follows.
In Section 2, together with several preliminaries for later sections, we give the precise definition of a resolving subcategory of $\db(R)$ and states its basic properties.
In Sections 3, 5 and 6, we mainly deal with perfect complexes.
In Section 3, we classify those resolving subcategories of $\dpf(R)$ whose objects are perfect complexes locally with nonpositive projective dimension; it turns out that they are totally ordered.
In Section 4, we introduce the key notion of NE-loci in $\db(R)$, which are regarded as extensions of nonfree loci in $\mod R$.
We find out several fundamental properties of NE-loci.
In Section 5, we provide a complete classification of the resolving subcategories of $\dpf(R)$ in terms of order-preserving maps from $\spec R$ to $\N\cup\{\infty\}$.
The proof uses induction whose basis is formed by the classification theorem obtained in Section 3.
The use of Koszul complexes and NE-loci is crucial here.
In Section 6, we realize results of Dao and Takahashi \cite{crspd} about modules of finite projective dimension, as restrictions of our results about perfect complexes, which are obtained in Sections 3 and 5.
In Section 7, we consider classifying preaisles of $\dpf(R)$ containing $R$ and closed under direct summands.
We also compare our results with a classification theorem of aisles given in \cite{ft}.
From Section 8 to 10 we mainly handle locally complete intersection rings.
In Section 8, we prove that the resolving subcategories of $\db(R)$ bijectively correspond to the direct product of the resolving subcategories of perfect complexes and the resolving subcategories of maximal Cohen--Macaulay complexes.
In Section 9, applying the result obtained in Section 8, we provide complete classifications of the resolving subcategories of $\db(R)$ and the preaisles of $\db(R)$ containing $R$ and closed under direct summands.
We also observe that this classification restricts to the classification of thick subcategories of $\ds(R)$ given in Theorem \ref{90}.
In Section 10, we realize the classification of resolving subcategories of $\mod R$ given in Theorem \ref{91} as a restriction of the classification of resolving subcategories of $\db(R)$ given in Section 9.

Finally, we should emphasize that some of our methods to investigate resolving subcategories of $\db(R)$ are similar to methods given in the literature to investigate resolving subcategories of $\mod R$, but we do need to invent and develop a lot of new techinques to obtain our results.
We should also emphasize that our main result, Theorem \ref{1}, is obtained at the end of this paper, by using results given in all the previous sections.

\section{Resolving subcategories of triangulated categories}

In this section, we state basic definitions which are used throughout this paper.
Mimicking the definition of a resolving subcategory of an abelian category, we define a resolving subcategory of a triangulated category.
We also explore fundamental properties of resolving subcategories.
We begin with giving our convention.

\begin{conv}
All subcategories are assumed to be strictly full.
An object $X$ of a category $\C$ is identified with the subcategory of $\C$ consisting of $X$.
An exact triangle $A\to B\to C\to A[1]$ is often abbreviated to $A\to B\to C\rightsquigarrow$.
Let $R$ be a commutative noetherian ring with identity.
For a prime ideal $\p$ of $R$, we denote by $\kappa(\p)$ the residue field of the local ring $R_\p$, that is, $\kappa(\p)=R_\p/\p R_\p$.
Subscripts and superscripts may be omitted if there is no danger of confusion.
\end{conv}

In the next two Definitions, we explain basic closedness conditions in an additive/abelian/triangulated category, and introduce certain subcategories determined by a given subcategory.

\begin{dfn}
Let $\C$ be an additive category, and let $\X$ be a subcategory of $\C$.
\begin{enumerate}[(1)]
\item
We say that $\X$ is {\em closed under finite direct sums} provided that for any finite number of objects $X_1,\dots,X_n$ in $\X$ the direct sum $X_1\oplus\cdots\oplus X_n$ is also in $\X$.
This is equivalent to saying that the direct sum of any two objects in $\X$ also belongs to $\X$.
\item
We say that $\X$ is {\em closed under direct summands} provided that if $X$ is an object in $\X$ and $Y$ is a direct summand of $X$ in $\A$, then $Y$ is also in $X$.
\item
We denote by $\add_\C\X$ the {\em additive closure} of $\X$, that is, the smallest subcategory of $\C$ containing $\X$ and closed under finite direct sums and direct summands.
\item
Assume $\C$ is abelian (resp. triangulated).
We say that $\X$ is {\em closed under extensions} provided for an exact sequence $0\to L\to M\to N\to0$ (resp. exact triangle $L\to M\to N\rightsquigarrow$) in $\C$, if $L,N\in\X$, then $M\in\X$.
\item
Suppose that $\C$ is either abelian or triangulated.
The {\em extension closure} $\ext_\C\X$ of $\X$ is defined as the smallest subcategory of $\C$ containing $\X$ and closed under direct summands and extensions.
\end{enumerate}
\end{dfn}

\begin{dfn}\label{100}
Let $\T$ be a triangulated category, and let $\X$ be a subcategory of $\T$.
\begin{enumerate}[(1)]
\item
For any $n\in\Z$ denote by $\X[n]$ the subcategory of $\T$ consisting of objects of the form $X[n]$ with $X\in\X$.
\item
We say that $\X$ is {\em closed under positive shifts} (resp. {\em closed under negative shifts}) if $\X[n]$ is contained in $\X$ for all $n>0$ (resp. $n<0$), which is equivalent to saying that $\X[1]$ (resp. $\X[-1]$) is contained in $\X$.
\item
We say that $\X$ is {\em thick} if $\X$ is a nonempty triangulated subcategory of $\T$ closed under direct summands.
We denote by $\thick_\T\X$ the {\em thick closure} of $\X$, namely, the smallest thick subcategory of $\T$ containing $\X$.
\end{enumerate}
\end{dfn}

Here we compare closedness under positive/negative shifts with other conditions regarding subcategories of a triangulated category.

\begin{prop}\label{42}
Let $\T$ be a triangulated category.
Let $\X$ be a subcategory of $\T$.
\begin{enumerate}[\rm(1)]
\item
Suppose $\X$ is closed under extensions and contains the zero object of $\T$.
Then the following are equivalent.
\begin{enumerate}[\rm(a)]
\item
The subcategory $\X$ of $\T$ is closed under positive (resp. negative) shifts.
\item
If $A\to B\to C\rightsquigarrow$ is an exact triangle with $A,B\in\X$ (resp. $B,C\in\X$), then $C$ (resp. $A$) is in $\X$.
\end{enumerate}
\item
Suppose that $\X$ is nonempty.
Then $\X$ is a thick subcategory of $\T$ if and only if $\X$ is closed under direct summands, extensions, positive shifts and negative shifts.
\end{enumerate}
\end{prop}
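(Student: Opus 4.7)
The plan is to establish both statements by systematic use of triangle rotation together with the presence of the zero object.

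For part (1), I treat the positive- and negative-shift versions dually. Assume $\X$ contains $0$ and is closed under extensions. For the implication (a)$\Rightarrow$(b) in the positive-shift version, given an exact triangle $A\to B\to C\rightsquigarrow$ with $A,B\in\X$, rotate once to obtain the exact triangle $B\to C\to A[1]\rightsquigarrow$; since $A[1]\in\X$ by closure under positive shifts and $B\in\X$, closure under extensions forces $C\in\X$. For (b)$\Rightarrow$(a), given $X\in\X$, rotating the triangle $X\xrightarrow{\id}X\to 0\rightsquigarrow$ yields $X\to 0\to X[1]\rightsquigarrow$; applying (b) with $X,0\in\X$ gives $X[1]\in\X$. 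The negative-shift case is entirely dual: rotate backwards to get $C[-1]\to A\to B\rightsquigarrow$ and use extensions, or apply the dual form of (b) to the triangle $X[-1]\to 0\to X\rightsquigarrow$.

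For part (2), the forward direction is immediate from the definition of thickness, since a triangulated subcategory is automatically closed under the shift functor and its inverse as well as under extensions. For the converse, assume $\X$ is nonempty and closed under direct summands, extensions, and both positive and negative shifts. Picking any $X\in\X$, the decomposition $X\cong X\oplus 0$ together with closure under direct summands yields $0\in\X$. Part (1) then gives, for every exact triangle $A\to B\to C\rightsquigarrow$, the two implications $A,B\in\X\Rightarrow C\in\X$ and $B,C\in\X\Rightarrow A\in\X$. Together with closure under extensions (the remaining two-out-of-three implication $A,C\in\X\Rightarrow B\in\X$) and closure under shifts in both directions, this makes $\X$ a full triangulated subcategory of $\T$, and closure under direct summands then upgrades it to a thick subcategory.

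The argument presents no genuine difficulty beyond careful bookkeeping with the direction of rotation and the sign of the shift; the essential tools are only the rotation axiom of a triangulated category and the observation that $0\in\X$, either given in part (1) or derived from nonemptiness and direct-summand closure in part (2).
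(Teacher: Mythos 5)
Your proof is correct and follows essentially the same route as the paper: rotating the given triangle for (a)$\Rightarrow$(b), using the triangles $X\to0\to X[1]\rightsquigarrow$ and $X[-1]\to0\to X\rightsquigarrow$ for (b)$\Rightarrow$(a), and in part (2) extracting $0$ as a direct summand of some object of $\X$ before invoking part (1). No issues to flag.
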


\begin{proof}
(1) The exact triangle $A\to B\to C\rightsquigarrow$ induces exact triangles $B\to C\to A[1]\rightsquigarrow$ and $C[-1]\to A\to B\rightsquigarrow$.
This shows that (a) implies (b).
For each object $X\in\T$ there exist exact triangles $X\to0\to X[1]\rightsquigarrow$ and $X[-1]\to0\to X\rightsquigarrow$ in $\T$.
This shows that (b) implies (a).

(2) By assumption, there exists an object $X$ in $\X$.
Suppose that $\X$ is closed under direct summands and extensions.
Then $\X$ contains $0$, since $0$ is a direct summand of $X$.
Now the assertion follows from (1).
\end{proof}

We introduce categories and subcategories which we basically use in this paper.

\begin{dfn}
We denote by $\mod R$ the category of finitely generated $R$-modules, by $\proj R$ the subcategory of $\mod R$ consisting of projective modules, and by $\fpd R$ the subcategory of $\mod R$ consisting of modules of finite projective dimension.
Also, we denote by $\d(R)$ the bounded derived category of $\mod R$ which is denoted by $\db(R)$ in Section 1, and by $\k(R)$ the bounded homotopy category of $\proj R$.
Via the natural fully faithful functors, we regard $\mod R$ and $\k(R)$ as (strictly full) subcategories of $\d(R)$.
In particular, we identify $\k(R)$ with the derived category $\dpf(R)$ of perfect $R$-complexes that appears in Section 1.
Here, a {\em perfect} complex is defined to be a bounded complex of finitely generated projective modules.
We thus have inclusions
$$
\proj R\subseteq\fpd R=\k(R)\cap\mod R,\qquad
\fpd R\subseteq\k(R)\subseteq\d(R),\qquad
\fpd R\subseteq\mod R\subseteq\d(R).
$$
\end{dfn}

Now we remind ourselves of the definition of a resolving subcategory of the module category.

\begin{dfn}
Let $\X$ be a subcategory of $\mod R$.
\begin{enumerate}[(1)]
\item
We say that $\X$ is {\em resolving} if it satisfies the following four conditions.
\begin{quote}
(i) $\X$ contains $R$.\quad
(ii) $\X$ is closed under direct summands.\quad
(iii) $\X$ is closed under extensions.\\
(iv) For an exact sequence $0\to A\to B\to C\to0$ in $\mod R$ with $B,C\in\X$, one has $A\in\X$.
\end{quote}
Conditions (i) and (ii) imply that a resolving subcategory of $\mod R$ contains the zero object $0$ of $\mod R$.
Condition (i) can be replaced with the condition that $\X$ contains $\proj R$.
Condition (iv) can be replaced with the condition that $\X$ is closed under syzygies; see \cite[Remark 2.3]{res}.
\item
The {\em resolving closure} $\res_{\mod R}\X$ of $\X$ is the smallest resolving subcategory of $\mod R$ containing $\X$.
\end{enumerate}
\end{dfn}

Imitating this definition, we shall introduce the notion of a resolving subcategory of a triangulated category.

\begin{dfn}
Let $\T$ be a triangulated subcategory of $\d(R)$ containing $R$.
Let $\X$ be a subcategory of $\T$.
\begin{enumerate}[(1)]
\item
We say that $\X$ is {\em resolving} if it satisfies the following four conditions.
\begin{quote}
(i) $\X$ contains $R$.\quad
(ii) $\X$ is closed under direct summands.\quad
(iii) $\X$ is closed under extensions.\\
(iv) For an exact triangle $A\to B\to C\rightsquigarrow$ in $\T$ with $B,C\in\X$, one has $A\in\X$.
\end{quote}
Conditions (i) and (ii) imply that a resolving subcategory of $\T$ contains the zero object $0$ of $\T$.
Condition (i) can be replaced with the condition that $\X$ contains $\proj R$.
Condition (iv) can be replaced with the condition that $\X$ is closed under negative shifts; see Proposition \ref{42}(1).
\item
The {\em resolving closure} $\res_\T\X$ of $\X$ is defined to be the smallest resolving subcategory of $\T$ containing $\X$.
\end{enumerate}
\end{dfn}

In the next proposition we explore the relationship between resolving closures and shifts.
It turns out that compatibility of taking the resolving closure and taking a shift is subtle; see also Remark \ref{99} given later.

\begin{prop}\label{52}
Let $\T$ be a triangulated subcategory of $\d(R)$ containing $R$.
\begin{enumerate}[\rm(1)]
\item
For each object $X$ of $\T$ and each integer $n$, there is an equality $\res_\T\{X[i]\mid i\in\Z\}=\res_\T\{X[i]\mid i\ge n\}$.
\item
Let $\X$ be a subcategory of $\T$, and let $n$ be an integer.
\begin{enumerate}[\rm(a)]
\item
Let $n\le0$.
Then there is an inclusion $(\res_\T\X)[n]\subseteq\res_\T(\X[n])$.
\item
Let $n\ge0$.
If $\X$ is resolving, then so is $\X[n]$.
More generally, $(\res_\T\X)[n]=\res_\T(\X[n]\cup\{R[n]\})$.
\end{enumerate}
\end{enumerate}
\end{prop}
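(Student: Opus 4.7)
The plan is to reduce each part of the proposition to the four defining conditions of a resolving subcategory, combined with the fact that the shift $[n]$ is an exact auto-equivalence that carries exact triangles to exact triangles. The recurring device will be to take a candidate subcategory defined by pulling a resolving closure back along $[n]$ and check that it is itself resolving.

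For (1), the inclusion $\supseteq$ is immediate. For $\subseteq$, since $\res_\T\{X[i]\mid i\ge n\}$ contains $X[n]$ and is closed under negative shifts, iterating $[-1]$ produces $X[i]$ for every $i<n$ as well, which is all that is missing.

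For (2a) I would introduce the auxiliary subcategory
$$
\Y=\{Y\in\T\mid Y[n]\in\res_\T(\X[n])\}
$$
and verify that $\Y$ is resolving and contains $\X$; combining these gives $\res_\T\X\subseteq\Y$, which is exactly the desired inclusion $(\res_\T\X)[n]\subseteq\res_\T(\X[n])$. Closure under direct summands, extensions and negative shifts for $\Y$ transfer through $[n]$ from the corresponding conditions for $\res_\T(\X[n])$. The crucial use of the hypothesis $n\le0$ appears in the verification that $R\in\Y$: this requires $R[n]\in\res_\T(\X[n])$, which holds because $R\in\res_\T(\X[n])$ and $[n]$ is an iterated negative shift when $n\le0$.

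For (2b), the first task is to show that $\X[n]$ is itself resolving whenever $\X$ is and $n\ge0$. Three of the four conditions transport through $[n]$ in a routine way, using that direct summands of $Y[n]$ are of the form $W[n]$ with $W$ a summand of $Y$, and that applying $[-n]$ to an exact triangle $A\to B\to C\rightsquigarrow$ with $A,C\in\X[n]$ reduces extension-closure for $\X[n]$ to that of $\X$. The condition $R\in\X[n]$ holds because $R[-n]\in\X$ (using closure of $\X$ under negative shifts, which applies since $-n\le0$). Once this is in hand, the equality $(\res_\T\X)[n]=\res_\T(\X[n]\cup\{R[n]\})$ follows from two containments: $\supseteq$ since $(\res_\T\X)[n]$ is resolving and visibly contains $\X[n]\cup\{R[n]\}$; and $\subseteq$ by the same pull-back trick applied to
$$
\ZZ=\{Y\in\T\mid Y[n]\in\res_\T(\X[n]\cup\{R[n]\})\},
$$
with the membership $R\in\ZZ$ now automatic from $R[n]\in\res_\T(\X[n]\cup\{R[n]\})$.

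The only genuinely subtle point is the asymmetry between $n\le0$ and $n\ge0$, which reflects the fact that resolving subcategories are closed under negative but not positive shifts. This asymmetry forces the explicit addition of $\{R[n]\}$ in (2b): the resolving subcategory generated by $\X[n]$ alone cannot be expected to contain $R[n]$ when $n>0$, so this generator must be supplied by hand. Everything else is bookkeeping against the four resolving axioms.
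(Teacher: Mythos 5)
Your proposal is correct and follows essentially the same route as the paper: the same pull-back subcategory $\Y=\{Y\in\T\mid Y[n]\in\res_\T(\cdots)\}$ verified to be resolving, the same use of $n\le0$ (resp.\ $n\ge0$) to secure $R\in\Y$ (resp.\ $R\in\X[n]$), and the same iterated-negative-shift argument for (1). The only cosmetic difference is that the paper first replaces $\X$ by $\X\cup\{R\}$ in (2b) instead of carrying $R[n]$ explicitly through the pull-back, which is an equivalent bookkeeping choice.
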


\begin{proof}
(1) Set $\X=\res_\T\{X[i]\mid i\ge n\}$.
Fix $j\in\Z$.
If $j\ge n$, then $X[j]$ is clearly in $\X$.
If $j<n$, then $j-n<0$ and one has $X[j]=(X[n])[j-n]\in\res( X[n])\subseteq\X$.
Hence $X[j]\in\X$ for all $j\in\Z$, and the assertion follows.

(2a) Consider the subcategory $\Y=\{Y\in\T\mid Y[n]\in\res_\T(\X[n])\}$ of $\T$.
Since $\res_\T(\X[n])$ is resolving, it contains $R$.
As $n\le0$, we have $R[n]\in\res_\T(\X[n])$.
Hence $R$ belongs to $\Y$.
Let $Y$ be an object in $\Y$ and $Z$ a direct summand of $Y$.
Then $Y[n]$ is in $\res_\T(\X[n])$ and $Z[n]$ is a direct summand of $Y[n]$.
Hence $Z[n]$ is in $\res_\T(\X[n])$, which implies $Z\in\Y$.
Let $A\to B\to C\rightsquigarrow$ be an exact triangle in $\T$ with $C\in\Y$.
Then there is an exact triangle $A[n]\to B[n]\to C[n]\rightsquigarrow$ and $C[n]$ is in $\res_\T(\X[n])$.
Hence $A[n]\in\res_\T(\X[n])$ if and only if $B[n]\in\res_\T(\X[n])$.
Therefore, $A\in\Y$ if and only if $B\in\Y$.
Consequently, $\Y$ is a resolving subcategory of $\T$.
Since $\Y$ contains $\X$, we see that $\Y$ contains $\res_\T\X$.
It follows that $(\res_\T\X)[n]\subseteq\res_\T(\X[n])$.

(2b) To show the first assertion, suppose that $\X$ is resolving.
As $\X$ is closed under negative shifts, we have that $\X[-1]\subseteq\X$, and that $R[-n]\in\X$ since $R\in\X$ and $-n\le0$.
Hence $(\X[n])[-1]=(\X[-1])[n]\subseteq\X[n]$ and $R=(R[-n])[n]\in\X[n]$, that is, $\X[n]$ is closed under negative shifts and contains $R$.
Let $A\to B\to C\rightsquigarrow$ be an exact triangle in $\T$ with $A,C\in\X[n]$.
Then there is an exact triangle $A[-n]\to B[-n]\to C[-n]\rightsquigarrow$ in $\T$ and $A[-n],C[-n]\in\X$.
Since $\X$ is closed under extensions, it contains $B[-n]$.
Hence $B=(B[-n])[n]$ belongs to $\X[n]$, and therefore $\X[n]$ is closed under extensions.
Let $K$ be an object in $\X[n]$ and $L$ a direct summand of $K$.
Then $K[-n]$ is in $\X$ and $L[-n]$ is a direct summand of $K[-n]$.
As $\X$ is closed under direct summands, $L[-n]$ is in $\X$.
Hence $L$ belongs to $\X[n]$, and therefore $\X[n]$ is closed under direct summands.
Consequently, $\X[n]$ is a resolving subcategory of $\T$.

Now we prove the second assertion.
Replacing $\X$ with $\X\cup\{R\}$, we may assume $R\in\X$.
We want to deduce $(\res_\T\X)[n]=\res_\T(\X[n])$.
As $\res_\T\X\supseteq\X$, we have $(\res_\T\X)[n]\supseteq\X[n]$.
As $(\res_\T\X)[n]$ is resolving by the first assertion, $(\res_\T\X)[n]$ contains $\res_\T(\X[n])$.
To show the opposite inclusion, consider the subcategory $\Y=\{Y\in\T\mid Y[n]\in\res_\T(\X[n])\}$ of $\T$.
Since $R\in\X$, we get $R[n]\in\X[n]\subseteq\res_\T(\X[n])$, which implies $R\in\Y$.
An analogous argument as in the proof of (1) shows $\Y$ is a resolving subcategory of $\T$.
Since $\Y$ contains $\X$, it contains $\res_\T\X$.
Thus $(\res_\T\X)[n]\subseteq\res_\T(\X[n])$.
We now obtain the equality $(\res_\T\X)[n]=\res_\T(\X[n])$.
\end{proof}

We define the minimum resolving subcategory, which plays a crucial role in the proofs of our main results.

\begin{dfn}
Let $\T$ be a triangulated subcategory of $\d(R)$ containing $R$.
We set $\E_\T=\res_\T0$ and call it the {\em minimum resolving subcategory} of $\T$.
It is minimum in the sense that every resolving subcategory of $\T$ contains $\E_\T$.
We simply write $\E_R=\E_{\d(R)}$.
\end{dfn}

The resolving closure $\res_\T\X$ of a subcategory $\X$ of $\T$, particularly the minimum resolving subcategory $\E_\T=\res_\T0$ of $\T$, depends on which triangulated subcategory $\T$ of $\d(R)$ is taken as the ambient category.
The proposition below collects properties of resolving subcategories, the second and third of which produce sufficient conditions for $\T$ to satisfy $\res_\T\X=\res_{\d(R)}\X$ for every subcategory $\X$ of $\T$.

\begin{prop}\label{43}
The following assertions hold true.
\begin{enumerate}[\rm(1)]
\item
Let $\T$ be a triangulated subcategory of $\d(R)$ containing $R$.
Then $\E_\T=\res_\T0=\res_\T R=\res_\T(\proj R)$.
\item
Let $\T$ be a thick subcategory of $\d(R)$ containing $R$.
Then the resolving subcategories of $\T$ are the resolving subcategories of $\d(R)$ contained in $\T$.
Hence $\res_\T\X=\res_{\d(R)}\X$ for any subcategory $\X$ of $\T$.
\item
The equality $\k(R)=\thick_{\d(R)}R$ holds.
Hence, there is an equality $\res_{\k(R)}\X=\res_{\d(R)}\X$ for any subcategory $\X$ of $\k(R)$.
In particular, it holds that $\E_{\k(R)}=\E_R$.
\item
If $\X$ is a resolving subcategory of $\d(R)$, then $\X\cap\mod R$ is a resolving subcategory of $\mod R$.
If $\X$ is a resolving subcategory of $\k(R)$, then $\X\cap\mod R$ is a resolving subcategory of $\mod R$ contained in $\fpd R$.
\end{enumerate}
\end{prop}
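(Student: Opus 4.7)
The plan is to unwind the definitions in each part. Parts (1) and (4) amount to verifying closure axioms by hand; part (2) supplies the identification of $\res_\T$ with $\res_{\d(R)}$ that then makes part (3) a corollary of a standard description of perfect complexes. I do not anticipate a genuine conceptual obstacle anywhere; the one place that demands care is the triangulated-subcategory argument in (2), where one must keep track of whether membership is being tested in $\T$ or in $\d(R)$.

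For (1), the equality $\E_\T=\res_\T0$ is by definition. The equality $\res_\T0=\res_\T R$ reduces to the fact that any resolving subcategory contains both $R$ (by axiom (i)) and $0$ (as a direct summand of $R$, using axiom (ii)), so each of $\res_\T0$ and $\res_\T R$ contains the other's generator and is resolving. The remaining equality $\res_\T R=\res_\T(\proj R)$ needs only the nontrivial inclusion $\proj R\subseteq\res_\T R$: I would use the split exact triangle $R\to R\oplus R\to R\rightsquigarrow$ in $\T$ together with axiom (iii) and induction to obtain $R^n\in\res_\T R$ for all $n$, and then invoke axiom (ii) to catch every finitely generated projective.

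For (2), one inclusion is immediate: any resolving subcategory of $\d(R)$ contained in $\T$ is resolving in $\T$, since the four closure operations only involve objects and morphisms that already lie in $\T$. For the other inclusion, let $\X\subseteq\T$ be resolving in $\T$ and take a triangle $A\to B\to C\rightsquigarrow$ in $\d(R)$ with $B,C\in\X$. Since $\T$ is a triangulated subcategory of $\d(R)$, the morphism $B\to C$ admits a cone inside $\T$, so after rotating, $A$ is isomorphic to an object of $\T$; then $A\in\X$ by axiom (iv) applied inside $\T$. Closure under direct summands, extensions and negative shifts transfers the same way, using thickness of $\T$ for summands. Part (3) then reduces to the standard identity $\k(R)=\thick_{\d(R)}R$, whose proof I would include in one sentence: $\k(R)$ is thick in $\d(R)$ and contains $R$, while conversely any perfect complex is built from finite direct sums of shifts of $R$ by iterated mapping cones, hence lies in $\thick_{\d(R)}R$.

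For (4), I would verify each of the four axioms of a resolving subcategory of $\mod R$ for $\X\cap\mod R$ by lifting a short exact sequence $0\to A\to B\to C\to0$ in $\mod R$ to an exact triangle in $\d(R)$ and invoking the hypothesis on $\X$; axiom (i) is immediate since $R\in\X\cap\mod R$, and closure under direct summands in $\mod R$ transfers to closure under direct summands in $\d(R)$ because the splitting idempotent is the same in both settings. The second assertion then follows at once from the identity $\k(R)\cap\mod R=\fpd R$ recorded in the definition immediately preceding the proposition, which forces $\X\cap\mod R\subseteq\fpd R$ as soon as $\X\subseteq\k(R)$.
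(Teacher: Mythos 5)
Your proposal is correct and follows essentially the same route as the paper: (1) via the split-triangle argument giving $\proj R\subseteq\res_\T R$, (2) by transferring the resolving axioms between $\T$ and $\d(R)$ (the paper recasts condition (iv) as closure under negative shifts via Proposition \ref{42}(1), while you rotate and take cones inside the strictly full triangulated subcategory $\T$ --- the same content, with thickness used only for summands), (3) from $\k(R)=\thick_{\d(R)}R$, and (4) by lifting short exact sequences in $\mod R$ to exact triangles. The only points you gloss --- deducing $\res_\T\X=\res_{\d(R)}\X$ from the first assertion of (2), and invoking (2) and (3) so that the first half of (4) applies to a resolving subcategory of $\k(R)$ --- are routine steps that the paper writes out explicitly.
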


\begin{proof}
(1) The assertion follows from the inclusions $\proj R\subseteq\E_\T=\res_\T0\subseteq\res_\T R\subseteq\res_\T(\proj R)$.

(2) Let $\X$ be a subcategory of $\T$ with $R\in\X$ and $\X[-1]\subseteq\X$.
Since $\T$ is closed under extensions as a subcategory of $\d(R)$, we see that $\X$ is closed under extensions as a subcategory of $\T$ if and only if $\X$ is closed under extensions as a subcategory of $\d(R)$.
If $A,B$ are objects of $\d(R)$ with $A\oplus B\in\X$, then $A\oplus B\in\T$, which implies $A,B\in\T$ since $\T$ is thick.
It is seen that $\X$ is closed under direct summands as a subcategory of $\T$ if and only if $\X$ is closed under direct summands as a subcategory of $\d(R)$.
The first assertion follows.

Since $\res_\T\X$ is a resolving subcategory of $\T$ containing $\X$, by the first assertion it is a resolving subcategory of $\d(R)$ contained in $\T$ and containing $\X$.
Hence $\T\supseteq\res_\T\X\supseteq\res_{\d(R)}\X$.
Therefore, $\res_{\d(R)}\X$ is a resolving subcategory of $\d(R)$ contained in $\T$ and containing $\X$, so that it is a resolving subcategory of $\T$ containing $\X$ by the first assertion again.
This implies that $\res_{\d(R)}\X\supseteq\res_\T\X$.
The second assertion now follows.

(3) It is a well-known fact that $\k(R)=\thick_{\d(R)}R$; see \cite[Proposition 1.4(2)]{dm} for instance.
In particular, $\k(R)$ is a thick subcategory of $\d(R)$ containing $R$.
It follows from (2) that $\res_{\k(R)}\X=\res_{\d(R)}\X$ for any subcategory $\X$ of $\k(R)$.
We get the equalities $\E_{\k(R)}=\res_{\k(R)}0=\res_{\d(R)}0=\E_{\d(R)}=\E_R$.

(4) Let $\X$ be a resolving subcategory of $\d(R)$.
Then $R$ belongs to $\X\cap\mod R$.
If $M\in\X\cap\mod R$ and $N$ is a direct summand in $\mod R$ of $M$, then $M$ is in $\X$ and $N$ is a direct summand in $\d(R)$ of $M$, so that $N$ is in $\X$ and hence $N\in\X\cap\mod R$.
Let $0\to A\to B\to C\to0$ be an exact sequence in $\mod R$ with $C\in\X\cap\mod R$.
Then there is an exact triangle $A\to B\to C\rightsquigarrow$ in $\d(R)$ and $C\in\X$.
Hence $A\in\X$ if and only if $B\in\X$, so that $A\in\X\cap\mod R$ if and only if $B\in\X\cap\mod R$.
Thus, $\X\cap\mod R$ is a resolving subcategory of $\mod R$.

Let $\X$ be a resolving subcategory of $\k(R)$.
By (2) and (3), $\X$ is a resolving subcategory of $\d(R)$ contained in $\k(R)$.
Hence $\X\cap\mod R$ is a resolving subcategory of $\mod R$ contained in $\k(R)\cap\mod R=\fpd R$.
\end{proof}

Here we state simple observations about representing each closure as an intersection of subcategories.

\begin{prop}
Let $\C$ be an additive category, and let $\X$ be a subcategory of $\C$.
\begin{enumerate}[\rm(1)]
\item
The additive closure $\add_\C\X$ is equal to the intersection of all subcategories of $\C$ that contain $\X$ and are closed under finite direct sums and direct summands.
\item
Assume that $\C$ is either abelian or triangulated.
Then the extension closure $\ext_\C\X$ is equal to the intersection of all subcategories of $\C$ that contain $\X$ and are closed under direct summands and extensions.
\item
Assume that $\C$ is triangulated.
Then the thick closure $\thick_\C\X$ is equal to the intersection of all thick subcategories of $\C$ containing $\X$.
\item
Assume that $\C$ is a triangulated subcategory of $\d(R)$ containing $R$.
Then the resolving closure $\res_\C\X$ is equal to the intersection of all resolving subcategories of $\C$ containing $\X$.
In particular, $\E_\C$ coincides with the intersection of all resolving subcategories of $\C$.
\end{enumerate}
\end{prop}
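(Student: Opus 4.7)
The plan is to handle all four parts by a single uniform argument, since each closure is by definition the smallest subcategory containing $\X$ and satisfying a specified list of closure conditions. The key observation is that every condition appearing in the four parts, namely containing $\X$, containing $R$, closure under finite direct sums, under direct summands, under extensions, under positive or negative shifts, and being nonempty/containing $0$, is preserved under taking arbitrary intersections of subcategories. This is immediate from the shape of each condition: they are of the form ``for every such-and-such diagram whose specified vertices lie in the subcategory, some additional vertex lies in the subcategory'', and such universally quantified closure properties pass to intersections of subcategories.

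Given this, each part is proved by the same two-step argument. First, let $\I$ denote the intersection of the family $\{\Y\}$ of all subcategories of $\C$ that contain $\X$ and satisfy the closure conditions in question (for (3), equivalently thick subcategories containing $\X$; for (4), equivalently resolving subcategories of $\C$ containing $\X$). By the preceding observation, $\I$ itself contains $\X$ and satisfies all the required closure conditions, so $\I$ is a member of the family. Second, since the corresponding closure (respectively $\add_\C\X$, $\ext_\C\X$, $\thick_\C\X$, $\res_\C\X$) is by definition the smallest such subcategory, it is contained in every $\Y$ and hence in $\I$; conversely, $\I\subseteq\Y$ for every $\Y$ in the family, and the closure is one such $\Y$, so $\I$ is contained in the closure. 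Therefore $\I$ equals the closure.

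For the final sentence of (4), one applies the first assertion of (4) to $\X=\{0\}$ to obtain that $\E_\C=\res_\C 0$ is the intersection of all resolving subcategories of $\C$ containing the zero object. But every resolving subcategory of $\C$ contains $0$ (as noted right after the definition of a resolving subcategory of $\T$: conditions (i) and (ii) force $0$ into the subcategory), so this intersection coincides with the intersection of all resolving subcategories of $\C$.

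There is essentially no obstacle here: the only thing to be careful about is the observation that, in the triangulated case of (2) and in (3) and (4), closure under extensions really is preserved under intersections, which requires noting that an exact triangle whose relevant vertices lie in each $\Y$ of the family has its completing vertex in each $\Y$, hence in the intersection. For (3), one uses Proposition \ref{42}(2) to translate thickness into closure under direct summands, extensions, and shifts in both directions, which are all intersection-stable. With these remarks the four parts follow by one common argument.
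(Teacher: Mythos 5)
Your proposal is correct and matches the paper's own proof, which likewise treats all four parts uniformly by observing that each defining closure condition is preserved under arbitrary intersections, so the smallest subcategory satisfying the conditions and containing $\X$ coincides with the intersection of all such subcategories. Your extra remarks on the triangulated cases and on the final sentence of (4) are fine elaborations of the same argument.
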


\begin{proof}
Let $\PP$ be a property of subcategories of $\C$ which is {\em preserved under intersections}, that is, for a family $\{\X_\lambda\}_{\lambda\in\Lambda}$ of subcategories of $\C$, if each $\X_\lambda$ satisfies $\PP$, then so does the intersection $\bigcap_{\lambda\in\Lambda}\X_\lambda$. 
For a subcategory $\X$ of $\C$, let $\PP_\C(\X)$ be the {\em $\PP$-closure} of $\X$, that is, the smallest subcategory of $\C$ satisfying $\PP$ and containing $\X$.
Then $\PP_\C(\X)$ coincides with the intersection of all subcategories of $\C$ that satisfy $\PP$ and contain $\X$.
\end{proof}

Next we recall the definitions of projective dimension and depth for complexes, and of Koszul complexes.

\begin{dfn}
\begin{enumerate}[(1)]
\item
The {\em supremum} $\sup X$ and {\em infimum} $\inf X$ of an object $X\in\d(R)$ is defined by $\sup X=\sup\{i\in\Z\mid\h^iX\ne0\}$ and $\inf X=\inf\{i\in\Z\mid\h^iX\ne0\}$.
\item
The {\em projective dimension} $\pd_RX$ of an object $X\in\d(R)$ is the infimum of integers $n$ such that $X\cong P$ in $\d(R)$ for some perfect $R$-complex $P$ with $P^{-i}=0$ for all integers $i>n$.
One has $\pd X\in\Z\cup\{\pm\infty\}$ and $\pd X\ge-\inf X$.
Also, $\pd X<\infty$ if and only if $X\in\k(R)$.
One does not necessarily have $\pd X\le n$ even if $X\cong P$ in $\d(R)$ for some complex $P$ of finitely generated projective modules with $P^{-i}=0$ for all $i>n$; see \cite[2.6.P]{AF}.
We refer to \cite[1.2.P, 1.7, 2.3.P, 2.4.P and 2.7.P]{AF} for details of projective dimension.
\item
For each integer $n$, we denote by $\k^n(R)$ the subcategory of $\k(R)$ consisting of perfect complexes having projective dimension at most $n$.
\item
For a sequence $\xx=x_1,\dots,x_n$ we denote by $\K(\xx,R)$ the Koszul complex of $\xx$ over $R$.
When the ambient ring $R$ is clear, we simply write $\K(\xx)$.
\item
Let $R$ be a local ring with residue field $k$.
For an object $X$ of $\d(R)$, we denote by $\depth_RX$ the {\em depth} of $X$, which is defined by the equality $\depth_RX=\inf\rhom_R(k,X)$.
\end{enumerate}
\end{dfn}

We make a collection of basic properties of projective dimension and depth which are frequently used later.

\begin{prop}\label{27}
\begin{enumerate}[\rm(1)]
\item
Let $X$ be an object of $\d(R)$, and let $r$ be an integer.
Then $\pd_R(X[r])=\pd_RX+r$.
When the ring $R$ is local, the equality $\depth_R(X[r])=\depth_RX-r$ holds.
\item
Let $R$ be a local ring with residue field $k$.
Let $X\in\d(R)$.
One has the equality $\pd_RX=-\inf(X\lten_Rk)$.
Also, the {\em Auslander--Buchsbaum formula} holds:
If $\pd_RX<\infty$, then $\pd_RX=\depth R-\depth_RX$.
\item
Let $A\to B\to C\rightsquigarrow$ be an exact triangle in $\d(R)$.
Then the following inequalities hold true,  where for the latter ones we assume that the ring $R$ is local.
$$
\begin{cases}
\pd_RB\le\sup\{\pd_RA,\pd_RC\},\\
\pd_RA\le\sup\{\pd_RB,\pd_RC-1\},\\
\pd_RC\le\sup\{\pd_RB,\pd_RA+1\},
\end{cases}
\begin{cases}
\depth_RB\ge\inf\{\depth_RA,\depth_RC\},\\
\depth_RA\ge\inf\{\depth_RB,\depth_RC+1\},\\\depth_RC\ge\inf\{\depth_RB,\depth_RA-1\}
\end{cases}
$$
\item
Let $X$ and $Y$ be objects of $\d(R)$.
Then one has the equality $\pd_R(X\oplus Y)=\sup\{\pd_RX,\pd_RY\}$.
When $R$ is local, one also has the equality $\depth_R(X\oplus Y)=\inf\{\depth_RX,\depth_RY\}$.
\item
For all nonnegative integers $n$, the subcategory $\k^n(R)$ of $\k(R)$ is resolving.
\item
There is an equality $\k^0(R)=\E_R$.
In particular, the equality $\E_R\cap\mod R=\proj R$ holds.
\item
Let $R$ be a local ring with maximal ideal $\m$.
Let $\xx=x_1,\dots,x_n$ be a sequence of elements of $R$.
If $x_i\in\m$ for all $i$, then $\pd_R\K(\xx)=n$.
If $x_i\notin\m$ for some $i$, then $\K(\xx)\cong0$ in $\k(R)$ and $\pd_R\K(\xx)=-\infty$.
\end{enumerate}
\end{prop}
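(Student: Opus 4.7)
The plan is to work through the seven items in order, since each builds on its predecessors. Part (1) is formal: if $X\cong P$ with $P^{-i}=0$ for $i>n$, then $(P[r])^{-i}=P^{r-i}$ vanishes whenever $i>n+r$, giving $\pd(X[r])\le\pd(X)+r$; the reverse inequality follows by applying the same estimate to $X[r]$ and $-r$. For depth one uses $\rhom_R(k,X[r])=\rhom_R(k,X)[r]$ together with the identity $\inf(Y[r])=\inf Y-r$. Part (2) is the classical tensor-product formula for $\pd$ and the Auslander--Buchsbaum equality, both of which I would simply cite from \cite{AF}.

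For part (3), I would apply $-\lten_R k$ to the given exact triangle and read off each inequality from the long exact cohomology sequence, using part (2) to convert $\inf$ of the derived tensor with $k$ into $-\pd$. The depth inequalities are dual: apply $\rhom_R(k,-)$ and take cohomology. Part (4) is then immediate because both $-\lten k$ and $\rhom(k,-)$ commute with finite direct sums, so the $\inf$ of the direct sum is the minimum of the individual $\inf$s.

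Part (5) amounts to checking the four resolving conditions for $\k^n(R)$ with $n\ge 0$: the ring $R$ has $\pd=0\le n$; closure under direct summands and under extensions come from parts (4) and (3), respectively; and closure under negative shifts is immediate from part (1), since $\pd(X[-1])=\pd X-1\le n$. For part (6), the inclusion $\E_R\subseteq\k^0(R)$ holds because $\k^0(R)$ is resolving by (5) and contains $\proj R$, while $\E_R=\res_{\d(R)}(\proj R)$ by Proposition \ref{43}(1). For the opposite inclusion, given $X$ with $\pd X\le 0$ the definition produces a perfect complex $P\cong X$ concentrated in non-negative degrees, say $P=(P^0\to P^1\to\cdots\to P^t)$; I would then induct on $t$. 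The stupid truncation supplies an exact triangle $\sigma^{\le t-1}P\to P\to P^t[-t]\rightsquigarrow$, whose first term lies in $\E_R$ by induction and whose third term is a non-positive shift of a projective, hence in $\E_R$ as well. Closure under extensions places $P$ in $\E_R$. The equality $\E_R\cap\mod R=\proj R$ then follows, since a module of projective dimension at most $0$ as a complex is projective.

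Finally, for part (7), when every $x_i$ lies in $\m$ the Koszul complex $\K(\xx)\cong\K(x_1)\otimes_R\cdots\otimes_R\K(x_n)$ is a bounded complex of finitely generated free modules in degrees $-n,\dots,0$, so $\pd\K(\xx)\le n$; derived-tensoring with $k$ kills every differential (as $x_i\in\m$) and yields an exterior algebra whose lowest nonzero cohomology sits in degree $-n$, so part (2) forces $\pd\K(\xx)=n$. If some $x_i$ is a unit, then $\K(x_i)$ is contractible, whence $\K(\xx)\cong 0$ in $\k(R)$ and its projective dimension is $-\infty$. The main technical obstacle across the whole proposition is the truncation-induction step in (6); everything else is routine bookkeeping built on the definitions and on the long exact sequences produced via part (2).
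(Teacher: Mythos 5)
Your treatment of (1), (2), (5), (6) and (7) is essentially the paper's own argument: (1) and (2) are definitional/cited facts, (5) follows from (3), (4) and (1), (6) is proved by exactly the same brutal-truncation induction (the paper phrases it as $P\in\ext_{\k(R)}\{P^s[-s],\dots,P^0\}\subseteq\ext_{\k(R)}\{R[i]\mid i\le0\}$, which is your induction unrolled; the direction of your truncation triangle is off, but since $P$ is the middle term either way, this is harmless), and (7) is the same Koszul computation over the residue field.

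There is, however, a genuine gap in (3) and (4): your method is to apply $-\lten_Rk$ and $\rhom_R(k,-)$, which presupposes a local ring with residue field $k$, while the projective-dimension statements in (3) and (4) are asserted for an arbitrary commutative noetherian $R$ (only the depth statements assume locality). For (3) the paper reduces the nonlocal case to the local one via the localization formula $\pd_RB=\sup_{\p\in\spec R}\{\pd_{R_\p}B_\p\}$ from \cite[Proposition 5.3.P]{AF}; you never invoke such a reduction. The issue is more serious in (4), which you call ``immediate'': for nonlocal $R$ the inequality $\pd_R(X\oplus Y)\ge\sup\{\pd_RX,\pd_RY\}$ does not follow just from compatibility of $-\lten k$ with direct sums, and the paper has to prove a separate claim --- if $\pd_{R_\p}X_\p<\infty$ for all $\p$ then $\pd_RX<\infty$, established by truncating a resolution and citing \cite[Lemma 4.5]{BM} --- in order to produce a prime $\p$ with $\pd_RX=\pd_{R_\p}X_\p$ and derive a contradiction from a strict inequality. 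Without this local-to-global step (or an equally careful use of the sup formula, including why the supremum is attained or is $\infty$ at some prime), your argument only establishes (3) and (4) for local $R$, which is not enough for the later uses of these items over general rings.
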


\begin{proof}
(1) We easily deduce the assertion from the definitions of projective dimension and depth.

(2) The first assertion follows from \cite[(A.5.7.2)]{C}.
The second assertion is stated in \cite[(1.5)]{CFF} for example.

(3) Suppose that $R$ is a local ring with residue field $k$.
The two exact triangles
$$
\rhom_R(k,A)\to\rhom_R(k,B)\to\rhom_R(k,C)\rightsquigarrow,\qquad
A\lten_Rk\to B\lten_Rk\to C\lten_Rk\rightsquigarrow
$$
give rise to the inequalities $\inf\rhom_R(k,B)\ge\inf\{\inf\rhom_R(k,A),\inf\rhom_R(k,C)\}$ and $\inf(B\lten_Rk)\ge\inf\{\inf(A\lten_Rk),\inf(C\lten_Rk)\}$.
Therefore we have $\depth B\ge\inf\{\depth A,\depth C\}$, and by (2) we get
$$
\begin{array}{l}
\pd_RB=-\inf(B\lten_Rk)\le-\inf\{\inf(A\lten_Rk),\inf(C\lten_Rk)\}\\
\phantom{\pd_RB=-\inf(B\lten_Rk)}=\sup\{-\inf(A\lten_Rk),-\inf(C\lten_Rk)\}=\sup\{\pd_RA,\pd_RC\}.
\end{array}
$$
Now we consider the case where $R$ is nonlocal.
Using \cite[Proposition 5.3.P]{AF} and the local case, we get
$$
\textstyle\pd_RB=\sup_{\p\in\spec R}\{\pd_{R_\p}B_\p\}\le\sup_{\p\in\spec R}\{\sup\{\pd_{R_\p}A_\p,\pd_{R_\p}C_\p\}\}\le\sup\{\pd_RA,\pd_RC\}.
$$
Applying the argument given so far to the exact triangles $C[-1]\to A\to B\rightsquigarrow$ and $B\to C\to A[1]\rightsquigarrow$ and using (1), we obtain the remaining four inequalities.

(4) Suppose that the ring $R$ is local, and let $k$ be the residue field of $R$.
Using (2) for the former, we have
$$
\begin{array}{l}
\pd_R(X\oplus Y)=-\inf((X\oplus Y)\lten_Rk)=-\inf((X\lten_Rk)\oplus(Y\lten_Rk))\\
\quad=-\inf\{\inf(X\lten_Rk),\inf(Y\lten_Rk)\}
=\sup\{-\inf(X\lten_Rk),-\inf(Y\lten_Rk)\}=\sup\{\pd_RX,\pd_RY\},\\
\depth_R(X\oplus Y)=\inf\rhom_R(k,X\oplus Y)=\inf(\rhom_R(k,X)\oplus\rhom_R(k,Y))\\
\quad=\inf\{\inf\rhom_R(k,X),\inf\rhom_R(k,Y)\}
=\inf\{\depth_RX,\depth_RY\}.
\end{array}
$$

Now let the ring $R$ be nonlocal.
Applying (3) to the exact triangle $X\to X\oplus Y\to Y\rightsquigarrow$ gives $\pd_R(X\oplus Y)\le\sup\{\pd_RX,\pd_RY\}$.
Assume that $\pd_R(X\oplus Y)<\sup\{\pd_RX,\pd_RY\}$.
We may assume $\pd_RX\ge\pd_RY$.

We claim that if $\pd_{R_\p}X_\p<\infty$ for all prime ideals $\p$ of $R$, then $\pd_RX<\infty$.
Indeed, putting $t=\inf X$ and $s=\sup X$, we find a complex $P=(\cdots\to P^t\to\cdots\to P^s\to0)$ of finitely generated projective $R$-modules such that $P\cong X$ in $\d(R)$.
Let $C$ be the cokernel of the map $P^{t-1}\to P^t$.
Let $Q=(0\to P^{t+1}\to\cdots\to P^s\to0)$ be the truncation of $P$, which is a perfect complex.
There is an exact triangle $Q\to P\to C[-t]\rightsquigarrow$.
For each $\p\in\spec R$ we have $\pd_{R_\p}Q_\p<\infty$ and  $\pd_{R_\p}P_\p=\pd_{R_\p}X_\p<\infty$, so that $\pd_{R_\p}C_\p<\infty$.
It follows from \cite[Lemma 4.5]{BM} that $\pd_RC<\infty$.
As $\pd_RQ<\infty$, we get $\pd_RX=\pd_RP<\infty$.
The claim thus follows.

The claim and \cite[Proposition 5.3.P]{AF} produce a prime ideal $\p$ such that $\pd_RX=\pd_{R_\p}X_\p\le\infty$.
We have
$$
\begin{array}{l}
\pd_{R_\p}X_\p\le\sup\{\pd_{R_\p}X_\p,\pd_{R_\p}Y_\p\}=\pd_{R_\p}(X_\p\oplus Y_\p)\\
\phantom{\pd_{R_\p}X_\p\le\sup\{\pd_{R_\p}X_\p,\pd_{R_\p}Y_\p\}}\le\pd_R(X\oplus Y)<\sup\{\pd_RX,\pd_RY\}=\pd_RX=\pd_{R_\p}X_\p,
\end{array}
$$
where the first equality holds since the ring $R_\p$ is local.
We now get a contradiction, and therefore, the equality $\pd_R(X\oplus Y)=\sup\{\pd_RX,\pd_RY\}$ holds.

(5) As $n$ is nonnegative, $R$ belongs to $\k^n(R)$.
The assertion is shown to hold by using (3) and (4).

(6) As $\E_R$ contains $R$ and is closed under negative shifts, it contains $R[i]$ for all $i\le0$.
Hence we will get the required equality $\k^0(R)=\E_R$ once we prove that the following inclusions hold.
$$
\k^0(R)\subseteq\ext_{\k(R)}\{R[i]\mid i\le0\}\subseteq\E_R\subseteq\k^0(R).
$$
The second inclusion holds since $\E_R$ is closed under extensions, while the last inclusion comes from the fact that $\k^0(R)$ is a resolving subcategory and $\E_R$ is a minimum resolving subcategory.
To show the first inclusion, pick an object $P$ in $\k^0(R)$.
We may assume that $P=(0\to P^0\to P^1\to\cdots\to P^s\to0)$.
Then $P$ belongs to $\ext_{\k(R)}\{P^s[-s],\dots,P^1[-1],P^0\}$, which is contained in $\ext_{\k(R)}\{R[i]\mid i\le0\}$.
Thus the first inclusion follows.

(7) If $x_i\in\m$ for all $i$, then $\pd_R\K(\xx)=n$ by (2).
If $x_i\notin\m$ for some $i$, then $\K(x_i)\cong0$ in $\d(R)$, and hence $\K(\xx)\cong\K(x_1)\lten_R\cdots\lten_R\K(x_i)\lten_R\cdots\lten_R\K(x_n)\cong0$ in $\d(R)$.
Hence $\K(\xx)\cong0$ in $\k(R)$.
\end{proof}

Here, let us present an application of the above proposition.
The corollary below is thought of as a derived category version of \cite[Proposition 1.12(2)]{stcm}.

\begin{cor}
Let $R$ be a local ring.
Let $X$ and $Y$ be complexes that belong to $\d(R)$.
Suppose that $X$ is in the resolving closure $\res_{\d(R)}Y$.
Then there is an inequality $\depth_RX\ge\inf\{\depth_RY,\depth R\}$.
\end{cor}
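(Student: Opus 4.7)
The plan is to define the subcategory
\[
\X = \{Z \in \d(R) \mid \depth_R Z \ge d\}, \qquad d := \inf\{\depth_R Y,\ \depth R\},
\]
and verify that $\X$ is a resolving subcategory of $\d(R)$ containing $Y$. Once this is done, the minimality of $\res_{\d(R)} Y$ gives $\res_{\d(R)} Y \subseteq \X$, and applying this to $X$ yields the desired depth inequality. The entire argument is then a matter of reading off the correct depth estimates from Proposition~\ref{27}.

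First I would check the four resolving axioms for $\X$. Condition (i) is immediate: $\depth R \ge d$ by definition of $d$, so $R \in \X$. For condition (ii), if $Z = W \oplus W' \in \X$, then Proposition~\ref{27}(4) gives $\depth_R W \ge \inf\{\depth_R W, \depth_R W'\} = \depth_R Z \ge d$, so $W \in \X$. For condition (iii), given an exact triangle $A \to B \to C \rightsquigarrow$ with $A, C \in \X$, the inequality $\depth_R B \ge \inf\{\depth_R A, \depth_R C\}$ from Proposition~\ref{27}(3) yields $\depth_R B \ge d$. Finally, for condition (iv), if $B, C \in \X$ in such a triangle, then $\depth_R A \ge \inf\{\depth_R B,\ \depth_R C + 1\} \ge \inf\{d,\ d+1\} = d$, again by Proposition~\ref{27}(3). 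Thus $\X$ is resolving, and $Y \in \X$ holds directly from the choice of $d$.

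I do not anticipate any real obstacle: the corollary is a formal consequence of closing the depth inequality under the four operations that generate the resolving closure. The only mildly delicate point is that condition (iv) uses the inequality involving $\depth_R C + 1$ rather than $\depth_R C$, but since $d+1 \ge d$ this shift only helps and cannot break the bound.
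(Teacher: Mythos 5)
Your proposal is correct and is essentially the paper's own argument: define the subcategory of complexes of depth at least $d=\inf\{\depth_RY,\depth R\}$, check it is resolving using Proposition \ref{27}, and invoke minimality of the resolving closure. The only cosmetic difference is that you verify condition (iv) directly via the second depth inequality of Proposition \ref{27}(3), whereas the paper checks closedness under negative shifts via the depth equality of Proposition \ref{27}(1); these are interchangeable.
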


\begin{proof}
Let $\ZZ$ be the subcategory of $\d(R)$ consisting of objects $Z$ such that $\depth Z\ge\inf\{\depth Y,\depth R\}$.
It is evident that $\ZZ$ contains $Y$ and $R$.
Using the depth equality in Proposition \ref{27}(4), we see that $\ZZ$ is closed under direct summands.
Also, the first depth inequality in Proposition \ref{27}(3) shows that $\ZZ$ is closed under extensions.
By the depth equality in Proposition \ref{27}(1), it follows that $\ZZ$ is closed under negative shifts.
Consequently, $\ZZ$ is a resolving subcategory of $\d(R)$ containing $Y$.
Hence $\ZZ$ contains $\res_{\d(R)}Y$, and therefore $X$ belongs to $\ZZ$.
Now the assertion of the corollary follows.
\end{proof}

By definition, a thick subcategory of $\d(R)$ containing $R$ is a resolving subcategory of $\d(R)$.
The converse of this statement is not necessarily true.
Actually, we state and prove the following proposition, which gives rise to an example of a resolving subcategory of $\d(R)$ that is not a thick subcategory of $\d(R)$.

\begin{prop}\label{79}
The equality $\res_{\d(R)}(\mod R)=\{X\in\d(R)\mid\h^{<0}X=0\}$ of subcategories of $\d(R)$ holds.
Thus, the resolving subcategory $\res_{\d(R)}(\mod R)$ of $\d(R)$ is not thick; it is not closed under positive shifts.
\end{prop}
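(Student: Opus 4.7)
The plan is to show the equality by double inclusion, letting $\C=\{X\in\d(R)\mid\h^{<0}X=0\}$.

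First I would show the inclusion $\res_{\d(R)}(\mod R)\subseteq\C$ by checking that $\C$ is a resolving subcategory of $\d(R)$ containing $\mod R$. The containment $\mod R\subseteq\C$ is immediate since modules are concentrated in degree $0$. For the resolving conditions: $R\in\C$, and closure under direct summands follows from $\h^i(X\oplus Y)=\h^iX\oplus\h^iY$. Closure under extensions follows from the long exact cohomology sequence: if $A\to B\to C\rightsquigarrow$ is exact and $\h^{<0}A=\h^{<0}C=0$, then $\h^{<0}B=0$. Closure under negative shifts is the observation that $\h^i(X[-1])=\h^{i-1}X$ vanishes for $i<0$ when $X\in\C$. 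By Proposition \ref{42}(1), the closure under extensions plus the closure under negative shifts yields the exact-triangle axiom required for resolving subcategories.

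Next I would prove the harder inclusion $\C\subseteq\res_{\d(R)}(\mod R)$ by induction on $s=\sup X$ for $X\in\C$ (noting that $\sup X<\infty$ since $X\in\d(R)$, and $\sup X\ge 0$ whenever $X\ne 0$ because $\inf X\ge 0$). For the base case $X=0$, this is trivial. For the inductive step, apply the standard soft truncation to obtain an exact triangle
$$
\tau^{\le s-1}X\longrightarrow X\longrightarrow \h^sX[-s]\rightsquigarrow
$$
in $\d(R)$. Since $\tau^{\le s-1}X$ still lies in $\C$ with supremum at most $s-1$, the induction hypothesis places it in $\res_{\d(R)}(\mod R)$. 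On the other hand, $\h^sX\in\mod R\subseteq\res_{\d(R)}(\mod R)$, and since $s\ge 0$ and $\res_{\d(R)}(\mod R)$ is closed under negative shifts, $\h^sX[-s]$ also lies there. Closure under extensions then forces $X\in\res_{\d(R)}(\mod R)$.

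Finally, to see that $\res_{\d(R)}(\mod R)$ is not thick, I would exhibit the obstruction $R[1]$: since $\h^{-1}(R[1])=R\ne 0$, the complex $R[1]$ does not belong to $\C$, hence not to $\res_{\d(R)}(\mod R)$; however $R$ does, so closure under positive shifts fails. By Proposition \ref{42}(2), the subcategory cannot be thick. The main subtlety in the whole argument is just being careful that the truncation triangle is available in $\d(R)$ and that the inductive descent on $\sup X$ really terminates; once that bookkeeping is in place the proof is routine.
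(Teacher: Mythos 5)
Your proof is correct and follows essentially the same strategy as the paper's: both directions are handled by showing that $\{X\in\d(R)\mid\h^{<0}X=0\}$ is a resolving subcategory containing $\mod R$, then writing each such $X$ as an iterated extension of modules shifted into nonpositive degrees and using closure under extensions and negative shifts, and both use $R[1]$ to detect failure of closure under positive shifts. The only (harmless) variation is that you filter by soft truncations, building $X$ from the cohomology modules $\h^iX[-i]$ via induction on $\sup X$, whereas the paper first replaces $X$ by a quasi-isomorphic complex concentrated in degrees $0,\dots,n$ and filters by its components $X^i[-i]$.
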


\begin{proof}
Let $\X$ be the subcategory of $\d(R)$ consisting of complexes $X$ with $\h^{<0}X=0$.
Evidently, $\X$ contains $\mod R$.
In particular, $\X$ contains $R$.
It is straightforward to verify that $\X$ is closed under direct summands, extensions, and negative shifts.
Hence $\X$ is a resolving subcategory of $\d(R)$ containing $\mod R$.
Therefore, $\X$ contains $\res_{\d(R)}(\mod R)$.
Conversely, pick $X\in\X$.
Since $\h^{<0}X=0$, we may assume $X=(0\to X^0\to X^1\to\cdots\to X^n\to0)$; see \cite[(A.1.14)]{C}.
There is a series $\{X^{i}[-i]\to X_{i}\to X_{i-1}\rightsquigarrow\}_{i=0}^n$ of exact triangles in $\d(R)$ with $X_n=X$ and $X_{-1}=0$.
The object $X^i[-i]$ is in $\res_{\d(R)}(\mod R)$ for all $0\le i\le n$, since $X^i\in\mod R$ and $-i\le0$.
It is observed that $X$ belongs to $\res_{\d(R)}(\mod R)$.
Therefore, $\X$ is contained in $\res_{\d(R)}(\mod R)$.

As for the last assertion of the proposition, we have $R\in\X$, but $R[1]\notin\X$ since $\h^{-1}(R[1])=R\ne0$.
\end{proof}

We close the section by stating a remark on the second assertion of Proposition \ref{52}.

\begin{rem}\label{99}
Let $\T$ be a triangulated subcategory of $\d(R)$ containing $R$.
Let $X$ and $Y$ be objects of $\T$.
Assume that $X$ belongs to $\res Y$.
Then $X[n]$ belongs to $\res(Y[n])$ if $n\le0$ by Proposition \ref{52}(2a).
However, $X[n]$ does not necessarily belong to $\res(Y[n])$, if $n>0$.
In fact, we have the following observations.
\begin{enumerate}[(1)]
\item
Let $X=R$ and $Y=R[-1]$.
Then $X\in\res_{\d(R)}Y$, but $X[1]=R[1]\notin\E_R=\res_{\d(R)}R=\res_{\d(R)}(Y[1])$; see Proposition \ref{43}(1).
Indeed, we have $\pd R[1]=1$ and $R[1]\notin\k^0(R)=\E_R$ by Proposition \ref{27}(1)(6).
\item
Suppose that there exists an exact triangle $X\to E\to Y\rightsquigarrow$ in $\T$ such that $E\in\E_\T$.
Then $X$ belongs to $\res_\T Y$.
An exact triangle $X[1]\to E[1]\to Y[1]\rightsquigarrow$ in $\T$ is induced.
If $E[1]$ is in $\res_\T(Y[1])$, then $X[1]$ is in $\res_\T(Y[1])$.
However, as we have seen in (1), the object $E[1]$ may not belong to $\res_\T(Y[1])$.
\end{enumerate}
\end{rem}

\section{Classification of resolving subcategories of $\k_0(R)$}

In this section, we classify the resolving subcategories of $\k(R)$ consisting of perfect complexes locally of projective dimension at most zero.
In the next section we give a classification of resolving subcategories of $\k(R)$ by using an inductive argument, and what we get in the present section forms its basis.
First of all, we recall some basic notions and introduce certain subcategories of $\d(R)$ and $\k(R)$.

\begin{dfn}\label{86}
Let $R$ be a local ring with maximal ideal $\m$ and residue field $k$.
\begin{enumerate}[(1)]
\item
We denote by $\edim R$ the {\em embedding dimension} of $R$, that is, the number of elements in a minimal system of generators of $\m$, which is equal to the dimension of the $k$-vector space $\m\otimes_Rk=\m/\m^2$.
\item
For a minimal system of generators $\xx$ of $\m$, we set $\K_R=\K(\xx,R)$ and call it the {\em Koszul complex} of $R$.
\item
We denote by $\d_0(R)$ the subcategory of $\d(R)$ consisting of complexes $X$ with $X_\p\in\E_{R_\p}$ (or in other words, $\pd_{R_\p}X_\p\le0$ by Proposition \ref{27}(6)) for all prime ideals $\p$ of $R$ such that $\p\ne\m$.
We set
$$
\k_0(R)=\k(R)\cap\d_0(R),\qquad
\k_0^n(R)=\k^n(R)\cap\k_0(R)=\k^n(R)\cap\d_0(R)\ \ \text{for }n\in\Z.
$$
\end{enumerate}
\end{dfn}

Here are a couple of statements about the notions just introduced.

\begin{prop}\label{21}
Let $R$ be a local ring.
Then the following statements are true.
\begin{enumerate}[\rm(1)]
\item
Let $X\in\d(R)$.
If $\h^iX$ has finite length as an $R$-module for all $i\in\Z$, then $X[i]\in\d_0(R)$ for all $i\in\Z$.
\item
The Koszul complex $\K_R$ of $R$ is uniquely determined up to complex isomorphism.
\item
Put $e=\edim R$ and $K=\K_R$.
One then has that $K[i]\in\k_0^{e+i}(R)\setminus\k_0^{e+i-1}(R)$ for each integer $i$.
\item
It holds that $\d_0(R)$ is a resolving subcategory of $\d(R)$.
Hence $\k_0(R)$ is a resolving subcategory of $\k(R)$, and so is $\k_0^n(R)$ for every nonnegative integer $n$.
\end{enumerate}
\end{prop}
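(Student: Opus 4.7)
The plan is to handle the four parts in the stated order, since (1) feeds into (3) and (4) reduces to a transfer of closure properties from the local categories $\E_{R_\p}$ to the global category $\d_0(R)$.

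For (1), I would use that a finitely generated $R$-module of finite length is annihilated by a power of $\m$ and hence has zero localization at every non-maximal prime. Thus if each $\h^i X$ has finite length, then $\h^i(X_\p) = (\h^i X)_\p = 0$ for every $\p \ne \m$, so $X_\p \cong 0$ in $\d(R_\p)$; in particular $X_\p \in \E_{R_\p}$, giving $X \in \d_0(R)$. The finite-length hypothesis is obviously preserved under shifting, so the same conclusion applies to every $X[i]$.

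For (2), two minimal systems of generators of $\m$ differ by an invertible matrix over $R$ (a standard consequence of Nakayama's lemma applied to $\m/\m^2$), and any such change of basis of $R^{\edim R}$ induces a chain isomorphism between the associated Koszul complexes. For (3), set $K = \K_R$ and $e = \edim R$. Since all $x_i$ lie in $\m$, Proposition \ref{27}(7) yields $\pd_R K = e$, so $K \in \k^e(R) \setminus \k^{e-1}(R)$. Each $\h^i K$ is annihilated by $(\xx) = \m$ and is finitely generated, hence a finite-dimensional $k$-vector space, in particular of finite length. By (1), $K \in \d_0(R)$, giving $K \in \k_0^e(R) \setminus \k_0^{e-1}(R)$. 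For arbitrary $i$, Proposition \ref{27}(1) gives $\pd_R(K[i]) = e + i$, and the hypothesis of (1) still holds for $K[i]$ (shifting only relabels the cohomology), so (1) again places $K[i]$ in $\d_0(R)$, and the claim follows.

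For (4), the point is that $\d_0(R)$ is defined by a pointwise condition at non-maximal primes, and localization is an exact triangulated functor, preserving direct summands, exact triangles, and shifts; each $\E_{R_\p}$ is resolving in $\d(R_\p)$ by definition. Therefore closedness of $\d_0(R)$ under direct summands, extensions, and negative shifts follows from the corresponding properties of the $\E_{R_\p}$, while $R \in \d_0(R)$ because $R_\p \in \E_{R_\p}$ by Proposition \ref{43}(1). Now $\k(R)$ is thick in $\d(R)$ by Proposition \ref{43}(3), so by Proposition \ref{43}(2) the resolving subcategories of $\k(R)$ are exactly the resolving subcategories of $\d(R)$ contained in $\k(R)$; thus $\k_0(R) = \k(R) \cap \d_0(R)$ is resolving in $\k(R)$, and so is $\k_0^n(R) = \k^n(R) \cap \d_0(R)$ for $n \ge 0$ (with $\k^n(R)$ resolving by Proposition \ref{27}(5)). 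The only nontrivial step is (3), where placing $K$ in $\k_0^e(R)$ rather than merely in $\k^e(R)$ requires the finite-length cohomology argument via (1); the remaining parts are essentially unpacking of definitions.
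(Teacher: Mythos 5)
Your proof is correct and follows essentially the same route as the paper: localization at nonmaximal primes for (1), the standard change-of-generators fact for (2) (which the paper simply cites from Bruns--Herzog), $\pd K=e$ plus finite-length Koszul cohomology fed through (1) for (3), and transfer of the resolving property of each $\E_{R_\p}$ through localization, combined with Propositions \ref{27}(5) and \ref{43}(2)(3) and closure under intersections, for (4).
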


\begin{proof}
(1) Let $\p$ be a nonmaximal prime ideal of $R$.
Let $i$ be an integer.
Then $\H^j((X[i])_\p)=(\H^{j+i}X)_\p=0$ for all $j\in\Z$, which means that $(X[i])_\p\cong0$ in $\d(R_\p)$.
Hence $(X[i])_\p$ belongs to $\E_{R_\p}$, so that $X[i]\in\d_0(R)$.

(2) The assertion is shown in \cite[page 52]{BH}.

(3) The complex $K[i]$ is in $\k_0(R)$ by (1).
Since $\pd K=e$, we have $\pd K[i]=e+i$ by Proposition \ref{27}(1).

(4) The first statement is deduced by using the fact that $\E_{R_\p}$ is a resolving subcategory of $\d(R_\p)$ for each prime ideal $\p$ of $R$.
The second statement follows from the first statement, Propositions \ref{27}(5), \ref{43}(2)(3) and the fact that the resolving property is preserved under taking intersections.
\end{proof}

We recall the definitions of an $R$-linear additive category and an ideal quotient of such a category.

\begin{dfn}
Let $\C$ be an {\em $R$-linear} additive category, that is, an additive category whose hom-sets are $R$-modules and composition of morphisms is $R$-bilinear.
\begin{enumerate}[(1)]
\item
An {\em ideal} $\I$ of $\C$ is by definition a family $\{\I(X,Y)\}_{X,Y\in\C}$ of $R$-submodules of $\Hom_\C(X,Y)$ such that $bfa\in\I(W,Z)$ for all $a\in\Hom_\C(W,X)$, $f\in\I(X,Y)$, $b\in\Hom_\C(Y,Z)$ and $W,X,Y,Z\in\C$.
The {\em ideal quotient} $\C/\I$ of $\C$ by $\I$ is by definition the category whose objects are those of $\C$ and morphisms are given by $\Hom_{\C/\I}(X,Y)=\Hom_\C(X,Y)/\I(X,Y)$ for $X,Y\in\C$.
Note that $\C/\I$ is an $R$-linear additive category.
\item
Let $\D$ be a subcategory of $\C$.
For two objects $X,Y$ of $\C$, let $[\D](X,Y)$ be the subset of $\Hom_\C(X,Y)$ consisting of morphisms that factor through some finite direct sums of objects in $\D$.
Then it is easy to observe that $[\D]$ is an ideal of $\C$, and hence the ideal quotient $\C/[\D]$ is defined.
\end{enumerate}
\end{dfn}

Now we can define the category $\ld(R)$, which plays an important role in the remainder of this section.

\begin{dfn}
We denote by $\ld(R)$ the ideal quotient of $\d(R)/[\E_R]$.
The hom-set $\Hom_{\ld(R)}(X,Y)$ is a finitely generated $R$-module for all $X,Y\in\ld(R)$, as it is a factor of the finitely generated $R$-module $\Hom_{\d(R)}(X,Y)$.
\end{dfn}

Let us investigate when an object and a morphism in $\d(R)$ are zero in the category $\ld(R)$.

\begin{prop}\label{23}
\begin{enumerate}[\rm(1)]
\item
A morphism in $\d(R)$ is zero in $\ld(R)$ if and only if it factors through an object in $\E_R$.
\item
Let $X\in\d(R)$.
The following are equivalent:\ \ 
{\rm(a)}\,$X\cong0$ in $\ld(R)$;\ {\rm(b)}\ $\Hom_{\ld(R)}(X,X)=0$;\ {\rm(c)}\ $X\in\E_R$.
\end{enumerate}
\end{prop}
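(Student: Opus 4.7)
The plan is to handle part (1) as an essentially formal consequence of the definitions, then to deduce part (2) through the cycle (a)$\Rightarrow$(b)$\Rightarrow$(c)$\Rightarrow$(a), with the crux being the middle implication.

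For part (1), I would first note that $\E_R$ is closed under finite direct sums: being closed under extensions and containing the zero object, the split exact triangle $A\to A\oplus B\to B\rightsquigarrow$ shows $A\oplus B\in\E_R$ whenever $A,B\in\E_R$. Therefore the ideal $[\E_R](X,Y)$, defined as morphisms factoring through finite direct sums of objects in $\E_R$, coincides with the set of morphisms factoring through a single object of $\E_R$. By the definition of $\ld(R)=\d(R)/[\E_R]$, a morphism $f$ is zero in $\ld(R)$ exactly when it lies in $[\E_R]$, which is exactly when $f$ factors through some object of $\E_R$. This is essentially unpacking definitions.

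For part (2), the implication (a)$\Rightarrow$(b) is immediate: an object isomorphic to $0$ has zero endomorphism module. For (b)$\Rightarrow$(c), assume $\Hom_{\ld(R)}(X,X)=0$, so $\id_X$ is zero in $\ld(R)$. By part (1), $\id_X$ admits a factorization $X\xrightarrow{f}E\xrightarrow{g}X$ with $E\in\E_R$ and $gf=\id_X$. Thus $X$ is a direct summand of $E$, and since $\E_R$ is closed under direct summands (being resolving), we conclude $X\in\E_R$. Finally, for (c)$\Rightarrow$(a), if $X\in\E_R$ then $\id_X$ factors trivially through $X\in\E_R$ itself, so $\id_X=0$ in $\ld(R)$; in any additive category an object with zero identity morphism is a zero object, since any morphism into or out of it factors through its identity. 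Hence $X\cong 0$ in $\ld(R)$.

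There is no serious obstacle here; the statement is a standard structural lemma about ideal quotients by subcategories closed under direct sums and summands. The only point requiring a moment's care is verifying that $\E_R$ is closed under finite direct sums, which makes the description of $[\E_R]$ clean enough that ``factors through a finite direct sum of objects of $\E_R$'' collapses to ``factors through an object of $\E_R$''. Once that is in hand, everything reduces to the well-known principle that in an additive category, an object is a zero object if and only if its identity morphism vanishes, combined with the splitting argument that turns a section–retraction pair through $E\in\E_R$ into membership in $\E_R$ via summand-closure.
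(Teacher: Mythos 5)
Your proof is correct and follows essentially the same route as the paper's: part (1) by unpacking the ideal quotient and using closure of $\E_R$ under finite direct sums, and (b)$\Rightarrow$(c) by splitting the identity through an object of $\E_R$ and invoking summand-closure. The only cosmetic difference is in (c)$\Rightarrow$(a), where the paper observes that all hom-sets into and out of $X$ vanish (so $X$ is initial and final), while you phrase the same fact as "$\id_X=0$ forces $X\cong0$"; these are the same argument.
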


\begin{proof}
(1) A morphism $f:X\to Y$ in $\d(R)$ is zero in $\ld(R)$ if and only if $f$ belongs to $[\E_R](X,Y)$, if and only if $f$ factors through some finite direct sum of objects in $\E_R$.
Since $\E_R$ is closed under finite direct sums, the last condition is equivalent to saying that $f$ factors through some object in $\E_R$.

(2) It is clear that (a) implies (b).
Assume (b).
Then by (1) the identity morphism $X\to X$ factors through some object $E\in\E_R$.
It is seen that $X$ is a direct summand of $E$.
As $\E_R$ is closed under direct summands, $X$ is in $\E_R$.
Therefore, (b) implies (c).
Finally, assume (c).
Then every morphism from/to $X$ factors through $X\in\E_R$.
Hence $\Hom_{\ld(R)}(X,Y)=0$ and $\Hom_{\ld(R)}(Y,X)=0$ for every object $Y\in\ld(R)$.
The former (resp. latter) means that $X$ is an initial (resp. a final) object of the additive category $\ld(R)$.
Thus (a) follows.
\end{proof}

We define the localization of a given subcategory of $\d(R)$ by a multiplicatively closed subset of $R$.

\begin{dfn}
Let $\X$ be a subcategory of $\d(R)$.
For a multiplicatively closed subset $S$ of $R$, we define the subcategory $\X_S$ of $\d(R_S)$ by $\X_S=\{X_S\mid X\in\X\}$.
When $S=R\setminus\p$ with $\p\in\spec R$, we set $\X_\p=\X_S$.
\end{dfn}

In the lemma below we study the structure of localizations of morphisms in the category $\ld(R)$.

\begin{lem}\label{16}
\begin{enumerate}[\rm(1)]
\item
Let $\p$ be a prime ideal of $R$.
Taking the localization of a morphism in $\d(R)$ at $\p$ induces an isomorphism $\Hom_{\ld(R)}(X,Y)_\p\to\Hom_{\ld(R_\p)}(X_\p,Y_\p)$ of $R_\p$-modules for all objects $X,Y\in\d(R)$.
\item
Suppose that $R$ is a local ring.
Let $X$ and $Y$ be objects of $\d(R)$.
Assume that either of the objects $X$ and $Y$ belongs to the subcategory $\d_0(R)$ of $\d(R)$.
Then the $R$-module $\Hom_{\ld(R)}(X,Y)$ has finite length. 
\end{enumerate}
\end{lem}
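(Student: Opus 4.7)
The plan is to deduce (2) from (1) combined with Proposition \ref{23}(2), and to prove (1) via a descent argument built on the standard localization isomorphism for morphisms in the bounded derived category.

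For (1), the key input is the natural isomorphism $\Hom_{\d(R)}(X,Y)_\p \xrightarrow{\sim} \Hom_{\d(R_\p)}(X_\p,Y_\p)$, valid for all $X,Y\in\db(R)$ (proved by resolving $X$ by finitely generated projectives and using boundedness of $Y$). Since localization is exact, applying $(-)_\p$ to the short exact sequence
\[0\to[\E_R](X,Y)\to\Hom_{\d(R)}(X,Y)\to\Hom_{\ld(R)}(X,Y)\to 0\]
reduces the lemma to showing that this isomorphism sends $[\E_R](X,Y)_\p$ onto $[\E_{R_\p}](X_\p,Y_\p)$. The inclusion $\subseteq$ is immediate: if $f=ba$ factors through $E\in\E_R=\k^0(R)$, then $f_\p$ factors through $E_\p$, and since projective dimension cannot grow under localization (\cite[Proposition 5.3.P]{AF}), $\pd_{R_\p}E_\p\le 0$, so $E_\p\in\E_{R_\p}$.

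The main obstacle is the reverse inclusion: given $f:X\to Y$ in $\d(R)$ such that $f_\p$ factors through some $E'\in\E_{R_\p}$, we must produce $u\in R\setminus\p$ so that $uf$ factors through an object of $\E_R$. My plan is first to reduce to the case where $E'$ is a finite iterated extension of shifts $R_\p[-i]$ with $i\ge 0$: by Proposition \ref{27}(6), $E'$ is a direct summand of such an iterated extension $\tilde E'$, and replacing the factorization $X_\p\to E'\to Y_\p$ with $X_\p\to E'\hookrightarrow\tilde E'\twoheadrightarrow E'\to Y_\p$ yields a factorization through $\tilde E'$. I would then inductively construct an object $E\in\E_R$ together with an isomorphism $E_\p\cong\tilde E'$: assuming that an intermediate stage in the construction of $\tilde E'$ admits an $R$-model $A\in\E_R$ with $A_\p\cong A'$, the next extension class in $\Hom_{\d(R_\p)}(R_\p[-i],A'[1])$ corresponds under the Hom-localization isomorphism to $\xi/s$ for some $\xi\in\Hom_{\d(R)}(R[-i],A[1])$ and $s\in R\setminus\p$, and $\xi$ defines an extension of $A$ by $R[-i]$ over $R$ whose $\p$-localization is isomorphic to the target (since $s$ becomes a unit in $R_\p$). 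Because $\E_R$ is closed under extensions and contains each $R[-i]$, the resulting $E$ lies in $\E_R$. Finally, the original factorization lifts: writing $a'=a/s$ and $b'=b/t$ for some $a:X\to E$, $b:E\to Y$ in $\d(R)$ and $s,t\in R\setminus\p$ via the Hom-localization isomorphism gives $(st)f_\p=(ba)_\p$, whence $u\cdot st\cdot f=u\cdot ba$ in $\Hom_{\d(R)}(X,Y)$ for some $u\in R\setminus\p$, and the right-hand side factors through $E\in\E_R$.

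For (2), once (1) is in hand the argument is short. The $R$-module $\Hom_{\ld(R)}(X,Y)$ is a quotient of the finitely generated $R$-module $\Hom_{\d(R)}(X,Y)$, hence itself finitely generated. To show it has finite length over the local ring $R$, it suffices to verify that it is supported only at $\m$. For any non-maximal prime $\p$, part (1) gives $\Hom_{\ld(R)}(X,Y)_\p\cong\Hom_{\ld(R_\p)}(X_\p,Y_\p)$. If $X\in\d_0(R)$ then $X_\p\in\E_{R_\p}$, and Proposition \ref{23}(2) yields $X_\p\cong 0$ in $\ld(R_\p)$, so the Hom vanishes; the symmetric argument handles $Y\in\d_0(R)$. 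Therefore $\Hom_{\ld(R)}(X,Y)$ is a finitely generated $R$-module supported only at $\m$, hence of finite length.
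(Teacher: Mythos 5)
Your proposal is correct and follows essentially the same route as the paper: both rest on the localization isomorphism for $\Hom_{\d(R)}(X,Y)$, reduce part (1) to showing that $[\E_R](X,Y)_\p$ maps onto $[\E_{R_\p}](X_\p,Y_\p)$ (which amounts to lifting objects of $\E_{R_\p}$ to objects of $\E_R$ and then clearing denominators in the factorization), and deduce part (2) from part (1) together with Proposition \ref{23}(2) and the finite generation of the Hom module. The only real difference is that where the paper quotes \cite[Lemma 4.2(1)]{ddc} to produce a perfect $R$-complex $P$ with $P_\p$ isomorphic to a given bounded complex of finite free $R_\p$-modules, you build such a model by inductively lifting extension classes of the shifts $R[-i]$ ($i\ge0$) through the Hom-localization isomorphism; this works because multiplying the connecting morphism by an element of $R\setminus\p$, which becomes a unit in $R_\p$, does not change the isomorphism class of the middle term of the triangle, so your argument is a correct self-contained substitute for that citation.
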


\begin{proof}
(1) Localization at $\p$ gives an isomorphism $\phi:\Hom_{\d(R)}(X,Y)_\p\to\Hom_{\d(R_\p)}(X_\p,Y_\p)$; see \cite[Lemma (A.4.5)]{C}.
If $P=(0\to P^0\to\cdots\to P^n\to0)$ is a perfect $R$-complex, then $P_\p=(0\to P^0_\p\to\cdots\to P^n_\p\to0)$ is a perfect $R_\p$-complex.
By Proposition \ref{27}(6), the subcategory $(\E_R)_\p$ of $\d(R_\p)$ is contained in $\E_{R_\p}$.
Thus $\phi$ restricts to an injection $\psi:([\E_R](X,Y))_\p\to[\E_{R_\p}](X_\p,Y_\p)$.
Let $Q=(0\to R_\p^{\oplus r_0}\to\cdots\to R_\p^{\oplus r_n}\to0)$ be a perfect $R_\p$-complex with $Q^i=R_\p^{\oplus r_i}$ for each $i\in\Z$.
Then we easily find a perfect $R$-complex $P=(0\to R^{\oplus r_0}\to\cdots\to R^{\oplus r_n}\to0)$ such that $P_\p$ is isomorphic to $Q$ as an $R_\p$-complex; see \cite[Lemma 4.2(1) and its proof]{ddc}.
Hence $P$ belongs to $\E_R$, and the equality $(\E_R)_\p=\E_{R_\p}$ follows.
Consider the decomposition
$$
(X_\p\xrightarrow{\frac{f}{s}}Y_\p)=(X_\p\xrightarrow{\frac{g}{t}}E_\p\xrightarrow{\frac{h}{u}}Y_\p)
$$
of a morphism $\frac{f}{s}:X_\p\to Y_\p$ in $\d(R_\p)$, where $f,g,h$ are morphisms in $\d(R)$, $s,t,u\in R\setminus\p$ and $E\in\E_R$.
Then $vutf=vshg$ for some $v\in R\setminus\p$.
We have $\frac{f}{s}=\frac{vutf}{vuts}$ and $(X\xrightarrow{vutf}Y)=(X\xrightarrow{g}E\xrightarrow{vsh}Y)$.
This shows that the injection $\psi$ is surjective.
Consequently, $\phi$ induces an isomorphism $\Hom_{\ld(R)}(X,Y)_\p\to\Hom_{\ld(R_\p)}(X_\p,Y_\p)$.

(2) Fix a nonmaximal prime ideal $\p$ of $R$.
Then $\Hom_{\ld(R)}(X,Y)_\p$ is isomorphic to $\Hom_{\ld(R_\p)}(X_\p,Y_\p)$ by (1), while either $X_\p$ or $Y_\p$ belongs to $\E_{R_\p}$.
It follows from Proposition \ref{23}(2) that $\Hom_{\ld(R_\p)}(X_\p,Y_\p)=0$.
\end{proof}

Here, we are necessary to prove an elementary lemma concerning a gerenal triangulated category, which produces a certain exact triangle of mapping cones.

\begin{lem}\label{15}
Let $\T$ be a triangulated category.
\begin{enumerate}[\rm(1)]
\item
Suppose that there exists a commutative diagram 
$$
\xymatrix@R-1pc@C+2pc{
A\ar[r]\ar@{=}[d]& B\ar[r]\ar[d]& C\ar[r]\ar[d]& A[1]\ar@{=}[d]\\
A\ar[r]& B'\ar[r]& C'\ar[r]& A[1] 
}
$$
of exact triangles in $\T$.
Then there exists an exact triangle $B\to B'\oplus C\to C'\to B[1]$ in $\T$.
\item
Let $X\xrightarrow{f}Y\xrightarrow{g}Z$ be morphisms in $\T$.
Then there exists an exact triangle $\cone(gf)\to\cone(g)\oplus X[1]\to Y[1]\to\cone(gf)[1]$ in $\T$.
\end{enumerate}
\end{lem}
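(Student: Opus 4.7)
Both parts will be proved using the octahedral axiom (TR4), and part~(2) will follow as a direct consequence of part~(1).

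For part~(1), I plan to apply TR4 to the composable morphisms $A\xrightarrow{a}B\xrightarrow{\beta}B'$ (here $\beta$ denotes the vertical morphism $B\to B'$, and analogously for other arrows). Since the composition equals $a'$, the octahedral axiom produces the object $M=\cone(\beta)$ fitting in two exact triangles: the triangle for $\beta$, namely $B\xrightarrow{\beta}B'\to M\to B[1]$, and the octahedral triangle $C\xrightarrow{\widetilde\gamma}C'\to M\to C[1]$ for some morphism $\widetilde\gamma$ extending $(\id_A,\beta)$ to a morphism of triangles. Since the lemma only asserts \emph{existence} of the desired exact triangle, we may replace the given $\gamma$ by the octahedrally-produced $\widetilde\gamma$. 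Next, from the two resulting triangles involving $M$, I plan to construct the claimed triangle $B\to B'\oplus C\to C'\to B[1]$ by defining the morphism $\binom{\beta}{b}\colon B\to B'\oplus C$, completing it to an exact triangle $B\to B'\oplus C\to W\to B[1]$, and identifying $W$ with $C'$. The identification proceeds via the map $(b',-\widetilde\gamma)\colon B'\oplus C\to C'$, which satisfies $(b',-\widetilde\gamma)\circ\binom{\beta}{b}=b'\beta-\widetilde\gamma b=0$ by commutativity, and hence factors through a morphism $W\to C'$; one then checks this is an isomorphism by the five lemma applied to the long exact sequences obtained from $\Hom_\T(-,T)$ for each $T\in\T$.

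For part~(2), I plan to apply the octahedral axiom to $X\xrightarrow{f}Y\xrightarrow{g}Z$. Rotating the standard exact triangles of $gf$ and $g$ once, the octahedron supplies a commutative diagram of exact triangles
$$
\xymatrix@R-1pc@C+1pc{
Z\ar[r]\ar@{=}[d]&\cone(gf)\ar[r]\ar[d]&X[1]\ar[r]\ar[d]^{f[1]}&Z[1]\ar@{=}[d]\\
Z\ar[r]&\cone(g)\ar[r]&Y[1]\ar[r]&Z[1]
}
$$
with identity morphisms on $Z$ and $Z[1]$. Applying part~(1) to this diagram immediately yields the desired exact triangle $\cone(gf)\to\cone(g)\oplus X[1]\to Y[1]\to\cone(gf)[1]$.

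The main obstacle is the final identification step in the proof of~(1): verifying that the canonically induced morphism $W\to C'$ is an isomorphism. This requires a careful comparison of the long exact sequences determined by the two triangles, which can be carried out via the five lemma or, more elegantly, through a second application of TR4 that identifies $W$ and $C'$ as cones of the same morphism.
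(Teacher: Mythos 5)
Your part (2) is fine and is exactly the paper's own reduction (the paper obtains the comparison map $\cone(gf)\to\cone(g)$ from the axiom on completing commutative squares rather than from the octahedron, which is immaterial), and your reading of (1) as a bare existence statement, which licenses replacing the given $\gamma$ by the octahedral $\widetilde\gamma$, is also legitimate. The gap is in part (1), at precisely the step you flag yourself. Bear in mind that the paper does not prove (1): it quotes Neeman's Lemma 1.4.3, which \emph{is} this (genuinely nontrivial) homotopy-cartesian statement, so the identification $W\cong C'$ carries the entire content of the lemma, and your argument for it does not go through as described. The five lemma needs two exact rows joined by a commutative ladder: applying $\Hom_\T(-,T)$ to the triangle $B\to B'\oplus C\to W\to B[1]$ gives one row, but the second row you would need is the exactness of $\Hom_\T(-,T)$ applied to $B\to B'\oplus C\to C'\to B[1]$, and that exactness (for all $T$) is, by Yoneda, essentially the statement being proved; as described the argument is circular. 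Moreover, an arbitrary lift $\chi\colon W\to C'$ of $(b',-\widetilde\gamma)$ need not be invertible, since lifts can be modified by morphisms factoring through $W\to B[1]$, so even granted the exactness one must argue for a particular choice of $\chi$.

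The alternative you only gesture at (a second application of TR4) can indeed be made to work, but it has to be carried out, and that is where the real work lies. Write $B\xrightarrow{\beta}B'\xrightarrow{u}M\xrightarrow{v}B[1]$ for the chosen triangle on $\beta$ and $b\colon B\to C$ for the second map of the top row; the first octahedron gives a triangle $C\xrightarrow{\widetilde\gamma}C'\to M\to C[1]$ whose connecting map $M\to C[1]$ is $b[1]\circ v$. Now apply TR4 to the composable pair $B\to B'\oplus C\to B'$, where the first map is yours with components $\beta$ and $b$ and the second is the projection, whose composite is $\beta$; take the split triangle on the projection (with third map zero) and the triangle above on $\beta$. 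The resulting octahedral triangle is $W\to M\to C[1]\to W[1]$, and the compatibility relations identify its map $M\to C[1]$ with $b[1]\circ v$ up to sign. Hence $W$ and $C'$ are, after rotation, cones of the same morphism up to sign; since the cone of $h$ and of $-h$ are isomorphic, $W\cong C'$, and transporting the exact triangle $B\to B'\oplus C\to W\to B[1]$ along any such isomorphism yields the asserted triangle. Alternatively, simply cite Neeman's Lemma 1.4.3, as the paper does.
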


\begin{proof}
The first assertion immediately follows from \cite[Lemma 1.4.3]{N2}.
To show the second, let $X\xrightarrow{f}Y\xrightarrow{g}Z$ be morphisms in $\T$.
Then we have a commutative diagram of exact triangles at the lower left, which induces a commutative diagram of exact triangles at the lower right.
$$
\xymatrix@R-1pc{
X\ar[r]^{gf}\ar[d]_f& Z\ar[r]\ar@{=}[d]& \cone(gf)\ar[r]\ar[d]& X[1]\ar[d]\\
Y\ar[r]^g& Z\ar[r]& \cone(g)\ar[r]& Y[1]
}\qquad\qquad
\xymatrix@R-1pc{
Z\ar[r]\ar@{=}[d]& \cone(gf)\ar[r]\ar[d]& X[1]\ar[r]\ar[d]& Z[1]\ar@{=}[d]\\
Z\ar[r]& \cone(g)\ar[r]& Y[1]\ar[r]& Z[1] 
}
$$
By the first assertion, we get an exact triangle $\cone(gf)\to\cone(g)\oplus X[1]\to Y[1]\to\cone(gf)[1]$ in $\T$.
\end{proof}

Now, applying the previous two lemmas, we consider when a given object of the derived category $\d(R)$ belongs to the resolving closure of the (derived) tensor product with a Koszul complex.

\begin{lem}\label{17}
\begin{enumerate}[\rm(1)]
\item
For elements $x,y\in R$ there is an exact triangle $\K(x)\to\K(xy)\to\K(y)\rightsquigarrow$ in $\k(R)$.
\item
Let $X$ be an object of $\d(R)$ and let $x$ be an element of $R$.
Suppose that the morphism $X\xrightarrow{x}X$ in $\ld(R)$ defined by multiplication by $x$ is zero.
Then $X$ belongs to $\res_{\d(R)}(\K(x)\otimes_RX[-1])$.
\item
Suppose that $(R,\m)$ is local.
Let $X$ be an object in $\d_0(R)$.
Let $\xx=x_1,\dots,x_n$ be a sequence of elements in $\m$.
Then $X$ belongs to $\res_{\d(R)}(\K(\xx)\otimes_RX[-n])$.
In particular, $X$ is in $\res_{\d(R)}(\K_R\otimes_RX[-\edim R])$.
\end{enumerate}
\end{lem}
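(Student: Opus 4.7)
The plan is as follows. For part (1), I would apply the octahedral axiom to the composition $R\xrightarrow{x}R\xrightarrow{y}R$ in $\k(R)$; since $\cone(R\xrightarrow{z}R)=\K(z)$ for any $z\in R$, this directly produces the exact triangle $\K(x)\to\K(xy)\to\K(y)\rightsquigarrow$. For part (2), the hypothesis together with Proposition~\ref{23}(1) provides a factorization $X\xrightarrow{\alpha}E\xrightarrow{\beta}X$ in $\d(R)$ with $E\in\E_R$ and $\beta\alpha$ equal to multiplication by $x$. I would then apply Lemma~\ref{15}(2) to this composition, shift the resulting triangle by $-1$, and use $\cone(\beta\alpha)=\cone(X\xrightarrow{x}X)\cong\K(x)\otimes_R X$ to obtain the exact triangle
$$
\K(x)\otimes_R X[-1]\to\cone(\beta)[-1]\oplus X\to E\rightsquigarrow.
$$
Since $E\in\E_R$ lies in every resolving subcategory and $\K(x)\otimes_R X[-1]$ tautologically lies in $\res_{\d(R)}(\K(x)\otimes_R X[-1])$, closure under extensions forces $\cone(\beta)[-1]\oplus X$ into this resolving closure, and closure under direct summands then yields $X\in\res_{\d(R)}(\K(x)\otimes_R X[-1])$.

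For part (3), I would proceed by induction on $n$, with the crux being the base case $n=1$. Given $X\in\d_0(R)$ and $y\in\m$, Lemma~\ref{16}(2) ensures that $\Hom_{\ld(R)}(X,X)$ has finite length over $R$, hence is annihilated by some power $\m^N$; in particular, $y^N\cdot\id_X$ vanishes in $\ld(R)$, so (2) applied with $x=y^N$ yields $X\in\res_{\d(R)}(\K(y^N)\otimes_R X[-1])$. Iterating (1) produces exact triangles $\K(y)\to\K(y^k)\to\K(y^{k-1})\rightsquigarrow$ for $k\ge2$, which inductively place $\K(y^N)$ in $\ext_{\k(R)}(\K(y))$; tensoring with $X[-1]$ and closing under extensions gives $\K(y^N)\otimes_R X[-1]\in\res_{\d(R)}(\K(y)\otimes_R X[-1])$, from which the desired conclusion $X\in\res_{\d(R)}(\K(y)\otimes_R X[-1])$ follows.

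For the inductive step with $\xx=x_1,\dots,x_n$, I set $\xx''=x_1,\dots,x_{n-1}$ and $W=\K(\xx'')\otimes_R X[-(n-1)]$; the inductive hypothesis supplies $X\in\res_{\d(R)}(W)$. A short calculation using Proposition~\ref{27}(1) together with the bound $\pd_{R_\p}\K(\xx'')_\p\le n-1$ at every prime $\p\ne\m$ shows $W\in\d_0(R)$, so the $n=1$ case applied to $W$ and $x_n\in\m$ yields $W\in\res_{\d(R)}(\K(x_n)\otimes_R W[-1])=\res_{\d(R)}(\K(\xx)\otimes_R X[-n])$. Transitivity of resolving closures then gives $X\in\res_{\d(R)}(\K(\xx)\otimes_R X[-n])$, and the final assertion follows by taking $\xx$ to be a minimal system of generators of $\m$. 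The main obstacle will be the base case $n=1$: it forces one to combine part (2)---which demands that the multiplication morphism vanish in $\ld(R)$, not merely be nilpotent---with the finite-length property of $\Hom_{\ld(R)}(X,X)$ to introduce a power of $y$, and then to use part (1) to descend from $\K(y^N)$ back to $\K(y)$.
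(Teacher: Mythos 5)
Your proposal is correct and follows essentially the same route as the paper: octahedral axiom for (1), the factorization through $\E_R$ combined with the mapping-cone triangle of Lemma \ref{15}(2) for (2), and for (3) the finite length of $\Hom_{\ld(R)}(X,X)$ to kill a power $x^r$, part (2) applied to $x^r$, and part (1) to descend from $\K(x^r)$ to $\K(x)$. Your induction on $n$, with the explicit check that the intermediate object lies in $\d_0(R)$, is just a spelled-out version of the paper's chain of inclusions $\res_{\d(R)}X\subseteq\res_{\d(R)}(\K(x_1)\otimes X[-1])\subseteq\cdots\subseteq\res_{\d(R)}(\K(\xx)\otimes X[-n])$.
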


\begin{proof}
(1) The assertion is shown by applying the octahedral axiom to $(R\xrightarrow{xy}R)=(R\xrightarrow{x}R\xrightarrow{y}R)$.

(2) There exist morphisms $X\xrightarrow{f}E\xrightarrow{g}X$ in $\d(R)$ such that $E$ is in $\E_R$ and the composition $gf$ is equal to the mutiplication morphism $X\xrightarrow{x}X$ in $\d(R)$.
By Lemma \ref{15}(2) we have an exact triangle $\cone(gf)\to\cone(g)\oplus X[1]\to E[1]\rightsquigarrow$ in $\d(R)$.
The object $\cone(gf)$ is the mapping cone of the morphism $X\xrightarrow{x}X$, which is isomorphic to $\K(x)\otimes X$.
We get an exact triangle $\K(x)\otimes X[-1]\to\cone(g)[-1]\oplus X\to E\rightsquigarrow$ in $\d(R)$.
It follows that $X\in\ext_{\d(R)}\{\K(x)\otimes X[-1],E\}\subseteq\res_{\d(R)}(\K(x)\otimes X[-1])$.

(3) Lemma \ref{16}(2) says that the $R$-module $\Hom_{\ld(R)}(X,X)$ has finite length, and hence it is annihilated by some power $\m^r$ of $\m$.
Fix an element $x\in\m$.
Then the multiplication morphism $X\xrightarrow{x^r}X$ in $\ld(R)$ is zero.
By (2) the object $X$ is in $\res_{\d(R)}(\K(x^r)\otimes X[-1])$, which is contained in $\res_{\d(R)}(\K(x)\otimes X[-1])$ by (1).
It follows that $\res_{\d(R)}X$ is contained in $\res_{\d(R)}(\K(x)\otimes X[-1])$.
We observe that there is a sequence of inclusions
$$
\begin{array}{l}
\res_{\d(R)}X
\subseteq\res_{\d(R)}(\K(x_1)\otimes X[-1])
\subseteq\res_{\d(R)}(\K(x_2)\otimes\K(x_1)\otimes X[-2])\\
\phantom{\res_{\d(R)}X}\subseteq\cdots\subseteq\res_{\d(R)}(\K(x_n)\otimes\cdots\otimes\K(x_1)\otimes X[-n])
=\res_{\d(R)}(\K(\xx)\otimes X[-n]).
\end{array}
$$
Thus, $X$ belongs to the resolving closure $\res_{\d(R)}(\K(\xx)\otimes_RX[-n])$.
\end{proof}

Applying the above lemma, we can prove the following proposition about Koszul complexes, which is thought of as an essential part of the proof of the main result of this section.

\begin{prop}\label{14}
Let $R$ be a local ring.
Put $e=\edim R$ and $K=\K_R$.
\begin{enumerate}[\rm(1)]
\item
For every integer $n\ge0$ there is an equality $\k_0^n(R)=\res_{\k(R)}(K[n-e])$.
\item
Let $F$ be an object of $\k(R)$, and put $t=\pd_RF$.
Then the object $\K[t-e]$ belongs to $\res_{\k(R)}F$.
\end{enumerate}
\end{prop}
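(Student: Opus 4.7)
The plan is to exploit the interaction between resolving closures and tensor products with Koszul complexes, using Lemma~\ref{17}(3) as the main technical tool. For part~(1), the inclusion $\res_{\k(R)}(K[n-e]) \subseteq \k_0^n(R)$ is immediate from Proposition~\ref{21}(3)(4), since $K[n-e]$ lies in $\k_0^n(R)$ and the latter is resolving. For the reverse, given $X \in \k_0^n(R)$, Lemma~\ref{17}(3) yields $X \in \res_{\d(R)}(K \otimes_R X[-e])$. Since $X$ is quasi-isomorphic to a perfect complex concentrated in degrees $\ge -n$, it is an iterated extension of shifts $R[j]$ with $j \le n$, so tensoring with $K[-e]$ writes $K \otimes_R X[-e]$ as an iterated extension of $K[j-e]$ with $j \le n$. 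Each such $K[j-e] = (K[n-e])[j-n]$ is a nonpositive shift of $K[n-e]$, hence lies in $\res_{\k(R)}(K[n-e])$; combining with extension closure and Proposition~\ref{43}(3) completes the argument.

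For part~(2), set $G := F \otimes_R K[-e]$. An iterated cocone argument using the triangles $F_{i-1} \xrightarrow{x_i} F_{i-1} \to F_{i-1} \otimes K(x_i) \rightsquigarrow$ (with $F_0 = F$ and $F_i := F_{i-1} \otimes K(x_i)[-1]$) shows that $G \in \res_{\k(R)}(F)$, while a short computation using $K[-e] \in \d_0(R)$ and the residue-field formula for projective dimension places $G$ in $\k_0^t(R)$ with $\pd_R G = t$.

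It therefore suffices to prove $K[t-e] \in \res_{\k(R)}(G)$, and my plan is to realize $K[t-e]$ as a direct summand of $G \otimes K[-e]$, which itself lies in $\res(G)$ by the same iteration. A minimal complex of $G$ begins in degree $-t$ with $R^{b_{-t}}$, where $b_{-t} \ge 1$, and both $K[t-e]$ and $G \otimes K[-e]$ have their lowest-degree term in degree $-t$ (namely $R$ and $R^{b_{-t}}$, respectively), so I will construct a chain map $\phi \colon K[t-e] \to G \otimes K[-e]$ by including $R$ into the first summand of $R^{b_{-t}}$ and lifting inductively along the Koszul differentials, using minimality of $G$ (whose matrix entries lie in $\m$) to solve the lifting equations. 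For a retraction, I will use the quotient $G \to R^{b_{-t}}[t]$ arising from the stupid truncation, composed with projection onto the first summand and tensored with $K[-e]$. The main obstacle is verifying that the composition equals the identity in $\d(R)$: in the regular case this follows from $\Hom_{\d(R)}(K,K) = k$ together with the degree-$-t$ computation, but in the non-regular case a further argument is needed, for which I plan to iterate the equality $\res(G) = \res(G \otimes K[-e])$ (a consequence of Lemma~\ref{17}(3) in both directions) until the Koszul structure yields a splittable form.
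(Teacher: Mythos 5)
Your treatment of part (1) is correct and is essentially the paper's own argument (Lemma \ref{17}(3) plus writing the perfect complex as an iterated extension of shifts of $R$), so there is nothing to add there.

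For part (2) there is a genuine gap, and it is not merely the verification you postpone: the claim your whole plan rests on is false. The reduction to $G=F\otimes_RK[-e]$ is fine ($G\in\res_{\k(R)}F$, $\pd_RG=t$, $G\in\k_0(R)$), but $K[t-e]$ need not be a direct summand of $G\otimes_RK[-e]$. Take $R=k[x]/(x^2)$, so $e=1$ and $K=(0\to R\xrightarrow{x}R\to0)$, and let $F=(0\to R\xrightarrow{x}R\xrightarrow{x}R\to0)$, a minimal perfect complex with $t=\pd_RF=2$. Set $C=(0\to R\xrightarrow{x}R\xrightarrow{x}R\xrightarrow{x}R\to0)$. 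Elementary changes of basis give $F\otimes K\cong C\oplus K[1]$ and $C\otimes K\cong C\oplus C[1]$ (the latter because $x\cdot\id_C$ is null-homotopic), while $K\otimes K\cong K\oplus K[1]$. Hence $G\otimes K[-1]=F\otimes K\otimes K[-2]\cong C[-2]\oplus C[-1]\oplus K[-1]\oplus K$, and by Krull--Schmidt for $\k(R)$ over this complete local ring, $K[t-e]=K[1]$ is not a direct summand (none of the four indecomposable summands is isomorphic to it). The same bookkeeping shows that $F\otimes K^{\otimes m}$ decomposes into copies of $C[j]$ with $0\le j\le m-1$ and $K[j]$ with $1\le j\le m$, so your fallback of iterating $\res(G)=\res(G\otimes K[-e])$ never produces the required summand either: after $m$ iterations you would need $K[m+2]$ as a summand of $F\otimes K^{\otimes(m+1)}$, and it never occurs. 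This is consistent with your own caveat that the composite-equals-identity check uses $\Hom_{\d(R)}(K,K)=k$: that identification holds only when $R$ is regular, which is exactly the case the proposition is not about, and in the singular case $\Hom_{\k(R)}(K,K)$ has a nontrivial radical and the obstruction groups $\Hom_{\k(R)}(K[t-e],K[i-e])$, $i<t$, which control splitting off the top layer of the filtration of $G\otimes K[-e]$ by copies of $K[i-e]$, are genuinely nonzero.

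The paper proves the (weaker, and correct) membership statement without any such splitting of the full complex: it passes to a minimal model of $F$, takes only the two-term bottom truncation $P=(0\to F^{-t}\to F^{-t+1}\to0)$, and observes that $\partial\otimes K$ is null-homotopic because the entries of $\partial$ lie in $\m$; this splits $K[t-e]$ off $(P\otimes K)[-e]$ on the nose. The price is that $(P\otimes K)[-e]$ is not itself in $\res_{\k(R)}F$ for free: it sits in a triangle with $(F\otimes K)[-e]$ and $(Q\otimes K)[1-e]$, and the latter is absorbed using an induction on $t$ which provides $K[j-e]\in\res_{\k(R)}F$ for all $j\le t-1$. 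Some induction of this kind (or another mechanism to handle the layers below the top one) is what your proposal is missing; without it, the argument cannot be completed, and with the counterexample above it cannot be completed along the direct-summand route at all.
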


\begin{proof}
(1) We have $K[n-e]\in\k_0^n(R)$, so that $\res_{\k(R)}(K[n-e])\subseteq\k_0^n(R)$; see (3) and (4) of Proposition \ref{21}.
Conversely, pick an object $P\in\k_0^n(R)$.
Lemma \ref{17}(3) and Proposition \ref{43}(3) imply that $P$ belongs to $\res_{\k(R)}(K\otimes P[-e])$.
We may assume that the perfect complex $P$ has the form $(0\to P^{-n}\to P^{-n+1}\to\cdots\to P^s\to0)$.
Then it holds that $P\in\ext_{\k(R)}\{P^s[-s],\dots,P^{-n}[n]\}\subseteq\ext_{\k(R)}\{R[i]\mid-s\le i\le n\}$, and hence
$$
(K\otimes P)[-e]\in\ext_{\k(R)}\{K[i-e]\mid-s\le i\le n\}\subseteq\res_{\k(R)}(K[n-e]).
$$
Therefore, the complex $P$ belongs to $\res_{\k(R)}(K[n-e])$.

(2) We shall prove that $K[i-e]\in\res F$ for all $i\le t$.
For this we use induction on $t$.
When $t\le0$, for each $i\le t$ we have $\pd K[i-e]=\pd K+(i-e)=i\le t\le0$ by (1) and (7) of Proposition \ref{27}, and hence $K[i-e]\in\E_R\subseteq\res F$.
Let $t>0$.
We may assume that $F=(0\to F^{-t}\xrightarrow{\partial}F^{-t+1}\to\cdots\to F^s\to0)$, where $F^{-t},F^{-t+1},\dots,F^s$ are free, $F^{-t}\ne0$, $-t+1\le s$ and $\im\partial\subseteq\m F^{-t+1}$.
Since $\pd F[-1]=t-1<t$, the induction hypothesis implies $K[j-e]\in\res F[-1]$ for all $j\le t-1$.
As $F[-1]$ belongs to $\res F$, we see that
\begin{equation}\label{8}
\text{the object $K[j-e]$ belongs to $\res_{\k(R)}F$ for all integers $j\le t-1$.}
\end{equation}
It remains to show that $K[t-e]$ is in $\res F$.
Let $P=(0\to F^{-t}\xrightarrow{\partial}F^{-t+1}\to0)$ be a truncation of $F$.
Then there is an exact triangle $F^{-t}\xrightarrow{\partial}F^{-t+1}\to P[1-t]\rightsquigarrow$ in $\k(R)$.
Tensoring $K$ over $R$ gives an exact triangle
\begin{equation}\label{4}
F^{-t}\otimes K\xrightarrow{\partial\otimes K}F^{-t+1}\otimes K\to (P\otimes K)[1-t]\rightsquigarrow
\end{equation}
in $\k(R)$.
Write $F^{-t}=R^{\oplus n}$ and $F^{-t+1}=R^{\oplus m}$.
The inclusion $\im\partial\subseteq\m F^{-t+1}$ implies that the map $\partial:R^{\oplus n}\to R^{\oplus m}$ is represented by an $m\times n$ matrix $(a_{ij})$ with $a_{ij}\in\m$.
The chain map $\partial\otimes K:K^{\oplus n}\to K^{\oplus m}$ is given by the matrix $(a_{ij})$.
The multiplication morphism $K\xrightarrow{a_{ij}}K$ is zero in $\k(R)$ by \cite[Proposition 2.3(3)]{dm}, and so is the morphism $K^{\oplus n}\xrightarrow{(a_{ij})}K^{\oplus m}$; the matrix $(s_{ij})$ of null-homotopies $s_{ij}$ of $K\xrightarrow{a_{ij}}K$ is a null-homotopy of $K^{\oplus n}\xrightarrow{(a_{ij})}K^{\oplus m}$.
It follows from \eqref{4} that $(P\otimes K)[1-t]$ is isomorphic to the direct sum of $F^{-t+1}\otimes K=K^{\oplus m}$ and $(F^{-t}\otimes K)[1]=(K[1])^{\oplus n}$.
As $F^{-t}\ne0$, we have $n>0$ and the complex $K[1]$ is a direct summand of $(P\otimes K)[1-t]$ as an object of $\k(R)$.
Applying the functor $[t-e-1]$ shows that
\begin{equation}\label{5}
\text{the object $K[t-e]$ is a direct summand of the object$(P\otimes K)[-e]$ in $\k(R)$.}
\end{equation}
Let $Q=(0\to F^{-t+2}\to\cdots\to F^s\to0)$ be another truncation of $F$.
There is an exact triangle $Q\to F\to P\rightsquigarrow$, which induces an exact triangle $(Q\otimes K)[-e]\to(F\otimes K)[-e]\to(P\otimes K)[-e]\rightsquigarrow$.
This shows that
\begin{equation}\label{6}
\text{the object $(P\otimes K)[-e]$ belongs to $\ext_{\k(R)}\{(F\otimes K)[-e],(Q\otimes K)[1-e]\}$.}
\end{equation}
The Koszul complex $K=(0\to K^{-e}\to\cdots\to K^0\to0)$ is in $\ext\{K^{-i}[i]\mid0\le i\le e\}$, which is contained in $\ext\{R[i]\mid0\le i\le e\}$.
Applying $(F\otimes-)[-e]$ shows $(F\otimes K)[-e]$ is in $\ext\{F[i]\mid-e\le i\le0\}$, which implies
\begin{equation}\label{7}
\text{the object $(F\otimes K)[-e]$ belongs to $\res_{\k(R)}F$.}
\end{equation}
The perfect complex $Q=(0\to F^{-t+2}\to\cdots\to F^s\to0)$ is in $\ext_{\k(R)}\{F^i[-i]\mid-t+2\le i\le s\}$, which is contained in $\ext_{\k(R)}\{R[-i]\mid-t+2\le i\le s\}$.
Hence the object $(Q\otimes K)[1-e]$ belongs to the subcategory
$$
\ext_{\k(R)}\{K[1-e-i]\mid-t+2\le i\le s\}=\ext_{\k(R)}\{K[i]\mid(1-s)-e\le i\le(t-1)-e\}
$$
of $\k(R)$.
By \eqref{8}, this extension closure is contained in the resolving closure $\res_{\k(R)}F$.
Therefore,
\begin{equation}\label{9}
\text{the object $(Q\otimes K)[1-e]$ belongs to $\res_{\k(R)}F$.}
\end{equation}
It follows from \eqref{5}, \eqref{6}, \eqref{7} and \eqref{9} that $K[t-e]$ is in $\res_{\k(R)}F$ as desired.
\end{proof}

\begin{rem}
We may wonder if the Hopkins--Neeman classification theorem \cite{H,N} can be applied to deduce Proposition \ref{14}(2).
Actually, the proof of the Hopkins--Neeman theorem provides a certain integer $m$ such that $K[m]$ belongs to $\res_{\k(R)}F$.
However, this is done by applying the smash nilpotence theorem \cite[Theorem 1.1]{N}, which relies on the fact that $R$ is noetherian, so that $m$ cannot be described concretely.
Note that there is no meaning for us unless $m>-e$, since we know that $K[i]\in\E_R\subseteq\res_{\k(R)}F$ for all $i\le-e$.
\end{rem}

The following two corollaries are direct consequences of the above proposition.

\begin{cor}\label{19}
Suppose that $R$ be a local ring.
Then the following two statements hold true.
\begin{enumerate}[\rm(1)]
\item
There is an equality $\k_0(R)=\res_{\k(R)}\{\K_R[i]\mid i\in\Z\}$ of subcategories of $\k(R)$.
\item
Let $F$ be an object in $\k_0(R)$ and assume $\pd_RF=t\ge0$.
Then the equality $\res_{\k(R)}F=\k_0^t(R)$ holds.
\end{enumerate}
\end{cor}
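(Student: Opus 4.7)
My plan is to derive both statements as direct consequences of Proposition \ref{14}, using the resolving property of $\k_0(R)$ and $\k_0^t(R)$ established in Proposition \ref{21}(4). Since the groundwork has been laid, the corollary is essentially a matter of assembling the correct inclusions, and I do not expect a serious obstacle.

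For (1), I would prove both inclusions. Set $e=\edim R$ and $K=\K_R$. For the inclusion $\res_{\k(R)}\{K[i]\mid i\in\Z\}\subseteq\k_0(R)$, note that by Proposition \ref{21}(3) each shift $K[i]$ lies in $\k_0(R)$, and by Proposition \ref{21}(4) the subcategory $\k_0(R)$ is resolving in $\k(R)$; hence it contains the resolving closure. For the reverse inclusion, pick $F\in\k_0(R)$ and set $n=\max\{\pd_RF,0\}\ge 0$, so $F\in\k_0^n(R)$. By Proposition \ref{14}(1), $\k_0^n(R)=\res_{\k(R)}(K[n-e])\subseteq\res_{\k(R)}\{K[i]\mid i\in\Z\}$, giving $F$ in the right-hand side.

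For (2), I would again prove two inclusions. Since $F\in\k_0^t(R)$ and $\k_0^t(R)$ is resolving in $\k(R)$ by Proposition \ref{21}(4), we obtain $\res_{\k(R)}F\subseteq\k_0^t(R)$. For the reverse inclusion, Proposition \ref{14}(2) furnishes $K[t-e]\in\res_{\k(R)}F$; applying $\res_{\k(R)}$ to both sides gives
\[
\k_0^t(R)\;=\;\res_{\k(R)}(K[t-e])\;\subseteq\;\res_{\k(R)}F,
\]
where the equality is Proposition \ref{14}(1) (using $t\ge0$). Combining both inclusions yields $\res_{\k(R)}F=\k_0^t(R)$.

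The only point that deserves attention is the reduction in (1) from an arbitrary $F\in\k_0(R)$ to some $\k_0^n(R)$ with $n\ge 0$: if $\pd_RF<0$, then $F\in\k^0(R)=\E_R$ by Proposition \ref{27}(6), so $F\in\k_0^0(R)$. Once this mild bookkeeping is done, everything else is a sandwich argument using the two halves of Proposition \ref{14}, so no genuine obstacle arises.
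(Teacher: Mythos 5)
Your proposal is correct and follows essentially the same route as the paper: both inclusions in each statement come from Proposition \ref{21}(3)(4) together with the two parts of Proposition \ref{14}, with the low-projective-dimension case handled via $\k^0(R)=\E_R$ (Proposition \ref{27}(6)), exactly as in the paper's sandwich argument.
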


\begin{proof}
Put $e=\edim R$, $K=\K_R$ and set $\X=\res\{K[i]\mid i\in\Z\}$.
Proposition \ref{21}(3) implies that $\X\subseteq\k_0(R)$.
Fix $F\in\k_0(R)$ and set $t=\pd F$.
If $t\le0$, then $F\in\E_R\subseteq\X$ by Proposition \ref{27}(6).
Let $t\ge0$.
We have
$$
F\in\k_0^t(R)=\res_{\k(R)}K[t-e]\subseteq\X\cap\res_{\k(R)}F\subseteq\res_{\k(R)}F\subseteq\k_0^t(R),
$$
where the equality and the first inclusion follow from Proposition \ref{14}, and the other inclusions are obvious.
We thus obtain the equalities $\X=\k_0(R)$ and $\k_0^t(R)=\res_{\k(R)}F$.
\end{proof}

\begin{cor}\label{20}
Let $R$ be a local ring.
Let $\X$ be a resolving subcategory of $\k(R)$ contained in $\k_0(R)$.
Suppose that one has $\sup_{X\in\X}\{\pd_RX\}=\infty$.
Then the equality $\X=\k_0(R)$ holds true.
\end{cor}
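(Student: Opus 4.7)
My plan is to reduce the statement to Corollary \ref{19}(1), which identifies $\k_0(R)$ as the resolving closure of all shifts of the Koszul complex $\K_R$. Since $\X$ is already contained in $\k_0(R)$, it suffices to show that $\X$ contains $\K_R[j]$ for every $j \in \Z$.

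Set $e=\edim R$ and $K=\K_R$. First I would use the hypothesis $\sup_{X\in\X}\{\pd_RX\}=\infty$ to produce, for each integer $n$, an object $F_n\in\X$ with $t_n:=\pd_RF_n\ge n$. Applying Proposition \ref{14}(2) to $F_n$ yields
$$
K[t_n-e]\in\res_{\k(R)}F_n\subseteq\X,
$$
the last inclusion holding because $\X$ is itself a resolving subcategory containing $F_n$.

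Next I would exploit the fact that a resolving subcategory is closed under negative shifts (Proposition \ref{42}(1) combined with the definition, or the observation following the definition of resolving in $\T$). For any $j\in\Z$, choose $n$ large enough that $t_n-e\ge j$; then $K[j]=(K[t_n-e])[\,j-(t_n-e)\,]$ is a nonpositive shift of an object of $\X$, so $K[j]\in\X$. Therefore $\{K[i]\mid i\in\Z\}\subseteq\X$.

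Finally, since $\X$ is resolving, it contains the resolving closure $\res_{\k(R)}\{K[i]\mid i\in\Z\}$, which equals $\k_0(R)$ by Corollary \ref{19}(1). Combined with the assumed inclusion $\X\subseteq\k_0(R)$, this gives $\X=\k_0(R)$. There is no substantial obstacle here; the work has already been done in Proposition \ref{14}(2) and Corollary \ref{19}(1), and the argument is essentially a matter of shifting the Koszul complex down via the resolving closure.
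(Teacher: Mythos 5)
Your proof is correct and follows essentially the same route as the paper: both arguments rest on Proposition \ref{14} and Corollary \ref{19}, using the hypothesis to produce objects of $\X$ of arbitrarily large projective dimension whose resolving closures capture the Koszul complex shifts. The paper argues by contradiction via Corollary \ref{19}(2), picking $Y\in\k_0(R)\setminus\X$ and an $X\in\X$ with $\pd_RX\ge\max\{\pd_RY,0\}$, whereas you directly collect all shifts $\K_R[j]$ in $\X$ and invoke Corollary \ref{19}(1); the difference is cosmetic.
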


\begin{proof}
Assume that $\X$ is strictly contained in $\k_0(R)$.
Then there exists an object $Y\in\k_0(R)$ such that $Y\notin\X$.
Put $u=\pd_RY$.
As $Y$ is a nonzero object of $\k(R)$, we have that $-\infty<u<\infty$.
Since $\sup_{X\in\X}\{\pd X\}=\infty$, there exists an object $X\in\X$ such that $t:=\pd X\ge\max\{u,0\}$.
Then $X\in\k_0(R)$, $\pd X=t\ge0$ and $u\le t$.
Applying Corollary \ref{19}(2), we observe $Y\in\k_0^u(R)\subseteq\k_0^t(R)=\res_{\k(R)}X\subseteq\X$.
This gives a contradiction.
\end{proof}

Now we can give a proof of the main result of this section.
It provides an explicit description of the resolving subcategories of $\k(R)$ contained in $\k_0(R)$; in particular, they form a totally ordered set.

\begin{thm}\label{13}
Suppose that $R$ is a local ring with $e=\edim R$ and $K=\K_R$.
Then one has strict inclusions
\begin{equation}\label{45}
\E_R=\k_0^0(R)\subsetneq\k_0^1(R)\subsetneq\cdots\subsetneq\k_0^n(R)\subsetneq\k_0^{n+1}(R)\subsetneq\cdots\subsetneq\k_0(R)
\end{equation}
of resolving subcategories of $\k(R)$ such that $K[n-e]\in\k_0^n(R)\setminus\k_0^{n-1}(R)$ for each $n\ge1$.
Moreover, all the resolving subcategories of $\k(R)$ contained in $\k_0(R)$ appear in the above chain of subcategories of $\k(R)$.
\end{thm}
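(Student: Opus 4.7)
The plan is to first verify the claimed chain, then establish exhaustiveness by a short case analysis based on the supremum of projective dimensions of objects in a given resolving subcategory.

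For the first assertion, each $\k_0^n(R)$ is a resolving subcategory of $\k(R)$ by Proposition \ref{21}(4). The equality $\E_R = \k_0^0(R)$ follows from Proposition \ref{27}(6), which gives $\k^0(R) = \E_R$, together with the easy observation (implicit in the proof of Lemma \ref{16}(1)) that $\E_R \subseteq \d_0(R)$, so $\k_0^0(R) = \k^0(R) \cap \d_0(R) = \E_R$. The strictness of each inclusion $\k_0^{n-1}(R) \subsetneq \k_0^n(R)$ is witnessed by $K[n-e]$, thanks to Proposition \ref{21}(3), which also confirms the displayed separation property.

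For the exhaustiveness claim, let $\X$ be any resolving subcategory of $\k(R)$ with $\X \subseteq \k_0(R)$. Set $s = \sup_{X \in \X}\{\pd_R X\}$; since $R \in \X$, we have $s \ge 0$. I would split into three cases. If $s = \infty$, then Corollary \ref{20} yields $\X = \k_0(R)$. If $s = 0$, then every $X \in \X$ lies in $\k^0(R) = \E_R$, so $\X \subseteq \E_R$; conversely, $\E_R = \E_{\k(R)}$ is the minimum resolving subcategory of $\k(R)$ by Proposition \ref{43}(1)(3), whence $\E_R \subseteq \X$ and $\X = \E_R = \k_0^0(R)$. If $0 < s < \infty$, choose $F \in \X$ with $\pd_R F = s$; since $F \in \k_0(R)$ and $s \ge 0$, Corollary \ref{19}(2) gives $\res_{\k(R)} F = \k_0^s(R)$, so $\k_0^s(R) \subseteq \X$, and the reverse containment holds by definition of $s$. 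Hence $\X = \k_0^s(R)$.

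There is no serious obstacle here: the substance of the theorem is carried entirely by Proposition \ref{14} (through the equality $\k_0^n(R) = \res_{\k(R)}(K[n-e])$ and the resulting Corollaries \ref{19} and \ref{20}), and the present argument is just the organizational step that assembles these facts into a classification. The only mild subtlety is the case $s = 0$, where one must remember that $\E_R$ is the minimum resolving subcategory of $\k(R)$ (not merely of $\d(R)$) so as to obtain $\E_R \subseteq \X$.
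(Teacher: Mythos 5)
Your proposal is correct and follows essentially the same route as the paper: the chain and its strictness come from Propositions \ref{21}(3)(4) and \ref{27}(6), and exhaustiveness is deduced from Corollaries \ref{19}(2) and \ref{20} by taking $s=\sup_{X\in\X}\{\pd_RX\}$ and picking an object attaining it. The only cosmetic difference is that you split off the case $s=0$ (and verify $\E_R=\k_0^0(R)$ via localization rather than minimality), whereas the paper handles all finite $s\ge0$ uniformly with Corollary \ref{19}(2); both variants are fine.
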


\begin{proof}
Proposition \ref{27}(6) says $\E_R=\k^0(R)\supseteq\k_0^0(R)$.
Since $\k_0^0(R)$ is resolving by Proposition \ref{21}(4) and $\E_R$ is the minimum resolving subcategory, the equality $\E_R=\k_0^0(R)$ holds.
For each $n\ge1$ it is clear that $\k_0^{n-1}(R)\subseteq\k_0^n(R)$, while $K[n-e]\in\k_0^n(R)\setminus\k_0^{n-1}(R)$ by Proposition \ref{21}(3).
The first assertion now follows.

Now, let us show the second assertion.
Let $\X$ be a resolving subcategory of $\k(R)$ contained in $\k_0(R)$.
We may assume that $\X$ is different from $\k_0(R)$.
Corollary \ref{20} says that $t:=\sup_{X\in\X}\{\pd X\}$ is finite, and in particular, $\X$ is contained in $\k_0^t(R)$.
Choose an an object $X\in\X$ such that $\pd X=t$.
We have $t\ge0$ as $R$ is in $\X$.
Using Corollary \ref{19}(2), we see that $\k_0^t(R)=\res_{\k(R)}X\subseteq\X$.
The equality $\X=\k_0^t(R)$ follows.
\end{proof}

\section{NE-loci of objects and subcategories of $\d(R)$}

Recall that the {\em nonfree locus} $\nf(M)$ of each object $M$ of $\mod R$ is by definition the set of prime ideals $\p$ of $R$ such that the localization $M_\p$ is nonfree as an $R_\p$-module.
Also, recall that the {\em nonfree locus} $\nf(\X)$ of a subcategory $\X$ of $\mod R$ is defined as the union of $\nf(M)$ where $M$ runs through the objects of $\X$. 
In this section, we introduce and study NE-loci $\NE(-)$, which extend nonfree loci $\nf(-)$ to objects and subcategories of $\d(R)$.
We begin with defining the NE-loci of objects of $\d(R)$.

\begin{dfn}
Let $X$ be an object of $\d(R)$.
We denote by $\NE(X)$ the set of prime ideals $\p$ of $R$ such that $X_\p\notin\E_{R_\p}$, and call it the {\em NE-locus} of $X$.
According to Proposition \ref{27}(6), this is equal to the set of prime ideals $\p$ of $R$ such that the $R_\p$-complex $X_\p$ has positive (possibly infinite) projective dimension.
Thus we may also call $\NE(X)$ the {\em positive projective dimension locus} of $X$.
Clearly, the equality $\NE(M)=\nf(M)$ holds for each finitely generated $R$-module.
Note that $\NE(X)$ is contained in $\supp X$, where the latter set is the {\em support} of $X$, which is defined by the equality $\supp X=\{\p\in\spec R\mid X_\p\ncong0\text{ in }\d(R_\p)\}$.
\end{dfn}

We state a basic fact on free resolutions and truncations of complexes, which is frequently used later. 

\begin{rem}\label{40}
Let $X\in\d(R)$ be a complex.
Put $t=\inf X$ and $s=\sup X$.
Then there exists a complex
$$
F=(\cdots\xrightarrow{\partial^{t-1}}F^t\xrightarrow{\partial^t}F^{t+1}\xrightarrow{\partial^{t+1}}\cdots\xrightarrow{\partial^{s-2}}F^{s-1}\xrightarrow{\partial^{s-1}}F^s\to0)
$$
of finitely generated free $R$-modules such that $X\cong F$ in $\d(R)$; see \cite[(A.3.2)]{C} for instance.
\begin{enumerate}[(1)]
\item
Let $C$ be the cokernel of the differential map $\partial_{t-1}$, and let $P=(0\to F^{t+1}\to\cdots\to F^s\to0)$ be a truncation of $F$.
Then $P$ is a perfect complex and one has an exact triangle $P\to X\to C[-t]\rightsquigarrow$ in $\d(R)$.
\item
Let $P=(0\to F^1\to\cdots\to F^s\to0)$ and $Y=(\cdots\to F^{-1}\to F^0\to0)$ be truncations of $F$.
Then $Y$ is an object of $\d(R)$ with $\sup Y\le0$.
There is an exact triangle $P\to X\to Y\rightsquigarrow$ in $\d(R)$, which induces an exact triangle $X\to Y\to P[1]\rightsquigarrow$ in $\d(R)$.
Proposition \ref{27}(6) implies that $P$ and $P[1]$ belong to $\E_R$. 
\end{enumerate}
\end{rem}

Here are several basic properties of the NE-loci of objects of the derived category $\d(R)$.

\begin{lem}\label{31}
\begin{enumerate}[\rm(1)]
\item
Let $X$ be an object of $\d(R)$.
Then, the set $\NE(X)$ is empty if and only if $X$ belongs to $\E_R$.
\item
For any objects $X_1,\dots,X_n$ of $\d(R)$ one has the equality $\NE(\bigoplus_{i=1}^nX_i)=\bigcup_{i=1}^n\NE(X_i)$.
\item
For every $X\in\d(R)$ there exists $Y\in\d(R)$ with $\sup Y\le0$, $\res_{\d(R)}X=\res_{\d(R)}Y$ and $\NE(X)=\NE(Y)$.
\item
For an exact triangle $X\to Y\to Z\rightsquigarrow$ one has $\NE(X)\subseteq\NE(Y)\cup\NE(Z)$ and $\NE(Y)\subseteq\NE(X)\cup\NE(Z)$.
\end{enumerate}
\end{lem}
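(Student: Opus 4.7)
The plan is to address the four parts in the order (1), (2), (4), (3), since the argument for (3) will invoke (4).

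For (1), I will use the identification $\E_R=\k^0(R)$ from Proposition \ref{27}(6) together with the formula $\pd_R X=\sup_{\p\in\spec R}\pd_{R_\p}X_\p$ of \cite[Proposition 5.3.P]{AF}. If $\NE(X)=\emptyset$ then $\pd_{R_\p}X_\p\le0$ for every prime $\p$, hence $\pd_R X\le0$ and $X\in\k^0(R)=\E_R$. The converse is immediate, since localising a perfect complex concentrated in cohomological degrees $\le0$ yields a perfect $R_\p$-complex of the same shape.

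For (2), I will combine the definition of $\NE$ with the additivity $\pd_{R_\p}(X\oplus Y)_\p=\sup\{\pd_{R_\p}X_\p,\pd_{R_\p}Y_\p\}$ supplied by Proposition \ref{27}(4): a prime $\p$ lies in $\NE(\bigoplus_i X_i)$ precisely when at least one of the localisations $(X_i)_\p$ has positive projective dimension.

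For (4), I will localise the exact triangle $X\to Y\to Z\rightsquigarrow$ at an arbitrary prime $\p$. The subcategory $\E_{R_\p}$ of $\d(R_\p)$ is resolving (Propositions \ref{27}(5)--(6) and \ref{21}(4)), so in particular closed under extensions and negative shifts. If $\p\notin\NE(Y)\cup\NE(Z)$ then $Y_\p,Z_\p\in\E_{R_\p}$, and Proposition \ref{42}(1) applied to the localised triangle forces $X_\p\in\E_{R_\p}$, giving the first inclusion. The second inclusion follows from closure of $\E_{R_\p}$ under extensions applied to the same localised triangle.

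For (3), I will produce $Y$ via the truncation construction of Remark \ref{40}(2): choose a free resolution $F\simeq X$ and take $P=(0\to F^1\to\cdots\to F^s\to0)$ and $Y=(\cdots\to F^{-1}\to F^0\to0)$, obtaining an exact triangle $P\to X\to Y\rightsquigarrow$ in $\d(R)$ with $P,P[1]\in\E_R$ and $\sup Y\le0$. Since $\E_R$ is contained in every resolving subcategory, this triangle shows $X\in\res_{\d(R)}Y$, and its rotation $X\to Y\to P[1]\rightsquigarrow$ shows $Y\in\res_{\d(R)}X$, yielding the equality of resolving closures. By (1) we have $\NE(P)=\NE(P[1])=\emptyset$, and two applications of (4) to the triangle and its rotation then give $\NE(X)\subseteq\NE(Y)$ and $\NE(Y)\subseteq\NE(X)$.

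The only delicate point is that the truncation in (3) must be chosen so that both $P$ and $P[1]$ land in $\E_R$ simultaneously, because we need to control both $\res_{\d(R)}(-)$ and $\NE(-)$ at once; this is precisely what Remark \ref{40}(2) guarantees, so the argument goes through without additional work.
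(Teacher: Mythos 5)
Your proof is correct and follows essentially the same route as the paper: localize at each prime, use the closure properties of $\E_{R_\p}$ (equivalently the projective-dimension formulas of Proposition \ref{27}) for (1), (2), (4), and use the truncation triangle of Remark \ref{40}(2) for (3); the paper proves (3) by repeating the local argument directly instead of citing (4), but that is only a cosmetic reordering. One small slip to fix in (1): an object with $\pd_RX\le0$ is represented by a perfect complex concentrated in cohomological degrees $\ge0$ (i.e.\ $P^{-i}=0$ for $i>0$), not degrees $\le0$, though this does not affect the argument since the inequality $\pd_{R_\p}X_\p\le\pd_RX$ already follows from the formula of \cite[Proposition 5.3.P]{AF} that you cite.
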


\begin{proof}
(1) By \cite[Proposition 5.3.P]{AF} and Proposition \ref{27}(6), we get $\NE(X)=\emptyset$ if and only if $X_\p\in\E_{R_\p}$ for all $\p\in\spec R$, if and only if $\pd_{R_\p}X_\p\le0$ for all $\p\in\spec R$, if and only if $\pd_RX\le0$, if and only if $X\in\E_R$.

(2) Let $\p$ be a prime ideal of $R$.
Since $\E_{R_\p}$ is a resolving subcategory of $\d(R_\p)$, we have that $\bigoplus_{i=1}^n(X_i)_\p=(\bigoplus_{i=1}^nX_i)_\p\in\E_{R_\p}$ if and only if $(X_i)_\p\in\E_{R_\p}$ for all $1\le i\le n$.
The assertion follows from the contrapositive.

(3) According to Remark \ref{40}(2), there exists an exact triangle $X\to Y\to Z\rightsquigarrow$ in $\d(R)$ such that $\sup Y\le0$ and $Z\in\E_R$.
For each resolving subcategory $\X$ of $\d(R)$ we have $X\in\X$ if and only if $Y\in\X$.
In particular, it holds that $\res_{\d(R)}X=\res_{\d(R)}Y$.
For every $\p\in\spec R$ there exists an exact triangle $X_\p\to Y_\p\to Z_\p\rightsquigarrow$ in $\d(R_\p)$.
Since $Z_\p$ belongs to $\E_{R_\p}$, we again have $X_\p\in\Y$ if and only if $Y_\p\in\Y$ for each resolving subcategory $\Y$ of $\d(R_\p)$.
In particular, $X_\p\notin\E_{R_\p}$ if and only if $Y_\p\notin\E_{R_\p}$.
Hence the equality $\NE(X)=\NE(Y)$ holds.

(4) Fix $\p\in\spec R$.
By Proposition \ref{27}(3), if $\pd Y_\p\le0$ and $\pd Z_\p\le0$, then $\pd X_\p\le\sup\{\pd Y_\p,\pd Z_\p-1\}\le0$.
Also, if $\pd X_\p\le0$ and $\pd Z_\p\le0$, then $\pd Y_\p\le\sup\{\pd X_\p,\pd Z_\p\}\le0$.
The assertion follows.
\end{proof}

\begin{rem}
In view of Lemma \ref{31}(4), we may wonder if the inclusion $\NE(Z)\subseteq\NE(X)\cup\NE(Y)$ holds for every exact triangle $X\to Y\to Z\rightsquigarrow$ in $\d(R)$.
This is not true in general.
In fact, the exact triangle $R\xrightarrow{=}R\to0\rightsquigarrow$ induces an exact triangle $R\to 0\to R[1]\rightsquigarrow$.
Then $\NE(R[1])=\spec R$ because $\pd(R[1])_\p=\pd R_\p[1]=\pd R_\p+1=1$ for each $\p\in\spec R$ by Proposition \ref{27}(1), while $\NE(R)\cup\NE(0)$ is an empty set.
\end{rem}

We provide a generalization (or a derived category version) of \cite[Proposition 2.10 and Corollary 2.11(1)]{res}.

\begin{prop}\label{25}
For every object $X$ of $\d(R)$ there is an equality $\NE(X)=\supp_R\Hom_{\ld(R)}(X,X)$.
In particular,  the NE-loci of objects of $\d(R)$ are closed subsets of $\spec R$ in the Zariski topology.
\end{prop}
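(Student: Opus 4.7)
The plan is to reduce the equality to the two ingredients already on hand: the localization isomorphism for the hom-modules in $\ld(R)$ from Lemma \ref{16}(1), and the characterization of zero objects in $\ld(R)$ from Proposition \ref{23}(2).

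Fix $X \in \d(R)$ and a prime $\p$ of $R$. The main step is to chase the chain of equivalences
\[
\p \in \NE(X) \iff X_\p \notin \E_{R_\p} \iff \Hom_{\ld(R_\p)}(X_\p, X_\p) \neq 0 \iff \Hom_{\ld(R)}(X,X)_\p \neq 0 \iff \p \in \supp_R \Hom_{\ld(R)}(X,X).
\]
The first equivalence is the definition of $\NE(X)$. The second equivalence is exactly the content of Proposition \ref{23}(2) applied over the local ring $R_\p$: indeed, (a)$\Leftrightarrow$(c) of that proposition says that $X_\p \cong 0$ in $\ld(R_\p)$ iff $X_\p \in \E_{R_\p}$, while (a)$\Leftrightarrow$(b) says that $X_\p \cong 0$ in $\ld(R_\p)$ iff $\Hom_{\ld(R_\p)}(X_\p,X_\p)=0$. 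The third equivalence is given by the natural isomorphism $\Hom_{\ld(R)}(X,X)_\p \xrightarrow{\sim} \Hom_{\ld(R_\p)}(X_\p, X_\p)$ of Lemma \ref{16}(1) (taking $Y=X$). The last equivalence is the definition of the support.

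For the ``in particular'' clause, recall that the definition of $\ld(R)$ notes that the hom-set $\Hom_{\ld(R)}(X,X)$ is a finitely generated $R$-module, being a quotient of the finitely generated $R$-module $\Hom_{\d(R)}(X,X)$. Since the support of a finitely generated module over a noetherian ring is a Zariski-closed subset of $\spec R$ (cut out by its annihilator), the equality $\NE(X) = \supp_R \Hom_{\ld(R)}(X,X)$ shows $\NE(X)$ is closed.

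There is no real obstacle here: all of the work has been absorbed into the preceding lemmas. The only small point to be careful about is invoking Lemma \ref{16}(1) in the form needed (with $X=Y$), and making sure to cite the three-way equivalence in Proposition \ref{23}(2) rather than any single implication, since we need both ``vanishing of hom $\Rightarrow$ object lies in $\E_{R_\p}$'' and its converse.
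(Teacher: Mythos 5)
Your proof is correct and is essentially identical to the paper's: both chase the same chain of equivalences through Lemma \ref{16}(1) and Proposition \ref{23}(2), and both deduce closedness from the fact that $\Hom_{\ld(R)}(X,X)$ is a finitely generated $R$-module. Nothing to add.
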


\begin{proof}
A prime ideal $\p$ of $R$ does not belong to the support of the $R$-module $\Hom_{\ld(R)}(X,X)$ if and only if $\Hom_{\ld(R)}(X,X)_\p=0$, if and only if $\Hom_{\ld(R_\p)}(X_\p,X_\p)=0$ by Lemma \ref{16}(1), if and only if $X_\p$ belongs to $\E_{R_\p}$ by Proposition \ref{23}(2), if and only if $\p$ is not in $\NE(X)$.
It follows that $\NE(X)=\supp\Hom_{\ld(R)}(X,X)$.
\end{proof}

Next we state the definition of the NE-locus of a subcategory of $\d(R)$.

\begin{dfn}
For a subcategory $\X$ of $\d(R)$, we set $\NE(\X)=\bigcup_{X\in\X}\NE(X)$ and call it the {\em NE-locus} of $\X$.
Since each $\NE(X)$ in the union is a Zariski-closed subset of $\spec R$ by Proposition \ref{25}, the subset $\NE(\X)$ of $\spec R$ is specialization-closed; this statement is a generalization of \cite[Corollary 2.11(2)]{res}.
\end{dfn}

The following proposition is regarded as a derived category version of \cite[Corollary 3.6]{res}.

\begin{prop}\label{61}
For every subcategory $\X$ of $\d(R)$ the equality $\NE(\res_{\d(R)}\X)=\NE(\X)$ holds.
\end{prop}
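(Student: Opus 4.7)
The plan is to establish the two inclusions separately. The forward inclusion $\NE(\X)\subseteq\NE(\res_{\d(R)}\X)$ is immediate from $\X\subseteq\res_{\d(R)}\X$ together with the definition of NE-locus of a subcategory, so the entire content lies in the reverse inclusion.

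For the reverse direction, I would introduce the subcategory
\[
\Y:=\{\,Y\in\d(R)\mid\NE(Y)\subseteq\NE(\X)\,\}
\]
and prove that $\Y$ is a resolving subcategory of $\d(R)$ that contains $\X$. Once this is done, $\res_{\d(R)}\X\subseteq\Y$ will follow, and this is exactly the inequality $\NE(\res_{\d(R)}\X)\subseteq\NE(\X)$ that we want.

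The verification that $\Y$ is resolving is routine and relies entirely on the basic properties of NE-loci already established in Lemma \ref{31}. First, $\X\subseteq\Y$ is clear from the definition, so in particular $R\in\Y$ (one can also note that $R\in\E_R$ gives $\NE(R)=\emptyset$ by Lemma \ref{31}(1)). Closure under direct summands follows from the equality in Lemma \ref{31}(2): if $Y=Z\oplus Z'\in\Y$, then $\NE(Z)\subseteq\NE(Z)\cup\NE(Z')=\NE(Y)\subseteq\NE(\X)$. Closure under extensions follows from the first inclusion in Lemma \ref{31}(4) applied to a triangle $A\to B\to C\rightsquigarrow$ with $A,C\in\Y$. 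Finally, for closure under negative shifts, given $Y\in\Y$, apply the same inclusion of Lemma \ref{31}(4) to the canonical exact triangle $Y[-1]\to 0\to Y\rightsquigarrow$, which yields $\NE(Y[-1])\subseteq\NE(0)\cup\NE(Y)=\NE(Y)\subseteq\NE(\X)$.

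I do not anticipate any serious obstacle here: the argument is a standard ``closure principle'' where one packages the desired conclusion into a subcategory defined by a condition on each object, and checks that this condition is preserved by the resolving operations. The only subtlety worth flagging is that one must use closure under negative shifts (rather than under syzygies, which has no direct meaning in $\d(R)$) as the fourth axiom, which is exactly the equivalent formulation noted after the definition of resolving subcategory of $\d(R)$ via Proposition \ref{42}(1); the triangle $Y[-1]\to 0\to Y\rightsquigarrow$ makes this step immediate.
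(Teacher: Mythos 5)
Your proof is correct and is essentially the paper's own argument: the paper likewise packages the nontrivial inclusion into a ``closure principle,'' defining for a fixed prime $\p\notin\NE(\X)$ the subcategory of complexes $Y$ with $\pd_{R_\p}Y_\p\le0$ and checking it is resolving via Proposition \ref{27}(1)(3)(4), which is the prime-by-prime version of your $\Y=\{Y\mid\NE(Y)\subseteq\NE(\X)\}$ checked via Lemma \ref{31}. One small citation nit: for closure under extensions of a triangle $A\to B\to C\rightsquigarrow$ with $A,C\in\Y$ you need the \emph{second} inclusion of Lemma \ref{31}(4), namely $\NE(B)\subseteq\NE(A)\cup\NE(C)$ for the middle term (the first inclusion is the one you correctly use for negative shifts), but that inclusion is indeed supplied by the lemma, so the argument stands.
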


\begin{proof}
Since $\X$ is contained in $\res_{\d(R)}\X$, we see that $\NE(\X)$ is contained in $\NE(\res\X)$.
Let $\p$ be a prime ideal of $R$ with $\p\notin\NE(\X)$.
We have $\pd_{R_\p}X_\p\le0$ for every $X\in\X$, so that $\X$ is contained in the subcategory $\Y$ of $\d(R)$ consisting of complexes $Y$ such that $\pd_{R_\p}Y_\p\le0$.
Clearly, $\Y$ contains $R$.
By the projective dimension equality in Proposition \ref{27}(4), we see that $\Y$ is closed under direct summands. 
The first projective dimension inequality in Proposition \ref{27}(3) shows that $\Y$ is closed under extensions.
The projective dimension equality in Proposition \ref{27}(1) implies that $\Y$ is closed under negative shifts.
Thus, $\Y$ is a resolving subcategory of $\d(R)$ containing $\X$, so that it contains $\res\X$.
It follows that $\p\notin\NE(\res\X)$.
Thus, $\NE(\X)=\NE(\res\X)$.
\end{proof}

Recall that a finitely generated $R$-module $M$ is called a {\em maximal Cohen--Macaulay module} provided that it satisfies the inequality $\depth_{R_\p}M_\p\ge\dim R_\p$ for all prime ideals $\p$ of $R$.
Now, extending this, we introduce the notion of a maximal Cohen--Macaulay complex.
This plays an important role in the rest of this paper.

\begin{dfn}
\begin{enumerate}[(1)]
\item
We call an object $X$ of $\d(R)$ a {\em maximal Cohen--Macaulay complex} if $\depth_{R_\p}X_\p\ge\dim R_\p$ for all prime ideals $\p$.
By definition, a finitely generated $R$-module $M$ is maximal Cohen--Macaulay if and only if the complex $(0\to M\to0)$ concentrated in degree zero is maximal Cohen--Macaulay.
\item
We denote by $\c(R)$ the subcategory of $\d(R)$ consisting of all maximal Cohen--Macaulay $R$-complexes.
\end{enumerate}
\end{dfn}

Recall that $\sing R$ stands for the {\em singular locus} of $R$, that is to say, the set of prime ideals $\p$ of $R$ such that the local ring $R_\p$ is not regular.
In the proposition below, we state some properties of maximal Cohen--Macaulay complexes, whose module category version can be found in \cite[Example 2.9]{res}.

\begin{prop}\label{58}
\begin{enumerate}[\rm(1)]
\item
Let $X\in\d(R)$ be maximal Cohen--Macaulay.
Then $\NE(X)$ is contained in $\sing R$.
\item
The subcategory $\c(R)$ of $\d(R)$ is closed under direct summands, extensions and negative shifts.
If $R$ is a Cohen--Macaulay ring, then $\c(R)$ is a resolving subcategory of $\d(R)$ and vice versa.
\end{enumerate}
\end{prop}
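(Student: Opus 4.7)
The plan is to prove (1) by contrapositive via the Auslander--Buchsbaum formula, and (2) by verifying each closure condition pointwise at every prime using the depth and projective dimension formulas collected in Proposition \ref{27}. The argument is largely mechanical once those tools are in hand; the only real content is the observation in (1) that regularity forces $X_\p$ into $\E_{R_\p}$, and the observation in the ``vice versa'' half of (2) that the only resolving axiom depending on $R$ is membership of $R$ itself.

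For (1), I would take $\p\notin\sing R$ and show $\p\notin\NE(X)$. Since $R_\p$ is regular, every object of $\d(R_\p)$ has finite projective dimension, so in particular $\pd_{R_\p}X_\p<\infty$. The Auslander--Buchsbaum formula in Proposition \ref{27}(2) then gives
$$
\pd_{R_\p}X_\p=\depth R_\p-\depth_{R_\p}X_\p=\dim R_\p-\depth_{R_\p}X_\p\le 0,
$$
where the middle equality uses that regular local rings are Cohen--Macaulay and the last inequality uses that $X$ is maximal Cohen--Macaulay. By Proposition \ref{27}(6) this says $X_\p\in\E_{R_\p}$, i.e.\ $\p\notin\NE(X)$.

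For the first assertion of (2), I would check the three closure conditions prime by prime. Closure under direct summands follows from the depth equality $\depth_{R_\p}(X\oplus Y)_\p=\inf\{\depth_{R_\p}X_\p,\depth_{R_\p}Y_\p\}$ in Proposition \ref{27}(4). Closure under extensions follows from the first depth inequality in Proposition \ref{27}(3), which gives $\depth_{R_\p}B_\p\ge\inf\{\depth_{R_\p}A_\p,\depth_{R_\p}C_\p\}\ge\dim R_\p$ whenever $A,C\in\c(R)$. Closure under negative shifts follows from the depth equality $\depth_{R_\p}(X[-1])_\p=\depth_{R_\p}X_\p+1$ in Proposition \ref{27}(1), which is clearly still at least $\dim R_\p$ when $X\in\c(R)$.

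For the ``vice versa'' statement, I would observe that having established the three closure conditions, $\c(R)$ is a resolving subcategory of $\d(R)$ if and only if it contains $R$. But $R\in\c(R)$ amounts to the inequality $\depth R_\p\ge\dim R_\p$ for every $\p\in\spec R$, which is exactly the definition of $R$ being Cohen--Macaulay. This gives both implications simultaneously. No serious obstacle is anticipated; the proof is a direct application of the depth/projective-dimension toolkit already assembled.
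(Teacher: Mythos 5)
Your proposal is correct and follows essentially the same route as the paper's proof: part (1) via regularity of $R_\p$, the Auslander--Buchsbaum formula from Proposition \ref{27}(2), and Proposition \ref{27}(6); part (2) via the depth statements in Proposition \ref{27}(1), (3), (4), with the ``vice versa'' reduced to the observation that the only remaining resolving axiom is $R\in\c(R)$, which is exactly the Cohen--Macaulay condition. No gaps.
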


\begin{proof}
(1) Let $\p$ be a prime ideal of $R$ with $\p\notin\sing R$.
Then $R_\p$ is a regular local ring, so that $\pd_{R_\p}X_\p<\infty$.
By Proposition \ref{27}(2), we have $\pd_{R_\p}X_\p=\depth R_\p-\depth X_\p=\dim R_\p-\depth X_\p\le0$.
Thus $\p\notin\NE(X)$.

(2) The depth equality in Proposition \ref{27}(4) shows $\c(R)$ is closed under direct summands.
Using the first inequality in Proposition \ref{27}(3), we observe that $\c(R)$ is closed under extensions.
It is seen from the depth equality in Proposition \ref{27}(1) that $\c(R)$ is closed under negative shifts.
The ring $R$ is Cohen--Macaulay if and only if $R$ belongs to $\c(R)$, if and only if $\c(R)$ is a resolving subcategory of $\d(R)$.
\end{proof}

Recall that a {\em thick} subcategory $\X$ of $\cm(R)$ is by definition a subcategory of $\cm(R)$ closed under direct summands and such that for every short exact sequence $0\to L\to M\to N\to0$ of maximal Cohen--Macaulay $R$-modules, if two of $L,M,N$ belong to $\X$, then so does the third.
Also, for each set $\Phi$ of prime ideals of $R$, the subcategory $\nf^{-1}(\Phi)$ is defined as the subcategory of $\mod R$ consisting of modules whose nonfree loci are contained in $\Phi$, and $\nf^{-1}_\cm(\Phi)$ is defined to be the intersection of $\nf^{-1}(\Phi)$ with $\cm(R)$.
These three subcategories play important roles in \cite{stcm,crs,thd}.
Now we introduce their derived category versions.

\begin{dfn}
\begin{enumerate}[(1)]
\item
We say that a subcategory $\X$ of $\c(R)$ is {\em thick} provided that $\X$ is closed under direct summands in the additive category $\c(R)$, and that for each exact triangle $A\to B\to C\rightsquigarrow$ in $\d(R)$ such that $A,B,C\in\c(R)$, if two of $A,B,C$ belong to $\X$, then so does the third.
(We should be careful not to confuse a thick subcategory of $\c(R)$ with a thick subcategory of $\d(R)$ in the sense of Definition \ref{100}.)
\item
For a subset $\Phi$ of $\spec R$, we denote by $\NE^{-1}(\Phi)$ the subcategory of $\d(R)$ consisting of complexes whose NE-loci are contained in $\Phi$.
We define the subcategory $\NE^{-1}_\c(\Phi)$ of $\c(R)$ by $\NE^{-1}_\c(\Phi)=\NE^{-1}(\Phi)\cap\c(R)$.
\end{enumerate}
\end{dfn}

The following proposition includes a derived category version of \cite[Propositions 1.15(3), 4.2 and Theorem 4.10(3)]{stcm}.
Compare this proposition with Theorem \ref{76} stated later.

\begin{prop}\label{60}
The following statements hold true.
\begin{enumerate}[\rm(1)]
\item
Every thick subcategory of $\d(R)$ contained in $\c(R)$ is a thick subcategory of $\c(R)$.
Every thick subcategory of $\c(R)$ containing $R$ is a resolving subcategory of $\d(R)$ contained in $\c(R)$.
\item
For $\Phi\subseteq\spec R$ the subcategory $\NE^{-1}(\Phi)$ of $\d(R)$ is resolving.
If $R$ is Cohen--Macaulay, then $\NE^{-1}_\c(\Phi)$ is a thick subcategory of $\c(R)$ containing $R$, and a resolving subcategory of $\d(R)$ contained in $\c(R)$.
\end{enumerate}
\end{prop}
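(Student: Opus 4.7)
The plan is to verify each of the listed closure properties directly from the definitions, leaning throughout on the behavior of $\c(R)$ recorded in Proposition \ref{58}, the basic facts about $\NE$ gathered in Lemma \ref{31}, and the properties of projective dimension in Proposition \ref{27}.

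For part (1)(a), I would unpack a thick subcategory $\X$ of $\d(R)$ via Proposition \ref{42}(2) as being closed under direct summands together with extensions and shifts in both directions; Proposition \ref{42}(1) then translates this into the ``two out of three'' principle for exact triangles. Restricting to triangles all three of whose vertices happen to lie in $\c(R)$ immediately yields thickness of $\X$ inside $\c(R)$. For part (1)(b), let $\X$ be a thick subcategory of $\c(R)$ containing $R$; I would check the three conditions for $\X$ to be a resolving subcategory of $\d(R)$. Closure under direct summands in $\d(R)$ follows because $\c(R)$ is closed under direct summands by Proposition \ref{58}(2), so every summand of an object of $\X$ lands back in $\c(R)$ and then in $\X$ by thickness. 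Closure under extensions follows because $\c(R)$ is closed under extensions, so the middle term $B$ of a triangle $A\to B\to C\rightsquigarrow$ with $A,C\in\X$ sits in $\c(R)$ and hence in $\X$ by thickness. Closure under negative shifts is obtained from the triangle $X[-1]\to 0\to X\rightsquigarrow$: since $0\in\X$ (as a direct summand of $R$) and $X[-1]\in\c(R)$ by Proposition \ref{58}(2), thickness of $\X$ in $\c(R)$ places $X[-1]$ in $\X$.

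For part (2)(a), the four conditions for $\NE^{-1}(\Phi)$ to be resolving in $\d(R)$ fall out of Lemma \ref{31}: containment of $R$ holds because $\NE(R)=\emptyset$, closure under direct summands is Lemma \ref{31}(2), closure under extensions is Lemma \ref{31}(4), and closure under negative shifts follows from the equality $\pd_{R_\p}(X_\p[-1])=\pd_{R_\p}X_\p-1$ given by Proposition \ref{27}(1), which only decreases projective dimension.

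For part (2)(b), when $R$ is Cohen--Macaulay, Proposition \ref{58}(2) makes $\c(R)$ a resolving subcategory of $\d(R)$, so the intersection $\NE^{-1}_\c(\Phi)=\NE^{-1}(\Phi)\cap\c(R)$ is automatically a resolving subcategory of $\d(R)$ contained in $\c(R)$ and containing $R$. To obtain thickness of $\NE^{-1}_\c(\Phi)$ inside $\c(R)$, direct-summand closure is inherited, and for a triangle $A\to B\to C\rightsquigarrow$ with $A,B,C\in\c(R)$ the two cases where one recovers $B$ from $\{A,C\}$ or $A$ from $\{B,C\}$ follow from resolvingness in $\d(R)$. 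The delicate case, which I expect to be the main obstacle, is deriving $C\in\NE^{-1}_\c(\Phi)$ from $A,B\in\NE^{-1}_\c(\Phi)$: a resolving subcategory need not be closed under cones, so I would invoke the CM hypothesis as follows. Fix $\p\in\spec R$ with $\pd_{R_\p}A_\p\le 0$ and $\pd_{R_\p}B_\p\le 0$; Proposition \ref{27}(3) gives $\pd_{R_\p}C_\p\le\sup\{\pd_{R_\p}B_\p,\pd_{R_\p}A_\p+1\}\le 1$, so $\pd_{R_\p}C_\p$ is finite, and the Auslander--Buchsbaum formula of Proposition \ref{27}(2) then yields $\pd_{R_\p}C_\p=\depth R_\p-\depth_{R_\p}C_\p\le\dim R_\p-\dim R_\p=0$, where the first equality uses that $R_\p$ is Cohen--Macaulay and the inequality uses $C\in\c(R)$. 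Hence $\p\notin\NE(C)$, which completes the verification.
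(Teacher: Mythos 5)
Your proof is correct and follows essentially the same route as the paper: direct verification of the resolving/thick axioms using Proposition \ref{58}(2), Lemma \ref{31} and Proposition \ref{27}, with the key cone case of (2)(b) settled exactly as in the paper via $\pd_{R_\p}C_\p\le 1<\infty$ and the Auslander--Buchsbaum formula combined with the Cohen--Macaulay and maximal Cohen--Macaulay hypotheses. The only differences (arguing the cone case contrapositively instead of by contradiction, and handling negative shifts via the triangle $X[-1]\to 0\to X\rightsquigarrow$ or the shift formula) are cosmetic.
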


\begin{proof}
(1) First of all, note that since $\c(R)$ is closed under direct summands as a subcategory of $\d(R)$ by Proposition \ref{58}(2), being closed under direct summands as a subcategory of $\c(R)$ implies being closed under direct summands as a subcategory of $\d(R)$.
The first assertion now follows.
To show the second, let $\X$ be a thick subcategory of $\c(R)$ containing $R$.
Then $\X$ is closed under direct summands as a subcategory of $\d(R)$.
Let $A\to B\to C\rightsquigarrow$ be an exact triangle in $\d(R)$ with $C\in\X$.
Then $C$ is in $\c(R)$, and we observe from Propositions \ref{58}(2) and \ref{42}(1) that $A\in\c(R)$ if and only if $B\in\c(R)$.
Since $\X$ is a thick subcategory of $\c(R)$, it is easy to verify that $A\in\X$ if and only if $B\in\X$.
Thus, $\X$ is a resolving subcategory of $\d(R)$.

(2) Since $\NE(R)=\emptyset\subseteq\Phi$, we have $R\in\NE^{-1}(\Phi)$.
Using Lemma \ref{31}(2), we see that $\NE^{-1}(\Phi)$ is closed under direct summands.
Let $X\to Y\to Z\rightsquigarrow$ be an exact triangle in $\d(R)$ with $Z\in\NE^{-1}(\Phi)$.
Then $\Phi$ contains $\NE(Z)$.
It follows from Lemma \ref{31}(4) that $\Phi$ contains $\NE(X)$ if and only if $\Phi$ contains $\NE(Y)$.
This means that $X\in\NE^{-1}(\Phi)$ if and only if $Y\in\NE^{-1}(\Phi)$.
Thus, $\NE^{-1}(\Phi)$ is a resolving subcategory of $\d(R)$.

Let $R$ be Cohen--Macaulay.
The first assertion of (2) shows $\NE^{-1}(\Phi)$ is a resolving subcategory of $\d(R)$, and so is $\c(R)$ by Proposition \ref{58}(2), whence so is $\NE^{-1}(\Phi)\cap\c(R)=\NE_\c^{-1}(\Phi)$.
Let $A\to B\to C\rightsquigarrow$ be an exact triangle in $\d(R)$ with $A,B\in\NE_\c^{-1}(\Phi)$ and $C\in\c(R)$.
Assume $C\notin\NE_\c^{-1}(\Phi)$.
Then we find $\p\in\NE(C)$ such that $\p\notin\Phi$.
Hence $\p$ does not belong to $\NE(A)$ or $\NE(B)$, which means that $\pd A_\p\le0$ and $\pd B_\p\le0$.
The exact triangle $A_\p\to B_\p\to C_\p\rightsquigarrow$ in $\d(R_\p)$ and Proposition \ref{27}(3) show $\pd C_\p\le1<\infty$, and we get
$$
\pd C_\p=\depth R_\p-\depth C_\p=\dim R_\p-\depth C_\p\le0,
$$
where the first equality comes from Proposition \ref{27}(2), the second equality holds since the local ring $R_\p$ is Cohen--Macaulay, and the inequality holds as $C$ is maximal Cohen--Macaulay.
This is a contradiction because $\p\in\NE(C)$.
It follows that $C\in\NE_\c^{-1}(\Phi)$.
Thus $\NE_\c^{-1}(\Phi)$ is a thick subcategory of $\c(R)$ (containing $R$).
\end{proof}


\begin{rem}
The converse of the first assertion of Proposition \ref{60}(1) does not necessarily hold true.
In fact, $\c(R)$ is itself a thick subcategory of $\c(R)$, but it is not necessarily a thick subcategory of $\d(R)$.
For example, let $R$ be a Cohen--Macaulay local ring of positive Krull dimension.
Then there exists a non-zerodivisor $x$ in the maximal ideal of $R$, which gives rise to an exact sequence $0\to R\xrightarrow{x}R\to R/(x)\to0$ in $\mod R$, which induces an exact triangle $R\xrightarrow{x}R\to R/(x)\rightsquigarrow$ in $\d(R)$.
We have $R\in\c(R)$ but $R/(x)\notin\c(R)$.
Therefore, $\c(R)$ is not a thick subcategory of $\d(R)$.
This argument also shows the module category version:
a thick subcategory of $\cm(R)$ is not necessarily a thick subcategory of $\mod R$ contained in $\cm(R)$.
\end{rem}

\section{Classification of resolving subcategories of $\k(R)$}

The purpose of this section is to give a complete classification of the resolving subcategories of $\k(R)$.
We start by defining, for each object of the derived category $\d(R)$, another object by tensoring a Koszul complex and taking a shift.
This is used in the proof of the first main result of this section.

\begin{dfn}
Let $X\in\d(R)$.
For $x\in R$, set $X(x)=\K(x)\lten_RX[-1]\in\d(R)$.
For a sequence $\xx=x_1,\dots,x_n$ in $R$, we inductively define $X(\xx)\in\d(R)$ by $X(x_1,\dots,x_i)=(X(x_1\dots,x_{i-1}))(x_i)$ for $1\le i\le n$.
\end{dfn}

We make a list of basic properties of the object $X(x)$ for $X\in\d(R)$ and $x\in R$.

\begin{lem}\label{24}
Let $X$ be an object of $\d(R)$, and let $x$ be an element of $R$.
\begin{enumerate}[\rm(1)]
\item
If $x$ is a unit of $R$, then there is an isomorphism $X(x)\cong0$ in $\d(R)$.
\item
There exists an exact triangle $X(x)\to X\xrightarrow{x}X\rightsquigarrow$ in $\d(R)$.
In particular, one has the containment $X(x)\in\res_{\d(R)}X$ and the isomorphisms $X(x)\cong\rhom_R(\K(x),X)\cong\Hom_R(\K(x),X)$ in $\d(R)$.
\item
Let $R$ be a local ring with maximal ideal $\m$ and residue field $k$.
Let $x\in\m$.
Then one has the equalities $\depth_RX(x)=\depth_RX$ and $\pd_RX(x)=\pd_RX$.
In particular, $X\in\E_R$ if and only if $X(x)\in\E_R$.
\item
There is an equality $\NE(X(x))=\NE(X)\cap\V(x)$ of subsets of $\spec R$.
\end{enumerate}
\end{lem}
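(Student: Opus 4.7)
The plan is to handle the four parts in order, with the triangle $R\xrightarrow{x}R\to\K(x)\rightsquigarrow$ serving as the common engine.

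For (1), note that $\K(x)$ is the mapping cone of multiplication by $x$ on $R$. If $x\in R^{\times}$, this map is an isomorphism, so $\K(x)\cong 0$ in $\d(R)$ and hence $X(x)=\K(x)\lten_RX[-1]\cong 0$.

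For (2), I rotate the canonical triangle $R\xrightarrow{x}R\to\K(x)\rightsquigarrow$ to get $\K(x)[-1]\to R\xrightarrow{x}R\rightsquigarrow$, and tensor derivedly with $X$ to obtain the required triangle $X(x)\to X\xrightarrow{x}X\rightsquigarrow$. Rotating this again shows $X(x)$ is an extension of $X[-1]$ by $X$, both of which lie in $\res_{\d(R)}X$, so $X(x)\in\res_{\d(R)}X$. For the two identifications, I observe that $\K(x)$ is a bounded complex of finitely generated free modules, so $\rhom_R(\K(x),-)=\Hom_R(\K(x),-)$ on the level of complexes up to quasi-isomorphism; the termwise computation gives the Koszul self-duality $\Hom_R(\K(x),R)\cong\K(x)[-1]$, and tensor-hom then yields
\[
\rhom_R(\K(x),X)\cong\Hom_R(\K(x),R)\lten_RX\cong\K(x)[-1]\lten_RX=X(x).
\]

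For (3), the crucial computation is $\K(x)\lten_Rk\cong k\oplus k[1]$, which holds because $x\in\m$ forces the differential of $\K(x)\otimes_Rk$ to be zero. Applying $\rhom_R(k,-)$ to the isomorphism $X(x)\cong\rhom_R(\K(x),X)$ from (2) and using tensor-hom adjunction gives
\[
\rhom_R(k,X(x))\cong\rhom_R(\K(x)\lten_Rk,X)\cong\rhom_R(k,X)\oplus\rhom_R(k,X)[-1],
\]
so taking $\inf$ yields $\depth_RX(x)=\depth_RX$. A parallel calculation $X(x)\lten_Rk\cong(X\lten_Rk)[-1]\oplus(X\lten_Rk)$ combined with Proposition \ref{27}(2) gives $\pd_RX(x)=-\inf(X(x)\lten_Rk)=-\inf(X\lten_Rk)=\pd_RX$. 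The equivalence $X\in\E_R\Leftrightarrow X(x)\in\E_R$ then follows from Proposition \ref{27}(6).

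For (4), I reduce to the local situation via the identification $X(x)_\p=X_\p(x/1)$ for each $\p\in\spec R$. If $x\notin\p$, then $x/1\in R_\p^{\times}$, so (1) gives $X_\p(x/1)\cong 0\in\E_{R_\p}$ and $\p\notin\NE(X(x))$. If $x\in\p$, then $x/1$ lies in the maximal ideal of $R_\p$, and (3) applied to $R_\p$ yields $X_\p(x/1)\in\E_{R_\p}$ iff $X_\p\in\E_{R_\p}$. Combining the two cases gives $\NE(X(x))=\NE(X)\cap\V(x)$. The main obstacle is bookkeeping with shift conventions: once the Koszul self-duality and the splitting $\K(x)\lten_Rk\cong k\oplus k[1]$ are established with correct indexing, parts (3) and (4) follow formally.
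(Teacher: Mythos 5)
Your proof is correct and follows essentially the same path as the paper: the triangle $R\xrightarrow{x}R\to\K(x)\rightsquigarrow$ tensored with $X$, the splitting $\K(x)\lten_Rk\cong k\oplus k[1]$ for $x\in\m$, and the localization $X(x)_\p=X_\p(\tfrac{x}{1})$ combined with (1) and (3) for the NE-locus computation. The only variation is in (2): you obtain $X(x)\cong\rhom_R(\K(x),X)$ from the Koszul self-duality $\Hom_R(\K(x),R)\cong\K(x)[-1]$ together with the evaluation isomorphism $\rhom_R(\K(x),X)\cong\rhom_R(\K(x),R)\lten_RX$ valid for perfect complexes, whereas the paper applies $\rhom_R(-,X)$ to the same basic triangle and compares the two resulting triangles with base $X\xrightarrow{x}X$; your route gives a canonical isomorphism and also makes explicit why $\rhom_R(\K(x),X)\cong\Hom_R(\K(x),X)$, which the paper leaves implicit, so this is a harmless and slightly cleaner variant of the same argument.
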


\begin{proof}
(1) If $x$ is a unit of $R$, then $\K(x)\cong0$ in $\d(R)$, and hence $X(x)=\K(x)\lten_RX[-1]\cong0$ in $\d(R)$.

(2) There exists an exact triangle $e:R\xrightarrow{x}R\to\K(x)\rightsquigarrow$ in $\d(R)$.
Applying the functor $-\lten_RX$ to $e$ gives rise to an exact triangle $X\xrightarrow{x}X\to\K(x)\lten_RX\rightsquigarrow$ in $\d(R)$, which induces an exact triangle $a: X(x)\to X\xrightarrow{x}X\rightsquigarrow$ in $\d(R)$.
Hence $X(x)$ belongs to $\res_{\d(R)}X$.
Applying the functor $\rhom_R(-,X)$ to $e$ yields an exact triangle $b: \rhom_R(\K(x),X)\to X\xrightarrow{x}X\rightsquigarrow$.
It follows from $a$ and $b$ that $X(x)\cong\rhom_R(\K(x),X)$.

(3) As $x$ belongs to $\m$, we have $\K(x)\lten_Rk\cong(0\to k\xrightarrow{x}k\to0)=(0\to k\xrightarrow{0}k\to0)\cong k\oplus k[1]$.
Hence
$$
\begin{array}{l}
\rhom_R(k,X(x))\cong\rhom_R(k,\rhom_R(\K(x),X))\cong\rhom_R(\K(x)\lten_Rk,X)\\
\phantom{\rhom_R(k,X(x))}
\cong\rhom_R(k\oplus k[1],X)\cong\rhom_R(k,X)\oplus\rhom_R(k,X)[-1]\text{ by (2), and}\\
X(x)\lten_Rk\cong\K(x)\lten_RX[-1]\lten_Rk\cong(\K(x)\lten_Rk)\lten_RX[-1]\\
\phantom{X(x)\lten_Rk\cong\K(x)\lten_RX[-1]\lten_Rk}\cong(k\oplus k[1])\lten_RX[-1]\cong(X\lten_Rk)\oplus(X\lten_Rk)[-1].
\end{array}
$$
As $\inf Y[-1]=\inf Y+1$ for any complex $Y$, we get $\depth X(x)=\inf\rhom_R(k,X(x))=\inf\rhom_R(k,X)=\depth X$ and $\pd X(x)=-\inf(X(x)\lten_Rk)=-\inf(X\lten_Rk)=\pd X$, where we use Proposition \ref{27}(2) for the latter statement.
By virtue of Proposition \ref{27}(6), we have $X\in\E_R$ if and only if $X(x)\in\E_R$.

(4) To show $(\supseteq)$, let $\p\in\NE(X)\cap\V(x)$.
Then $\pd_{R_\p}X_\p>0$ and $\frac{x}{1}\in\p R_\p$.
By (3) we have $\pd_{R_\p}X_\p(\frac{x}{1})=\pd_{R_\p}X_\p>0$, whence $\p\in\NE(X(x))$.
To show $(\subseteq)$, let $\p\in\NE(X(x))$.
Then $\pd_{R_\p}X_\p(\frac{x}{1})>0$.
In particular, $X_\p(\frac{x}{1})\ncong0$ in $\d(R)$.
Hence $x\in\p$ by (1).
By (3) we get $0<\pd_{R_\p}X_\p(\frac{x}{1})=\pd_{R_\p}X_\p$, so that $\p\in\NE(X)$.
\end{proof}

The theorem below is the first main result of this section.
The first and last assertions of the theorem are regarded as derived category versions of \cite[Theorem 4.3]{res} and \cite[Lemma 4.6]{radius} respectively, which concern the nonfree locus and the resolving closure of an object in the module category $\mod R$.

\begin{thm}\label{26}
Let $X$ be an object of $\d(R)$.
Let $W$ be a closed subset of $\spec R$ contained in $\NE(X)$.
Then:
\begin{enumerate}[\rm(1)]
\item
There exists an object $Y\in\res_{\d(R)}X$ such that $W=\NE(Y)$.
\item
If $R$ is local and $W$ is nonempty, then $Y$ can be chosen so that $\pd Y=\pd X$ and $\depth Y=\depth X$. 
\item
If $W$ is irreducible, then $Y$ can be chosen so that $\pd Y_\p=\pd X_\p$ and $\depth Y_\p=\depth X_\p$ for all $\p\in W$.
\end{enumerate}
\end{thm}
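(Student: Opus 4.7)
The plan is to construct $Y$ from $X$ by iterating the operation $X \mapsto X(x)$. Write $W = \V(I)$ for some ideal $I = (x_1, \ldots, x_n)$ of $R$, and set $Y := X(x_1, \ldots, x_n)$; unwinding the recursive definition gives $Y \cong \K(x_1, \ldots, x_n) \lten_R X[-n]$. The whole argument then reduces to iterated application of the parts of Lemma \ref{24}.

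For (1), repeated use of Lemma \ref{24}(2) yields the chain of memberships $X(x_1) \in \res_{\d(R)} X$, then $X(x_1, x_2) \in \res_{\d(R)} X(x_1) \subseteq \res_{\d(R)} X$, and so on, whence $Y \in \res_{\d(R)} X$. Repeated application of Lemma \ref{24}(4) gives
$$\NE(Y) \;=\; \NE(X) \cap \V(x_1) \cap \cdots \cap \V(x_n) \;=\; \NE(X) \cap \V(I) \;=\; \NE(X) \cap W,$$
and this equals $W$ since $W \subseteq \NE(X)$ by hypothesis. For (2), if $R$ is local with maximal ideal $\m$ and $W$ is nonempty, then $I$ is a proper ideal, so $I \subseteq \m$, and the generators $x_i$ can be chosen inside $\m$. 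Inductively applying Lemma \ref{24}(3) then yields $\pd Y = \pd X$ and $\depth Y = \depth X$.

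For (3), write $W = \V(\p)$ with $\p \in \spec R$ and take $x_1, \ldots, x_n$ to generate $\p$. For every $\q \in W$ one has $\p \subseteq \q$, so each $x_i/1$ sits in the maximal ideal $\q R_\q$ of the local ring $R_\q$. Because Koszul complexes and derived tensor products commute with localization, $Y_\q \cong X_\q(x_1/1, \ldots, x_n/1)$, and iterating Lemma \ref{24}(3) over $R_\q$ produces $\pd Y_\q = \pd X_\q$ and $\depth Y_\q = \depth X_\q$ for each $\q \in W$. The whole argument is essentially mechanical once Lemma \ref{24} is in hand; the only step needing explicit verification is the compatibility of the operation $X \mapsto X(x)$ with localization used in (3), which is routine from standard properties of Koszul complexes.
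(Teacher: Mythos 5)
Your proposal is correct, but it takes a genuinely different and more direct route than the paper. The paper first decomposes $W$ into irreducible components, reduces to $W=\V(\p)$, and then builds the sequence $\xx$ adaptively: at each stage it picks a single element $x\in\p\setminus\q$ for some $\q\in\NE(X)\setminus\V(\p)$, uses Lemma \ref{24}(4) together with the closedness of NE-loci (Proposition \ref{25}) to get $\V(\p)\subseteq\NE(X(x))\subsetneq\NE(X)$, and invokes the noetherianity of $\spec R$ to terminate; the general case is then assembled by direct sums via Proposition \ref{27}(4). You instead take a full finite generating set $\xx$ of an ideal $I$ with $\V(I)=W$ once and for all, and read off $\NE(X(\xx))=\NE(X)\cap\V(x_1)\cap\cdots\cap\V(x_n)=\NE(X)\cap W=W$ by iterating Lemma \ref{24}(4), which handles arbitrary closed $W$ in one stroke (no reduction to irreducible components, no closedness of NE-loci, no noetherian termination argument, no direct sums). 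Parts (2) and (3) then follow exactly as you say, by iterating Lemma \ref{24}(3) over $R$ (generators of a proper ideal lie in $\m$) respectively over $R_\q$ for $\q\supseteq\p$; the localization compatibility $(X(x))_\q\cong X_\q(\tfrac{x}{1})$ that you flag is routine and is in fact already used implicitly in the paper's proof of Lemma \ref{24}(4), so it is available. What the paper's adaptive construction buys is mainly economy in the length of the sequence $\xx$ and a closer parallel with the module-category arguments it generalizes (\cite[Theorem 4.3]{res}, \cite[Lemma 4.6]{radius}); for the statement as given, your shorter argument suffices. Two trivial housekeeping points: if $W=\emptyset$ you should either take $Y=R$ or note that a unit generator of $I=R$ gives $X(1)\cong0$, and when both (2) and (3) apply you should take $I=\p$ so that a single $Y$ satisfies all assertions simultaneously, as the paper's $Y$ does.
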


\begin{proof}
When $W$ is empty, we can take $Y:=R$.
Assume that $W\ne\emptyset$.
Then there exist prime ideals $\p_1,\dots,\p_n$ of $R$ such that $n>0$ and $W=\V(\p_1)\cup\cdots\cup\V(\p_n)$.
Each $\V(\p_i)$ is contained in $\NE(X)$.
If we find an object $Y_i\in\res_{\d(R)}X$ such that $\V(\p_i)=\NE(Y_i)$, then $Y:=Y_1\oplus\cdots\oplus Y_n$ belongs to $\res X$ and satisfies $W=\NE(Y)$ by Lemma \ref{31}(2).
If $R$ is local, $\pd_RY_i=\pd_RX$ and $\depth_RY_i=\depth_RX$ for all $1\le i\le n$, then $\pd_RY=\sup_{1\le i\le n}\{\pd_RY_i\}=\pd_RX$ and $\depth_RY=\inf_{1\le i\le n}\{\depth_RY_i\}=\depth_RX$ by Proposition \ref{27}(4).
Thus, it suffices to show that in the case where $W=\V(\p)$ for some prime ideal $\p$ of $R$ there exists $Y\in\res_{\d(R)}X$ such that $W=\NE(Y)$, $\pd_{R_P}Y_P=\pd_{R_P}X_P$ and $\depth_{R_P}Y_P=\depth_{R_P}X_P$ for all $P\in W$.

The set $\NE(X)$ contains $\V(\p)$.
If $\NE(X)=\V(\p)$, then we are done by letting $Y=X$.
Suppose that $\NE(X)$ strictly contains $\V(\p)$ and choose an element $\q\in\NE(X)\setminus\V(\p)$.
Then $\q$ does not contain $\p$, and we can choose an element $x\in\p\setminus\q$.
Using Lemma \ref{24}(4), we get $\p\in\NE(X)\cap\V(x)=\NE(X(x))$.
As $\NE(X(x))$ is Zariski-closed by Proposition \ref{25}, it contains $\V(\p)$.
It follows that $\V(\p)\subseteq\NE(X(x))=\NE(X)\cap\V(\p)\subseteq\NE(X)$ and the fact that $\q\in\NE(X)\setminus\V(\p)$ says $\NE(X)\cap\V(\p)\ne\NE(X)$.
We conclude $\V(\p)\subseteq\NE(X(x))\subsetneq\NE(X)$.

By Lemma \ref{24}(2)(3), we have that $X(x)\in\res X$, and that $\pd_{R_P}X(x)_P=\pd_{R_P}X_P(\frac{x}{1})=\pd_{R_P}X_P$ and $\depth_{R_P}X(x)_P=\depth_{R_P}X_P(\frac{x}{1})=\depth_{R_P}X_P$ for all $P\in\V(\p)$ since $x\in\p\subseteq P$.
If $\NE(X(x))$ is equal to $\V(\p)$, we are done by letting $Y=X(x)$.
If $\NE(X(x))$ strictly contains $\V(\p)$, we apply the above argument to find $y\in\p$ with $\V(\p)\subseteq\NE(X(x,y))\subsetneq\NE(X(x))$.
Iterating this procedure, we get an ascending chain
$$
\V(\p)\subseteq\cdots\subsetneq\NE(x,y,z,w)\subsetneq\NE(x,y,z)\subsetneq\NE(X(x,y))\subsetneq\NE(X(x))\subsetneq\NE(X)
$$
of subsets of $\spec R$ with $x,y,z,w,\ldots\in\p$.
However, we can do this only finitely many times, since each NE-locus appearing in the above chain is Zariski-closed, and the topological space $\spec R$ is noetherian.

We thus obtain a sequence $\xx=x_1,\dots,x_n$ in $\p$ with $\NE(X(\xx))=\V(\p)$ and $X(\xx)\in\res X$, $\pd_{R_P}X(\xx)_P=\pd_{R_P}X_P$ and $\depth_{R_P}X(\xx)_P=\depth_{R_P}X_P$ for all $P\in\V(\p)$.
The theorem follows by letting $Y=X(\xx)$.
\end{proof}

From the above theorem we can deduce the following corollary.
Thanks to this result, for each object $X$ in a fixed resolving subcategory of $\d(R)$, one may often assume that $X$ belongs to $\d_0(R)$.

\begin{cor}\label{32}
Let $R$ be a local ring.
For every object $X\in\d(R)$ there exists an object $Y\in\res_{\d(R)}X\cap\d_0(R)$ such that $\pd_RY=\pd_RX$ and $\depth_RY=\depth_RX$.
\end{cor}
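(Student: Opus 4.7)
The plan is to apply Theorem \ref{26} to $X$ with the one-point closed subset $W := \{\m\}$ of $\spec R$, where $\m$ denotes the maximal ideal of the local ring $R$. The subtle point is verifying the hypotheses of Theorem \ref{26}, and in particular that $W \subseteq \NE(X)$; this forces me to treat separately the case in which $\NE(X)$ is empty.

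First I would handle the easy case $\NE(X) = \emptyset$. By Lemma \ref{31}(1) this is equivalent to $X \in \E_R$, and then $X_\p \in \E_{R_\p}$ for \emph{every} prime $\p$, so in particular $X \in \d_0(R)$. Setting $Y := X$ manifestly satisfies all the required conditions.

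Next I would treat the case $\NE(X) \ne \emptyset$. By Proposition \ref{25} the set $\NE(X)$ is Zariski-closed in $\spec R$, so it has the form $\V(I)$ for some proper ideal $I$ of $R$; as $R$ is local, $I \subseteq \m$, so $\m \in \NE(X)$. Hence $W = \{\m\} = \V(\m)$ is a nonempty closed subset of $\spec R$ contained in $\NE(X)$. Applying parts (1) and (2) of Theorem \ref{26} to $X$ and this $W$ produces an object $Y \in \res_{\d(R)} X$ satisfying $\NE(Y) = \{\m\}$, $\pd_R Y = \pd_R X$ and $\depth_R Y = \depth_R X$. The equality $\NE(Y) = \{\m\}$ says exactly that $Y_\p \in \E_{R_\p}$ for every non-maximal prime $\p$, which is the defining condition for $Y$ to lie in $\d_0(R)$.

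The main obstacle is not really in the deduction itself, since the statement falls out as a direct corollary of Theorem \ref{26}; rather, it lay in formulating Theorem \ref{26} in sufficient generality (closed $W$ inside $\NE(X)$, together with preservation of $\pd$ and $\depth$) so that this refinement is available off the shelf. The key observation that makes the argument clean is that over a local ring every nonempty Zariski-closed subset of $\spec R$ automatically contains the maximal ideal, so the one-point target $W = \{\m\}$ is legitimate whenever $X \notin \E_R$.
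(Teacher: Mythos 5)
Your proposal is correct and follows essentially the same route as the paper: dispose of the case $X\in\E_R$ (equivalently $\NE(X)=\emptyset$) by taking $Y=X$, and otherwise use Lemma \ref{31}(1) and Proposition \ref{25} to see that $\m\in\NE(X)$, so that Theorem \ref{26} applied to $W=\V(\m)=\{\m\}$ yields the desired $Y$. No gaps.
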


\begin{proof}
When $X$ belongs to $\E_R$, we put $Y:=X$ and are done.
Let $X$ be outside of $\E_R$.
Then the maximal ideal $\m$ of $R$ belongs to $\NE(X)$ by Lemma \ref{31}(1) and Proposition \ref{25}, and hence $\V(\m)$ is contained in $\NE(X)$.
Applying Theorem \ref{26} to $\V(\m)$, we find an object $Y\in\res_{\d(R)}X$ such that $\NE(Y)=\V(\m)=\{\m\}$, $\pd Y=\pd X$ and $\depth Y=\depth X$.
The equality $\NE(Y)=\{\m\}$ implies that $Y$ belongs $\d_0(R)$.
\end{proof}

Next, we prove the following lemma, which is thought of as a derived category version of \cite[Lemma 3.2 and Proposition 3.3]{crspd}.
For a partially ordered set $P$ we denote by $\min P$ the set of mimimal elements of $P$.

\begin{lem}\label{29}
Let $\X$ be a subcategory of $\d(R)$.
\begin{enumerate}[\rm(1)]
\item
Let $S$ be a multiplicatively closed subset of $R$.
Suppose that $\X$ is a resolving subcategory of $\d(R)$.
Then $\add_{\d(R_S)}\X_S$ is a resolving subcategory of $\d(R_S)$.
Hence, the equality $\add_{\d(R_S)}\X_S=\res_{\d(R_S)}\X_S$ holds.
\item
Suppose that $\X$ contains $R$ and is closed under finite direct sums.
Let $Z$ be a nonempty finite subset of $\spec R$.
Let $C\in\d(R)$ be such that $C_\p\in\add_{\d(R_\p)}\X_\p$ for all $\p\in Z$.
Then there exist exact triangles
$$
K\to X\to C\to K[1],\qquad
L\to K\oplus C\to X\to L[1]
$$
in $\d(R)$ such that $X\in\X$, that $\NE(L)\subseteq\NE(C)$, that $\supp L\cap Z=\emptyset$, and that $\pd_{R_\p}L_\p\le\pd_{R_\p}C_\p$ and $\depth_{R_\p}L_\p\ge\depth_{R_\p}C_\p$ for all prime ideals $\p$ of $R$.
\item
Assume that $\X$ is a resolving subcategory of $\d(R)$. 
The following are equivalent for each $C\in\d(R)$.
\begin{enumerate}[\rm(a)]
\item
The object $C$ belongs to $\X$.
\item
The localization $C_\p$ belongs to $\add_{\d(R_\p)}\X_\p$ for all prime ideals $\p$ of $R$.
\item
The localization $C_\m$ belongs to $\add_{\d(R_\m)}\X_\m$ for all maximal ideals $\m$ of $R$.
\end{enumerate}
\end{enumerate}
\end{lem}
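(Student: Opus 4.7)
The plan is to treat the three parts in order, with part (2) being the technical heart.

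For part (1), I will verify directly that $\add_{\d(R_S)}\X_S$ is a resolving subcategory of $\d(R_S)$. Containment of $R_S$, closure under direct summands, and closure under negative shifts are immediate from the corresponding properties of $\X$ together with the fact that localization commutes with shifts. The nontrivial condition is closure under extensions: given $A\to B\to C\rightsquigarrow$ in $\d(R_S)$ with $A,C\in\add\X_S$, I write $A\oplus A'\cong X_S$ and $C\oplus C'\cong Z_S$ for some $X,Z\in\X$, extend the connecting map $\delta\colon C\to A[1]$ by zero to a morphism $Z_S\to X_S[1]$, and use the localization-of-Hom isomorphism in Lemma \ref{16}(1) to lift it (after clearing denominators) to some $\psi\colon Z\to X[1]$ in $\d(R)$. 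Closure of $\X$ under extensions gives an exact triangle $X\to Y'\to Z\xrightarrow{\psi}X[1]$ with $Y'\in\X$, and localizing yields $Y'_S\cong B\oplus A'\oplus C'$, displaying $B$ as a summand of an object of $\X_S$. The equality $\add\X_S=\res\X_S$ then follows because resolving subcategories are closed under finite direct sums (via the split triangle $X\to X\oplus Y\to Y\rightsquigarrow$) and under direct summands.

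For part (2), I first reduce to the case where $Z$ consists of pairwise incomparable primes: if $\p\subseteq\p'$ are both in $Z$, then $L_{\p'}=0$ forces $L_\p=0$ by further localization, so the maximal elements of $Z$ suffice. For each $\p\in Z$, the hypothesis $C_\p\in\add\X_\p$ combined with part (1) and Lemma \ref{16}(1) yields $X^{(\p)}\in\X$ and a morphism $f^{(\p)}\colon X^{(\p)}\to C$ in $\d(R)$ whose localization at $\p$ is split surjective. Setting $X=\bigoplus_{\p\in Z}X^{(\p)}\in\X$ and $f=(f^{(\p)})\colon X\to C$ produces a map split surjective at every $\p\in Z$, which defines the first triangle $K\to X\xrightarrow{f}C\xrightarrow{\delta}K[1]$. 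Since $\delta_\p=0$ for each $\p\in Z$, I choose $u_\p\in R\setminus\p$ with $u_\p\delta=0$ in $\Hom_{\d(R)}(C,K[1])$ and assemble them by prime avoidance (which requires the incomparability) into a single element $u\in R$ satisfying $u\delta=0$ and $u\notin\p$ for every $\p\in Z$. The factorization $u\cdot\id_C=f\circ g$ then exists for some $g\colon C\to X$, and applying Lemma \ref{15}(2) to the composable pair $C\xrightarrow{g}X\xrightarrow{f}C$, together with $\cone(f)\cong K[1]$ and $\cone(u\cdot\id_C)\cong\K(u)\lten_RC$, produces a triangle whose $(-1)$-shift identifies $L$ with $(\K(u)\lten_RC)[-1]=C(u)$ in the notation preceding Lemma \ref{24}. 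The four asserted properties of $L$ then follow directly from Lemma \ref{24}: the vanishing $L_\p=0$ for $\p\in Z$ from part (1) of that lemma, the inclusion $\NE(L)\subseteq\NE(C)$ from part (4), and the projective-dimension and depth bounds from part (3) combined with vanishing at primes where $u$ is a unit.

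For part (3), the implications (a)$\Rightarrow$(b) and (b)$\Rightarrow$(c) are clear, while (c)$\Rightarrow$(b) holds by further localization since $R_\p$ is a localization of $R_\m$ for any maximal $\m\supseteq\p$. The substantive implication (b)$\Rightarrow$(a) I will prove by Noetherian induction on the Zariski-closed subset $\NE(C)\subseteq\spec R$ (Proposition \ref{25}). The base case $\NE(C)=\emptyset$ gives $C\in\E_R\subseteq\X$ by Lemma \ref{31}(1). For the inductive step I apply part (2) with $Z=\min\NE(C)$, a finite set by Noetherianity of $\spec R$. Applying part (1) to each of the two resulting triangles shows $L_\p\in\add\X_\p$ for every $\p$, while the support and NE-locus properties of $L$ give $\NE(L)\subseteq\NE(C)\setminus Z\subsetneq\NE(C)$. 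The induction hypothesis delivers $L\in\X$, and the second triangle $L\to K\oplus C\to X\rightsquigarrow$ with $L,X\in\X$ then places $K\oplus C$ in $\X$ by closure under extensions, so $C\in\X$ by closure under direct summands. The main obstacle is the construction in part (2) of the element $u$: the reduction to incomparable primes, the prime-avoidance combination of the local $u_\p$, and the clean identification of $L$ with the Koszul twist $C(u)$ via Lemma \ref{15}(2) are what make the support, projective-dimension, and depth estimates fall out simultaneously from a single application of Lemma \ref{24}.
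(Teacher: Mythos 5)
Your proposal is correct and follows essentially the same route as the paper: the same fraction-lifting argument for (1), the same construction in (2) (local split epimorphisms assembled into a map $X\to C$, prime avoidance applied to $\ann_R$ of the connecting morphism, and identification of $L$ with the Koszul twist $C(u)$ --- your appeal to Lemma \ref{15}(2) is just the paper's octahedral-axiom step repackaged), and the same Noetherian induction on NE-loci for (3). The only blemishes are cosmetic: standard prime avoidance does not require the primes of $Z$ to be incomparable, so your preliminary reduction is unnecessary (though harmless and valid), and the fraction description of morphisms over a general multiplicatively closed set $S$ is the fact the paper quotes from Avramov--Foxby rather than Lemma \ref{16}(1), which is stated only for localization at a prime.
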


\begin{proof}
(1) By definition, the additive closure $\add\X_S$ is closed under direct summands.
As $R$ is in $\X$, we have $R_S\in\X_S\subseteq\add\X_S$.
Let $A\in\add\X_S$.
Then $A\oplus B$ is isomorphic to $X_S$ for some $B\in\d(R_S)$ and $X\in\X$, whence $A[-1]\oplus B[-1]$ is isomorphic to $(X[-1])_S$.
As $\X$ is closed under negative shifts, we see that $A[-1]$ is in $\add\X_S$.
Therefore, $\add\X_S$ is closed under negative shifts.
Let $L\to M\to N\rightsquigarrow$ be an exact triangle in $\d(R_S)$ with $L,N\in\add\X_S$.
Then $L\oplus L'\cong X_S$ and $N\oplus N'\cong Y_S$ for some $L',N'\in\d(R_S)$ and $X,Y\in\X$.
Taking the direct sum with the exact triangles $L'\to L'\to0\rightsquigarrow$ and $0\to N'\to N'\rightsquigarrow$, we observe that there exists an exact triangle $L'\oplus M\oplus N'\to Y_S\xrightarrow{f}X[1]_S\rightsquigarrow$ in $\d(R_S)$.
Write $f=\frac{g}{s}$, where $g:Y\to X[1]$ is a morphism in $\d(R)$ and $s$ is an element of $S$; see \cite[Lemma 5.2(b)]{AF}.
There is an exact triangle $X\to Z\to Y\xrightarrow{g}X[1]$ in $\d(R)$.
Since $\X$ is closed under extensions, $Z$ is in $\X$.
Also, we see that $Z_S$ is isomorphic to $L'\oplus M\oplus N'$ in $\d(R_S)$.
It follows that $M$ belongs to $\add\X_S$, which shows that $\add\X_S$ is closed under extensions.

(2) Write $Z=\{\p_1,\dots,\p_n\}$.
Fix $1\le i\le n$.
There exists $X_i\in\X$ such that $C_{\p_i}$ is a direct summand of $(X_i)_{\p_i}$ in $\d(R_{\p_i})$.
We have a split epimorphism $f_i:(X_i)_{\p_i}\to C_{\p_i}$ in $\d(R_{\p_i})$, so that there is a morphism $\alpha_i:C_{\p_i}\to(X_i)_{\p_i}$ in $\d(R_{\p_i})$ with $f_i\alpha_i=\id_{C_{\p_i}}$.
Choose a morphism $g_i:X_i\to C$ in $\d(R)$ and an element $s_i\in R\setminus\p_i$ such that $\frac{g_i}{s_i}=f_i$.
Set $X=X_1\oplus\cdots\oplus X_n\in\X$ and consider the morphism $g=(g_1,\dots,g_n):X\to C$ in $\d(R)$.
Then $\frac{g}{1}=(\frac{g_1}{1},\dots,\frac{g_n}{1}):X_{\p_i}\to C_{\p_i}$ is a split epimorphism in $\d(R_{\p_i})$ for each $i$, since letting $\beta_i:C_{\p_i}\to X_{\p_i}$ be the transpose of $(0,\dots,0,\frac{1}{s_i}\alpha_i,0,\dots,0)$, we have $\frac{g}{1}\beta_i=\id_{C_{\p_i}}$.
There is an exact triangle $K\to X\xrightarrow{g}C\xrightarrow{h}K[1]$ in $\d(R)$.
For any integer $1\le i\le n$ it holds that $h_{\p_i}=\frac{h}{1}=0$ in $\d(R_{\p_i})$, which means that the annihilator $\ann_Rh$ of $h\in\Hom_{\d(R)}(C,K[1])$ is not contained in $\p_i$.
By prime avoidance, we find an element $s\in\ann_Rh$ such that $s\notin\p$ for all $\p\in Z$.
The octahedral axiom gives rise to a commutative diagram
$$
\xymatrix@R-1pc@C+2pc{
C\ar[r]^-s\ar@{=}[d]& C\ar[d]^-h\ar[r]& \k(s)\lten_RC\ar[r]\ar[d]&C[1]\ar@{=}[d]\\
C\ar[r]^-{hs}_-0\ar[d]& K[1]\ar@{=}[d]\ar[r]& K[1]\oplus C[1]\ar[d]\ar[r]& C[1]\ar[d]\\
C\ar[r]^-h\ar[d]& K[1]\ar[r]\ar[d]& X[1]\ar@{=}[d]\ar[r]^-{g[1]}& C[1]\ar[d]\\
\k(s)\lten_RC\ar[r]& K[1]\oplus C[1]\ar[r]& X[1]\ar[r]& (\k(s)\lten_RC)[1]
}
$$
in $\d(R)$ whose rows are exact triangles.
The bottom row in the diagram induces an exact triangle $L\to K\oplus C\to X\to L[1]$ in $\d(R)$, where we put $L:=C(s)=\k(s)\lten_RC[-1]$.

Let $\p\in Z$.
Then the element $\frac{s}{1}$ of $R_\p$ is a unit, and hence it holds in $\d(R_\p)$ that $\k(s)_\p=\k(\frac{s}{1},R_\p)\cong0$.
Therefore, $L_\p=\k(s)_\p\lten_{R_\p}C_\p[-1]\cong0$ in $\d(R_\p)$.
It follows that the intersection $\supp L\cap Z$ is an empty set.

Fix a prime ideal $\p$ of $R$.
Note that we have $L_\p=C(s)_\p=C_\p(\frac{s}{1})$.
If $s$ is in $\p$, then $\pd C_\p(\frac{s}{1})=\pd C_\p$ and $\depth C_\p(\frac{s}{1})=\depth C_\p$ by Lemma \ref{24}(3).
If $s$ is not in $\p$, then $C_\p(\frac{s}{1})=0$ by Lemma \ref{24}(1), so that $\pd C_\p(\frac{s}{1})=-\infty$ and $\depth C_\p(\frac{s}{1})=\infty$.
Therefore, there are inequalities $\pd L_\p\le\pd C_\p$ and $\depth L_\p\ge\depth C_\p$.
If $\p$ is not in $\NE(C)$, then $C_\p$ is in $\E_{R_\p}$, and so is $L_\p$.
Hence $\NE(L)$ is contained in $\NE(C)$.

(3) Localization shows the implications (a) $\Rightarrow$ (c) $\Rightarrow$ (b).
Assume that (b) holds and $C\notin\X$.
Then the set
$$
A=\{\NE(Y)\mid\text{$Y\in\d(R)$, $Y\notin\X$ and $Y_\p\in\add_{\d(R_\p)}\X_\p$ for all prime ideals $\p$ of $R$}\}
$$
is nonempty.
Since $\spec R$ is a noetherian space and each $\NE(Y)$ is Zariski-closed by Proposition \ref{25}, the set $A$ contains a minimal element $\NE(B)$ with $B\in\d(R)$, $B\notin\X$ and $B_\p\in\add\X_\p$ for every prime ideal $\p$ of $R$.
If $\NE(B)$ is an empty set, then we have $B\in\E_R\subseteq\X$ by Lemma \ref{31}(1), which gives a contradiction.
Thus $\NE(B)$ is a nonempty Zariski-closed set, which implies that $\min\NE(B)$ is nonempty and finite.
It follows from (2) that there exist exact triangles $K\to X\to B\rightsquigarrow$ and $L\to K\oplus B\to X\rightsquigarrow$ in $\d(R)$ such that $X\in\X$, $\NE(L)\subseteq\NE(B)$ and $\NE(L)\cap\min\NE(B)=\emptyset$.
In particular, $\NE(L)$ is strictly contained in $\NE(B)$.

We claim that $L_\p$ is in $\add\X_\p$ for every $\p\in\spec R$.
In fact, there is an exact triangle $K_\p\to X_\p\to B_\p\rightsquigarrow$.
It follows from (1) that $\add\X_\p$ is a resolving subcategory of $\d(R_\p)$.
Since $X_\p$ and $B_\p$ belong to $\add\X_\p$, so does $K_\p$. 
There is an exact triangle $L_\p\to K_\p\oplus B_\p\to X_\p\rightsquigarrow$.
As $K_\p\oplus B_\p$ and $X_\p$ are in $\add\X_\p$, so is $L_\p$.

Now the minimality of $\NE(B)$ forces $L$ to be in $\X$.
The exact triangle $L\to K\oplus B\to X\rightsquigarrow$ implies that $B$ belongs to $\X$, which contradicts the choice of $B$.
We thus conclude that the object $C$ belongs to $\X$.
\end{proof}

\begin{rem}
In Lemma \ref{29}(2), the object $L$ is taken in such a way that $\supp L\cap Z=\emptyset$.
Comparing this with the module category version of Lemma \ref{29}(2) given in  \cite[Lemma 3.2]{crspd}, we see that the expected condition satisfied by the object $L$ in Lemma \ref{29}(2) is the weaker condition that $\NE(L)\cap Z=\emptyset$.
It is an advantage the derived category possesses against the module category that one can get $L$ so that $\supp L\cap Z=\emptyset$. 
By the way, only for the purpose of proving our main results, it does suffice to have the equality $\NE(L)\cap Z=\emptyset$.
\end{rem}

Here we define assignments between subcategories of $\d(R)$ and maps from $\spec R$ to $\Z\cup\{\pm\infty\}$, and state a couple of properties which are used in the next main result of this section and its proof.

\begin{dfn}\label{53}
For a subcategory $\X$ of $\d(R)$, we define the map $\Phi(\X):\spec R\to\Z\cup\{\pm\infty\}$ by $\Phi(\X)(\p)=\sup_{X\in\X}\{\pd_{R_\p}X_\p\}$ for $\p\in\spec R$.
For a map $f:\spec R\to\Z\cup\{\pm\infty\}$, we denote by $\Psi(f)$ the subcategory of $\d(R)$ consisting of objects $X$ with $\pd_{R_\p}X_\p\le f(\p)$ for all $\p\in\spec R$.
We equip the sets $\spec R$ and $\Z\cup\{\pm\infty\}$ with the partial orders given by the inclusion relation $(\subseteq)$ and the inequality relation $(\le)$, respectively.
\end{dfn}

\begin{lem}\label{28}
\begin{enumerate}[\rm(1)]
\item
Let $\X$ be a subcategory of $\d(R)$ which contains $R$.
Then $\Phi(\X)$ defines an order-preserving map from $\spec R$ to $\N\cup\{\infty\}$.
\item
Let $f:\spec R\to\N\cup\{\infty\}$ be a map.
Then $\Psi(f)$ is a resolving subcategory of $\d(R)$.
\end{enumerate}
\end{lem}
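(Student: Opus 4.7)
The plan for part (1) is to address the two assertions separately. To see that $\Phi(\X)(\p)$ lies in $\N\cup\{\infty\}$ for every $\p\in\spec R$, I use that $R\in\X$ contributes the term $\pd_{R_\p}R_\p=0$ to the supremum defining $\Phi(\X)(\p)$, so $\Phi(\X)(\p)\ge 0$; since the supremum a priori lies in $\Z\cup\{\pm\infty\}$, this forces $\Phi(\X)(\p)\in\N\cup\{\infty\}$. For the order-preserving property, let $\p\subseteq\q$ in $\spec R$. Then $R_\p$ is a localization of $R_\q$, so by the standard fact that projective dimension does not increase under further localization (apply \cite[Proposition 5.3.P]{AF} to $R_\q$ in place of $R$), we have $\pd_{R_\p}X_\p\le\pd_{R_\q}X_\q$ for every object $X\in\d(R)$. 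Taking the supremum over $X\in\X$ on both sides yields $\Phi(\X)(\p)\le\Phi(\X)(\q)$.

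For part (2), I will verify the four defining conditions of a resolving subcategory of $\d(R)$ in turn, each by reducing to a single item of Proposition \ref{27}. Containment of $R$ is immediate since $\pd_{R_\p}R_\p=0\le f(\p)$ for every $\p\in\spec R$. Closure under direct summands follows from the projective dimension equality $\pd_{R_\p}(X\oplus Y)_\p=\sup\{\pd_{R_\p}X_\p,\pd_{R_\p}Y_\p\}$ in Proposition \ref{27}(4): if $X\oplus Y\in\Psi(f)$, then both $\pd_{R_\p}X_\p$ and $\pd_{R_\p}Y_\p$ are bounded above by $f(\p)$. For closure under extensions, given an exact triangle $A\to B\to C\rightsquigarrow$ in $\d(R)$ with $A,C\in\Psi(f)$, I localize at each $\p$ and apply the first projective dimension inequality in Proposition \ref{27}(3) to the triangle $A_\p\to B_\p\to C_\p\rightsquigarrow$, obtaining $\pd_{R_\p}B_\p\le\sup\{\pd_{R_\p}A_\p,\pd_{R_\p}C_\p\}\le f(\p)$. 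Finally, closure under negative shifts is handled by Proposition \ref{27}(1): if $X\in\Psi(f)$, then $\pd_{R_\p}(X[-1])_\p=\pd_{R_\p}X_\p-1\le f(\p)$, so $X[-1]\in\Psi(f)$.

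The proof is essentially bookkeeping with the properties of projective dimension compiled in Proposition \ref{27}, together with the elementary observation that $R$ lies in $\X$ (for (1)) and that $f$ takes values in $\N\cup\{\infty\}$ so the constant bound $f(\p)\ge 0$ accommodates $R$ (for (2)); I do not anticipate any genuine obstacle. The only point worth double-checking is the order-preservation in (1), where one must be careful that the chain of localizations really does allow one to treat $R_\p$ as a localization of $R_\q$ when $\p\subseteq\q$, but this is a standard fact.
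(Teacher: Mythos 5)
Your proof is correct and follows essentially the same route as the paper: both parts are routine applications of Proposition \ref{27}, with order-preservation in (1) coming from the fact that projective dimension does not increase under further localization. The only cosmetic difference is that in (2) you verify closure under negative shifts (via Proposition \ref{27}(1)) where the paper checks condition (iv) directly via the two-out-of-three inequalities of Proposition \ref{27}(3); these are equivalent by Proposition \ref{42}(1), as noted in the definition of a resolving subcategory.
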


\begin{proof}
(1) Fix a prime ideal $\p$ of $R$.
Since $R$ belongs to $\X$, it holds that $\sup_{X\in\X}\{\pd X_\p\}\ge\pd R_\p=0$.
Thus, $\Phi(\X)(\p)$ is an element of $\N\cup\{\infty\}$.
If $\q$ is a prime ideal of $R$ that contains $\p$, then we have $\pd_{R_\p}X_\p=\pd_{(R_\q)_{\p R_\q}}(X_\q)_{\p R_\q}\le\pd_{R_\q}X_\q$ for each $X\in\d(R)$ (see \cite[Proposition 5.1(P)]{AF}), whence $\Phi(\X)(\p)\le\Phi(\X)(\q)$.

(2)  We have $\pd R_\p=0\le f(\p)$ for every prime ideal $\p$ of $R$, which shows that $\Psi(f)$ contains $R$.
If $X$ is an object of $\Psi(f)$ and $Y$ is a direct summand of $X$ in $\d(R)$, then $\pd Y_\p\le\pd X_\p\le f(\p)$ for all $\p\in\spec R$ by Proposition \ref{27}(4), which shows that $Y$ belongs to $\Psi(f)$.
Let $X\to Y\to Z\rightsquigarrow$ be an exact triangle in $\d(R)$ with $Z\in\Psi(f)$. 
Then for each $\p\in\spec R$ there is an exact triangle $X_\p\to Y_\p\to Z_\p\rightsquigarrow$ in $\d(R_\p)$ and $\pd Z_\p\le f(\p)$.
It is seen from Proposition \ref{27}(3) that $\pd X_\p\le f(\p)$ if and only if $\pd Y_\p\le f(\p)$.
Therefore, $X$ is in $\Psi(f)$ if and only if $Y$ is in $\Psi(f)$.
We now conclude that $\Psi(f)$ is a resolving subcategory of $\d(R)$.
\end{proof}

Now we have reached the stage to state and prove the second main result of this section, which provides a complete classification of the resolving subcategories of $\k(R)$.

\begin{thm}\label{2}
The assignments $\X\mapsto\Phi(\X)$ and $f\mapsto\Psi(f)\cap\k(R)$ give mutually inverse bijections between the resolving subcategories of $\k(R)$, and the order-preserving maps from $\spec R$ to $\N\cup\{\infty\}$.
\end{thm}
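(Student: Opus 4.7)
The plan is to check well-definedness and then establish the two composition identities. Well-definedness is essentially Lemma \ref{28}: $\Phi(\X)$ lands in $\N\cup\{\infty\}$ and is order-preserving because $R\in\X$, while $\Psi(f)$ is a resolving subcategory of $\d(R)$, so its intersection with the thick subcategory $\k(R)$ is a resolving subcategory of $\k(R)$ by Proposition \ref{43}(2).

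For the identity $\Phi(\Psi(f)\cap\k(R))=f$, the $(\le)$ inequality is immediate from the definition of $\Psi(f)$. For the reverse inequality, I would fix $\p\in\spec R$ and an integer $n$ with $1\le n\le f(\p)$ and apply Theorem \ref{26}(3) to $X=R[n]$ (which has $\NE(R[n])=\spec R$, since $\pd_{R_\q}R[n]_\q=n$ for every $\q$) together with the irreducible closed subset $W=\V(\p)$. This produces $Y\in\res_{\d(R)}R[n]$, which is perfect by Proposition \ref{43}(3), with $\NE(Y)=\V(\p)$ and $\pd_{R_\q}Y_\q=n$ for every $\q\in\V(\p)$. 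Because $f$ is order-preserving, $\pd_{R_\q}Y_\q=n\le f(\p)\le f(\q)$ for $\q\supseteq\p$, and $\pd_{R_\q}Y_\q\le0\le f(\q)$ otherwise; hence $Y\in\Psi(f)\cap\k(R)$ realizes the value $n$ at $\p$. The case $n=0$ is witnessed by $R$ itself, and $f(\p)=\infty$ is handled by letting $n$ grow.

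For the reverse inclusion $\Psi(\Phi(\X))\cap\k(R)\subseteq\X$, I would first invoke Lemma \ref{29}(3) to reduce to the case where $R$ is local with maximal ideal $\m$, and then run a double induction: outer on $\dim R$ and inner on $\dim\NE(C)$ for $C\in\Psi(\Phi(\X))\cap\k(R)$. The outer base $\dim R=0$ is Theorem \ref{13}: since $\d_0(R)=\d(R)$ in this case, $\k_0(R)=\k(R)$ and the chain $\k_0^0(R)\subsetneq\k_0^1(R)\subsetneq\cdots\subsetneq\k_0(R)$ exhausts the resolving subcategories of $\k(R)$, with each $\k_0^d(R)$ corresponding to the constant function $d$. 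The inner base $\NE(C)=\emptyset$ gives $C\in\E_R\subseteq\X$ by Lemma \ref{31}(1). In the inner step, if $\NE(C)=\{\m\}$ then $C\in\k_0^d(R)$ with $d=\pd_RC\le\Phi(\X)(\m)$; choosing $X'\in\X$ with $\pd_RX'\ge d$ (possible since this supremum is either attained or is $\infty$), Proposition \ref{14} provides $\k_0^d(R)\subseteq\k_0^{\pd X'}(R)=\res_{\k(R)}\K_R[\pd_RX'-\edim R]\subseteq\res_{\k(R)}X'\subseteq\X$.

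Otherwise the minimal primes $\p_1,\dots,\p_s$ of $\NE(C)$ are all strictly contained in $\m$, so the outer hypothesis applies to each local ring $R_{\p_i}$ of strictly smaller dimension. Applied to the resolving subcategory $\add_{\k(R_{\p_i})}\X_{\p_i}$ of $\k(R_{\p_i})$ (resolving by Lemma \ref{29}(1) together with Proposition \ref{43}), and using that $\Phi(\add\X_{\p_i})$ coincides with the restriction of $\Phi(\X)$ to $\spec R_{\p_i}$, it yields $C_{\p_i}\in\add\X_{\p_i}$ for each $i$. Lemma \ref{29}(2) with $Z=\{\p_1,\dots,\p_s\}$ then supplies exact triangles $K\to X\to C\rightsquigarrow$ and $L\to K\oplus C\to X\rightsquigarrow$ with $X\in\X$, $\NE(L)\subseteq\NE(C)\setminus Z$, and $\pd_{R_\q}L_\q\le\pd_{R_\q}C_\q$ for every $\q$; in particular $\dim\NE(L)<\dim\NE(C)$ and $L\in\Psi(\Phi(\X))\cap\k(R)$, so the inner hypothesis places $L$ in $\X$. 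The second triangle $L\to K\oplus C\to X\rightsquigarrow$ with $L,X\in\X$ then forces $K\oplus C\in\X$ by closedness of $\X$ under extensions, whence $C\in\X$ by closedness under direct summands. The main obstacle will be orchestrating this double induction cleanly: verifying that in the nontrivial inner case the minimal primes of $\NE(C)$ genuinely lie strictly below $\m$ (so the outer hypothesis applies to a smaller-dimensional local ring), and that Lemma \ref{29}(2) really does produce an $L$ with a strict drop in $\dim\NE$.
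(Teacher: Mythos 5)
Your proposal is correct and follows essentially the same strategy as the paper's proof: it reduces the hard inclusion $\Psi(\Phi(\X))\cap\k(R)\subseteq\X$ to the local case via Lemma \ref{29}(3), inducts on $\dim\NE(-)$ with Lemma \ref{29}(2) supplying the descent (your two stated worries -- that the minimal primes lie strictly below $\m$ in the nontrivial case and that $\dim\NE(L)$ drops strictly -- hold for exactly the reasons implicit in the paper), and settles the case $\NE(C)\subseteq\{\m\}$ with the Section 3 results. Your only deviations are minor: you witness the surjectivity of $\Phi$ via Theorem \ref{26}(3) applied to $R[n]$ where the paper uses the explicit Koszul complexes $\K(\xx)[n-s]$ on a generating set of $\p$, you organize the change of rings as an outer induction on $\dim R$ where the paper just applies its induction basis over the localizations $R_{\p_i}$, and you use Proposition \ref{14} directly in place of Corollary \ref{32} combined with Theorem \ref{13}.
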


\begin{proof}
Fix a resolving subcategory $\X$ of $\k(R)$ and an order-preserving map $f:\spec R\to\N\cup\{\infty\}$.
Lemma \ref{28} implies that $\Phi(\X):\spec R\to\N\cup\{\infty\}$ is an order-preserving map and $\Psi(f)$ is a resolving subcategory of $\d(R)$.
Hence $\Psi(f)\cap\k(R)$ is a resolving subcategory of $\k(R)$.
Fix a prime ideal $\p$ of $R$.
It is clear that
$$
\Phi(\Psi(f)\cap\k(R))(\p)=\sup\{\pd_{R_\p}P_\p\mid P\in\k(R)\text{ and }\pd_{R_\q}P_\q\le f(\q)\text{ for all }\q\in\spec R\}\le f(\p).
$$
Let $\xx=x_1,\dots,x_s$ be a system of generators of $\p$, and let $\q$ be a prime ideal of $R$.
First, we consider the case where $f(\p)=:n<\infty$.
Set $P=\K(\xx)[n-s]\in\k(R)$.
We have $\pd_{R_\p}P_\p=n=f(\p)$ by Proposition \ref{27}(1)(7).
If $\p$ is contained in $\q$, then $\pd_{R_\q}P_\q=\pd_{R_\q}\K(\xx,R_\q)+(n-s)\le s+(n-s)=n=f(\p)\le f(\q)$.
If $\p$ is not contained in $\q$, then $\pd_{R_\q}P_\q=-\infty\le f(\q)$ by Proposition \ref{27}(7).
Thus $\Phi(\Psi(f)\cap\k(R))(\p)=f(\p)$.
Next, we consider the case where $f(\p)=\infty$.
Then for any integer $n$ we set $P=\K(\xx)[n-s]$ to have $\pd_{R_\p}P_\p=n$.
If $\p$ is contained in $\q$, then $\infty=f(\p)\le f(\q)$, so that $\pd_{R_\q}P_\q\le\infty=f(\q)$.
If $\p$ is not contained in $\q$, then $\pd_{R_\q}P_\q=-\infty\le f(\q)$.
We get $\Phi(\Psi(f)\cap\k(R))(\p)=\infty=f(\p)$.
It now follows that $\Phi(\Psi(f)\cap\k(R))=f$.

It remains to prove that $\Psi(\Phi(\X))\cap\k(R)=\X$.
Note that there are an equality and an inclusion
$$
\textstyle\Psi(\Phi(\X))\cap\k(R)=\{P\in\k(R)\mid\text{$\pd_{R_\p}P_\p\le\sup_{X\in\X}\{\pd_{R_\p}X_\p\}$ for all $\p\in\spec R$}\}\supseteq\X.
$$
Let $P\in\Psi(\Phi(\X))\cap\k(R)$.
All we need to do is show that $P$ is in $\X$.
Fix a maximal ideal $\m$ of $R$ and a prime ideal $\p$ of $R$ contained in $\m$.
Then $\add\X_\m$ is a resolving subcategory of $\k(R_\m)$ by Lemma \ref{29}(1).
We have
\begin{equation}\label{33}
\begin{array}{l}
\pd_{(R_\m)_{\p R_\m}}(P_\m)_{\p R_\m}
=\pd_{R_\p}P_\p
\le\textstyle\sup_{X\in\X}\{\pd_{R_\p}X_\p\}
=\textstyle\sup_{X\in\X}\{\pd_{(R_\m)_{\p R_\m}}(X_\m)_{\p R_\m}\}\\
\phantom{\pd_{(R_\m)_{\p R_\m}}(P_\m)_{\p R_\m}=\pd_{R_\p}P_\p
\le\textstyle\sup_{X\in\X}\{\pd_{R_\p}X_\p\}}\le\textstyle\sup_{Y\in\add\X_\m}\{\pd_{(R_\m)_{\p R_\m}}Y_{\p R_\m}\}.
\end{array}
\end{equation}
By Lemma \ref{29}(3), it suffices to show $P_\m\in\add\X_\m$.
Thus we may assume that $(R,\m)$ is local.
Then the dimension $n:=\dim\NE(P)$ of the Zariski-closed set $\NE(P)$ is finite.
We prove by induction on $n$ that $P\in\X$.

When $n\le0$, the set $\NE(P)$ is contained in $\{\m\}$, which means that $P$ belongs to $\k_0(R)$.
By the choice of $P$, we have $\pd_RP\le\sup_{X\in\X}\{\pd_RX\}$, which implies that there is an object $X\in\X$ such that $\pd_RP\le\pd_RX$.
Corollary \ref{32} gives rise to an object $X'\in\d_0(R)\cap\res_{\d(R)}X$ with $\pd_RX'=\pd_RX$.
Hence $X'\in\X\cap\k_0(R)$ and $\pd_RP\le\pd_RX'$.
Replacing $X$ with $X'$, we may assume that $X\in\k_0(R)$.
Setting $t=\pd_RX$, we have the inequality $\pd_RP\le t$, so that $P\in\k_0^t(R)=\res_{\k(R)}X\subseteq\X$, where the equality holds by Theorem \ref{13}.

Now we deal with the case where $n>0$.
Then $\min\NE(P)$ is nonempty and finite.
Write $\min\NE(P)=\{\p_1,\dots,\p_r\}$ and fix $1\le i\le r$.
We have $\NE(P_{\p_i})=\{\p_i R_{\p_i}\}$, whence $\dim\NE(P_{\p_i})=0$.
The subcategory $\add\X_{\p_i}$ of $\k(R_{\p_i})$ is resolving by Lemma \ref{29}(1).
Similarly as in \eqref{33}, the inequality $\pd_{(R_{\p_i})_{\q R_{\p_i}}}(X_{\p_i})_{\q R_{\p_i}}\le\sup_{Y\in\add\X_{\p_i}}\{\pd_{(R_{\p_i})_{\q R_{\p_i}}}Y_{\q R_{\p_i}}\}$ holds for every prime ideal $\q$ of $R$ contained in $\p_i$.
The induction basis implies that $P_{\p_i}$ belongs to $\add\X_{\p_i}$.
Lemma \ref{29}(2) yields exact triangles $K\to Z\to P\rightsquigarrow$ and $L\to K\oplus P\to Z\rightsquigarrow$ in $\d(R)$ such that $Z\in\X$, $\NE(L)\subseteq\NE(P)$, $\NE(L)\cap\min\NE(P)=\emptyset$, and $\pd_{R_\p}L_\p\le\pd_{R_\p}P_\p$ for every prime ideal $\p$ of $R$.
Since $Z$ and $P$ are in $\k(R)$, so is $K$, and so is $L$.
The inequality $\dim\NE(L)<\dim\NE(P)=n$ holds, while $\pd_{R_\p}L_\p\le\pd_{R_\p}P_\p\le\sup_{X\in\X}\{\pd_{R_\p}X_\p\}$ for all $\p\in\spec R$.
Hence we can apply the induction hypothesis to $L$ to deduce that $L$ is in $\X$.
The exact triangle $L\to K\oplus P\to Z\rightsquigarrow$ shows that $P$ is in $\X$.
\end{proof}

\section{Restricting the classification of resolving subcategories of $\k(R)$}

In this section, we compare our results obtained so far with the results of Dao and Takahashi concerning resolving subcategories of $\mod R$ contained in $\fpd R$.
For this purpose, we begin with recalling some notation.

\begin{dfn}
Let $R$ be a local ring.
We set $\mod_0(R)=\mod R\cap\d_0(R)$.
Note that $\mod_0(R)$ consists of the finitely generated $R$-modules which are locally free on the punctured spectrum of $R$.
We also put
$$
\begin{array}{l}
\fpd_0(R)=\fpd R\cap\mod_0(R)=\k_0(R)\cap\mod R=\{M\in\mod_0(R)\mid\pd_RM<\infty\},\\
\fpd_0^n(R)=\k_0^n(R)\cap\mod R=\k^n(R)\cap\mod_0(R)=\{M\in\fpd_0(R)\mid\pd_RM\le n\}\ \ \text{for }n\in\Z.
\end{array}
$$
\end{dfn}

The following theorem is \cite[Theorem 2.1]{crspd}.
Denote by $\syz$ and $\tr$ the syzygy and transpose functors.

\begin{thm}[Dao--Takahashi]\label{44}
Let $R$ be a local ring of depth $t$ and with residue field $k$.
Then one has
\begin{equation}\label{46}
\proj R=\fpd_0^0(R)\subsetneq\fpd_0^1(R)\subsetneq\cdots\subsetneq\fpd_0^t(R)=\fpd_0^{t+1}(R)=\cdots=\fpd_0(R)
\end{equation}
such that $\tr\syz^{n-1}k\in\fpd_0^n(R)\setminus\fpd_0^{n-1}(R)$ for each $t\ge n\ge1$.
Moreover, all the resolving subcategories of $\mod R$ contained in $\fpd_0(R)$ appear in the above chain.
\end{thm}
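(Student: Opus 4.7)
The plan is to deduce Theorem \ref{44} as the restriction of Theorem \ref{13} to the full subcategory $\mod R\subseteq\k(R)$. First I would observe that the equalities $\fpd_0^n(R)=\k_0^n(R)\cap\mod R$ and $\fpd_0(R)=\k_0(R)\cap\mod R$ are immediate from the definitions. Combined with Propositions \ref{21}(4) and \ref{43}(4), this shows that each $\fpd_0^n(R)$ is a resolving subcategory of $\mod R$ contained in $\fpd_0(R)$. The case $n=0$ gives $\fpd_0^0(R)=\E_R\cap\mod R=\proj R$, via the identity $\k_0^0(R)=\E_R$ from Theorem \ref{13} and Proposition \ref{27}(6).

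Next I would pin down the global shape of \eqref{46}. The stabilization at $n=t$ is a direct consequence of the Auslander--Buchsbaum formula (Proposition \ref{27}(2)): every $M\in\fpd_0(R)$ satisfies $\pd_RM\le\depth R=t$, so $\fpd_0^n(R)=\fpd_0^t(R)=\fpd_0(R)$ for all $n\ge t$. For the strict inclusions in the range $1\le n\le t$, it suffices to verify that $\tr\syz^{n-1}k$ lies in $\fpd_0^n(R)\setminus\fpd_0^{n-1}(R)$: the module $\syz^{n-1}k$ is supported only at $\m$, hence so is its Auslander transpose, placing $\tr\syz^{n-1}k$ inside $\mod_0(R)$; dualizing an initial segment of a minimal free resolution of $k$ and invoking Auslander--Buchsbaum once more identifies $\pd_R(\tr\syz^{n-1}k)=n$ exactly.

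The main work is the classification itself. Given any resolving subcategory $\Y$ of $\mod R$ with $\Y\subseteq\fpd_0(R)$, form the resolving closure $\widetilde\Y:=\res_{\k(R)}\Y$ inside $\k(R)$. Because $\fpd_0(R)\subseteq\k_0(R)$ and $\k_0(R)$ is a resolving subcategory of $\k(R)$ by Proposition \ref{21}(4), we obtain $\widetilde\Y\subseteq\k_0(R)$, and Theorem \ref{13} then forces $\widetilde\Y=\k_0^{n_0}(R)$ for some $n_0\ge0$ or $\widetilde\Y=\k_0(R)$. The theorem will follow once the identity $\Y=\widetilde\Y\cap\mod R$ is established, since that immediately pins $\Y$ to one of $\fpd_0^{n_0}(R),\fpd_0(R)$.

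The hard part is exactly this identity. One containment is automatic; for the reverse, I would choose $M\in\Y$ with $\pd_RM=n_0$ (which must exist, for otherwise $\widetilde\Y$ would be strictly smaller than $\k_0^{n_0}(R)$ by Corollary \ref{19}(2)) and then aim to show $\fpd_0^{n_0}(R)\subseteq\res_{\mod R}M$. My strategy is to mirror the Koszul-complex argument underlying Lemma \ref{17}(3) and Proposition \ref{14} inside $\mod R$: because multiplication by any element of $\m$ annihilates the stable endomorphism group of a module locally free on the punctured spectrum (a module-category analogue of Lemma \ref{16}(2)), every $N\in\fpd_0^{n_0}(R)$ can be produced from $M$ by a sequence of Koszul-style short exact sequences, provided one is allowed to use syzygies. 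The subtle point is that the derived construction naturally supplies shifts $M[i]$ with $i\le 0$, which must be reinterpreted as iterated syzygies $\syz^{-i}M$ in $\mod R$; this reinterpretation is legitimate precisely because $\Y$ is closed under syzygies, and it is this translation from derived-category shifts to module syzygies that constitutes the main technical hurdle in descending from the $\k(R)$-classification to the $\mod R$-classification.
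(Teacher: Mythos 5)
Note first that the paper does not prove this theorem at all: it is quoted from \cite[Theorem 2.1]{crspd}, and the remark immediately following it stresses that its proof is ``completely different'' from that of Theorem \ref{13}, requiring module-theoretic arguments with syzygies and transposes that occupy the whole of \cite[\S 2]{crspd}. Your proposal instead tries to deduce it as a restriction of Theorem \ref{13}, and the reduction you set up is fine as far as it goes: the chain \eqref{46} and the stabilization at $n=t$ do follow from $\k_0^n(R)\cap\mod R=\fpd_0^n(R)$, Proposition \ref{27}(6) and Auslander--Buchsbaum, and your reduction of the completeness statement to the single claim $\fpd_0^{n_0}(R)\subseteq\res_{\mod R}M$ for some $M\in\Y$ with $\pd_RM=n_0$ is correct. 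But that claim is essentially the entire content of the theorem, and your proposed proof of it is a genuine gap. The derived-category mechanism you want to ``mirror'' (Lemma \ref{17}, Proposition \ref{14}(2)) works by forming mapping cones $\K(x)\otimes X[-1]$, brutal truncations, and positive shifts of the Koszul complex, and by splitting off $K[1]$ from a null-homotopic chain map --- all of which produce genuine complexes, not modules, and there is no mechanism in your sketch for staying inside $\mod R$ or for descending membership in $\res_{\k(R)}M$ to membership in $\res_{\mod R}M$. The translation ``negative shifts $\leftrightarrow$ syzygies'' handles only the easy direction; it does not convert the intermediate complexes of the Koszul argument into short exact sequences of modules. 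Nor can you instead invoke the paper's descent statements (Proposition \ref{84}, Theorem \ref{36}, Remark \ref{56}), since in this paper those are proved \emph{using} the Dao--Takahashi module classifications, so the argument would be circular. In \cite{crspd} this step is precisely where the transpose machinery enters (showing, e.g., that $\tr\syz^{n-1}k$ lies in the resolving closure of any module of projective dimension $n$ in $\fpd_0(R)$), and nothing in your outline replaces it.

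Two smaller inaccuracies in the easy part: $\syz^{n-1}k$ is not ``supported only at $\m$'' for $n\ge 2$ (it has full support); what is true and what you need is that it is locally free on the punctured spectrum, hence so is its transpose. And the computation $\pd_R(\tr\syz^{n-1}k)=n$ does not follow from Auslander--Buchsbaum alone: you need the exactness of the dualized initial segment of the minimal resolution of $k$, i.e.\ the vanishing $\Ext_R^i(k,R)=0$ for $i<t=\depth R$, together with minimality to rule out $\pd\le n-1$. These are fixable, but the unproved descent step above is where the proposal fails.
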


\begin{rem}
\begin{enumerate}[(1)]
\item
Theorem \ref{13} is viewed as a derived category version of Theorem \ref{44}.
\item
A remarkable difference between Theorems \ref{13} and \ref{44} is that the former says that there exist only finitely many resolving subcategories of $\mod R$ contained in $\fpd_0(R)$, while the latter says that there exist infinitely (but countably) many resolving subcategories of $\k(R)$ contained in $\k_0(R)$.
\item
Although both have similar configurations, the proof of Theorem \ref{13} is completely different from that of Theorem \ref{44}.
Indeed, the latter requires much more complicated arguments on modules which involve syzygies and transposes; the whole of \cite[\S2]{crspd} is devoted to giving a proof of Theorem \ref{44}. 
\item
The restriction of \eqref{45} to $\mod R$ coincides with \eqref{46}.
Indeed, Proposition \ref{27}(6) says that $\E_R\cap\mod R=\proj R$, while by definition we have $\k_0^n(R)\cap\mod R=\fpd_0^n(R)$ for each integer $n$.
The Auslander--Buchsbaum formula \cite[Theorem 1.3.3]{BH} shows $\fpd_0^n(R)=\fpd_0(R)$ for all integers $n\ge t$.
\end{enumerate}
\end{rem}

In the proof of Theorem \ref{44}, the resolving closure $\res_{\mod R}k$ in $\mod R$ of the residue field $k$ of $R$ does play a crucial role; it coincides with $\mod_0(R)$.
Here we consider a derived category version of this fact.

\begin{prop}\label{30}
Let $R$ be a local ring with residue field $k$.
Let $X\in\d_0(R)$ and put $h=\depth X$.
One then has $X\in\res_{\d(R)}(k[-h])$.
In particular, it holds that $\d_0(R)=\res_{\d(R)}\{k[i]\mid i\in\Z\}$.
\end{prop}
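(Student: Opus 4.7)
The plan is to tensor $X$ with the Koszul complex on a minimal generating system of $\m$ and then exploit the fact that the cohomology of the resulting complex is a finite-dimensional $k$-vector space concentrated in a controlled range of degrees.

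Let $\xx=x_1,\dots,x_e$ be a minimal generating system of $\m$, with $e=\edim R$, and put $K=\K_R=\K(\xx,R)$. Since $X\in\d_0(R)$, Lemma \ref{17}(3) gives $X\in\res_{\d(R)}(K\lten_RX[-e])$, so it suffices to show that $K\lten_RX[-e]\in\res_{\d(R)}(k[-h])$. Writing $Y:=K\lten_RX$, I will establish: (i) every $H^i(Y)$ is a finite-dimensional $k$-vector space; (ii) $\depth_RY\ge h-e$; and (iii) $\depth_RZ=\inf Z$ for every bounded $Z\in\d(R)$ whose cohomology modules are $k$-vector spaces. Granting (i)--(iii), they combine to give $\inf Y=\depth_RY\ge h-e$, so $K\lten_RX[-e]$ has $k$-vector space cohomology concentrated in degrees $\ge h$. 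A standard truncation dévissage then places $K\lten_RX[-e]$ in $\ext_{\d(R)}\{k[-i]\mid i\ge h\}$, and every such $k[-i]$ with $i\ge h$ is a nonpositive shift $(k[-h])[-(i-h)]$ of $k[-h]$, hence lies in $\res_{\d(R)}(k[-h])$ by closedness under negative shifts.

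For (i), each multiplication $x_j\colon K\to K$ is null-homotopic (as cited in the proof of Proposition \ref{14} via \cite[Proposition 2.3(3)]{dm}), so $x_j\cdot H^\ast(Y)=0$; since $\xx$ generates $\m$ and $Y$ is bounded with finitely generated cohomology, each $H^i(Y)$ is a finite-dimensional $k$-vector space. For (ii), I would iterate the third depth inequality in Proposition \ref{27}(3) along the exact triangles $W\xrightarrow{x_i}W\to\K(x_i)\lten W\rightsquigarrow$, where $W=\K(x_1,\dots,x_{i-1})\lten X$, obtaining $\depth_RY\ge\depth_RX-e=h-e$. For (iii), I would induct on $\sup Z-\inf Z$ using the soft-truncation triangle $H^{\inf Z}(Z)[-\inf Z]\to Z\to\tau_{>\inf Z}Z\rightsquigarrow$; the base case $Z=k^{\oplus r}[-n]$ yields $\depth Z=n=\inf Z$ since $\Ext^j_R(k,k)$ is nonzero precisely for $j\ge0$, and the inductive step follows by combining both depth inequalities in Proposition \ref{27}(3) applied to the truncation triangle. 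For the ``in particular'' statement, the inclusion $\d_0(R)\subseteq\res_{\d(R)}\{k[i]\mid i\in\Z\}$ is immediate from the first assertion; conversely, every $k[i]$ lies in $\d_0(R)$ (since $k_\p=0$ for each non-maximal prime $\p$), while $\d_0(R)$ is resolving by Proposition \ref{21}(4).

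The main obstacle will be (iii), the identification $\depth Z=\inf Z$ for bounded complexes with $k$-vector space cohomology; once this is in hand, the Koszul-tensor reduction, the depth iteration in (ii), and the null-homotopy observation in (i) are all standard bookkeeping.
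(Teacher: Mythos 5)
Your proof is correct, and its skeleton matches the paper's: reduce via Lemma \ref{17}(3) to the Koszul-tensored complex, break that complex into shifted copies of $k$ by soft truncation, bound its bottom cohomological degree in terms of $\depth X$, and conclude using closedness of resolving subcategories under negative shifts (plus the shifting behaviour of resolving closures as in Proposition \ref{52}(2a)). The difference lies in the auxiliary inputs. The paper takes the Koszul complex on a system of parameters (length $d=\dim R$): finite length of the cohomology then comes from localization (Proposition \ref{27}(7)), and the exact value $\inf(\K(\xx)\lten_RX)=h-d$ is quoted from \cite[Theorem I]{FI}. You instead use $\K_R$ on a minimal generating set of $\m$ (length $e=\edim R$), obtain that the cohomology modules are $k$-vector spaces from the null-homotopy of multiplication by elements of $\m$ on $\K_R$, and replace the Foxby--Iyengar citation by an elementary two-step argument: the inequality $\depth(\K_R\lten_RX)\ge h-e$ via iteration of Proposition \ref{27}(3), together with the fact that $\depth=\inf$ for bounded complexes whose cohomology consists of $k$-vector spaces; your induction on truncations for the latter is sound, since the base case $k^{\oplus r}[-n]$ and the two depth inequalities applied to the truncation triangle give both bounds. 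So your route is self-contained where the paper leans on an external theorem, at the cost of proving only the inequality you need rather than the exact formula; either choice of Koszul complex works, since all that matters is that the sequence lies in $\m$ and generates an $\m$-primary ideal. Your treatment of the ``in particular'' statement (shifts of $k$ lie in $\d_0(R)$, which is resolving by Proposition \ref{21}) is also correct.
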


\begin{proof}
Take a system of parameters $\xx=x_1,\dots,x_d$ of $R$.
Set $Y=\K(\xx)\lten_RX$.
It follows from Lemma \ref{17}(3) that $X\in\res_{\d(R)}(Y[-d])$.
Taking soft truncations of the complex $Y$ implies that $Y$ is in the extension closure $\ext_{\d(R)}\{\h^iY[-i]\mid\inf Y\le i\le\sup Y\}$.
Localization at nonmaximal prime ideals shows that each $\h^iY$ has finite length as an $R$-module (see Proposition \ref{27}(7)), so that it is in $\ext_{\d(R)}k$.
We have $Y\in\ext_{\d(R)}\{k[-i]\mid\inf Y\le i\le\sup Y\}\subseteq\res_{\d(R)}(k[-\inf Y])$, where the inclusion comes from the fact that every resolving subcategory is closed under negative shifts.
Using \cite[Theorem I]{FI}, we get $\inf Y=h-d$, which implies $Y\in\res_{\d(R)}(k[d-h])$.
Therefore, the object $X$ belongs to $\res_{\d(R)}(k[-h])$ by Proposition \ref{52}(2a).
\end{proof}

We recall the definition of a grade-consistent function which has been introduced in \cite{crspd}.

\begin{dfn}
A {\em grade-consistent function} on $\spec R$ is by definition an order-preserving map $f:\spec R\to\N$ such that the inequality $f(\p)\le\grade\p$ holds for all prime ideals $\p$ of $R$.
\end{dfn}

The grade condition in the definition of a grade-consistent function can be changed to a depth condition.

\begin{lem}\label{34}
Let $f:\spec R\to\N\cup\{\infty\}$ be an order-preserving map.
Then $f$ is a grade-consistent function on $\spec R$ if and only if $f(\p)\le\depth R_\p$ for all prime ideals $\p$ of $R$.
\end{lem}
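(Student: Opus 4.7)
The plan is to pass through the classical identity
\[
\grade\p \;=\; \min\{\depth R_\q \mid \q\in\V(\p)\},
\]
and then exploit that $f$ is order-preserving.

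For the forward direction, if $f$ is grade-consistent then by definition $f$ takes values in $\N$ and $f(\p)\le\grade\p$. Since any $R$-regular sequence contained in $\p$ remains regular after localization at $\p$, we have $\grade\p\le\depth R_\p$, whence $f(\p)\le\depth R_\p$.

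For the converse, assume $f(\p)\le\depth R_\p$ for every $\p\in\spec R$. As $R_\p$ is Noetherian local, $\depth R_\p\le\dim R_\p<\infty$, so $f$ automatically lands in $\N$. To upgrade the depth bound to a grade bound I would invoke the displayed identity, which I would prove as follows: pick a maximal $R$-regular sequence $x_1,\dots,x_g$ inside $\p$ of length $g=\grade\p$; by maximality $\p$ consists of zerodivisors on $R/(x_1,\dots,x_g)$ and therefore lies in some $\q\in\mathrm{Ass}(R/(x_1,\dots,x_g))$ by prime avoidance; localizing at this $\q$, the sequence $x_1,\dots,x_g$ is an $R_\q$-regular sequence with $\q R_\q\in\mathrm{Ass}(R_\q/(x_1,\dots,x_g))$, so $\depth R_\q=g=\grade\p$. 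Combined with the order-preserving hypothesis, for every $\q\in\V(\p)$ we then have $f(\p)\le f(\q)\le\depth R_\q$; taking the minimum over $\q\in\V(\p)$ yields $f(\p)\le\grade\p$, as required.

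The only substantive point is the identity $\grade\p=\min_{\q\in\V(\p)}\depth R_\q$. It is classical, but the associated-prime/prime-avoidance argument outlined above is the main (small) obstacle; once it is in hand, both directions of the lemma reduce to one-line verifications using the order-preserving hypothesis.
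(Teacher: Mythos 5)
Your argument is correct and follows essentially the same route as the paper: both directions reduce to the identity $\grade\p=\inf\{\depth R_\q\mid\q\in\V(\p)\}$ combined with the order-preserving hypothesis. The only difference is that the paper simply cites this identity (Bruns--Herzog, Proposition 1.2.10(a)) whereas you sketch its standard proof via maximal regular sequences, associated primes and prime avoidance, which is fine.
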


\begin{proof}
Fix $\p\in\spec R$.
The equality $\grade\p=\inf\{\depth R_\q\mid\q\in\V(\p)\}$ holds by \cite[Proposition 1.2.10(a)]{BH}.
In particular, one has $\grade\p\le\depth R_\p$, which shows the `only if' part of the lemma.
To show the `if' part, suppose $f(\q)\le\depth R_\q$ for all $\q\in\spec R$.
Then the image of $f$ is contained in $\N$.
If $\q\in\V(\p)$, then $\p\subseteq\q$ and $f(\p)\le f(\q)\le\depth R_\q$.
This shows $f(\p)\le\inf\{\depth R_\q\mid\q\in\V(\p)\}=\grade\p$.
Thus, we are done.
\end{proof}

Applying the above lemma, we can show the following result on the assignments used in Theorem \ref{2}.

\begin{prop}\label{35}
Let $\Phi$ and $\Psi$ be the ones introduced in Definition \ref{53}.
\begin{enumerate}[\rm(1)]
\item
Let $\X$ be a subcategory of $\mod R$ containing $R$.
Then $\Phi(\X)$ is a grade-consistent function on $\spec R$.
\item
Let $f:\spec R\to\N$ be a map.
Then the equality $\Psi(f)\cap\mod R=(\Psi(f)\cap\k(R))\cap\mod R$ holds, and it is a resolving subcategory of $\mod R$ contained in $\fpd R$.
\end{enumerate}
\end{prop}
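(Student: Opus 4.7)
The plan is to handle the two parts independently by combining earlier structural results.

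For (1), I will apply Lemma \ref{28}(1) to conclude that $\Phi(\X)$ is an order-preserving map $\spec R\to\N\cup\{\infty\}$, using $R\in\X$. To upgrade this to grade-consistency, I will invoke Lemma \ref{34}, which reduces the claim to verifying the pointwise bound $\Phi(\X)(\p)\le\depth R_\p$ for every $\p\in\spec R$. Since $\X$ sits inside $\mod R$, each $X\in\X$ is a module concentrated in degree zero, so $\depth_{R_\p}X_\p\ge0$. Whenever $\pd_{R_\p}X_\p$ is finite, the Auslander--Buchsbaum formula (Proposition \ref{27}(2)) yields $\pd_{R_\p}X_\p=\depth R_\p-\depth_{R_\p}X_\p\le\depth R_\p$, and taking suprema over $X\in\X$ delivers the required inequality on $\Phi(\X)(\p)$.

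For (2), Lemma \ref{28}(2) gives that $\Psi(f)$ is a resolving subcategory of $\d(R)$, and then Proposition \ref{43}(4) immediately provides that $\Psi(f)\cap\mod R$ is a resolving subcategory of $\mod R$. The substantive point is the containment $\Psi(f)\cap\mod R\subseteq\fpd R$: for any $M$ in the left-hand side, one has $\pd_{R_\p}M_\p\le f(\p)<\infty$ at every prime $\p$, so the finite-projective-dimension criterion established inside the proof of Proposition \ref{27}(4) (via truncations of a free resolution together with \cite[Lemma 4.5]{BM}) promotes pointwise finiteness of projective dimension to finite global projective dimension, yielding $M\in\fpd R$. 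Since $\k(R)\cap\mod R=\fpd R$ by the inclusions stated after the definition of $\fpd R$, the desired equality $(\Psi(f)\cap\k(R))\cap\mod R=\Psi(f)\cap\fpd R=\Psi(f)\cap\mod R$ then falls out directly.

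The main obstacle I anticipate is the finite-global-projective-dimension step in part (2), which is what bridges pointwise local finiteness with globally bounded projective dimension; the Auslander--Buchsbaum bound used in part (1) is comparatively routine once one observes that depth is nonnegative on the module category, which is precisely what distinguishes $\mod R$ from an arbitrary subcategory of $\d(R)$.
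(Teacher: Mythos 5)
Your argument reproduces the paper's proof essentially verbatim: part (1) is Lemma \ref{28}(1), the Auslander--Buchsbaum formula from Proposition \ref{27}(2), and Lemma \ref{34}; part (2) is Lemma \ref{28}(2), Proposition \ref{43}(4), the identity $\k(R)\cap\mod R=\fpd R$, and the promotion of pointwise finiteness of projective dimension to $\pd_RM<\infty$ via \cite[Lemma 4.5]{BM} (the paper cites the Bass--Murthy lemma directly rather than through the claim inside the proof of Proposition \ref{27}(4), but that is the same content). Your caveat ``whenever $\pd_{R_\p}X_\p$ is finite'' in (1) is precisely the implicit assumption the paper itself makes when invoking Auslander--Buchsbaum (the intended setting being $\X$ with objects of locally finite projective dimension, as in Theorem \ref{36}), so your route does not differ from the paper's.
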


\begin{proof}
(1) Lemma \ref{28}(1) implies that $\Phi(\X)$ is an order-preserving map from $\spec R$ to $\N\cup\{\infty\}$.
For each $\p\in\spec R$ we have $\Phi(\X)(\p)=\sup_{X\in\X}\{\pd X_\p\}\le\depth R_\p$ since the Auslander--Buchsbaum formula implies $\pd X_\p=\depth R_\p-\depth X_\p\le\depth R_\p$.
Lemma \ref{34} shows $\Phi(\X)$ is a grade-consistent function on $\spec R$.

(2) According to Lemma \ref{28}(2), the subcategory $\Psi(f)$ of $\d(R)$ is resolving.
It follows from Proposition \ref{43}(4) that $\Psi(f)\cap\mod R$ is a resolving subcategory of $\mod R$, and $(\Psi(f)\cap\k(R))\cap\mod R=\Psi(f)\cap\fpd R=(\Psi(f)\cap\mod R)\cap\fpd R$ is a resolving subcategory of $\mod R$ contained in $\fpd R$.
Let $M\in\Psi(f)\cap\mod R$.
Then for every prime ideal $\p$ of $R$ one has $\pd_{R_\p}M_\p\le f(\p)\in\N$, which particularly says that $\pd_{R_\p}M_\p<\infty$.
By \cite[Lemma 4.5]{BM}, we get $\pd_RM<\infty$.
Therefore, $\Psi(f)\cap\mod R$ coincides with $(\Psi(f)\cap\mod R)\cap\fpd R$.
\end{proof}

The following theorem is shown in \cite[Theorem 1.2]{crspd}, which is one of the main results of \cite{crspd}.

\begin{thm}[Dao--Takahashi]\label{36}
By the assignments $\X\mapsto\Phi(\X)$ and $f\mapsto\Psi(f)\cap\mod R$, the resolving subcategories of $\mod R$ contained in $\fpd R$ bijectively correspond to the grade-consistent functions on $\spec R$.
\end{thm}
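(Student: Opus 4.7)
The plan is to derive Theorem \ref{36} by restricting the classification of Theorem \ref{2} from $\k(R)$ to $\mod R$. Proposition \ref{35} already shows that both assignments $\X\mapsto\Phi(\X)$ and $f\mapsto\Psi(f)\cap\mod R$ take values in the claimed classes, so the remaining task is to verify they are mutually inverse.

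For $\Phi(\Psi(f)\cap\mod R)=f$ with $f$ grade-consistent, the inequality $\le$ is immediate from the definitions. For the reverse, fix $\p\in\spec R$ and set $n=f(\p)\le\grade\p$; pick a regular sequence $\xx=x_1,\dots,x_n$ in $\p$ and take $M=R/(\xx)$. The Koszul complex is a finite free resolution, so $\pd_RM=n$ and $\pd_{R_\p}M_\p=n$. For $\p\subseteq\q$ the order-preservation of $f$ gives $\pd_{R_\q}M_\q\le n=f(\p)\le f(\q)$, while for $\p\not\subseteq\q$ some $x_i$ lies outside $\q$ and $M_\q=0$. Thus $M$ belongs to $\Psi(f)\cap\mod R$ and witnesses $\Phi(\Psi(f)\cap\mod R)(\p)\ge f(\p)$.

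For the other composition, let $\X$ be a resolving subcategory of $\mod R$ contained in $\fpd R$; the inclusion $\X\subseteq\Psi(\Phi(\X))\cap\mod R$ is trivial. I would reduce the reverse inclusion, via Theorem \ref{2}, to the key equality $\res_{\k(R)}\X\cap\mod R=\X$. Indeed, since $\res_{\k(R)}\X$ is a resolving subcategory of $\k(R)$ containing $\X$, it sits inside $\Psi(\Phi(\X))\cap\k(R)$, producing the sandwich $\Phi(\X)\le\Phi(\res_{\k(R)}\X)\le\Phi(\Psi(\Phi(\X))\cap\k(R))=\Phi(\X)$; the bijectivity in Theorem \ref{2} then forces $\res_{\k(R)}\X=\Psi(\Phi(\X))\cap\k(R)$, and intersecting with $\mod R$ gives the claimed reduction.

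The main obstacle is the key equality $\res_{\k(R)}\X\cap\mod R=\X$. Given $M$ on the left I would proceed by noetherian induction on $\NE(M)$. When $\NE(M)=\emptyset$, Proposition \ref{27}(6) gives that $M$ is projective, so $M\in\X$ since $\X$ contains $R$ and is closed under direct summands. For the inductive step I would localize at each minimal prime $\p_i$ of $\NE(M)$: the localization $M_{\p_i}$ then lies in $\fpd_0(R_{\p_i})$, and Theorem \ref{44} applied to the resolving subcategory $(\add_{\mod R_{\p_i}}\X_{\p_i})\cap\fpd_0(R_{\p_i})$ of $\mod R_{\p_i}$, together with a module analogue of Corollary \ref{32} used to realize the required projective-dimension bounds inside $\fpd_0(R_{\p_i})$, places $M_{\p_i}$ in $\add_{\mod R_{\p_i}}\X_{\p_i}$. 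A module-theoretic analogue of Lemma \ref{29}(2), obtained as in \cite[Lemma 3.2]{crspd} by adapting the Koszul-complex construction to yield an honest short exact sequence of modules rather than an exact triangle in $\d(R)$, then supplies $0\to L\to X\oplus M\to Y\to 0$ in $\mod R$ with $X,Y\in\X$ and $\NE(L)\subsetneq\NE(M)$; the induction hypothesis applied to $L$ together with closure of $\X$ under extensions and summands yields $M\in\X$.
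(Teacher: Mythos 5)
Your proposal is essentially correct and is not circular, but it takes a genuinely different route from the paper for the simple reason that the paper gives no proof of Theorem \ref{36}: it quotes it from \cite[Theorem 1.2]{crspd}, and Remark \ref{56} merely observes, via Proposition \ref{35}, that the statement can be regarded as the restriction of Theorem \ref{2} to $\mod R$. What you do differently is to actually derive it: the composite $\Phi(\Psi(f)\cap\mod R)=f$ is handled by the module witness $R/(\xx)$ for a regular sequence of length $f(\p)$ inside $\p$ (this is exactly where grade-consistency enters, replacing the shifted Koszul complexes used in the proof of Theorem \ref{2}), and the other composite is correctly reduced, through the sandwich $\res_{\k(R)}\X=\Psi(\Phi(\X))\cap\k(R)$ and Proposition \ref{35}(2), to the equality $\X=(\res_{\k(R)}\X)\cap\mod R$. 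You rightly refrain from quoting Proposition \ref{84}(1) here, since the paper deduces that proposition from Theorem \ref{36}, so it is unavailable; instead you reprove it by noetherian induction on $\NE(M)$, with the local step settled by Theorem \ref{44} over $R_{\p_i}$ after transporting the projective-dimension bound (which comes from $M\in\res_{\k(R)}\X\subseteq\Psi(\Phi(\X))$) into $\fpd_0(R_{\p_i})$ via a module analogue of Corollary \ref{32}, and the descent step by a module analogue of Lemma \ref{29}. This is precisely the Dao--Takahashi strategy, and the analogues you invoke are independent lemmas of \cite{crspd} (its Lemma 3.2 and Proposition 3.3) together with the nonfree-locus realization results of \cite{res,radius}, none of which depend on \cite[Theorem 1.2]{crspd}; so the logic is sound, though your proof is not self-contained at its hardest point. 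Two small remarks: the auxiliary exact sequence you state, $0\to L\to X\oplus M\to Y\to0$ with both $X$ and $Y$ in $\X$, is likely a slightly stronger shape than the cited module lemma provides (whose middle term involves a syzygy-type module $K$ rather than a second object of $\X$), but the weaker form suffices by the same extension-plus-summand argument, provided you note that $L$ (and $K$) stay inside $\res_{\k(R)}\X$ so that the induction hypothesis applies to $L$. In sum, the paper buys the theorem by citation and uses Proposition \ref{35} only as an interpretive bridge to Theorem \ref{2}; your route buys an actual proof, at the cost of importing the module-level machinery of \cite{crspd} in sketch form.
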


\begin{rem}\label{56}
Proposition \ref{35} says that Theorem \ref{36} is regarded as the restriction of Theorem \ref{2} to $\mod R$.
\end{rem}

\section{Classification of certain preaisles of $\k(R)$}

In this section, we consider classifying certain preaisles of the triangulated category $\k(R)$.
We begin with recalling the definitions of preaisles and several related notions.

\begin{dfn}\cite[\S1.1]{AJS}\cite[\S1.3]{BBD}\cite[\S1.1]{KV}
Let $\T$ be a triangulated category.
A {\em preaisle} (resp. {\em precoaisle}) of $\T$ is by definition a subcategory of $\T$ closed under extensions and positive (resp. negative) shifts.
A preaisle (resp. precoaisle) $\X$ of $\T$ is called an {\em aisle} (resp. a {\em coaisle}) if the inclusion functor $\X\hookrightarrow\T$ has a right (resp. left) adjoint.
For an aisle $\X$ and a coaisle $\Y$ of $\T$, the pair $(\X,\Y[1])$ is called a {\em $t$-structure} of $\T$.
\end{dfn}

Denote by $(-)^\ast$ the $R$-dual functor $\Hom_R(-,R)$.
The assignment $P\mapsto P^\ast$ gives a duality of $\k(R)$, which sends an exact triangle $A\xrightarrow{f}B\xrightarrow{g}C\xrightarrow{h}A[1]$ to the exact triangle $C^\ast\xrightarrow{g^\ast}B^\ast\xrightarrow{f^\ast}A^\ast\xrightarrow{h^\ast[1]}C^\ast[1]$.
For a subcategory $\X$ of $\k(R)$, we denote by $\X^\ast$ the subcategory of $\k(R)$ consisting of complexes of the form $X^\ast$ with $X\in\X$.
The following lemma is straightforward from the definitions of preaisles and precoaisles.

\begin{lem}\label{3}
The assignment $\X\mapsto\X^\ast$ produces a one-to-one correspondence
$$
\left\{\begin{matrix}
\text{preaisles of $\k(R)$ containing $R$}\\
\text{and closed under direct summands}
\end{matrix}\right\}\cong\left\{\begin{matrix}
\text{precoaisles of $\k(R)$ containing $R$}\\
\text{and closed under direct summands}
\end{matrix}\right\}.
$$
\end{lem}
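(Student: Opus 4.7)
The plan is to verify that the $R$-dual functor $(-)^{\ast} = \Hom_R(-,R)$ is a contravariant equivalence on $\k(R)$ that interchanges preaisles and precoaisles (among subcategories closed under direct summands and containing $R$), and then use this to obtain the bijection.

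First I would collect the basic formal properties of $(-)^{\ast}$ on $\k(R)$. Since every object of $\k(R)$ is a bounded complex of finitely generated projectives, biduality gives a natural isomorphism $P \cong P^{\ast\ast}$ for each $P \in \k(R)$, so $(-)^{\ast}$ is a duality of $\k(R)$. Moreover, one has $(P[n])^{\ast} \cong P^{\ast}[-n]$ naturally, and the induced map on exact triangles reverses their direction, as recorded in the excerpt. In particular, $R^{\ast} \cong R$.

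Next I would show that if $\X$ is a preaisle of $\k(R)$ containing $R$ and closed under direct summands, then $\X^{\ast}$ has the corresponding dual properties. Since $R^{\ast} = R$, the subcategory $\X^{\ast}$ contains $R$. If $Y$ is a direct summand of some $X^{\ast}$ in $\k(R)$, then $Y^{\ast}$ is a direct summand of $X^{\ast\ast} \cong X \in \X$, so $Y^{\ast} \in \X$ and hence $Y \cong Y^{\ast\ast} \in \X^{\ast}$. For closedness under negative shifts, an object $X^{\ast}[-1] \cong (X[1])^{\ast}$ with $X \in \X$ belongs to $\X^{\ast}$ because $X[1] \in \X$ by the positive-shift closure of $\X$. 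Finally, given an exact triangle $A \to B \to C \rightsquigarrow$ with $A, C \in \X^{\ast}$, applying $(-)^{\ast}$ yields an exact triangle $C^{\ast} \to B^{\ast} \to A^{\ast} \rightsquigarrow$ with $C^{\ast}, A^{\ast} \in \X$; since $\X$ is extension closed, $B^{\ast} \in \X$, hence $B \cong B^{\ast\ast} \in \X^{\ast}$. Thus $\X^{\ast}$ is a precoaisle closed under direct summands and containing $R$.

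The symmetric argument shows that the dual of a precoaisle containing $R$ and closed under direct summands is a preaisle with the same closure properties. Combined with the natural isomorphism $(-)^{\ast\ast} \cong \id_{\k(R)}$, this gives $(\X^{\ast})^{\ast} = \X$ on both sides, yielding the stated bijection. There is no real obstacle here: the argument is purely formal once the behavior of $(-)^{\ast}$ on shifts, triangles, and biduality is recorded, and the only point that requires any care is bookkeeping the reversal of shift direction when passing between preaisles and precoaisles.
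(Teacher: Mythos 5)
Your proposal is correct and is essentially the paper's argument: the paper records that $(-)^\ast$ is a duality of $\k(R)$ reversing triangles and then declares the lemma straightforward from the definitions, and your write-up just fills in the routine checks (biduality, $R^\ast\cong R$, shift reversal, summands, extensions) in exactly that spirit.
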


\begin{rem}
In \cite{DS} a preaisle closed under direct summands is called a {\em thick preaisle}.
\end{rem}

Let us recall the definition of a certain fundamental filtration of subsets of $\spec R$.

\begin{dfn}
A {\em filtration by supports} or {\em sp-filtration} of $\spec R$ is by definition an order-reversing map $\phi:\Z\to2^{\spec R}$ such that for each $i\in\Z$ the subset $\phi(i)$ of $\spec R$ is specialization-closed.
\end{dfn}

Here we need to introduce some notation.
$$
\begin{array}{l}
\text{$\F(\phi)(\p)=\sup\{j\in\Z\mid\p\in\phi(j)\}+1$ for a map $\phi:\Z\to2^{\spec R}$ and $\p\in\spec R$},\\
\text{$\P(f)(i)=\{\p\in\spec R\mid f(\p)>i\}$ for a map $f:\spec R\to\Z\cup\{\pm\infty\}$ and $i\in\Z$},\\
\text{$\G(f)=\{X\in\k(R)\mid\text{$\pd X^\ast_\p\le f(\p)$ for all $\p\in\spec R$}\}$ for a map $f:\spec R\to\Z\cup\{\pm\infty\}$},\\
\text{$\Q(\X)(\p)=\sup\{\pd X^\ast_\p\mid X\in\X\}$ for a subcategory $\X$ of $\k(R)$ and $\p\in\spec R$}.
\end{array}
$$

Now we can state and prove the main result of this section, which classifies certain preaisles of $\k(R)$.

\begin{thm}\label{49}
There are one-to-one correspondences
$$
\xymatrix@C-0.1pc{
{\left\{\begin{matrix}
\text{preaisles of $\k(R)$ containing $R$}\\
\text{and closed under direct summands}
\end{matrix}\right\}}
\ar@<.7mm>[r]^-\Q&
{\left\{\begin{matrix}
\text{order-preserving maps}\\
\text{$\spec R\to\N\cup\{\infty\}$}
\end{matrix}\right\}}
\ar@<.7mm>[r]^-\P\ar@<.7mm>[l]^-\G&
{\left\{\begin{matrix}
\text{sp-filtrations $\phi$ of $\spec R$}\\
\text{with $\phi(-1)=\spec R$}
\end{matrix}\right\}.}
\ar@<.7mm>[l]^-\F}
$$
\end{thm}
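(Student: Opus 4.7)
The plan is to factor both correspondences through well-understood intermediate bijections and then invoke Theorem \ref{2}. For the left correspondence, I would first observe that a precoaisle of $\k(R)$ containing $R$ and closed under direct summands is exactly a resolving subcategory of $\k(R)$ (using condition (iv) $\Leftrightarrow$ closure under negative shifts in the definition of resolving from Section 2). Combined with Lemma \ref{3}, this gives a bijection, via the $R$-dual $(-)^\ast$, between preaisles of $\k(R)$ containing $R$ and closed under direct summands on the one hand, and resolving subcategories of $\k(R)$ on the other. Composing with the bijection of Theorem \ref{2} (given by $\Phi$ and $\Psi(-)\cap\k(R)$) and using biduality $X \cong X^{\ast\ast}$ on perfect complexes, I would verify
$$
\Phi(\X^\ast)(\p) = \sup_{X \in \X}\{\pd_{R_\p}(X^\ast)_\p\} = \Q(\X)(\p), \qquad (\Psi(f)\cap\k(R))^\ast = \G(f),
$$
so that the composite bijection is exactly $(\Q,\G)$.

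For the right correspondence I would check directly from the definitions of $\P$ and $\F$ that they are mutually inverse. If $f\colon\spec R \to \N\cup\{\infty\}$ is order-preserving, then each $\P(f)(i) = \{\p \mid f(\p) > i\}$ is specialization-closed (by order-preservation of $f$), the assignment $i \mapsto \P(f)(i)$ is visibly order-reversing, and $\P(f)(-1) = \spec R$ because $f \ge 0$. Conversely, if $\phi$ is an sp-filtration with $\phi(-1)=\spec R$, then $\F(\phi)$ takes values in $\N\cup\{\infty\}$ and is order-preserving because each $\phi(j)$ is specialization-closed. A short case analysis (separating $f(\p)=\infty$ from $f(\p)<\infty$) gives $\F(\P(f))=f$, and order-reversal of $\phi$ yields $\P(\F(\phi))(i) = \{\p \mid \exists\, j\ge i \text{ with } \p\in\phi(j)\} = \phi(i)$.

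The only nontrivial point is the duality step, which is what actually reduces everything to Theorem \ref{2}. Concretely, one must know that $(-)^\ast$ restricts to an involution on $\k(R)$ (via the natural isomorphism $P^{\ast\ast} \cong P$ for bounded complexes of finitely generated projectives) and that it carries exact triangles $A \to B \to C \rightsquigarrow$ to exact triangles $C^\ast \to B^\ast \to A^\ast \rightsquigarrow$, thereby swapping positive and negative shifts while preserving extensions, direct summands, and the object $R$. Once these foundational points are in place --- most of them already encoded in Lemma \ref{3} --- the theorem follows without further substantive work, the hard lifting being done by Theorem \ref{2}.
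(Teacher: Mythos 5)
Your proposal is correct and takes essentially the same route as the paper: the paper likewise observes that preaisles of $\k(R)$ containing $R$ and closed under direct summands correspond, under the duality of Lemma \ref{3}, to resolving subcategories of $\k(R)$, and then invokes Theorem \ref{2} to obtain the mutually inverse pair $(\Q,\G)$. The only difference is in the pair $(\P,\F)$: the paper checks the boundary conditions ($\phi(-1)=\spec R$ and $f\ge0$) and then quotes \cite[Proposition 4.3]{ft} for the bijection between order-preserving maps and sp-filtrations, whereas you verify this elementary combinatorial step directly, which is equally valid and makes the argument self-contained.
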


\begin{proof}
Note that the resolving subcategories of $\k(R)$ are precisely the precoaisles of $\k(R)$ that contain $R$ and are closed under direct summands.
Thus, it immediately follows from Lemma \ref{3} and Theorem \ref{2} that the maps $(\Q,\G)$ appearing in the assertion are mutually inverse bijections.
If $\phi$ is an sp-filtration of $\spec R$ with $\phi(-1)=\spec R$, then $\F(\phi)(\p)=\sup\{i\in\Z\mid\p\in\phi(i)\}+1\ge(-1)+1=0$ for each prime ideal $\p$ of $R$, and hence $\F(\phi)$ is regarded as a map from $\spec R$ to $\N\cup\{\infty\}$.
If $f:\spec R\to\N\cup\{\infty\}$ is an order-preserving map, then $\P(f)(-1)=\{\p\in\spec R\mid f(\p)\ge0\}=\spec R$.
Thus, it follows from \cite[Proposition 4.3]{ft} that the maps $(\P,\F)$ appearing in the assertion are mutually inverse bijections.
Now the proof is completed.
\end{proof}

\begin{rem}
Theorem \ref{49} can also be proved by using the techniques of the unbounded derived category $\dd(\Mod R)$ of all $R$-modules.
We present the argument by an anonymous reader and Tsutomu Nakamura.
Let $A$ be the set of preaisles of $\k(R)$ closed under direct summands, $B$ the set of aisles of compactly generated $t$-structures of $\dd(\Mod R)$, and $C$ the set of sp-filtrations of $\spec R$.
Then, combining \cite[Proposition 1.9(ii)]{Na} with \cite[Theorem A.7]{KN} implies that the map $f:A\to B$ is bijective, which sends each $\X\in A$ to the aisle of $\dd(\Mod R)$ generated by $\X$.
In \cite[Theorem 3.11]{AJS} it is proved that the map $g:B\to C$ is bijective, which sends each $\Y\in B$ to the map $\phi:\Z\to2^{\spec R}$ given by $\phi(n)=\{\p\in\spec R\mid(R/\p)[-n]\in\Y\}$ for each $n\in\Z$.
\end{rem}

To compare the above theorem with classification of aisles of $\d(R)$, we need to recall some notions.

\begin{dfn}
\begin{enumerate}[(1)]
\item
A map $f:\spec R\to\Z\cup\{\pm\infty\}$ is called a {\em $t$-function} on $\spec R$ if for each inclusion $\p\subseteq\q$ in $\spec R$ there are inequalities $f(\p)\le f(\q)\le f(\p)+\height\q/\p$.
\item
An sp-filtration $\phi$ is said to satisfy the {\em weak Cousin condition} provided that for all integers $i$ and for all saturated inclusions $\p\subsetneq\q$ in $\spec R$, if $\q$ belongs to $\phi(i)$, then $\p$ belongs to $\phi(i-1)$.
\item
We say that $R$ is {\em CM-excellent} if $R$ is universally catenary, the formal fibers of the localizations of $R$ are Cohen--Macaulay, and the Cohen--Macaulay locus of each finitely generated $R$-algebra is Zariski-open.
\end{enumerate}
\end{dfn}

Takahashi \cite[Theorem 5.5]{ft} proved the following, which yields a complete classification of the $t$-structures of $\d(R)$ when $R$ is a CM-excellent ring of finite Krull dimension.
We use the following notation:
$$
\begin{array}{l}
\text{$\H(f)=\{X\in\d(R)\mid\text{$\h^{\ge f(\p)}(X_\p)=0$ for all $\p\in\spec R$}\}$ for a map $f:\spec R\to\Z\cup\{\pm\infty\}$},\\
\text{$\R(\X)(\p)=\sup\{i\in\Z\mid(R/\p)[-i]\in\X\}+1$ for a subcategory $\X$ of $\d(R)$ and $\p\in\spec R$}.
\end{array}
$$

\begin{thm}[Takahashi]\label{48}
When $R$ is CM-excellent and $\dim R<\infty$, there are one-to-one correspondences
$$
\xymatrix{
{\left\{\begin{matrix}
\text{aisles}\\
\text{of $\d(R)$}
\end{matrix}\right\}}
\ar@<.7mm>[r]^-\R&
{\left\{\begin{matrix}
\text{$t$-functions}\\
\text{on $\spec R$}
\end{matrix}\right\}}
\ar@<.7mm>[r]^-\P\ar@<.7mm>[l]^-\H&
{\left\{\begin{matrix}
\text{sp-filtrations of $\spec R$}\\
\text{satisfying the weak Cousin condition}
\end{matrix}\right\}.}
\ar@<.7mm>[l]^-\F}
$$
\end{thm}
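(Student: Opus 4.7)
The plan is to split the double bijection into its two halves and prove them independently. For the right-hand bijection between $t$-functions on $\spec R$ and sp-filtrations satisfying the weak Cousin condition, I would proceed by direct translation. Given a $t$-function $f$, the subsets $\P(f)(i)=\{\p\in\spec R:f(\p)>i\}$ are specialization-closed precisely because $f$ is order-preserving, and $\P(f)$ is order-reversing by construction; conversely, the order-preservingness of $\F(\phi)$ uses that each $\phi(i)$ is specialization-closed. The content is to show that the inequality $f(\q)\le f(\p)+\height(\q/\p)$ for $\p\subseteq\q$ matches the weak Cousin condition on $\phi$; this reduces by induction on $\height(\q/\p)$ to the case of a saturated cover, where the two conditions agree verbatim. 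The mutual-inverse property is then a direct verification.

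For the main bijection $(\R,\H)$ between aisles of $\d(R)$ and $t$-functions on $\spec R$, the plan is to adapt the approach of Alonso Tarr\'{i}o--Jerem\'{i}as L\'opez--Saor\'{i}n \cite{AJS} beyond the dualizing-complex setting. First I would verify that $\R(\X)$ is a $t$-function for any aisle $\X$: the inequality $\R(\X)(\p)\le\R(\X)(\q)$ for $\p\subseteq\q$ follows because each $(R/\q)[-i]$ is built from $(R/\p)[-i]$ by a finite sequence of mapping cones on multiplication by elements of $\q\setminus\p$, which all stay inside the aisle, and the upper bound for a saturated cover comes from such a triangle shifting $R/\p$ by $-1$ into $R/\q$. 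Conversely, $\H(f)$ is closed under extensions and positive shifts by stalkwise cohomology, and a right adjoint to its inclusion into $\d(R)$ would be assembled from local truncations $\tau^{<f(\p)}$ along a finite stratification of $\spec R$, whose finiteness is guaranteed by $\dim R<\infty$.

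The main obstacle is showing that $\R$ and $\H$ are mutually inverse, specifically the equality $\R(\H(f))=f$, which requires for each $\p$ and each integer $i<f(\p)$ producing an object of $\H(f)$ whose stalk at $\p$ realizes $(R/\p)[-i]$. When $R$ has a dualizing complex, this is achieved in \cite{AJS} via local duality applied to truncations of the Cousin complex of the dualizing complex. In the CM-excellent case the substitute would be a Cousin-type complex along the codimension filtration of $\spec R$, available because CM-excellence forces the Cohen--Macaulay loci of finite $R$-algebras to be Zariski-open and hence the codimension function to be upper semicontinuous; the finite Krull dimension hypothesis yields the boundedness needed to remain in $\d(R)$. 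Producing these test objects, verifying they lie in $\H(f)$, and then inverting the comparison is the technical heart of the argument, and is where the \v{C}esnavi\v{c}ius framework \cite{Ce} combined with Kawasaki-style Cousin complex constructions would be brought to bear.
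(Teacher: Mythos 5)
The first thing to say is that this paper contains no proof of Theorem \ref{48} to compare against: it is quoted as an external result from \cite[Theorem 5.5]{ft}, so your proposal can only be measured against the argument given there. Your treatment of the right-hand bijection is fine and is indeed a routine translation (a saturated inclusion has relative height one, and a chain realizing $\height\q/\p$ is saturated at each step, so the weak Cousin condition and the inequality $f(\q)\le f(\p)+\height\q/\p$ match up exactly as you say).

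For the main bijection, however, you have misplaced the difficulty, and the step you dispatch in passing is the one that fails as stated. The equality $\R(\H(f))=f$, which you call the technical heart, is essentially formal: $(R/\p)[-i]$ lies in $\H(f)$ if and only if $i<f(\q)$ for every $\q\supseteq\p$, which by order-preservation of $f$ is equivalent to $i<f(\p)$; no Cousin-type test objects are needed. The genuine crux is the claim you cover with ``a right adjoint would be assembled from local truncations along a finite stratification'': one must show that $\H(f)$ is an aisle of $\d(R)$, i.e.\ that the truncation functors attached to the corresponding sp-filtration send bounded complexes with finitely generated cohomology back into $\d(R)$. Those truncations are built from local cohomology functors $\rg_{\phi(i)}$, and local cohomology of a finitely generated module is in general neither finitely generated nor bounded, so the naive assembly does not stay in $\d(R)$; this is exactly where the weak Cousin condition and the hypotheses ``CM-excellent and $\dim R<\infty$'' must be used. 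In \cite{AJS} the required finiteness comes from local duality via a dualizing complex; the entire point of \cite{ft} is to replace that input by Faltings' annihilator theorem for local cohomology, which is available in the CM-excellent, finite-dimensional setting. Your proposed substitute (Kawasaki-style Cousin complexes in the \v{C}esnavi\v{c}ius framework) is speculative, is not what the cited proof does, and does not address the finiteness of the truncations. In addition, the verification that $\R(\X)$ satisfies the upper bound at saturated inclusions for an arbitrary aisle $\X$ (the restriction-implies-weak-Cousin direction, due to \cite{AJS}) requires a real argument rather than the single mapping-cone triangle you invoke. So the proposal, as it stands, has a genuine gap at the central step.
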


In the proposition below, we record a relationship between Theorem \ref{49} and the restriction of Theorem \ref{48} to $\k(R)$.
Note that the intersection of the set of order-preserving maps from $\spec R$ to $\N\cup\{\infty\}$ and the set of $t$-functions on $\spec R$ consists of the $t$-functions whose images are contained in $\N\cup\{\infty\}$.

\begin{prop}\label{51}
Let $R$ be a CM-excellent ring of finite Krull dimension.
Let $f$ be a $t$-function on $\spec R$ whose image is contained in $\N\cup\{\infty\}$.
Then there is an equality $\G(f)[1]=\H(f)\cap\k(R)$. 
\end{prop}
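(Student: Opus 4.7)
The plan is to reduce the equality to a single local identity and then unwind definitions. The key identity is
\[
\pd_{R_\p}(X^\ast)_\p \;=\; \sup X_\p
\]
for every $X\in\k(R)$ and $\p\in\spec R$; call it $(\ast)$. Granting $(\ast)$, both $\G(f)[1]$ and $\H(f)\cap\k(R)$ sit inside $\k(R)$, and for $X\in\k(R)$ the contravariant shift rule $(X[-1])^\ast=X^\ast[1]$ together with Proposition \ref{27}(1) gives $\pd_{R_\p}((X[-1])^\ast)_\p=\pd_{R_\p}(X^\ast)_\p+1$. Therefore $X\in\G(f)[1]$, i.e., $X[-1]\in\G(f)$, is equivalent to $\pd_{R_\p}(X^\ast)_\p\le f(\p)-1$ for every prime $\p$, which by $(\ast)$ translates to $\sup X_\p\le f(\p)-1$, i.e., $\h^i(X_\p)=0$ for all $i\ge f(\p)$. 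This is exactly the defining condition of $\H(f)$, yielding the claimed equality as a chain of biconditionals.

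To establish $(\ast)$, localize at $\p$: since $X$ is perfect we have $(X^\ast)_\p\cong(X_\p)^\ast$ in $\d(R_\p)$. Choose a minimal perfect representative $Q$ of $X_\p$ over the local ring $R_\p$, meaning a bounded complex of finitely generated free $R_\p$-modules whose differentials have entries in $\p R_\p$. Then $Q\lten_{R_\p}\kappa(\p)$ has vanishing differential and is a direct sum of shifts of $\kappa(\p)$; its supremum equals the largest degree in which $Q$ is nonzero, and by minimality this equals $\sup Q=\sup X_\p$. The dual complex $Q^\ast$ is again minimal, and the natural isomorphism
\[
Q^\ast\lten_{R_\p}\kappa(\p)\;\cong\;\Hom_{\kappa(\p)}\bigl(Q\lten_{R_\p}\kappa(\p),\,\kappa(\p)\bigr)
\]
reverses indices, giving $\inf(Q^\ast\lten_{R_\p}\kappa(\p))=-\sup(Q\lten_{R_\p}\kappa(\p))=-\sup X_\p$. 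Proposition \ref{27}(2) applied to $(X^\ast)_\p$ then yields
\[
\pd_{R_\p}(X^\ast)_\p\;=\;-\inf\bigl((X^\ast)_\p\lten_{R_\p}\kappa(\p)\bigr)\;=\;\sup X_\p,
\]
which is $(\ast)$. The edge cases $X_\p\cong 0$ (both sides equal $-\infty$) and $f(\p)=\infty$ (both sides impose no condition) are built in.

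The only step requiring genuine thought is the identity $(\ast)$; everything else is bookkeeping. Note that neither the CM-excellence nor the finite Krull dimension of $R$ is actually invoked, nor is the $t$-function property of $f$; these hypotheses enter only through Theorem \ref{48}, against which Theorem \ref{49} is being compared.
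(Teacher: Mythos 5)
Your proposal is correct and takes essentially the same route as the paper: the paper likewise reduces everything to the local identity $\pd_RX^\ast=\sup X$ for perfect $X$ over a local ring (stated there as $\pd X^\ast\le n$ if and only if $\h^{>n}(X)=0$) and then runs exactly your chain of biconditionals, the only difference being that the paper verifies the identity by citing $\pd X^\ast=\sup\rhom_R(X^\ast,k)$, $\rhom_R(X^\ast,k)\cong X\lten_Rk$ and $\sup(X\lten_Rk)=\sup X$ from Christensen's book, whereas you prove it directly with a minimal free complex. Your closing observation that CM-excellence, finite Krull dimension and the $t$-function hypothesis are never used matches the paper's proof, which does not use them either.
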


\begin{proof}
We claim that if $R$ is local, $X\in\k(R)$ and $n\in\Z$, then $\pd X^\ast\le n$ if and only if $\h^{>n}(X)=0$.
In fact, letting $k$ be the residue field of $R$, we have that $\pd X^\ast=\sup\rhom_R(X^\ast,k)$, that $\rhom_R(X^\ast,k)\cong X\lten_Rk$ and that $\sup(X\lten_Rk)=\sup X$ by \cite[(A.5.7.3), (A.4.24) and (A.6.3.2)]{C}, respectively.

Let $X$ be an object of $\k(R)$.
Using the above claim, we see that $X\in\G(f)[1]$ if and only if $X[-1]\in\G(f)$, if and only if $\pd(X[-1])^\ast_\p\le f(\p)$ for all $\p\in\spec R$, if and only if $\pd X^\ast_\p\le f(\p)-1$ for all $\p\in\spec R$, if and only if $\h^{\ge f(\p)}(X_\p)=0$, if and only if $X\in\H(f)$.
It follows that $\G(f)[1]=\H(f)\cap\k(R)$.
\end{proof}

\begin{ques}\label{87}
Let $R$ be a CM-excellent ring of finite Krull dimension.
Is there any relationship between Theorem \ref{49} and the restriction of Theorem \ref{48} to $\k(R)$, other than the one shown in Proposition \ref{51}?
\end{ques}

\begin{rem}
In view of what we have stated so far, it is quite natural to ask if the aisles of $\k(R)$ can be classified.
If $R$ is regular, then $\k(R)$ coincides with $\d(R)$, and Theorem \ref{48} gives an answer.
In case $R$ is singular, it is known that $\k(R)$ possesses only trivial aisles under mild assumptions:
Smith \cite[Theorems 1.2 and 1.3]{S} proved that if $R$ has finite Krull dimension, then $\k(R)$ has no bounded $t$-structure, and if moreover $R$ is irreducible, then $0$ and $\k(R)$ are the only aisles of $\k(R)$.
The former statement has recently been extended to schemes by Neeman \cite[Theorem 0.1]{N3}, which resolves a conjecture of Antieau, Gepner and Heller \cite{AGH}.
\end{rem}

\section{Separating the resolving subcategories of $\d(R)$}

In this section, for a complete intersection $R$ we separate the resolving subcategories of $\d(R)$ into the resolving subcategories contained in $\k(R)$ and the resolving subcategories contained in $\c(R)$.
For this, we need several preparations.
We start by recalling the definition of the cosyzygies of a finitely generated module.

\begin{dfn}
Let $M$ be a finitely generated $R$-module and $n\ge1$ an integer.
We denote by $\syz_R^{-n}M$ the {\em $n$th cosyzygy} of $M$.
This is defined inductively as follows.
Let $\syz_R^{-1}M$ be the cokernel of a homomorphism $f:M\to P$ such that $P$ is a finitely generated projective $R$-module and the map $\Hom_R(f,R):\Hom_R(P,R)\to\Hom_R(M,R)$ is surjective.
For $n\ge2$ we set $\syz_R^{-n}M=\syz_R^{-1}(\syz_R^{-(n-1)}M)$.
The $n$th cosyzygy of $M$ is uniquely determined by $M$ and $n$ up to projective summands.
For details, see \cite[Sections 2 and 7]{catgp} for instance.
\end{dfn}

Next we recall the definition of a certain numerical invariant for complexes.

\begin{dfn}
The {\em (large) restricted flat dimension} $\rfd_RX$ of an $R$-complex $X\in\d(R)$ is defined by
$$
\textstyle\rfd_RX=\sup_{\p\in\spec R}\{\depth R_\p-\depth X_\p\}.
$$
One has inequalities $-\inf X\le\rfd_RX<\infty$; see \cite[Theorem 1.1]{AIL} and \cite[Proposition (2.2) and Theorem (2.4)]{CFF}.
Also, note that if $R$ is Cohen--Macaulay, then $X$ is maximal Cohen--Macaulay if and only if $\rfd_RX\le0$.
\end{dfn}

For a complex $X\in\d(R)$, we denote by $\gdim_RX$ the {\em Gorenstein dimension} ({\em G-dimension} for short) of $X$.
Recall that a {\em totally reflexive module} is defined to be a finitely generated module of G-dimension at most zero.
For the details of G-dimension and totally reflexive modules, we refer the reader to \cite{C}.
In the following lemma, we make a list of properties of G-dimension we need to use later.
Assertions (2) and (3) of the lemma correspond to assertions (1) and (3) of Proposition \ref{27} concerning projective dimension.

\begin{lem}\label{66}
\begin{enumerate}[\rm(1)]
\item
Let $Y,Z\in\d(R)$.
If $\pd_RY<\infty$ and $\gdim_RZ\le0$, then $\Ext_R^i(Z,Y)=0$ for all $i>\sup Y$.
\item
For every $X\in\d(R)$ and every $n\in\Z$ the equality $\gdim_R(X[n])=\gdim_RX+n$ holds.
\item
Let $X\to Y\to Z\rightsquigarrow$ be an exact triangle in $\d(R)$.
One then has $\gdim_RX\le\sup\{\gdim_RY,\gdim_RZ-1\}$, $\gdim_RY\le\sup\{\gdim_RX,\gdim_RZ\}$ and $\gdim_RZ\le\sup\{\gdim_RX+1,\gdim_RY\}$.
\item
An object $X\in\d(R)$ is isomorphic to a totally reflexive module if and only if $\gdim_RX\le0$ and $\sup X\le0$.
\item
Suppose that the ring $R$ is Gorenstein.
Then every complex $X$ of $\d(R)$ satisfies $\gdim_RX=\rfd_RX<\infty$.
In particular, $X$ is a maximal Cohen--Macaulay $R$-complex if and only if one has $\gdim_RX\le0$.
\end{enumerate}
\end{lem}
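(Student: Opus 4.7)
The plan is to assemble each part from standard properties of G-dimension, for which the main reference is Christensen's monograph [C]; the proofs are routine and the lemma is included for the record.

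For (1), when $\gdim_R Z\le0$ the complex $Z$ is represented by a bounded complex of totally reflexive modules, and a totally reflexive module is $\Ext^{>0}$-orthogonal to any finitely generated projective module. Induction on $\pd_R Y$, combined with the usual long-exact-sequence computation applied to a minimal perfect representative of $Y$ concentrated in degrees $\le\sup Y$, gives the sharper statement $\sup\rhom_R(Z,Y)\le\sup Y$, which is exactly the claimed vanishing $\Ext^i_R(Z,Y)=0$ for $i>\sup Y$. Statement (2) is immediate from the definition: shifting a G-resolution by totally reflexive modules yields a G-resolution of the shift.

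For (3), the three inequalities are the G-dimension analogues of the projective-dimension inequalities in Proposition \ref{27}(3). Each follows by applying the standard characterization of $\gdim$ in terms of vanishing of $\rhom(-,G)$ for totally reflexive $G$, to the rotated exact triangles $C[-1]\to A\to B\rightsquigarrow$ and $B\to C\to A[1]\rightsquigarrow$, and reading off the induced long exact sequences in cohomology. For (4), the forward direction is immediate. Conversely, assume $\gdim_R X\le0$ and $\sup X\le0$. The standard inequality $\gdim_R X\ge-\inf X$ (valid whenever $\gdim_R X$ is finite; see [C]) forces $\inf X\ge0$, so $X$ has cohomology concentrated in degree zero and is therefore quasi-isomorphic to the module $M=\h^0X$. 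Since $\gdim_R M=\gdim_R X\le0$, the module $M$ is totally reflexive.

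For (5), the finiteness of $\gdim_R X$ for every $X\in\d(R)$ over a Gorenstein ring $R$ is classical: over a Gorenstein local ring every finitely generated module has finite G-dimension (Auslander--Bridger), and this extends to bounded complexes via [C, Chapter 5]. The equality $\gdim_R X=\rfd_R X$ then follows by localizing and applying the Auslander--Bridger formula $\gdim_{R_\p}X_\p=\depth R_\p-\depth_{R_\p}X_\p$ at each prime $\p$, then taking the supremum over $\spec R$ and comparing with the definition of $\rfd$. The final sentence is the specialization to $\rfd_R X\le0$, together with the already-recalled equivalence between being maximal Cohen--Macaulay and $\rfd\le0$ over a Cohen--Macaulay ring.

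The only obstacle worth flagging is bookkeeping among the several equivalent formulations of $\gdim$ on the derived category (via totally reflexive resolutions, via vanishing of $\rhom(-,G)$, and via the Auslander category $\mathsf{A}(R)$); all of these equivalences, along with the numerical bounds and the Auslander--Bridger formula for complexes used above, are proved in [C] and will be cited rather than reproduced.
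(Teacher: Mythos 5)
Parts (1), (2) and (4) of your plan are fine, and (5), though terse about the local-to-global passage for G-dimension, amounts to the same localization-plus-Auslander--Bridger computation that the paper writes out. The genuine gap is in (3). You rest all three inequalities on a ``standard characterization of $\gdim$ in terms of vanishing of $\rhom(-,G)$ for totally reflexive $G$'' and then propose to read off long exact sequences from the rotated triangles. No such characterization is available, in \cite{C} or elsewhere: the problematic direction is that Ext-vanishing would have to force \emph{finiteness} of G-dimension, and this fails already for $G=R$ --- there are modules $M$ with $\Ext^i_R(M,R)=0$ for all $i>0$ but $\gdim_RM=\infty$ (Jorgensen--\v{S}ega), and enlarging the test class to all totally reflexive modules is not known to repair this (over many rings the only totally reflexive modules are the free ones). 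Consequently, what the long exact sequence of $\rhom(-,R)$ actually yields, once $\gdim_RX$ and $\gdim_RZ$ are assumed finite, is only the bound $\sup\rhom_R(Y,R)\le\sup\{\gdim_RX,\gdim_RZ\}$; it does not show that $\gdim_RY$ is finite, and that finiteness is the real content of (3). The paper supplies it via the characterization \cite[(2.3.8)]{C} of finite G-dimension as derived reflexivity: it dualizes the triangle to see that $\rhom_R(Y,R)$ is bounded, and then uses the commutative diagram of biduality morphisms $f_X,f_Y,f_Z$ to conclude that $f_Y$ is an isomorphism because $f_X$ and $f_Z$ are; only after that does $\gdim_RY=\sup\rhom_R(Y,R)$ hold and the long exact sequence close the argument. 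This two-out-of-three statement for finite G-dimension cannot be sidestepped by assuming $R$ Gorenstein, since (3) is invoked later for arbitrary $R$ (Lemmas \ref{67} and \ref{69}, Propositions \ref{70} and \ref{101}).

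For the record, your treatment of (1) is legitimate and slightly more self-contained than the paper's appeal to \cite[(2.4.1)]{C} plus localization: representing $Z$ by a bounded complex of totally reflexive modules in nonnegative degrees and $Y$ by a perfect complex in degrees at most $\sup Y$, an induction over the terms of the perfect complex gives $\Ext_R^i(Z,Y)=0$ for $i>\sup Y$, using $\sup\rhom_R(Z,R)\le0$. So the only repair needed is an honest proof of the finiteness assertion in (3), for instance the biduality-diagram argument just described.
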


\begin{proof}
In what follows, \cite[(2.3.8)]{C} is our fundamental tool.
Also, we set $(-)^\star=\rhom_R(-,R)$, and for each complex $C\in\d(R)$ such that $C^\star\in\d(R)$, let $f_C:C\to C^{\star\star}$ stand for the natural morphism.

(1) If $R$ is local, then $\sup\rhom(Z,Y)\le\gdim Z+\sup Y\le\sup Y$ by \cite[(2.4.1)]{C}, and the assertion is deduced.
Suppose $R$ is nonlocal, and fix $\p\in\spec R$.
Then $\pd_{R_\p}Y_\p<\infty$, $Z_\p\in\d(R_\p)$, and $\gdim_{R_\p}Z_\p\le0$ by \cite[(2.3.11)]{C}.
The assertion in the local case shows $\Ext_{R_\p}^i(Z_\p,Y_\p)=0$ for all $i>\sup Y_\p$.
As $\sup Y_\p\le\sup Y$, we have $\Ext_R^i(Z,Y)_\p=\Ext_{R_\p}^i(Z_\p,Y_\p)=0$ for all $i>\sup Y$.
Therefore, $\Ext_R^i(Z,Y)=0$ for all $i>\sup Y$.

(2) The assertion is straightforward (from the definition of G-dimension or \cite[(2.3.8)]{C}).

(3) We have only to verify $\gdim Y\le\sup\{\gdim X,\gdim Z\}$, because once it is done, applying it to the exact triangles $Y\to Z\to X[1]\rightsquigarrow$ and $Z[-1]\to X\to Y\rightsquigarrow$ and using (2) will give the inequalities $\gdim Z\le\sup\{\gdim Y,\gdim X+1\}$ and $\gdim X\le\sup\{\gdim Z-1,\gdim Y\}$.
The inequality is obvious if either $\gdim X$ or $\gdim Z$ is infinite.
We may assume $\gdim X$ and $\gdim Z$ are both finite.
Then $X^\star,Z^\star$ belong to $\d(R)$, and $f_X,f_Z$ are isomorphisms.
The induced exact triangle $Z^\star\to Y^\star\to X^\star\rightsquigarrow$ (in the derived category of left-bounded $R$-complexes) shows that $Y^\star$ is in 
$\d(R)$.
There is a commutative diagram
$$
\xymatrix@R-1pc@C5pc{
X\ar[r]\ar[d]^-{f_X}&Y\ar[r]\ar[d]^-{f_Y}&Z\ar[r]\ar[d]^-{f_Z}&X[1]\ar[d]^-{f_X[1]}\\
X^{\star\star}\ar[r]&Y^{\star\star}\ar[r]&Z^{\star\star}\ar[r]&X^{\star\star}[1]
}
$$
of exact triangles in $\d(R)$.
Since $f_X$ and $f_Z$ are isomorphisms, so is $f_Y$.
We conclude that $\gdim Y$ is finite, and get $\gdim Y=\sup\rhom(Y,R)=\sup Y^\star\le\sup\{\sup X^\star,\sup Z^\star\}=\sup\{\gdim X,\gdim Z\}$.

(4) The ``only if'' part is obvious.
By \cite[(2.3.3)]{C} there is an inequality $\gdim X\ge-\inf X$.
Suppose that $\gdim X\le0$ and $\sup X\le0$.
We then have $\sup X\le0\le-\gdim X\le\inf X$, which implies $\sup X=\inf X=0$ or $X\cong0$ in $\d(R)$.
Hence, $X$ is isomorphic in $\d(R)$ to a totally reflexive module.
Thus the ``if'' part follows.

(5) The second assertion follows from the first and the fact that for a Cohen--Macaulay ring $R$ one has $X\in\c(R)$ if and only if $\rfd_RX\le0$.
To show the first assertion, put $r=\rfd_RX$.
Fix $\p\in\spec R$.
As $R_\p$ is a Gorenstein local ring, $\gdim_{R_\p}X_\p=\depth R_\p-\depth X_\p\le r$; see \cite[(2.3.13) and (2.3.14)]{C}.
Hence $\Ext_R^i(X,R)_\p=\Ext_{R_\p}^i(X_\p,R_\p)=0$ for all $i>r$.
Therefore, $\Ext_R^i(X,R)=0$ for all $i>r$.
We see that $(f_X)_\p=f_{X_\p}$ is an isomorphism for all $\p\in\spec R$, so that $f_X$ is an isomorphism (see \cite[(A.4.5) and (A.8.4.1)]{C}).
Hence $\gdim_RX=\sup X^\star\le r$.
If $\gdim_RX\le r-1$, then $\depth R_\p-\depth X_\p=\gdim_{R_\p}X_\p\le r-1$ for all $\p\in\spec R$ by \cite[(2.3.11) and (2.3.13)]{C}, and $\rfd_RX\le r-1$, a contradiction.
Thus $\gdim_RX=r$.
\end{proof}

In the lemma below we study the structure of a resolving subcategory of a certain form.
The idea of the proof comes from the proof of \cite[Lemma 7.2]{crspd}.

\begin{lem}\label{67}
Let $\Y$ and $\ZZ$ be resolving subcategories of $\d(R)$.
Let $\X$ be the subcategory of $\d(R)$ consisting of comlexes $X$ which fits into an exact triangle $Z\to X\oplus X'\to Y\rightsquigarrow$ in $\d(R)$ with $Z\in\ZZ$ and $Y\in\Y$.
\begin{enumerate}[\rm(1)]
\item
There is an inclusion $\X\subseteq\res_{\d(R)}(\Y\cup\ZZ)$ of subcategories of $\d(R)$.
\item
The subcategory $\X$ coincides with the subcategory of $\d(R)$ consisting of objects $X$ which fits into an exact triangle $Z\to X\oplus X'\to Y\rightsquigarrow$ in $\d(R)$ with $Z\in\ZZ$, $Y\in\Y$ and $\sup Y\le0$.
\item
Suppose that $\pd_RY<\infty$ for each $Y\in\Y$ and $\gdim_RZ\le0$ for each $Z\in\ZZ$.
Then $\X=\res_{\d(R)}(\Y\cup\ZZ)$.
\end{enumerate}
\end{lem}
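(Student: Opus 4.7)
The plan is to establish (1), (2), (3) in order. Part (1) is immediate: $\res_{\d(R)}(\Y\cup\ZZ)$ contains both $\Y$ and $\ZZ$, and is closed under extensions and direct summands, so any $X$ admitting a triangle $Z\to X\oplus X'\to Y\rightsquigarrow$ with $Z\in\ZZ$, $Y\in\Y$ is already in it.

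For (2), I would start with a witnessing triangle $Z\to X\oplus X'\to Y\rightsquigarrow$. Applying Remark \ref{40}(2) to $Y$ produces a triangle $Y\to Y''\to P[1]\rightsquigarrow$ with $\sup Y''\le 0$ and $P,P[1]\in\E_R$. Since $\Y$ is resolving, $\E_R\subseteq\Y$, so $P[1]\in\Y$, and closure under extensions forces $Y''\in\Y$. The octahedral axiom applied to the composition $X\oplus X'\to Y\to Y''$ yields a new triangle $Z'\to X\oplus X'\to Y''\rightsquigarrow$ together with $Z\to Z'\to P\rightsquigarrow$; since $P\in\E_R\subseteq\ZZ$, closure of $\ZZ$ under extensions gives $Z'\in\ZZ$, as required.

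For (3), by (1) it suffices to show that $\X$ is itself resolving. Containment of $R$, closure under direct summands, and closure under negative shifts are direct from the definition together with the resolvingness of $\Y$ and $\ZZ$. The real work is closure under extensions. Given $X_1\to X_2\to X_3\rightsquigarrow$ with $X_1,X_3\in\X$, by (2) I fix triangles $Z_i\to X_i\oplus X_i'\to Y_i\rightsquigarrow$ ($i=1,3$) with $Z_i\in\ZZ$, $Y_i\in\Y$, and $\sup Y_i\le 0$. Taking direct sums with identity triangles on $X_1'$ and $X_3'$ produces
\[
X_1\oplus X_1'\to X_2':=X_2\oplus X_1'\oplus X_3'\to X_3\oplus X_3'\rightsquigarrow.
\]
Let $F$ be defined by $F\to X_2'\to Y_3\rightsquigarrow$, where the middle map is the composition $X_2'\to X_3\oplus X_3'\to Y_3$. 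The octahedral axiom applied to this factorization yields a triangle $X_1\oplus X_1'\to F\to Z_3\rightsquigarrow$, and a second octahedral on $Z_1\to X_1\oplus X_1'\to F$ produces $Z_1\to F\to G\rightsquigarrow$ together with $Y_1\to G\to Z_3\rightsquigarrow$.

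The crux, and the only place where the hypotheses $\pd Y<\infty$ for $Y\in\Y$ and $\gdim Z\le 0$ for $Z\in\ZZ$ are used, is to split the triangle $Y_1\to G\to Z_3\rightsquigarrow$. Its connecting map lies in $\Ext_R^1(Z_3,Y_1)$, which vanishes by Lemma \ref{66}(1) using $\sup Y_1\le 0$; this is precisely why the normalization in (2) is essential. Hence $G\cong Y_1\oplus Z_3$, and I have $Z_1\to F\to Y_1\oplus Z_3\rightsquigarrow$. A third octahedral on $F\to Y_1\oplus Z_3\to Y_1$ (the projection) produces $F'\to F\to Y_1\rightsquigarrow$ with $F'$ fitting in $Z_1\to F'\to Z_3\rightsquigarrow$, so $F'\in\ZZ$ by closure under extensions. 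A final octahedral on $F'\to F\to X_2'$ then yields $F'\to X_2'\to K\rightsquigarrow$ with $K$ fitting in $Y_1\to K\to Y_3\rightsquigarrow$, so $K\in\Y$. This last triangle exhibits $X_2\in\X$, completing the argument.
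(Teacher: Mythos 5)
Your proof is correct and follows essentially the same route as the paper: parts (1) and (2) are the paper's argument, and in (3) the crux---splitting the triangle $Y_1\to G\to Z_3\rightsquigarrow$ via $\Ext_R^1(Z_3,Y_1)=0$ from Lemma \ref{66}(1), available precisely because (2) lets you normalize $\sup Y_1\le0$---is the same vanishing the paper exploits, with the octahedra merely assembled in a slightly different order (you first form the fiber $F$ of $X_2'\to Y_3$, while the paper first forms the cone of $Z_1\to M\oplus L'$ and then brings in $Y_2$). One small point to patch: to get $\res_{\d(R)}(\Y\cup\ZZ)\subseteq\X$ it is not enough that $\X$ be resolving; you also need $\Y\cup\ZZ\subseteq\X$, which is immediate from the triangles $0\to Y\oplus0\to Y\rightsquigarrow$ and $Z\to Z\oplus0\to0\rightsquigarrow$, as the paper notes.
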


\begin{proof}
(1) Let $Z\to X\oplus X'\to Y\rightsquigarrow$ be an exact triangle in $\d(R)$ such that $Z\in\ZZ$ and $Y\in\Y$.
As $\res(\Y\cup\ZZ)$ contains $\ZZ$ and $\Y$, the objects $Z,Y$ are in $\res(\Y\cup\ZZ)$.
The triangle implies that $X$ belongs to $\res(\Y\cup\ZZ)$.

(2) Let $Z\to X\oplus X'\to Y\rightsquigarrow$ be an exact triangle with $Z\in\ZZ$ and $Y\in\Y$.
Remark \ref{40}(2) gives an exact triangle $P\to Y\to Y'\rightsquigarrow$ with $P\in\E_R$ and $\sup Y'\le0$.
The octahedral axiom yields a commutative diagram
$$
\xymatrix@R-1.5pc@C5pc{
X\oplus X'\ar[r]\ar@{=}[d]& Y\ar[r]\ar[d]& Z[1]\ar[r]\ar[d]& (X\oplus X')[1]\ar@{=}[d]\\
X\oplus X'\ar[r]\ar[d]& Y'\ar[r]\ar@{=}[d]& Z'[1]\ar[r]\ar[d]& (X\oplus X')[1]\ar[d]\\
Y\ar[r]\ar[d]& Y'\ar[r]\ar[d]& P[1]\ar[r]\ar@{=}[d]& Y[1]\ar[d]\\
Z[1]\ar[r]& Z'[1]\ar[r]& P[1]\ar[r]& Z[2]
}
$$
of exact triangles, and the bottom row induces an exact triangle $Z\to Z'\to P\rightsquigarrow$.
As $Z$ and $P$ are in $\ZZ$, so is $Z'$.
An exact triangle $Z'\to X\oplus X'\to Y'\rightsquigarrow$ is induced from the second row.
Now the assertion follows.

(3) The inclusion $(\subseteq)$ is shown in (1).
We prove the opposite inclusion $(\supseteq)$.
For $Y\in\Y$ and $Z\in\ZZ$ there are exact triangles $0\to Y\oplus0\to Y\rightsquigarrow$ and $Z\to Z\oplus0\to0\rightsquigarrow$.
This shows that $\Y\cup\ZZ\subseteq\X$.
We will be done once we prove that $\X$ is a resolving subcategory of $\d(R)$.
As $R$ belongs to $\Y$ (and $\ZZ$), it belongs to $\X$.

Let $X$ be an object of $\X$, and let $W$ be a direct summand of $X$ in $\d(R)$.
Then there is an exact triangle $Z\to X\oplus X'\to Y\rightsquigarrow$ in $\d(R)$ such that $Z\in\ZZ$ and $Y\in\Y$, and also $X=W\oplus V$ for some $V\in\d(R)$.
Setting $W'=V\oplus X'$, we have an exact triangle $Z\to W\oplus W'\to Y\rightsquigarrow$.
Hence $\X$ is closed under direct summands.

Every exact triangle $Z\to X\oplus X'\to Y\rightsquigarrow$ with $Z\in\ZZ$ and $Y\in\Y$ induces an exact triangle $Z[-1]\to X[-1]\oplus X'[-1]\to Y[-1]\rightsquigarrow$, and we have $Z[-1]\in\ZZ$ and $Y[-1]\in\Y$.
Thus $\X$ is closed under negative shifts.

It remains to prove that $\X$ is closed under extensions.
Let $L\xrightarrow{f}M\xrightarrow{g}N\rightsquigarrow$ be an exact triangle in $\d(R)$ with $L,N\in\X$.
Then there exist exact triangles $Z_1\to L\oplus L'\to Y_1\rightsquigarrow$ and $Z_2\to N\oplus N'\to Y_2\rightsquigarrow$ in $\d(R)$ such that $Z_1,Z_2\in\ZZ$ and $Y_1,Y_2\in\Y$.
In view of (2), we may assume that $\sup Y_1\le0$.
The octahedral axiom yields the following two commutative diagrams of exact triangles in $\d(R)$.
$$
\xymatrix@R-1pc{
Z_1\ar[r]\ar@{=}[d]&L\oplus L'\ar[r]\ar[d]^-{\left(\begin{smallmatrix}f&0\\0&1\end{smallmatrix}\right)}&Y_1\ar[r]\ar[d]&Z_1[1]\ar@{=}[d]\\
Z_1\ar[r]\ar[d]&M\oplus L'\ar[r]_-{\left(\begin{smallmatrix}p&q\end{smallmatrix}\right)}\ar@{=}[d]&A\ar[r]_-r\ar[d]^-h&Z_1[1]\ar[d]\\
L\oplus L'\ar[r]^-{\left(\begin{smallmatrix}f&0\\0&1\end{smallmatrix}\right)}\ar[d]&M\oplus L'\ar[r]^-{\left(\begin{smallmatrix}g&0\end{smallmatrix}\right)}\ar[d]&N\ar[r]\ar@{=}[d]&(L\oplus L')[1]\ar[d]\\
Y_1\ar[r]&A\ar[r]^-h&N\ar[r]^-k&Y_1[1]
}
\qquad
\xymatrix@R-1pc{
A\oplus N'\ar[r]^-{\left(\begin{smallmatrix}h&0\\0&1\end{smallmatrix}\right)}\ar@{=}[d]&N\oplus N'\ar[r]^-{\left(\begin{smallmatrix}k&0\end{smallmatrix}\right)}\ar[d]&Y_1[1]\ar[r]\ar[d]&(A\oplus N')[1]\ar@{=}[d]\\
A\oplus N'\ar[r]\ar[d]&Y_2\ar[r]\ar@{=}[d]&B[1]\ar[r]\ar[d]&(A\oplus N')[1]\ar[d]\\
N\oplus N'\ar[r]\ar[d]&Y_2\ar[r]\ar[d]&Z_2[1]\ar[r]\ar@{=}[d]&(N\oplus N')[1]\ar[d]\\
Y_1[1]\ar[r]&B[1]\ar[r]&Z_2[1]\ar[r]^-{\delta[1]}&Y_1[2]
}
$$
The bottom row of the right diagram induces an exact triangle $\sigma:Y_1\to B\to Z_2\xrightarrow{\delta}Y_1[1]$ in $\d(R)$.
We have $\delta\in\Hom_{\d(R)}(Z_2,Y_1[1])\cong\Ext_R^1(Z_2,Y_1)=0$ by Lemma \ref{66}(1).
Hence $\sigma$ splits (see \cite[Corollary 1.2.7]{N2}), which gives an isomorphism $B\cong Y_1\oplus Z_2$.
An exact triangle $Y_1\oplus Z_2\to A\oplus N'\to Y_2\rightsquigarrow$ is induced from the second row of the right diagram.
Applying the octahedral axiom again, we obtain commutative diagrams
$$
\xymatrix@R-1.5pc@C3pc{
Z_2\ar[r]^-{\left(\begin{smallmatrix}0\\1\end{smallmatrix}\right)}\ar@{=}[d]&Y_1\oplus Z_2\ar[r]^-{\left(\begin{smallmatrix}1&0\end{smallmatrix}\right)}\ar[d]&Y_1\ar@{~>}[r]\ar[d]&\\
Z_2\ar[r]\ar[d]&A\oplus N'\ar[r]\ar@{=}[d]&Y\ar@{~>}[r]\ar[d]&\\
Y_1\oplus Z_2\ar[r]\ar[d]&A\oplus N'\ar[r]\ar[d]&Y_2\ar@{~>}[r]\ar@{=}[d]&\\
Y_1\ar[r]&Y\ar[r]&Y_2\ar@{~>}[r]&
}
\qquad
\xymatrix@R-1.5pc@C3pc{
M\oplus L'\oplus N'\ar[r]^-{\left(\begin{smallmatrix}p&q&0\\0&0&1\end{smallmatrix}\right)}\ar@{=}[d]&A\oplus N'\ar[r]^-{\left(\begin{smallmatrix}r&0\end{smallmatrix}\right)}\ar[d]&Z_1[1]\ar@{~>}[r]\ar[d]&\\
M\oplus L'\oplus N'\ar[r]\ar[d]&Y\ar[r]\ar@{=}[d]&Z[1]\ar@{~>}[r]\ar[d]&\\
A\oplus N'\ar[r]\ar[d]&Y\ar[r]\ar[d]&Z_2[1]\ar@{~>}[r]\ar@{=}[d]&\\
Z_1[1]\ar[r]&Z[1]\ar[r]&Z_2[1]\ar@{~>}[r]&
}
$$
of exact triangles.
We obtain exact triangles $Y_1\to Y\to Y_2\rightsquigarrow$ and $Z_1\to Z\to Z_2\rightsquigarrow$, which imply $Y\in\Y$ and $Z\in\ZZ$.
We also have an exact triangle $Z\to M\oplus L'\oplus N'\to Y\rightsquigarrow$, which shows that $M$ belongs to $\X$.
\end{proof}

In the following lemma, we investigate syzygies and cosyzygies for complexes.

\begin{lem}\label{69}
Let $X$ be an object of $\d(R)$.
Then the following two statements hold true.
\begin{enumerate}[\rm(1)]
\item
There exists an exact triangle $Y\to E\to X\rightsquigarrow$ in $\d(R)$ such that $E\in\E_R$, $\sup E\le\sup\{\sup X,0\}$ and $\sup Y\le0$.
If $\sup X\le0$, then $E$ is isomorphic in $\d(R)$ to a projective $R$-module.
\item
Suppose that $\gdim_RX\le0$.
Then there exists an exact triangle $X\to P\to Y\rightsquigarrow$ in $\d(R)$ such that $P$ is a projective $R$-module and $\gdim_RY\le0$.
\end{enumerate}
\end{lem}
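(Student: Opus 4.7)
The plan for (1) is a direct truncation argument. By Remark \ref{40}, write $X\cong F$ with $F=(\cdots\to F^{-1}\to F^{0}\to F^{1}\to\cdots\to F^{s}\to 0)$ a complex of finitely generated free $R$-modules, where $s=\sup X$. Let $E:=(0\to F^{0}\to F^{1}\to\cdots\to F^{s}\to 0)$ be the truncation of $F$ viewed as a subcomplex, and $Y':=(\cdots\to F^{-2}\to F^{-1}\to 0)$ the quotient complex. The resulting short exact sequence gives an exact triangle $E\to X\to Y'\rightsquigarrow$; anti-rotating it yields the triangle $Y\to E\to X\rightsquigarrow$ with $Y:=Y'[-1]$. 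Then $E$ is a bounded complex of finitely generated projectives concentrated in nonnegative degrees, so $E\in\E_{R}$ by Proposition \ref{27}(6), and $\sup E\le\sup\{\sup X,0\}$. Also $\sup Y=\sup Y'+1\le 0$ since $\sup Y'\le -1$. When $\sup X\le 0$ one has $F^{i}=0$ for $i>0$, and $E$ reduces to the projective $R$-module $F^{0}$ placed in degree $0$.

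For (2) the plan is to apply (1) to the dual $X^{\star}:=\rhom_{R}(X,R)$ and then dualize back. Since $\gdim_{R}X\le 0<\infty$, the biduality theorem \cite[(2.3.8)]{C} (invoked in the proof of Lemma \ref{66}) gives $X^{\star}\in\d(R)$, the natural map $X\to X^{\star\star}$ is an isomorphism, and $\sup X^{\star}=\gdim_{R}X\le 0$. Applying (1) to $X^{\star}$ and using $\sup X^{\star}\le 0$ produces an exact triangle $Y_{1}\to P\to X^{\star}\rightsquigarrow$ in $\d(R)$ with $P$ a projective $R$-module and $\sup Y_{1}\le 0$. The same reference gives $\gdim_{R}X^{\star}=\sup X^{\star\star}=\sup X<\infty$, so Lemma \ref{66}(3) yields $\gdim_{R}Y_{1}\le\sup\{\gdim_{R}P,\gdim_{R}X^{\star}-1\}<\infty$. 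Applying the contravariant functor $\rhom_{R}(-,R)$, which sends a triangle $A\to B\to C\to A[1]$ to $C^{\star}\to B^{\star}\to A^{\star}\to C^{\star}[1]$, and using $X\cong X^{\star\star}$, one obtains the desired triangle $X\to P^{\star}\to Y_{1}^{\star}\rightsquigarrow$. Here $P^{\star}$ is projective because the $R$-dual of a finitely generated projective module is again such. Finally, the finite G-dimension of $Y_{1}$ ensures $Y_{1}\cong Y_{1}^{\star\star}$ and the finiteness of $\gdim_{R}Y_{1}^{\star}$, so the identity $\gdim_{R}(-)=\sup(-)^{\star}$ (valid for complexes of finite G-dimension) gives $\gdim_{R}Y_{1}^{\star}=\sup Y_{1}^{\star\star}=\sup Y_{1}\le 0$.

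The main obstacle is obtaining the sharp bound $\gdim_{R}Y_{1}^{\star}\le 0$ in (2): applying the inequalities of Lemma \ref{66}(3) directly to the triangle $X\to P^{\star}\to Y_{1}^{\star}\rightsquigarrow$ only yields $\gdim_{R}Y_{1}^{\star}\le 1$, which is insufficient. The dualization strategy is tailored to convert the easily obtained supremum bound $\sup Y_{1}\le 0$ into the desired G-dimension bound through the characterization $\gdim_{R}(-)=\sup(-)^{\star}$.
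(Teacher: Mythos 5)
Your proof of part (1) is correct and is essentially the paper's own argument: the same brutal truncation of a projective resolution into the subcomplex in degrees $\ge 0$ and the quotient in degrees $\le -1$, followed by a rotation.

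For part (2) you take a genuinely different, and correct, route. The paper first replaces $X$, up to an error in $\E_R$, by a totally reflexive module $X'$ (Remark \ref{40}(2) together with Lemma \ref{66}(3)(4)), then uses the cosyzygy exact sequence $0\to X'\to P\to\syz^{-1}X'\to 0$ and the octahedral axiom to assemble the desired triangle. You instead transport the $\sup\le 0$ case of (1) through the duality $\rhom_R(-,R)$: since $\gdim_RX\le0$, one has $X^\star:=\rhom_R(X,R)\in\d(R)$ with $\sup X^\star\le0$, and dualizing the triangle from (1) for $X^\star$ and invoking biduality and the identity $\gdim_R(-)=\sup\rhom_R(-,R)$ for complexes of finite G-dimension (the content of \cite[(2.3.8)]{C}, which the paper itself uses to prove Lemma \ref{66}(3)(5)) yields the statement; you also correctly note that a direct application of Lemma \ref{66}(3) to the final triangle would only give the bound $1$, so the dualization is doing real work. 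The one step you should make explicit is that finiteness of G-dimension passes to the dual, i.e., that $X^\star$ (and likewise $Y_1^\star$) is again reflexive; this follows from the identity $(f_X)^\star\circ f_{X^\star}=\mathrm{id}_{X^\star}$, so $f_{X^\star}$ is an isomorphism whenever $f_X$ is, or alternatively from the two-out-of-three argument for reflexivity carried out inside the paper's proof of Lemma \ref{66}(3); it is a standard fact, so this is a citation gap rather than a mathematical one. Comparing the two approaches: the paper's stays at the level of modules (cosyzygies of totally reflexive modules exist by definition) and avoids the reflexivity formalism, while yours is shorter and exhibits (2) as literally the $\rhom_R(-,R)$-dual of the second assertion of (1).
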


\begin{proof}
(1) Put $u=\sup\{\sup X,0\}$.
Thanks to \cite[(A.3.2)]{C}, we can choose a complex $F=(\cdots\to F^{u-1}\to F^u\to0)$ of finitely generated projective $R$-modules which is isomorphic to $X$ in $\d(R)$.
(In the case $\sup X<0$, we can put $F^i=0$ for all integers $i$ such that $\sup X+1\le i\le0=u$.)
As $u\ge0$, we can take the truncation $E=(0\to F^0\to\cdots\to F^u\to0)$ of $F$.
We have $\sup E\le u$, and $E\in\E_R$ by Proposition \ref{27}(6).
Taking the truncation $X'=(\cdots\to F^{-2}\to F^{-1}\to0)$ of $F$, we get an exact triangle $E\to X\to X'\rightsquigarrow$ in $\d(R)$.
Setting $Y=X'[-1]$, we get an exact triangle $Y\to E\to X\rightsquigarrow$ in $\d(R)$, and it holds that $\sup Y\le0$.

Suppose that $\sup X\le0$.
Then $u=\sup\{\sup X,0\}=0$.
Therefore, we have $E=(0\to F^0\to0)$, which is isomorphic in $\d(R)$ to the finitely generated projective $R$-module $F^0$.

(2) In view of Remark \ref{40}(2), we can take an exact triangle $X\to X'\to E\rightsquigarrow$ in $\d(R)$ such that $\sup X'\le0$ and $E\in\E_R$.
It is observed from Lemma \ref{66}(3) and \cite[(2.3.10)]{C} that $\gdim X'\le0$.
By Lemma \ref{66}(4), we may assume that $X'$ is a totally reflexive $R$-module.
Hence, there is an exact sequence $0\to X'\to P\to\syz^{-1}X'\to0$ in $\mod R$ such that $P$ is projective, which induces an exact triangle $X'\to P\to\syz^{-1}X'\rightsquigarrow$ in $\d(R)$.
By the octahedral axiom, we obtain a commutative diagram
$$
\xymatrix@R-1.5pc@C5pc{
X\ar[r]\ar@{=}[d]&X'\ar[r]\ar[d]&E\ar[r]\ar[d]&X[1]\ar@{=}[d]\\
X\ar[r]\ar[d]&P\ar[r]\ar@{=}[d]&Y\ar[r]\ar[d]&X[1]\ar[d]\\
X'\ar[r]\ar[d]&P\ar[r]\ar[d]&\syz^{-1}X'\ar[r]\ar@{=}[d]&X'[1]\ar[d]\\
E\ar[r]&Y\ar[r]&\syz^{-1}X'\ar[r]&E[1]
}
$$
of exact triangles in $\d(R)$.
Using Lemma \ref{66}(3) and \cite[(2.3.10)]{C} again, we see from the bottom row that $\gdim Y\le0$.
Thus the second row in the above diagram is such an exact triangle as in the assertion.
\end{proof}

To show our next proposition, we need to prepare one more lemma.

\begin{lem}\label{68}
Let $Z\to X\oplus W\to Y\rightsquigarrow$ be an exact triangle in $\d(R)$.
Then there exists an exact triangle $Z\to X'\oplus W\to Y'\rightsquigarrow$ in $\d(R)$ such that $\res_{\d(R)}X=\res_{\d(R)}X'$, $\res_{\d(R)}Y=\res_{\d(R)}Y'$ and $\sup X'\le0$.
\end{lem}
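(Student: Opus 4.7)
The plan is to apply Remark \ref{40}(2) to the complex $X$ to produce an exact triangle $X\to X'\to P[1]\rightsquigarrow$ in $\d(R)$ such that $\sup X'\le 0$ and $P[1]\in\E_R$; here $P$ is the perfect complex built from the part of a free resolution of $X$ lying in strictly positive degrees, and $X'$ is the corresponding negative truncation. Because $\E_R$ is contained in every resolving subcategory of $\d(R)$, this triangle immediately implies the equality $\res_{\d(R)}X=\res_{\d(R)}X'$.

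Next, I take the direct sum of the triangle $X\to X'\to P[1]\rightsquigarrow$ with the split exact triangle $W\xrightarrow{\mathrm{id}}W\to 0\rightsquigarrow$ to obtain an exact triangle $X\oplus W\to X'\oplus W\to P[1]\rightsquigarrow$ in $\d(R)$ whose third morphism is the identity on $W$. I then apply the octahedral axiom to the pair of composable morphisms $Z\to X\oplus W\to X'\oplus W$: the cones of the first morphism and of the second morphism are $Y$ and $P[1]$, respectively, while the cone of the composition I denote $Y'$. The octahedral axiom then yields simultaneously an exact triangle $Z\to X'\oplus W\to Y'\rightsquigarrow$ (the desired triangle) together with an exact triangle $Y\to Y'\to P[1]\rightsquigarrow$ linking the old and the new cone.

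From $P[1]\in\E_R$ and the triangle $Y\to Y'\to P[1]\rightsquigarrow$ one concludes $\res_{\d(R)}Y=\res_{\d(R)}Y'$, exactly as in the first paragraph. Combined with $\sup X'\le 0$, $\res_{\d(R)}X=\res_{\d(R)}X'$ and the exact triangle $Z\to X'\oplus W\to Y'\rightsquigarrow$, all three requirements of the lemma are fulfilled. The argument is essentially bookkeeping around the octahedral axiom together with the truncation construction of Remark \ref{40}, so I do not anticipate any serious obstacle; the only point to be slightly careful about is keeping the summand $W$ untouched, which is ensured by forming the direct sum with the split triangle on $W$ before invoking octahedral.
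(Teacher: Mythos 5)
Your argument is correct and is essentially the paper's own proof: replace $X$ by its truncation $X'$ via Remark \ref{40}(2), note $\res_{\d(R)}X=\res_{\d(R)}X'$ since the third term lies in $\E_R$, and apply the octahedral axiom to $Z\to X\oplus W\to X'\oplus W$ to obtain both the triangle $Z\to X'\oplus W\to Y'\rightsquigarrow$ and the comparison triangle $Y\to Y'\to P[1]\rightsquigarrow$ giving $\res_{\d(R)}Y=\res_{\d(R)}Y'$. No issues.
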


\begin{proof}
By Remark \ref{40}(2), there exists an exact triangle $X\xrightarrow{a}X'\xrightarrow{b}E\rightsquigarrow$ in $\d(R)$ such that $\sup X'\le0$ and $E\in\E_R$.
Note then that $\res X=\res X'$.
The octahedral axiom gives rise to a commutative diagram
$$
\xymatrix@R-1.5pc@C5pc{
Z\ar[r]\ar@{=}[d]&X\oplus W\ar[r]\ar[d]&Y\ar[r]\ar[d]&Z[1]\ar@{=}[d]\\
Z\ar[r]\ar[d]&X'\oplus W\ar[r]\ar@{=}[d]&Y'\ar[r]\ar[d]&Z[1]\ar[d]\\
X\oplus W\ar[r]^-{\left(\begin{smallmatrix}a&0\\0&1\end{smallmatrix}\right)}\ar[d]&X'\oplus W\ar[r]^-{\left(\begin{smallmatrix}b&0\end{smallmatrix}\right)}\ar[d]&E\ar[r]\ar@{=}[d]&(X\oplus W)[1]\ar[d]\\
Y\ar[r]&Y'\ar[r]&E\ar[r]&Y[1]
}
$$
of exact triangles in $\d(R)$.
From the bottom row in the commutative diagram we observe that $\res Y=\res Y'$.
Thus the second row in the commutative diagram is an exact triangle as in the assertion of the lemma.
\end{proof}

We denote by $\g(R)$ the subcategory of $\d(R)$ consisting of objects $X$ satisfying the inequality $\gdim_RX\le0$.
We can now prove the proposition below, which includes a derived category version of \cite[Proposition 7.3]{crspd}.

\begin{prop}\label{70}
Let $\Y$ and $\ZZ$ be resolving subcategories of $\d(R)$ such that $\Y\subseteq\k(R)$ and $\ZZ\subseteq\g(R)$.
Then:
$$
\Y=\res_{\d(R)}(\Y\cup\ZZ)\cap\k(R),\qquad
\ZZ=\res_{\d(R)}(\Y\cup\ZZ)\cap\g(R).
$$
\end{prop}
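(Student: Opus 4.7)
Both inclusions $\Y\subseteq\res_{\d(R)}(\Y\cup\ZZ)\cap\k(R)$ and $\ZZ\subseteq\res_{\d(R)}(\Y\cup\ZZ)\cap\g(R)$ are immediate from $\Y\cup\ZZ\subseteq\res_{\d(R)}(\Y\cup\ZZ)$ together with the hypotheses $\Y\subseteq\k(R)$ and $\ZZ\subseteq\g(R)$.

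For the two reverse inclusions, the central structural input is Lemma \ref{67}(3): every object $X$ of $\res_{\d(R)}(\Y\cup\ZZ)$ fits in an exact triangle $Z\to X\oplus X'\to Y\rightsquigarrow$ with $Z\in\ZZ$ and $Y\in\Y$, which by Lemma \ref{67}(2) may be arranged to have $\sup Y\le 0$. I would proceed by induction on $n=\pd_R Y$. In the base case $n\le 0$ we have $Y\in\E_R$, which lies in both $\Y$ and $\ZZ$; combining this with Lemma \ref{69}(2) applied to $Z$ (which supplies a triangle $Z\to P\to Z'\rightsquigarrow$ with $P$ projective and $\gdim_R Z'\le 0$) and an octahedral argument rewrites $X\oplus X'$ as an extension whose vertices both lie in the target subcategory, yielding $X\in\Y$ for the first equality and $X\in\ZZ$ for the second.

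For the inductive step, apply Lemma \ref{69}(1) to $Y$ to obtain $K\to E\to Y\rightsquigarrow$ in which $E$ is a projective $R$-module (thanks to $\sup Y\le 0$) and $\pd_R K\le n-1$. Projectivity of $E$ yields $\Ext_R^{1}(E,Z)=0$, so the morphism $E\to Y$ lifts to $\tilde E\colon E\to X\oplus X'$; by TR3 and the triangulated nine-lemma one obtains an auxiliary object $C$ fitting in two exact triangles $K\to Z\to C\rightsquigarrow$ and $E\to X\oplus X'\to C\rightsquigarrow$, with $K\in\res_{\d(R)}(\Y\cup\ZZ)\cap\k(R)$. A further octahedral manipulation, together with Lemmas \ref{68} and \ref{69}(2) used to tighten supports and excise $\g(R)$-contributions, then rewrites the data as a new triangle $Z''\to X\oplus X''\to Y''\rightsquigarrow$ of the same shape but with $\pd_R Y''\le n-1$, whereupon the inductive hypothesis delivers $X\in\Y$ (first equality) or $X\in\ZZ$ (second equality). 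The proof of the second equality uses the first equality at each step in order to certify $K\in\Y$.

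The hard part will be the final octahedral manipulation. The naive candidate $Y''=C$ does not work, because $C$ need not lie in $\k(R)$ at all: the triangle $K\to Z\to C\rightsquigarrow$ involves $Z\in\ZZ$ of possibly infinite projective dimension, so $\pd_R C$ may be infinite, and one cannot use $C$ directly as the $\Y$-vertex of a new triangle. Extracting a genuine $Y''\in\Y$ of projective dimension at most $n-1$ (and a matching $Z''\in\ZZ$) requires careful bookkeeping, combining the two auxiliary triangles through additional applications of the octahedral axiom that cancel the offending $\ZZ$-component in $C$ against the projective $E$; the same octahedral framework is designed to serve both equalities in parallel.
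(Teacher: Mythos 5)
Your overall skeleton (forward inclusions trivial; Lemma \ref{67}(2)(3) supplying a triangle $Z\to X\oplus X'\to Y\rightsquigarrow$ with $Z\in\ZZ$, $Y\in\Y$) matches the paper, but the induction on $\pd_RY$ that you build on it has a genuine gap, and its one concrete technical claim is false. You assert that projectivity of $E$ gives $\Hom_{\d(R)}(E,Z[1])=0$, so that $E\to Y$ lifts to $X\oplus X'$. Since $E$ is a projective module and $Z$ is a complex, $\Hom_{\d(R)}(E,Z[1])\cong\Hom_R(E,\h^1Z)$, and objects of $\g(R)$ can have nonzero cohomology in positive degrees (over a Gorenstein local ring of dimension $d>0$ the complex $k[-d]$ has G-dimension $0$ by Lemma \ref{66}(2), so a resolving $\ZZ\subseteq\g(R)$ may contain such objects; nothing in Lemma \ref{67}(2) controls $\sup Z$). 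Hence the lift, and with it your two auxiliary triangles, need not exist. The base case of the first equality is also misdescribed: the octahedral move you sketch yields a triangle $Z'[-1]\to X\oplus X'\to Y'\rightsquigarrow$ in which $Z'$ only satisfies $\gdim_RZ'\le0$ and lies in neither $\Y$ nor, in general, $\ZZ$ (Lemma \ref{69}(2) does not place its output in $\res_{\d(R)}Z$); indeed $X\oplus X'$ need not lie in $\k(R)$ at all, so it cannot be an extension of two objects of $\Y$. (The base-case conclusion itself can be rescued, since $X\in\ZZ\cap\k(R)$ has finite projective dimension and G-dimension at most $0$, hence $\pd_RX\le0$ and $X\in\E_R\subseteq\Y$; but that argument is not the one you state and does not power the inductive step.)

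More fundamentally, the step you yourself flag as the hard part — producing $Y''\in\Y$ with $\pd_RY''\le n-1$ and a matching $Z''\in\ZZ$ by cancelling the $\ZZ$-component of $C$ against $E$ — is exactly where the content of the proposition lies, and it is left as unproven bookkeeping; the only vanishing you invoke to effect the cancellation is the incorrect one above. The paper does not descend on $\pd_RY$ at all: for the first equality it normalizes $\sup X\le0$ via Lemma \ref{68} and uses Lemma \ref{66}(1) to get $\Ext_R^1(V,X)=0$ for the object $V$ with $\gdim_RV\le0$ produced by Lemma \ref{69}(2), so that after two octahedral diagrams $X$ splits off as a direct summand of an object $Y'\in\Y$; for the second equality it normalizes $\sup Y\le0$, applies Lemma \ref{69}(1), and uses the mirrored vanishing $\Ext_R^1(X,Y')=0$ to exhibit $X$ as a direct summand of an object $Z'\in\ZZ$. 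Some orthogonality of this kind between perfect complexes and $\g(R)$-objects is indispensable for separating the two components, and it appears nowhere in your proposal; without it the induction cannot close.
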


\begin{proof}
Let us begin with the first equality.
The inclusion $(\subseteq)$ is clear.
To show $(\supseteq)$, pick $X\in\res(\Y\cup\ZZ)\cap\k(R)$.
According to Lemma \ref{67}(3), there is an exact triangle $Z\to X\oplus W\to Y\rightsquigarrow$ in $\d(R)$ with $Z\in\ZZ$ and $Y\in\Y$.
What we want to show is that $X$ is in $\Y$.
For this purpose, thanks to Lemma \ref{68}, we may assume $\sup X\le0$.
Lemma \ref{69}(2) yields an exact triangle $Z\to P\to V\rightsquigarrow$ in $\d(R)$ such that $P$ is a projective module and $\gdim V\le0$.
The octahedral axiom gives the following commutative diagrams of exact triangles in $\d(R)$.
$$
\xymatrix@R-1pc{
Y[-1]\ar[r]\ar@{=}[d]&Z\ar[r]\ar[d]&X\oplus W\ar[r]\ar[d]^-{\left(\begin{smallmatrix}f&g\end{smallmatrix}\right)}&Y\ar@{=}[d]\\
Y[-1]\ar[r]\ar[d]&P\ar[r]\ar@{=}[d]&Y'\ar[r]\ar[d]&Y\ar[d]\\
Z\ar[r]\ar[d]&P\ar[r]\ar[d]&V\ar[r]\ar@{=}[d]&Z[1]\ar[d]\\
X\oplus W\ar[r]^-{\left(\begin{smallmatrix}f&g\end{smallmatrix}\right)}&Y'\ar[r]&V\ar[r]&X[1]
}\qquad\qquad
\xymatrix@R-1pc{
W\ar[r]^-{\left(\begin{smallmatrix}0\\1\end{smallmatrix}\right)}\ar@{=}[d]&X\oplus W\ar[r]^-{\left(\begin{smallmatrix}1&0\end{smallmatrix}\right)}\ar[d]^-{\left(\begin{smallmatrix}f&g\end{smallmatrix}\right)}&X\ar[r]\ar[d]^-l&W[1]\ar@{=}[d]\\
W\ar[r]\ar[d]&Y'\ar[r]^-h\ar@{=}[d]&U\ar[r]\ar[d]&W[1]\ar[d]\\
X\oplus W\ar[r]^-{\left(\begin{smallmatrix}f&g\end{smallmatrix}\right)}\ar[d]&Y'\ar[r]\ar[d]&V\ar[r]\ar@{=}[d]&(X\oplus W)[1]\ar[d]\\
X\ar[r]^-l&U\ar[r]&V\ar[r]^-\delta&X[1]
}
$$
From the exact triangle $P\to Y'\to Y\rightsquigarrow$ we see that $Y'$ is in $\Y$.
The equality $h(f\ g)=l(1\ 0)$ implies $l=hf$.
Also, by Lemma \ref{66}(1) we have $\delta\in\Hom_{\d(R)}(V,X[1])=\Ext_R^1(V,X)=0$.
There are commutative diagrams
$$
\xymatrix@R-1pc{
X\ar[r]^-l\ar@{=}[d]&U\ar[r]\ar[d]_-\cong^-{\left(\begin{smallmatrix}s\\t\end{smallmatrix}\right)}&V\ar[r]^-0\ar@{=}[d]&X[1]\ar@{=}[d]\\
X\ar[r]^-{\left(\begin{smallmatrix}1\\0\end{smallmatrix}\right)}&X\oplus V\ar[r]^-{\left(\begin{smallmatrix}0&1\end{smallmatrix}\right)}&V\ar[r]&X[1]
}\qquad\qquad
\xymatrix@R-1pc{
X\ar[r]^-f\ar@{=}[d]&Y'\ar[r]^-k\ar[d]_-\cong^-{\left(\begin{smallmatrix}sh\\k\end{smallmatrix}\right)}&C\ar[r]\ar@{=}[d]&X[1]\ar@{=}[d]\\
X\ar[r]^-{\left(\begin{smallmatrix}1\\0\end{smallmatrix}\right)}&X\oplus C\ar[r]^-{\left(\begin{smallmatrix}0&1\end{smallmatrix}\right)}&C\ar[r]&X[1]
}$$
of exact triangles in $\d(R)$.
Indeed, we get the left diagram by \cite[Proof of Corollary 1.2.7]{N2}, while the equality $\binom{1}{0}=\binom{s}{t}l$ coming from the commutativity of the left diagram shows $1=sl=shf$, so that the right diagram is obtained by \cite[Remark 1.2.9]{N2}.
The isomorphism $X\oplus C\cong Y'$ shows that $X$ belongs to $\Y$, as desired.

Next, we prove the second equality (the proof has the same stream as that of the first equality, but there are actually various different places).
It is evident that $(\subseteq)$ holds.
To prove $(\supseteq)$, let $X$ be an object in the subcategory $\res(\Y\cup\ZZ)\cap\g(R)$.
By Lemma \ref{67}(2)(3), there is an exact triangle $Z\to X\oplus W\to Y\rightsquigarrow$ in $\d(R)$ with $Z\in\ZZ$, $Y\in\Y$ and $\sup Y\le0$.
We want to show that $X$ belongs to $\ZZ$.
Using Lemma \ref{69}(1), we get an exact triangle $Y'\to P\to Y\rightsquigarrow$ in $\d(R)$ such that $P$ is a projective module and $\sup Y'\le0$.
Note then that $Y'$ is in $\Y$.
The octahedral axiom gives the following commutative diagrams of exact triangles in $\d(R)$.
$$
\xymatrix@R-1pc{
X\oplus W\ar[r]\ar@{=}[d]&Y\ar[r]\ar[d]&Z[1]\ar[r]\ar[d]&(X\oplus W)[1]\ar@{=}[d]\\
X\oplus W\ar[r]^-{\left(\begin{smallmatrix}f&g\end{smallmatrix}\right)}\ar[d]&Y'[1]\ar[r]\ar@{=}[d]&Z'[1]\ar[r]\ar[d]&(X\oplus W)[1]\ar[d]\\
Y\ar[r]\ar[d]&Y'[1]\ar[r]\ar[d]&P[1]\ar[r]\ar@{=}[d]&Y[1]\ar[d]\\
Z[1]\ar[r]&Z'[1]\ar[r]&P[1]\ar[r]&Z[2]
}\qquad
\xymatrix@R-1pc{
X\ar[r]^-{\left(\begin{smallmatrix}1\\0\end{smallmatrix}\right)}\ar@{=}[d]&X\oplus W\ar[r]^-{\left(\begin{smallmatrix}0&1\end{smallmatrix}\right)}\ar[d]^-{\left(\begin{smallmatrix}f&g\end{smallmatrix}\right)}&W\ar[r]\ar[d]&X[1]\ar@{=}[d]\\
X\ar[r]^-f\ar[d]^-{\left(\begin{smallmatrix}1\\0\end{smallmatrix}\right)}&Y'[1]\ar[r]\ar@{=}[d]&V\ar[r]^-h\ar[d]^-k&X[1]\ar[d]^-{\left(\begin{smallmatrix}1\\0\end{smallmatrix}\right)}\\
X\oplus W\ar[r]^-{\left(\begin{smallmatrix}f&g\end{smallmatrix}\right)}\ar[d]&Y'[1]\ar[r]\ar[d]&Z'[1]\ar[r]^-{\left(\begin{smallmatrix}p\\q\end{smallmatrix}\right)}\ar@{=}[d]&X[1]\oplus W[1]\ar[d]\\
W\ar[r]&V\ar[r]^-k&Z'[1]\ar[r]&W[1]
}
$$
The induced exact triangle $Z\to Z'\to P\rightsquigarrow$ shows that the object $Z'$ belongs to $\ZZ$.
Lemma \ref{66}(1) implies $f\in\Hom_{\d(R)}(X,Y'[1])=\Ext_R^1(X,Y')=0$.
By \cite[Proof of Corollary 1.2.7]{N2} there is a commutative diagram
$$
\xymatrix@R-1pc{
X\ar[r]\ar@{=}[d]&Y'[1]\ar[r]^-{\left(\begin{smallmatrix}1\\0\end{smallmatrix}\right)}\ar@{=}[d]&Y'[1]\oplus X[1]\ar[r]^-{\left(\begin{smallmatrix}0&1\end{smallmatrix}\right)}\ar[d]_-{\left(\begin{smallmatrix}s&t\end{smallmatrix}\right)}^-\cong&X[1]\ar@{=}[d]\\
X\ar[r]^-0&Y'[1]\ar[r]&V\ar[r]^-h&X[1]\\
}
$$
of exact triangles in $\d(R)$, which gives $ht=1$.
The equality $\binom{p}{q}k=\binom{1}{0}h$ implies $h=pk$.
Hence $pkt=1$, and it follows from \cite[Lemma 1.2.8]{N2} that $X[1]$ is isomorphic in $\d(R)$ to a direct summand of $Z'[1]$.
This implies that $X$ is isomorphic in $\d(R)$ to a direct summand of $Z'$.
Since $Z'$ belongs to $\ZZ$, so does $X$.
\end{proof}

Recall that $R$ is said to be {\em locally} a {\em complete intersection} if the local ring $R_\p$ is a complete intersection for each prime ideal $\p$ of $R$.
We simply say that $R$ is a {\em complete intersection} if it is locally a complete intersection.
The exact triangle appearing in the third assertion of the following proposition is regarded as a derived category version of a {\em finite projective hull} in the sense of Auslander and Buchweitz \cite{AB}.

\begin{prop}\label{71}
Let $R$ be a complete intersection.
Then the following statements hold true.
\begin{enumerate}[\rm(1)]
\item
Let $M$ be a maximal Cohen--Macaulay $R$-module.
Then the cosyzygy $\syz_R^{-1}M$ belongs to $\res_{\mod R}M$.
\item
For every maximal Cohen--Macaulay complex $X\in\d(R)$, there exists an exact triangle $X\to P\to Y\rightsquigarrow$ in $\d(R)$ such that $P$ is a projective module and $Y$ belongs to the resolving closure $\res_{\d(R)}X$.
\item
Each $X\in\d(R)$ admits an exact triangle $X\to P\to Y\rightsquigarrow$ with $P\in\k(R)$ and $Y\in(\res_{\d(R)}X)\cap\c(R)$.
\end{enumerate}
\end{prop}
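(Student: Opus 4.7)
The plan is to prove the three statements in order, each bootstrapping from the previous. Part (1) is a module-theoretic fact about cosyzygies over a complete intersection, settled by appealing to the Dao--Takahashi classification (Theorem \ref{91}). Part (2) lifts this to maximal Cohen--Macaulay complexes by tracing the explicit construction in Lemma \ref{69}(2). Part (3) extends to arbitrary complexes via induction on Gorenstein dimension, which is finite for every object of $\d(R)$ by Lemma \ref{66}(5) since a complete intersection is Gorenstein.

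For (1), I first note that $\syz^{-1}M$ is itself maximal Cohen--Macaulay and that the two modules share the same NE-locus: the defining sequence $0\to M\to P\to\syz^{-1}M\to 0$ with $P$ projective localizes at each prime $\p$ to a sequence that splits precisely when either outer term is free, so $\NE(\syz^{-1}M)=\NE(M)$. Since $M$ is MCM, $\res_{\mod R}M\subseteq\cm(R)$, and the Auslander--Buchsbaum formula forces $\res_{\mod R}M\cap\fpd R=\proj R$; the analogous equality holds for $\syz^{-1}M$. Under the bijection (b) of Theorem \ref{91}, both $\res_{\mod R}M$ and $\res_{\mod R}\syz^{-1}M$ are associated to the same pair (the zero grade-consistent function together with $\NE(M)\cap V$), whence $\res_{\mod R}M=\res_{\mod R}\syz^{-1}M$, and in particular $\syz^{-1}M\in\res_{\mod R}M$.

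For (2), apply Lemma \ref{69}(2) to $X$ (valid since $\gdim_R X\le 0$ by $X\in\c(R)$ and Lemma \ref{66}(5)) to obtain a triangle $X\to P\to Y\rightsquigarrow$ with $P$ projective and $\gdim_R Y\le 0$, so $Y\in\c(R)$. To verify $Y\in\res_{\d(R)}X$, I re-trace the proof of Lemma \ref{69}(2): the triangle is built from a triangle $X\to X'\to E\rightsquigarrow$ (with $X'$ a totally reflexive module and $E\in\E_R$, provided by Remark \ref{40}(2) and Lemma \ref{66}(4)) combined via the octahedral axiom with the short exact sequence $0\to X'\to P\to\syz^{-1}X'\to 0$, producing $Y$ inside an auxiliary triangle $E\to Y\to\syz^{-1}X'\rightsquigarrow$. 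By (1), $\syz^{-1}X'\in\res_{\mod R}X'\subseteq\res_{\d(R)}X'=\res_{\d(R)}X$, the last equality holding because $E\in\E_R\subseteq\res X$. Combined with $E\in\res X$, the auxiliary triangle forces $Y\in\res X$.

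For (3), induct on $g=\gdim_R X$, which is finite by Lemma \ref{66}(5). The base case $g\le 0$ is precisely (2). For $g\ge 1$, I would first produce an auxiliary triangle $X\to Q\to X'\rightsquigarrow$ with $Q\in\k(R)$ perfect, $X'\in\c(R)$, and $X'\in\res_{\d(R)}X$, then apply (2) to $X'$ to obtain $X'\to Q_0\to Y\rightsquigarrow$ with $Q_0$ projective and $Y\in\res X'\cap\c(R)\subseteq(\res X)\cap\c(R)$, and combine via the octahedral axiom on the composition $Q\to X'\to Q_0$: the cone $P:=\cone(Q\to Q_0)$ is perfect (as $\k(R)$ is thick), and octahedral yields a triangle relating $X$, $P$, and $Y$. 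The main obstacle is that this naive combination produces cofiber $Y[-1]$ rather than $Y$, so the inductive step requires a more delicate construction---effectively a Cohen--Macaulay coapproximation in the derived category analogous to the Auslander--Buchweitz approximation for modules. One concrete approach is to start from a bounded free resolution $F\cong X$ with $F^i=0$ for $i<\inf X-g$ (possible since $\gdim X=g$ forces the appropriate syzygy to be totally reflexive), brutally truncate to a perfect subcomplex $Q$, and iterate the cosyzygy construction of Lemma \ref{69}(2) to control simultaneously the G-dimension of the cofiber and its membership in $\res_{\d(R)}X$ via part (1).
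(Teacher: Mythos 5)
Part (2) of your proposal is essentially the paper's own argument and is fine (modulo its dependence on (1)). The first genuine gap is in (1): you invoke Theorem \ref{91}, but that classification requires the pair $(R,V)$ to be as in Setup \ref{95}, i.e.\ $R$ locally a hypersurface or a global quotient $S/(\aa)$ of a regular ring of finite Krull dimension by a regular sequence, whereas Proposition \ref{71} assumes only that $R$ is locally a complete intersection, which is strictly weaker; so the theorem is simply not available in the stated generality. Moreover, even under Setup \ref{95}(2) your identification of the second invariant of $\res_{\mod R}M$ with ``$\NE(M)\cap V$'' is unjustified: there $V$ is the singular locus of a projective hypersurface over $\PP_S^{c-1}$, not a subset of $\spec R$, and the specialization-closed subset attached to a resolving subcategory is computed through Stevenson's support in the singularity category, not through nonfree loci. (The fact that would rescue the idea in that setting is that $\syz_R^{-1}M\cong M[1]$ in the stable/singularity category, so $M$ and $\syz_R^{-1}M$ generate the same thick subcategory; you do not say this, and it still would not cover arbitrary locally complete intersections.) The paper instead proves (1) locally: $(\syz_R^{-1}M)_\p$ agrees up to free summands with $\syz_{R_\p}^{-1}(M_\p)$, which lies in $\res_{\mod R_\p}M_\p$ by \cite[Theorem 4.15]{radius}, and the local-global principle for resolving subcategories \cite[Lemma 3.2(1) and Proposition 3.3]{crspd} then gives $\syz_R^{-1}M\in\res_{\mod R}M$.

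Part (3) is not proved. The auxiliary triangle you posit at the start of the inductive step ($X\to Q\to X'$ with $Q$ perfect and $X'\in(\res_{\d(R)}X)\cap\c(R)$) is precisely the statement being proved, so assuming it is circular; you then acknowledge that your octahedral combination lands the cofiber in the wrong degree and that a ``more delicate construction'' is needed, without supplying one. The fallback you sketch also does not work: a complex of finite G-dimension admits no bounded free resolution unless it is perfect, so there is no $F\cong X$ with $F^i=0$ for $i<\inf X-g$; and the correct hard-truncation triangle $P\to X\to C[-m]\rightsquigarrow$ (with $P$ perfect and $C$ totally reflexive) does not put $C[-m]$ into $\res_{\d(R)}X$, since resolving closures are not closed under cones. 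The missing idea is to run the induction downward: after reducing to $\sup X\le0$, Lemma \ref{69}(1) gives a triangle $Y\to P\to X\rightsquigarrow$ with $P$ projective, so $Y\in\res_{\d(R)}X$ and $\gdim_RY\le\gdim_RX-1$; apply the induction hypothesis to $Y$ to get $Y\to K\to C\rightsquigarrow$ with $K\in\k(R)$ and $C\in(\res_{\d(R)}Y)\cap\c(R)$, then one octahedron produces a triangle $K\to C'\to X\rightsquigarrow$ with $C'\in\c(R)$ and $\res_{\d(R)}C'\subseteq\res_{\d(R)}X$, and a second application of (2) to $C'$ followed by another octahedron yields the desired triangle $X\to K'\to C''\rightsquigarrow$ with $K'\in\k(R)$ and $C''\in(\res_{\d(R)}X)\cap\c(R)$. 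Without this direction-reversing device (which is exactly what (1) and (2) are designed to enable) your induction does not close.
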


\begin{proof}
(1) Fix a prime ideal $\p$ of $R$.
Then $M_\p$ is a maximal Cohen--Macaulay $R_\p$-module.
In $\mod R_\p$ we have
$$
(\syz_R^{-1}M)_\p\approx\syz_{R_\p}^{-1}(M_\p)\in\res_{\mod R_\p}M_\p\subseteq\add_{\mod R_\p}(\res_{\mod R}M)_\p.
$$
Here, by $A\approx B$ we mean $A\cong B$ up to free summands.
The containment and the inclusion follow from \cite[Theorem 4.15]{radius} and \cite[Lemma 3.2(1)]{crspd}, respectively.
By \cite[Proposition 3.3]{crspd}, we get $\syz_R^{-1}M\in\res_{\mod R}M$.

(2) By Remark \ref{40}(2) there exists an exact triangle $X\to X'\to E\rightsquigarrow$ in $\d(R)$ such that $\sup X'\le0$ and $E\in\E_R$.
As $X$ belongs to $\c(R)$, so does $X'$ by Proposition \ref{58}(2).
By Lemma \ref{66}(4)(5), we may assume that $X'$ is a totally reflexive module.
There is an exact sequence $0\to X'\to P\to\syz^{-1}X'\to0$ in $\mod R$ with $P$ projective, and $\syz^{-1}X'$ belongs to $\res_{\mod R}X'$ by (1).
The octahedral axiom gives a commutative diagram
$$
\xymatrix@R-1.5pc@C5pc{
X\ar[r]\ar@{=}[d]&X'\ar[r]\ar[d]&E\ar[r]\ar[d]&X[1]\ar@{=}[d]\\
X\ar[r]\ar[d]&P\ar[r]\ar@{=}[d]&Y\ar[r]\ar[d]&X[1]\ar[d]\\
X'\ar[r]\ar[d]&P\ar[r]\ar[d]&\syz^{-1}X'\ar[r]\ar@{=}[d]&X'[1]\ar[d]\\
E\ar[r]&Y\ar[r]&\syz^{-1}X'\ar[r]&E[1]
}
$$
of exact triangles in $\d(R)$.
It is seen from the bottom row that $Y$ belongs to $\res_{\d(R)}X'$, which coincides with $\res_{\d(R)}X$ (by the first row).
Thus, the second row provides such an exact triangle as we want.

(3) We may assume that $\sup X\le0$.
In fact, by Remark \ref{40}(2) there exists an exact triangle $X\to X'\to E\rightsquigarrow$ in $\d(R)$ such that $\sup X'\le0$ and $E\in\E_R$.
Suppose that we have got an exact triangle $X'\to P\to C\rightsquigarrow$ in $\d(R)$ such that $P\in\k(R)$ and $C\in(\res X')\cap\c(R)$.
Then we have $C\in(\res X)\cap\c(R)$ as $\res X'=\res X$.
The octahedral axiom gives rise to the following commutative diagram of exact triangles in $\d(R)$.
$$
\xymatrix@R-1.5pc@C5pc{
X\ar[r]\ar@{=}[d]&X'\ar[r]\ar[d]&E\ar[r]\ar[d]&X[1]\ar@{=}[d]\\
X\ar[r]\ar[d]&P\ar[r]\ar@{=}[d]&Y\ar[r]\ar[d]&X[1]\ar[d]\\
X'\ar[r]\ar[d]&P\ar[r]\ar[d]&C\ar[r]\ar@{=}[d]&X'[1]\ar[d]\\
E\ar[r]&Y\ar[r]&C\ar[r]&E[1]
}
$$
The bottom row in the above diagram shows that $Y$ belongs to $(\res X)\cap\c(R)$ by Proposition \ref{58}(2).
Consequently, the second row in the above diagram is such an exact triangle as in the assertion.

Since the ring $R$ is a complete intersection, it is Gorenstein.
By Lemma \ref{66}(5), the number $n:=\gdim_RX$ is finite.
We use induction on $n$.
Let $n\le0$.
Then $X$ is a maximal Cohen--Macaulay complex; see Lemma \ref{66}(5).
Thus the assertion follows from (2); note that $\res X=(\res X)\cap\c(R)$.
Let $n>0$.
Since $\sup X\le0$, by Lemma \ref{69}(1) there is an exact triangle $Y\to P\to X\rightsquigarrow$ in $\d(R)$ such that $P$ is a projective module and $\sup Y\le0$.
Note then that $Y\in\res X$, so that $\res Y\subseteq\res X$.
As $n-1\ge0$, Lemma \ref{66}(3) implies $\gdim Y\le\sup\{\gdim P,\gdim X-1\}=n-1$.
The induction hypothesis yields an exact triangle $Y\to K\to C\rightsquigarrow$ in $\d(R)$ such that $K\in\k(R)$ and $C\in(\res Y)\cap\c(R)$.
By the octahedral axiom, we get the commutative diagram (a) of exact triangles in $\d(R)$.
The second row in (a) shows $C'\in\c(R)$ and $\res C'\subseteq\res C$.
By (2) there is an exact triangle $C'\to Q\to C''\to C'[1]$ in $\d(R)$ such that $Q$ is a projective module and $C''\in\res C'$.
Applying the octahedral axiom again, we obtain the commutative diagram (b) of exact triangles in $\d(R)$.
$$
{\rm(a)}:\xymatrix@R-1.5pc{
C[-1]\ar[r]\ar@{=}[d]&Y\ar[r]\ar[d]&K\ar[r]\ar[d]&C\ar@{=}[d]\\
C[-1]\ar[r]\ar[d]&P\ar[r]\ar@{=}[d]&C'\ar[r]\ar[d]&C\ar[d]\\
Y\ar[r]\ar[d]&P\ar[r]\ar[d]&X\ar[r]\ar@{=}[d]&Y[1]\ar[d]\\
K\ar[r]&C'\ar[r]&X\ar[r]&K[1]
}
\qquad\qquad
{\rm(b)}:\xymatrix@R-1.5pc{
K\ar[r]\ar@{=}[d]&C'\ar[r]\ar[d]&X\ar[r]\ar[d]&K[1]\ar@{=}[d]\\
K\ar[r]\ar[d]&Q\ar[r]\ar@{=}[d]&K'\ar[r]\ar[d]&K[1]\ar[d]\\
C'\ar[r]\ar[d]&Q\ar[r]\ar[d]&C''\ar[r]\ar@{=}[d]&C'[1]\ar[d]\\
X\ar[r]&K'\ar[r]&C''\ar[r]&X[1]
}
$$
The second row in (b) shows $K'$ is in $\k(R)$.
We have $C''\in\res C'\subseteq\res C\subseteq(\res Y)\cap\c(R)\subseteq(\res X)\cap\c(R)$.
Consequently, the bottom row in (b) provides such an exact triangle as in the assertion.
\end{proof}

We record here a direct consequence of the second assertion of the above proposition.

\begin{cor}\label{85}
Let $R$ be a complete intersection.
Let $X\in\d(R)$ be a maximal Cohen--Macaulay complex.
Then $X$ belongs to the resolving closure $\res_{\d(R)}(X[-i])$ for every nonnegative integer $i$.
\end{cor}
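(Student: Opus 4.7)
The plan is to reduce to the case $i = 1$ via a short induction, so the heart of the proof is to show that every maximal Cohen--Macaulay complex $X$ belongs to $\res_{\d(R)}(X[-1])$. For this, I would invoke Proposition \ref{71}(2) applied to $X$ itself to obtain an exact triangle
$$
X \to P \to Y \rightsquigarrow
$$
in $\d(R)$ with $P$ a projective $R$-module and $Y \in \res_{\d(R)}X$. Rotating this backwards yields the exact triangle $Y[-1] \to X \to P \rightsquigarrow$. By Proposition \ref{52}(2a) applied with $n=-1$ we have $(\res_{\d(R)}X)[-1] \subseteq \res_{\d(R)}(X[-1])$, so $Y \in \res_{\d(R)}X$ forces $Y[-1] \in \res_{\d(R)}(X[-1])$; on the other hand, $P \in \proj R \subseteq \E_R \subseteq \res_{\d(R)}(X[-1])$ since every resolving subcategory contains the minimum resolving subcategory. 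Because $\res_{\d(R)}(X[-1])$ is closed under extensions, the triangle $Y[-1] \to X \to P \rightsquigarrow$ forces $X \in \res_{\d(R)}(X[-1])$.

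For general $i \geq 1$, I would argue by induction using the $i=1$ case. By Proposition \ref{58}(2) the subcategory $\c(R)$ is closed under negative shifts, so $X[-i+1]$ is again a maximal Cohen--Macaulay complex. Applying the $i=1$ case to $X[-i+1]$ gives $X[-i+1] \in \res_{\d(R)}(X[-i+1][-1]) = \res_{\d(R)}(X[-i])$, hence $\res_{\d(R)}(X[-i+1]) \subseteq \res_{\d(R)}(X[-i])$. Chaining these inclusions produces
$$
\res_{\d(R)}X \subseteq \res_{\d(R)}(X[-1]) \subseteq \cdots \subseteq \res_{\d(R)}(X[-i]),
$$
and the tautology $X \in \res_{\d(R)}X$ finishes the proof.

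The main obstacle is really the $i=1$ case, and the delicate point there is that, by Remark \ref{99}, one cannot simply ``positively shift'' the containment $X \in \res_{\d(R)}X$ to deduce $X \in \res_{\d(R)}(X[-1])$. What rescues the argument is the asymmetry built into Proposition \ref{71}(2), which produces a cosyzygy $Y$ sitting to the right of $X$ in the triangle; after rotation, $Y[-1]$ appears to the left of $X$, so that the one-sided compatibility of Proposition \ref{52}(2a) can be invoked in the only direction it goes, namely for negative shifts. Without this asymmetric ingredient, the naive rotation of an arbitrary triangle $X \to P \to Y \rightsquigarrow$ would require controlling $P[1]$ inside $\res_{\d(R)}(X[-1])$, which fails in general.
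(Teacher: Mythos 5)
Your proof is correct and follows essentially the same route as the paper: the $i=1$ case via Proposition \ref{71}(2), rotation of the triangle $X\to P\to Y\rightsquigarrow$, and the one-sided compatibility of Proposition \ref{52}(2a) is exactly the paper's argument. The only immaterial difference is the organization of the induction step: the paper shifts the containment $X\in\res_{\d(R)}(X[-j])$ by $[-1]$ using Proposition \ref{52}(2a) once more, whereas you reapply the $i=1$ case to the shifted complex $X[-i+1]$, which is legitimate since $\c(R)$ is closed under negative shifts by Proposition \ref{58}(2).
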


\begin{proof}
Proposition \ref{71}(2) gives rise to an exact triangle $X\to P\to Y\rightsquigarrow$ in $\d(R)$ such that $P$ is a projective module and $Y\in\res X$.
An exact triangle $Y[-1]\to X\to P\rightsquigarrow$ is induced, which shows that $X$ is in $\res(Y[-1])$.
Proposition \ref{52}(2a) implies $Y[-1]\in(\res X)[-1]\subseteq\res(X[-1])$.
Hence $X\in\res(X[-1])$.
If $X\in\res(X[-j])$ for an integer $j$, then $X[-1]\in(\res(X[-j]))[-1]\subseteq\res(X[-j][-1])=\res(X[-j-1])$ by Proposition \ref{52}(2a) again, and we get $X\in\res(X[-1])\subseteq\res(X[-j-1])$.
It follows that $X$ belongs to $\res(X[-i])$ for all $i\ge0$.
\end{proof}

Now we have reached the stage to achieve the main purpose of this section; we shall state and prove the following theorem, which is viewed as a derived category version of \cite[Theorem 7.4]{crspd}.

\begin{thm}\label{72}
Suppose that $R$ is a complete intersection.
Then there are mutually inverse bijections
$$
\xymatrix{
{\left\{\begin{matrix}
\text{resolving subcategories}\\
\text{of $\d(R)$}
\end{matrix}\right\}}
\ar@<.7mm>[r]^-\phi&
{\left\{\begin{matrix}
\text{resolving subcategories}\\
\text{of $\d(R)$ contained in $\k(R)$}
\end{matrix}\right\}}\times
{\left\{\begin{matrix}
\text{resolving subcategories}\\
\text{of $\d(R)$ contained in $\c(R)$}
\end{matrix}\right\}},
\ar@<.7mm>[l]^-\psi&
}
$$
where the maps $\phi,\psi$ are given by $\phi(\X)=(\X\cap\k(R),\X\cap\c(R))$ and $\psi(\Y,\ZZ)=\res_{\d(R)}(\Y\cup\ZZ)$.
\end{thm}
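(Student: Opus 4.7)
The plan is to prove the two equalities $\phi\circ\psi=\id$ and $\psi\circ\phi=\id$, using the two key technical inputs established earlier in the section: Proposition \ref{70} (the ``separation'' result) and Proposition \ref{71}(3) (the existence of a finite projective hull-style exact triangle). Since a complete intersection is in particular Gorenstein, Lemma \ref{66}(5) gives the equality $\c(R)=\g(R)$, and this is exactly what lets us apply Proposition \ref{70} in the present Cohen--Macaulay setting.

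First I would check that both maps are well-defined. For $\phi$, note that $\k(R)=\thick_{\d(R)}R$ is itself a resolving subcategory of $\d(R)$ by Proposition \ref{43}(3), and $\c(R)$ is a resolving subcategory of $\d(R)$ by Proposition \ref{58}(2) since $R$ is Cohen--Macaulay. Hence $\X\cap\k(R)$ and $\X\cap\c(R)$ are resolving subcategories of $\d(R)$ contained respectively in $\k(R)$ and in $\c(R)$, as intersections of resolving subcategories are resolving. The map $\psi$ is trivially well-defined by the definition of the resolving closure.

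The identity $\phi\circ\psi=\id$ is essentially a direct citation: given resolving subcategories $\Y\subseteq\k(R)$ and $\ZZ\subseteq\c(R)=\g(R)$, Proposition \ref{70} yields
\[
\res_{\d(R)}(\Y\cup\ZZ)\cap\k(R)=\Y,\qquad \res_{\d(R)}(\Y\cup\ZZ)\cap\c(R)=\ZZ,
\]
so $\phi(\psi(\Y,\ZZ))=(\Y,\ZZ)$.

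For $\psi\circ\phi=\id$, fix a resolving subcategory $\X$ of $\d(R)$. The inclusion $\res_{\d(R)}((\X\cap\k(R))\cup(\X\cap\c(R)))\subseteq\X$ is immediate: both pieces lie in $\X$, which is resolving. For the reverse inclusion, pick $X\in\X$. Proposition \ref{71}(3) supplies an exact triangle $X\to P\to Y\rightsquigarrow$ in $\d(R)$ with $P\in\k(R)$ and $Y\in(\res_{\d(R)}X)\cap\c(R)$. Because $\X$ is resolving we have $\res_{\d(R)}X\subseteq\X$, so $Y\in\X\cap\c(R)$; then the extension closure of $\X$ applied to the same triangle forces $P\in\X$, whence $P\in\X\cap\k(R)$. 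Rotating this triangle produces $Y[-1]\to X\to P\rightsquigarrow$, and since $Y[-1]$ lies in the resolving closure of $Y$ (negative shifts) while $P$ obviously does, closure under extensions shows $X\in\res_{\d(R)}(\{P,Y\})\subseteq\res_{\d(R)}((\X\cap\k(R))\cup(\X\cap\c(R)))$. This finishes the proof.

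The substantive content is not in this theorem itself but in the two propositions it invokes; the only potential obstacle is to make sure the hypotheses of Proposition \ref{70} match — and they do precisely because $R$ is Gorenstein, giving $\c(R)=\g(R)$, so that a resolving subcategory of $\d(R)$ contained in $\c(R)$ is automatically a resolving subcategory contained in $\g(R)$.
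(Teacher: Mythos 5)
Your proposal is correct and follows essentially the same route as the paper: well-definedness, then $\phi\psi=\id$ via Proposition \ref{70} together with $\c(R)=\g(R)$ from Lemma \ref{66}(5), and $\psi\phi=\id$ via the exact triangle from Proposition \ref{71}(3). Your rotation of the triangle is just an unwinding of the resolving condition (iv) that the paper invokes directly, so there is no substantive difference.
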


\begin{proof}
Clearly, the maps $\phi,\psi$ are well-defined.
Lemma \ref{66}(5) implies $\g(R)=\c(R)$.
Proposition \ref{70} says $\phi\psi=\id$.
Let $\X$ be a resolving subcategory of $\d(R)$.
Then $\psi\phi(\X)=\res((\X\cap\k(R))\cup(\X\cap\c(R)))$ is clearly contained in $\X$.
Let $X$ be any object in $\X$.
It follows from Proposition \ref{71}(3) that there is an exact triangle $X\to P\to Y\rightsquigarrow$ in $\d(R)$ such that $P\in\k(R)$ and $Y\in(\res X)\cap\c(R)\subseteq\X\cap\c(R)$.
We see that $P$ is in $\X\cap\k(R)$, so that $X$ is in $\res((\X\cap\k(R))\cup(\X\cap\c(R)))$.
Thus $X$ belongs to $\psi\phi(\X)$, and we obtain $\psi\phi=\id$.
\end{proof}

\section{Classification of resolving subcategories and certain preaisles of $\d(R)$}

The main goal of this section is to give a complete classification of resolving subcategories of $\d(R)$ and preaisles of $\d(R)$ containing $R$ and closed under direct summands, in the case where $R$ belongs to a certain class of complete intersection rings.
First of all, applying the main result of the previous section, we prove the following theorem.
The bijections given in the theorem say that classifying the resolving subcategories of maximal Cohen--Macaulay complexes is equivalent to classifying the thick subcategories containing $R$.
The equality given in the theorem is a derived category version of \cite[Corollary 4.16]{radius}.

\begin{thm}\label{76}
Let $R$ be a complete intersection.
There are mutually inverse bijections and an equality
$$
\xymatrix{
{\left\{\begin{matrix}
\text{thick subcategories}\\
\text{of $\d(R)$ containing $R$}
\end{matrix}\right\}}
\ar@<.7mm>[rr]^-{(-)\cap\c(R)}&&
{\left\{\begin{matrix}
\text{resolving subcategories}\\
\text{of $\d(R)$ contained in $\c(R)$}
\end{matrix}\right\}}
={\left\{\begin{matrix}
\text{thick subcategories}\\
\text{of $\c(R)$ containing $R$}
\end{matrix}\right\}}.
\ar@<.7mm>[ll]^-{\thick_{\d(R)}(-)}&
}
$$
\end{thm}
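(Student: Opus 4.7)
The proof will divide into three parts: establishing the equality on the right of the statement, verifying well-definedness of the two assignments, and checking mutual invertibility. The crux of the argument will be the second half of mutual invertibility, which I will reduce to a single thickness claim.

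For the equality, one containment is immediate from Proposition \ref{60}(1). For the other, given a resolving subcategory $\ZZ$ of $\d(R)$ contained in $\c(R)$, I need to verify thickness in $\c(R)$. Closure under direct summands is automatic, as are the implications ``$A, C \in \ZZ \Rightarrow B \in \ZZ$'' and ``$B, C \in \ZZ \Rightarrow A \in \ZZ$'' (using Proposition \ref{42}(1)) for a triangle $A \to B \to C \rightsquigarrow$ with all three terms in $\c(R)$. The only nontrivial case is ``$A, B \in \ZZ \Rightarrow C \in \ZZ$'': rotating to $C[-1] \to A \to B \rightsquigarrow$ and applying Proposition \ref{42}(1) yields $C[-1] \in \ZZ$, and since $C$ is maximal Cohen--Macaulay and $R$ is a complete intersection, Corollary \ref{85} places $C \in \res_{\d(R)}(C[-1]) \subseteq \ZZ$.

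Well-definedness is straightforward: $\c(R)$ is closed under direct summands, extensions and negative shifts by Proposition \ref{58}(2), and $R \in \c(R)$ since $R$ is Cohen--Macaulay, so $\T \cap \c(R)$ inherits the resolving properties from $\T$; and $\thick_{\d(R)}\ZZ$ contains $R$ because $\ZZ$ does. For the identity $\thick_{\d(R)}(\T \cap \c(R)) = \T$, only $\supseteq$ needs argument: pick $X \in \T$ and apply Proposition \ref{71}(3) to obtain a triangle $X \to P \to Y \rightsquigarrow$ with $P \in \k(R)$ and $Y \in (\res_{\d(R)} X) \cap \c(R) \subseteq \T \cap \c(R)$. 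Since $R \in \T \cap \c(R)$, Proposition \ref{43}(3) gives $\k(R) = \thick_{\d(R)}R \subseteq \thick_{\d(R)}(\T \cap \c(R))$, placing $P$ and hence $X$ in $\thick_{\d(R)}(\T \cap \c(R))$.

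The remaining identity $\thick_{\d(R)}\ZZ \cap \c(R) = \ZZ$ is the hard part, and I will derive it from the claim that $\X := \res_{\d(R)}(\k(R) \cup \ZZ)$ is a thick subcategory of $\d(R)$. Granting this, $\X$ is thick and contains $\ZZ$, so $\thick_{\d(R)}\ZZ \subseteq \X$; conversely, $\thick_{\d(R)}\ZZ$ is resolving and contains both $\ZZ$ and $\k(R) = \thick_{\d(R)}R$, giving $\X \subseteq \thick_{\d(R)}\ZZ$. Hence the two coincide, and Proposition \ref{70} (applied to the resolving pair $(\k(R), \ZZ)$, using $\c(R) = \g(R)$ from Lemma \ref{66}(5)) gives $\thick_{\d(R)}\ZZ \cap \c(R) = \X \cap \c(R) = \ZZ$. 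To prove the claim, Proposition \ref{52}(2b) reduces $\X[1] \subseteq \X$ to verifying $X[1] \in \X$ for every $X \in \k(R) \cup \ZZ$. The case $X \in \k(R)$ is trivial. For $X \in \ZZ$, Proposition \ref{71}(2) supplies a triangle $X \to P \to Y \rightsquigarrow$ with $P$ a projective module and $Y \in \res_{\d(R)}X \subseteq \ZZ$; rotating it twice produces an exact triangle $Y \to X[1] \to P[1] \rightsquigarrow$, and closure of $\X$ under extensions, applied to $Y \in \ZZ \subseteq \X$ and $P[1] \in \k(R) \subseteq \X$, forces $X[1] \in \X$. This rotation argument, made possible by the finite projective hull of Proposition \ref{71}(2), is the most delicate point, and it is where the complete intersection hypothesis is really used.
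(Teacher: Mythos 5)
Your proposal is correct, and its skeleton coincides with the paper's: the crux in both is showing that $\res_{\d(R)}(\k(R)\cup\ZZ)$ is thick by reducing closure under positive shifts, via Proposition \ref{52}(2b), to shifting objects of $\ZZ$, and then using the finite projective hull over a complete intersection; you invoke Proposition \ref{71}(2) and rotate the triangle $X\to P\to Y\rightsquigarrow$, which is literally the proof of Corollary \ref{85} that the paper cites at this point. The organizational differences are worth recording. First, you prove the right-hand equality directly: for a triangle $A\to B\to C\rightsquigarrow$ in $\c(R)$ with $A,B\in\ZZ$ you get $C[-1]\in\ZZ$ by Proposition \ref{42}(1) and then $C\in\res_{\d(R)}(C[-1])\subseteq\ZZ$ by Corollary \ref{85}, whereas the paper deduces the equality from the already-established bijection $\ZZ=(\thick_{\d(R)}\ZZ)\cap\c(R)$; your route is self-contained and arguably cleaner. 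Second, you bypass Theorem \ref{72}: for $\thick_{\d(R)}(\T\cap\c(R))=\T$ you apply Proposition \ref{71}(3) and Proposition \ref{43}(3) directly, and for $(\thick_{\d(R)}\ZZ)\cap\c(R)=\ZZ$ you apply Proposition \ref{70} to the pair $(\k(R),\ZZ)$ (legitimately, since $\c(R)=\g(R)$ by Lemma \ref{66}(5)), while the paper funnels both through the bijections $\phi,\psi$ of Theorem \ref{72}; since Theorem \ref{72} is itself built from Propositions \ref{70} and \ref{71}(3), the two arguments use the same underlying inputs, yours just unwinding the intermediate theorem. No gaps.
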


\begin{proof}
We start by proving the equality, using the bijections.
By Proposition \ref{60}(1), it suffices to show that each resolving subcategory $\X$ of $\d(R)$ contained in $\c(R)$ is a thick subcategory of $\c(R)$, and for this it is enough to verify that for each exact triangle $A\to B\to C\rightsquigarrow$ in $\d(R)$ with $A,B,C\in\c(R)$, if $A$ and $B$ belong to $\X$, then so does $C$.
By the first assertion of the theorem we have $\X=(\thick_{\d(R)}\X)\cap\c(R)$.
Hence $A$ and $B$ are in $\thick_{\d(R)}\X$, and so is $C$.
It follows that $C$ belongs to $(\thick_{\d(R)}\X)\cap\c(R)=\X$, and we are done.

We proceed with showing the bijections.
Clearly, the two maps are well-defined.
Fix a thick subcategory $\X$ of $\d(R)$ containing $R$ and a resolving subcategory $\ZZ$ of $\d(R)$ contained in $\c(R)$.
Then $\X$ is a resolving subcategory of $\d(R)$, so that Theorem \ref{72} shows $\X=\psi\phi(\X)=\res((\X\cap\k(R))\cup(\X\cap\c(R)))$.
Since $\X$ is thick and contains $R$, it contains $\k(R)=\thick R$; see Proposition \ref{43}(3).
Hence $\X\cap\k(R)=\k(R)$, and
$$
\X
=\res(\k(R)\cup(\X\cap\c(R)))
\subseteq\thick(\k(R)\cup(\X\cap\c(R)))
=\thick(\X\cap\c(R))
\subseteq\X.
$$
Therefore, $\X=\thick(\X\cap\c(R))$.
On the other hand, applying Theorem \ref{72} again, we have
$$
(\k(R),\ZZ)=\phi\psi(\k(R),\ZZ)=(\res(\k(R)\cup\ZZ)\cap\k(R),\res(\k(R)\cup\ZZ)\cap\c(R)),
$$
which gives us the equality $\ZZ=\res(\k(R)\cup\ZZ)\cap\c(R)$.

We claim that $\res(\k(R)\cup\ZZ)$ is a thick subcategory of $\d(R)$.
Indeed, it suffices to verify that $\res(\k(R)\cup\ZZ)$ is closed under positive shifts.
Using Proposition \ref{52}(2b), we get equalities
\begin{equation}\label{75}
(\res(\k(R)\cup\ZZ))[1]
=\res((\k(R)\cup\ZZ)[1]\cup\{R[1]\})
=\res((\k(R)\cup\ZZ)[1])
=\res(\k(R)\cup\ZZ[1]).
\end{equation}
Pick $Z\in\ZZ$.
Then $Z$ is maximal Cohen--Macaulay, and Corollary \ref{85} implies $Z\in\res(Z[-1])$.
We obtain
$$
Z[1]\in(\res(Z[-1]))[1]=\res\{Z,R[1]\}\subseteq\res(\k(R)\cup\ZZ),
$$
where for the equality we apply Proposition \ref{52}(2b) again.
It follows that $\ZZ[1]$ is contained in $\res(\k(R)\cup\ZZ)$, which and \eqref{75} yield that $(\res(\k(R)\cup\ZZ))[1]$ is contained in $\res(\k(R)\cup\ZZ)$.
Thus the claim follows.

The above claim guarantees that $\res(\k(R)\cup\ZZ)=\thick(\k(R)\cup\ZZ)=\thick\ZZ$, and we obtain an equality $\ZZ=(\thick\ZZ)\cap\c(R)$.
Now we conclude that the two maps in the assertion are mutually inverse bijections.
\end{proof}

Denote by $\s(R)$ the {\em singularity category} $\ds(R)$ of $R$, that is, the Verdier quotient of $\d(R)$ by $\k(R)$.
The following lemma enables us to obtain a classification of preaisles in the next theorem.

\begin{lem}\label{74}
\begin{enumerate}[\rm(1)]
\item
There is a natural one-to-one correspondence
$$
{\left\{\begin{matrix}
\text{thick subcategories}\\
\text{of $\s(R)$}
\end{matrix}\right\}}
\cong
{\left\{\begin{matrix}
\text{thick subcategories}\\
\text{of $\d(R)$ containing $R$}
\end{matrix}\right\}}.
$$
\item
Suppose that $R$ is Gorenstein.
Assigning to each subcategory $\X$ of $\d(R)$ the subcategory $\rhom_R(\X,R)$ of $\d(R)$ consisting of objects of the form $\rhom_R(X,R)$ with $X\in\X$, one gets a one-to-one correspondence
$$
{\left\{\begin{matrix}
\text{preaisles of $\d(R)$}\\
\text{containing $R$ and closed}\\
\text{under direct summands}
\end{matrix}\right\}}
\cong
{\left\{\begin{matrix}
\text{precoaisles of $\d(R)$}\\
\text{containing $R$ and closed}\\
\text{under direct summands}
\end{matrix}\right\}}
=
{\left\{\begin{matrix}
\text{resolving}\\
\text{subcategories}\\
\text{of $\d(R)$}
\end{matrix}\right\}}.
$$
\end{enumerate}
\end{lem}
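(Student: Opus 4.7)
For (1), my plan is to invoke the classical Verdier--quotient correspondence. Since $\s(R)=\d(R)/\k(R)$ by definition and $\k(R)=\thick_{\d(R)}R$ by Proposition \ref{43}(3), the standard bijection (see, e.g., \cite{N2}) between thick subcategories of a Verdier quotient $\D/\T$ and thick subcategories of $\D$ containing $\T$ specializes to a bijection between thick subcategories of $\s(R)$ and thick subcategories of $\d(R)$ containing $\k(R)$. Since a thick subcategory of $\d(R)$ contains $\k(R)=\thick R$ if and only if it contains $R$, the assertion follows.

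For the equality in (2), I would simply unwind the definitions given in Section 2: a subcategory of $\d(R)$ is a precoaisle exactly when it is closed under extensions and negative shifts, and appending the two conditions that $R\in\X$ and that $\X$ is closed under direct summands recovers precisely the definition of a resolving subcategory of $\d(R)$.

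For the bijection in (2), the plan is to apply the $R$-duality functor $\rhom_R(-,R)$. The crucial input will be Lemma \ref{66}(5): under the Gorenstein hypothesis every $X\in\d(R)$ has finite G-dimension, which in turn guarantees that $\rhom_R(X,R)$ lies in $\d(R)$ and that the canonical biduality morphism $X\to\rhom_R(\rhom_R(X,R),R)$ is an isomorphism. The functor $\rhom_R(-,R)$ is contravariant and triangulated, sends $R$ to $R$, sends $X[n]$ to $\rhom_R(X,R)[-n]$, preserves direct summands, and reverses exact triangles. Hence it carries preaisles (closed under extensions and positive shifts) to precoaisles, and preserves both the condition $R\in\X$ and closedness under direct summands. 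Biduality then shows that the assignment $\X\mapsto\rhom_R(\X,R)$ is mutually inverse to itself, yielding the desired bijection.

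The main obstacle I anticipate is the verification that $\rhom_R(-,R)$ defines a genuine contravariant auto-duality on $\d(R)$: namely, that $\rhom_R(X,R)$ has bounded, finitely generated cohomology for every $X\in\d(R)$, and that biduality is an isomorphism. Both facts are packaged into finiteness of G-dimension, which is supplied by the Gorenstein hypothesis via Lemma \ref{66}(5) together with standard properties of totally reflexive resolutions. Once this duality is established, checking compatibility with each closure condition on $\X$ reduces to a routine diagram chase using that $\rhom_R(-,R)$ is a contravariant triangle functor.
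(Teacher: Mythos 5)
Your proposal is correct and takes essentially the same route as the paper: part (1) via the standard Verdier-quotient correspondence combined with $\k(R)=\thick_{\d(R)}R$, and part (2) via the duality $\rhom_R(-,R)$ on $\d(R)$ supplied by the Gorenstein hypothesis (finiteness of G-dimension, hence boundedness of $\rhom_R(X,R)$ and biduality) together with $\rhom_R(R,R)=R$. The paper merely leaves the routine closure verifications implicit, which you spell out.
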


\begin{proof}
(1) The assertion comes from a general fact on Verdier quotients; see \cite[Chapitre II, Proposition 2.3.1]{V}, \cite[Lemma 3.1]{thd} and \cite[Lemma 10.5]{dlr}.

(2) The equality follows by definition.
As $R$ is Gorenstein, for each $C\in\d(R)$ the complex $\rhom(C,R)$ is bounded, so that it is in $\d(R)$; see \cite[(2.3.8)]{C} and Lemma \ref{66}(5).
Thus, the contravariant exact (additive) functor $\rhom(-,R)$ gives a duality of $\d(R)$.
Since $\rhom(R,R)=R$, we can easily get the bijection.
\end{proof}

Combining Theorems \ref{2}, \ref{72}, \ref{76} and Lemma \ref{74}, we obtain the theorem below.
Thanks to this theorem, to classify the resolving subcategories of $\d(R)$ we have only to classify the thick subcategories of $\s(R)$.

\begin{thm}\label{88}
Let $R$ be a complete intersection.
Then there are one-to-one correspondences
$$
{\left\{\begin{matrix}
\text{preaisles of $\d(R)$}\\
\text{containing $R$ and closed}\\
\text{under direct summands}
\end{matrix}\right\}}
\cong
{\left\{\begin{matrix}
\text{resolving}\\
\text{subcategories}\\
\text{of $\d(R)$}
\end{matrix}\right\}}
\cong
{\left\{\begin{matrix}
\text{order-preserving}\\
\text{maps from $\spec R$}\\
\text{to $\N\cup\{\infty\}$}
\end{matrix}\right\}}\times
{\left\{\begin{matrix}
\text{thick}\\
\text{subcategories}\\
\text{of $\s(R)$}
\end{matrix}\right\}}.
$$
\end{thm}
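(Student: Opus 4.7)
The plan is to assemble the statement from the four ingredients prepared earlier in the paper, essentially without any new argument. Since a complete intersection ring is Gorenstein, Lemma \ref{74}(2) immediately provides the first bijection: the duality $\rhom_R(-,R)$ sends a preaisle of $\d(R)$ containing $R$ and closed under direct summands to the precoaisle $\rhom_R(\X,R)$ with the same properties, and the precoaisles of $\d(R)$ containing $R$ and closed under direct summands are by definition the resolving subcategories of $\d(R)$. So the first $\cong$ in the theorem is free.

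For the second $\cong$, I would invoke Theorem \ref{72}, which (using the complete intersection hypothesis) identifies
\[
\{\text{resolving subcategories of }\d(R)\}\ \cong\
\{\text{res.\ subcats.\ of }\d(R)\text{ in }\k(R)\}\times\{\text{res.\ subcats.\ of }\d(R)\text{ in }\c(R)\}.
\]
By Proposition \ref{43}(2)(3), the first factor coincides with the set of resolving subcategories of $\k(R)$, which by Theorem \ref{2} is in bijection with the set of order-preserving maps $\spec R\to\N\cup\{\infty\}$. For the second factor, Theorem \ref{76} provides a bijection with thick subcategories of $\d(R)$ containing $R$, given by $\X\mapsto\thick_{\d(R)}\X$ with inverse $\Y\mapsto\Y\cap\c(R)$; and Lemma \ref{74}(1) then identifies the thick subcategories of $\d(R)$ containing $R$ with the thick subcategories of $\s(R)=\ds(R)$ via the Verdier quotient $\d(R)\to\s(R)$.

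Putting the four bijections together yields the theorem. Concretely, I would present the proof as a single chain of isomorphisms of classifying sets and verify, as a bookkeeping remark, that all the assignments are compatible so that the composite is indeed the map advertised (namely $\X\mapsto(\Phi(\X\cap\k(R)),\,\X\cap\c(R)\text{ viewed in }\s(R))$ in one direction). Because every constituent bijection has already been proved, no genuine obstacle arises here; the only thing that needs care is the verification that a resolving subcategory of $\d(R)$ contained in $\k(R)$ is the same datum as a resolving subcategory of $\k(R)$, and likewise that a resolving subcategory of $\d(R)$ contained in $\c(R)$ is a well-defined notion for which Theorem \ref{76} applies. Both of these are immediate from Proposition \ref{43}(2)(3) and the thickness statements already recorded in Section 4.
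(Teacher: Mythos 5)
Your proposal is correct and follows exactly the paper's route: the paper proves Theorem \ref{88} precisely by combining Lemma \ref{74}(2) (valid since a complete intersection is Gorenstein) for the first bijection, Theorem \ref{72} for the splitting, and then Theorem \ref{2} (together with Proposition \ref{43}(2)(3)) and Theorem \ref{76} with Lemma \ref{74}(1) for the two factors. No substantive difference from the paper's argument.
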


We need to recall the definition of a hypersurface,  related notions and basic properties.

\begin{dfn}
\begin{enumerate}[(1)]
\item
Let $R$ be a local ring.
We denote by $\codim R$ and $\codepth R$ the {\em codimension} and the {\em codepth} of $R$, respectively, that is to say, $\codim R=\edim R-\dim R$ and $\codepth R=\edim R-\depth R$.
\item
For a local ring $R$, the following three conditions are equivalent; see \cite[\S5.1]{A}.\\
(a) There is an inequality $\codepth R\le1$.\quad
(b) The local ring $R$ is Cohen--Macaulay and $\codim R\le1$.\\
(c) The completion of $R$ is isomorphic to the residue ring of a regular local ring by a single element.\\
When one of these equivalent conditions holds, the local ring $R$ is called a {\em hypersurface}.
By \cite[Corollary 7.4.6]{A}, if a local ring $R$ is a hypersurface, then so is the local ring $R_\p$ for every prime ideal $\p$ of $R$. 
\item
We say that $R$ is {\em locally} a {\em hypersurface} if the localization $R_\p$ is a hypersurface local ring for every prime ideal $\p$ of $R$.
In what follows, we simply call $R$ a {\em hypersurface} if it is locally a hypersurface.
\end{enumerate}
\end{dfn}

To state theorems of Stevenson, Dao and Takahashi, and ours, we establish the following setup.

\begin{setup}\label{96}
Let $(R,V)$ be a pair that satisfies either of the following two conditions.
\begin{enumerate}[(1)]
\item
$R$ is a hypersurface and $V=\sing R$.
\item
$R=S/(\aa)$ where $S$ is a regular ring of finite Krull dimension and $\aa=a_1,\dots,a_c$ is an $S$-regular sequence, and $V=\sing Y=\{y\in Y\mid\text{$\OO_{Y,y}$ is not regular}\}$ where $X=\PP_S^{c-1}=\Proj(S[x_1,\dots,x_c])$ and $Y$ is the zero subscheme of $a_1x_1+\cdots+a_cx_c\in\Gamma(X,\OO_X(1))$.
\end{enumerate}
\end{setup}

\begin{rem}
In view of \cite[Theorem 2.10]{BW}, \cite[Corollary 7.9 and the beginning of Section 10]{St}, Setup \ref{96}(2) is equivalent to the following condition.
\begin{enumerate}[(2')]
\item
$R=S/(\aa)$ where $S$ is a regular ring of finite Krull dimension and $\aa=a_1,\dots,a_c$ is an $S$-regular sequence, and $V=\sing Y$ where $Y=\Proj G$ and $G=S[x_1,\dots,x_c]/(f)$ is the {\em generic hypersurface}, that is, the homogeneous $S$-algebra ($\deg(s)=0$ for $s\in S$ and $\deg(x_i)=1$ for $i=1,\dots,c$) defined as the quotient ring of the polynomial ring over $S$ in $c$ variables $x_1,\dots,x_c$ by the polynomial $f=a_1x_1+\cdots+a_cx_c$.
\end{enumerate}
\end{rem}

The following is the theorem of Stevenson \cite{St}.
Its assertion for Setup \ref{96}(1) is shown in \cite[Theorem 6.13]{St}, whose local case is \cite[Theorem 3.13(1)]{thd}.
Its assertion for Setup \ref{96}(2) is shown in \cite[Theorem 8.8]{St}.
The first one-to-one correspondence in the theorem is the one given in Lemma \ref{74}(1).

\begin{thm}[Stevenson]\label{77}
Let $(R,V)$ be as in Setup \ref{96}.
Then there is a one-to-one correspondence
$$
{\left\{\begin{matrix}
\text{thick subcategories}\\
\text{of $\s(R)$}
\end{matrix}\right\}}
\cong
{\left\{\begin{matrix}
\text{thick subcategories}\\
\text{of $\d(R)$ containing $R$}
\end{matrix}\right\}}
\overset{\rm(a)}{\cong}
{\left\{\begin{matrix}
\text{specialization-closed}\\
\text{subsets of $V$}
\end{matrix}\right\}}.
$$
\end{thm}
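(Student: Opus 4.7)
The plan is to handle the two bijections separately. The first bijection, between thick subcategories of $\s(R)$ and thick subcategories of $\d(R)$ containing $R$, is the general Verdier quotient correspondence already recorded as Lemma \ref{74}(1): since $\s(R)=\d(R)/\k(R)$ and $\k(R)=\thick_{\d(R)}R$ by Proposition \ref{43}(3), thick subcategories of $\s(R)$ correspond bijectively to thick subcategories of $\d(R)$ containing $\k(R)$, and hence to those containing $R$. So the remaining task is to build the bijection (a).

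For (a), I would construct assignments in both directions. Send a thick subcategory $\X\subseteq\d(R)$ with $R\in\X$ to $\NE(\X)\subseteq\spec R$, which is specialization-closed by the remark after Proposition \ref{25} and lands inside $V$ because over a regular local ring every complex in $\d(R)$ is perfect, forcing $\NE(\X)\subseteq\sing R\subseteq V$. Conversely, send a specialization-closed $W\subseteq V$ to the subcategory $\NE^{-1}(W)=\{X\in\d(R):\NE(X)\subseteq W\}$, which is resolving by Proposition \ref{60}(2) and closed under positive shifts since $\NE(X[1])=\NE(X)$ by Proposition \ref{27}(1) and the definition of $\NE$; hence it is thick. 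The easy composition $W\mapsto\NE^{-1}(W)\mapsto W$ follows from Theorem \ref{26}: any closed irreducible piece $\V(\p)\subseteq W$ is realized as $\NE(Y)$ for some $Y\in\res_{\d(R)}R\cdot\mathbf{1}_W$ obtained by iteratively tensoring with Koszul complexes.

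The delicate direction is $\X=\NE^{-1}(\NE(\X))$ for every thick $\X\ni R$. In the hypersurface setting (Setup \ref{96}(1)), one reduces to the local case via Lemma \ref{29}(3), then exploits matrix factorizations: over a local hypersurface, the syzygy functor is periodic of order two on maximal Cohen--Macaulay modules, so the thick subcategory generated by an MCM module $M$ is determined by $\supp M$. Combining this with Proposition \ref{71}(3) (to split any complex into a perfect part and a maximal Cohen--Macaulay part) and Theorem \ref{76} (to convert between thick subcategories containing $R$ and resolving subcategories contained in $\c(R)$), one recovers $\X$ from its support, matching \cite[Theorem 6.13]{St} and its local version \cite[Theorem 3.13(1)]{thd}.

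The main obstacle will be the general complete intersection case (Setup \ref{96}(2)). Here I would follow Stevenson's strategy, which needs a substantial external framework: one must equip $\s(R)$ with the action of the cohomology operators $\chi_1,\dots,\chi_c$, lift it to an action of the graded polynomial ring $S[x_1,\dots,x_c]$, pass to the generic hypersurface $G=S[\xx]/(a_1x_1+\cdots+a_cx_c)$, identify the relevant support space with $\sing Y=V$ where $Y=\Proj G$, and apply the Benson--Iyengar--Krause classification for compactly generated tensor-triangulated categories with a Noetherian action, verifying the local--global and tensor-nilpotence principles so that thick tensor-ideals biject with specialization-closed subsets of $\sing Y$. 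This is the heart of \cite[Theorem 8.8]{St} and is the step I do not expect to shortcut within the present framework.
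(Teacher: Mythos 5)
Your overall strategy coincides with how the paper itself treats this statement: the paper gives no independent proof, obtaining the first bijection from Lemma \ref{74}(1) and simply citing \cite[Theorem 6.13]{St} (local case \cite[Theorem 3.13(1)]{thd}) for Setup \ref{96}(1) and \cite[Theorem 8.8]{St} for Setup \ref{96}(2) — exactly the sources you fall back on for the hard content of (a). So deferring the substance of (a) to Stevenson is consistent with the paper.

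However, the explicit correspondence you sketch for (a) is genuinely wrong and would fail if taken literally. A thick subcategory $\X$ of $\d(R)$ containing $R$ contains $\k(R)=\thick_{\d(R)}R$, and suitably shifted Koszul complexes have positive local projective dimension at every prime, so $\NE(\X)=\spec R$ for \emph{every} such $\X$: the assignment $\X\mapsto\NE(\X)$ is constant, hence cannot be injective, and it does not land in $V$, because $\NE$ records positive rather than infinite projective dimension (already $\NE(R[1])=\spec R$ even when $R$ is regular, so "perfect over a regular local ring" does not force membership in $\E_{R_\p}$). Dually, $\NE^{-1}(W)$ is resolving but not thick: your claim $\NE(X[1])=\NE(X)$ is false — the remark following Lemma \ref{31} records $\NE(R[1])=\spec R$ while $\NE(R)=\emptyset$ — so $R\in\NE^{-1}(W)$ but $R[1]\notin\NE^{-1}(W)$ whenever $W\ne\spec R$, and $\NE^{-1}(W)$ is not closed under positive shifts. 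The invariant that does classify thick subcategories containing $R$ is the infinite projective dimension locus $\ipd(-)$ used in Section 9 (equivalently Stevenson's support), with inverse $W\mapsto\ipd^{-1}(W)$; Proposition \ref{65} is precisely the dictionary between $\NE$ of resolving subcategories inside $\c(R)$ and this datum via $\thick_{\d(R)}(-)$. Finally, note that in Setup \ref{96}(2) the set $V=\sing Y$ is a subset of the projective scheme $Y$, not of $\spec R$, so no locus inside $\spec R$ can even be compared with $V$; only Stevenson's support theory over the generic hypersurface, which you correctly invoke for that case, produces the bijection.
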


We obtain the following bijections by applying Theorems \ref{88} and \ref{77}.

\begin{cor}\label{41}
Let $(R,V)$ be as in Setup \ref{96}.
Then there are one-to-one correspondences
$$
{\left\{\begin{matrix}
\text{preaisles of $\d(R)$}\\
\text{containing $R$ and closed}\\
\text{under direct summands}
\end{matrix}\right\}}
\cong
{\left\{\begin{matrix}
\text{resolving}\\
\text{subcategories}\\
\text{of $\d(R)$}
\end{matrix}\right\}}
\overset{\rm(b)}{\cong}
{\left\{\begin{matrix}
\text{order-preserving}\\
\text{maps from $\spec R$}\\
\text{to $\N\cup\{\infty\}$}
\end{matrix}\right\}}\times
{\left\{\begin{matrix}
\text{specialization-closed}\\
\text{subsets of $V$}
\end{matrix}\right\}}.
$$
\end{cor}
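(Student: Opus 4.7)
The proof is essentially a matter of composing two results already available: Theorem \ref{88} and Theorem \ref{77}. The plan is to first verify the hypotheses of Theorem \ref{88} under Setup \ref{96}, then feed the output of Theorem \ref{77} into the second factor of Theorem \ref{88}.

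First I would check that in both cases of Setup \ref{96} the ring $R$ is a complete intersection in the sense used in the paper (that is, locally a complete intersection). In case (1), $R$ is a hypersurface, so by definition $R_\p$ is a hypersurface local ring for every $\p\in\spec R$, and in particular a complete intersection. In case (2), $R=S/(\aa)$ where $S$ is regular and $\aa$ is an $S$-regular sequence; localizing at any prime of $R$ gives a quotient of a regular local ring by a regular sequence, which is a complete intersection. Hence in either case Theorem \ref{88} applies and yields the bijections
$$
{\left\{\begin{matrix}\text{preaisles of $\d(R)$ containing}\\ \text{$R$ and closed under direct summands}\end{matrix}\right\}}
\cong
{\left\{\begin{matrix}\text{resolving subcategories}\\ \text{of $\d(R)$}\end{matrix}\right\}}
\cong
{\left\{\begin{matrix}\text{order-preserving maps}\\ \text{$\spec R\to\N\cup\{\infty\}$}\end{matrix}\right\}}\times
{\left\{\begin{matrix}\text{thick subcategories}\\ \text{of $\s(R)$}\end{matrix}\right\}}.
$$

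Next I would invoke Theorem \ref{77} (Stevenson's classification), which, under the same Setup \ref{96}, provides a bijection between the thick subcategories of $\s(R)$ and the specialization-closed subsets of $V$. Substituting this bijection into the second factor of the product above, we obtain the one-to-one correspondence claimed in the statement, namely
$$
{\left\{\begin{matrix}\text{preaisles of $\d(R)$ containing}\\ \text{$R$ and closed under direct summands}\end{matrix}\right\}}
\cong
{\left\{\begin{matrix}\text{resolving subcategories}\\ \text{of $\d(R)$}\end{matrix}\right\}}
\overset{\rm(b)}{\cong}
{\left\{\begin{matrix}\text{order-preserving maps}\\ \text{$\spec R\to\N\cup\{\infty\}$}\end{matrix}\right\}}\times
{\left\{\begin{matrix}\text{specialization-closed}\\ \text{subsets of $V$}\end{matrix}\right\}}.
$$

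There is no genuine obstacle here: all the substantial content has already been established in the preceding sections (Theorem \ref{72} separating resolving subcategories of $\d(R)$ into the $\k(R)$-part and the $\c(R)$-part, Theorem \ref{2} classifying the $\k(R)$-part by order-preserving maps, Theorem \ref{76} identifying the $\c(R)$-part with thick subcategories containing $R$, Lemma \ref{74}(2) passing from preaisles to resolving subcategories via $\rhom_R(-,R)$, and Lemma \ref{74}(1) combined with Theorem \ref{77} classifying thick subcategories of $\s(R)$). The proof of the corollary itself is therefore a direct composition of previously established bijections, and the only step to check is the elementary verification that Setup \ref{96} implies that $R$ is a complete intersection so that Theorem \ref{88} is applicable.
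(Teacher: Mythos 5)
Your proposal is correct and matches the paper's own argument: the paper obtains Corollary \ref{41} precisely by applying Theorem \ref{88} (noting that Setup \ref{96} gives a complete intersection) and then substituting Stevenson's bijection from Theorem \ref{77} for the factor of thick subcategories of $\s(R)$. Your extra verification that both cases of Setup \ref{96} yield a locally complete intersection ring is the right (and only) hypothesis check needed.
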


\begin{rem}\label{98}
By Proposition \ref{60}(1), thick subcategories of $\d(R)$ containing $R$ are resolving subcategories of $\d(R)$.
Restricting the bijection (b) in Corollary \ref{41} to the thick subcategories of $\d(R)$ containing $R$, one recovers the bijection (a) in Theorem \ref{77}.
In fact, let $\X$ be a thick subcategory of $\d(R)$ containing $R$.
Then $\X$ contains $\thick_{\d(R)}R=\k(R)$ by Proposition \ref{43}(3).
Hence $\X\cap\k(R)=\k(R)$, and $\sup_{X\in\X\cap\k(R)}\{\pd X_\p\}=\infty$ for each prime ideal $\p$ of $R$.
Note that this actually holds for $\X:=\k(R)$.
Define the map $\xi:\spec R\to\N\cup\{\infty\}$ by $\xi(\p)=\infty$ for every $\p\in\spec R$.
It is observed along the way to get Corollary \ref{41} that the bijection (b) in Corollary \ref{41} restricts to the bijection below, which can be identified with the bijection (a) in Theorem \ref{77}.
$$
\{\text{thick subcategories of $\d(R)$ containing $R$}\}\cong\{\xi\}\times\{\text{specialization-closed subsets of $V$}\}.
$$
\end{rem}

It may be interesting to consider the following quetion which is similar to Question \ref{87}.

\begin{ques}
Let $R$ be as in Corollary \ref{41}.
Hence $R$ is Cohen--Macaulay, so it is CM-excellent.
Assume that $R$ has finite Krull dimension.
Then, the aisles of $\d(R)$ containing $R$ and closed under direct summands are classified by both Theorem \ref{48} and Corollary \ref{41}.
Are these two classifications (essentially) the same?
\end{ques}

We close the section by giving, in the case of a hypersurface, an explicit description in terms of NE-loci of the restriction of the one-to-one correspondence (b) in Corollary \ref{41} to the resolving subcategories of maximal Cohen--Macaulay complexes.
For a subcategory $\C$ of $\d(R)$, denote by $\ipd(\C)$ the set of prime ideals $\p$ of $R$ with $\pd_{R_\p}X_\p=\infty$ for some $X\in\C$.
For a set $\Phi$ of prime ideals of $R$, denote by $\ipd^{-1}(\Phi)$ the subcategory of $\d(R)$ consisting of complexes $X$ such that every prime ideal $\p$ of $R$ with $\pd_{R_\p}X_\p=\infty$ belongs to $\Phi$.

\begin{prop}\label{65}
Let $R$ be a hypersurface.
One then has the following mutually inverse bijections.
$$
\xymatrix{
{\left\{\begin{matrix}
\text{resolving subcategories of $\d(R)$}\\
\text{contained in $\c(R)$}
\end{matrix}\right\}}
\ar@<.7mm>[rr]^-{\NE(-)}&&
{\left\{\begin{matrix}
\text{specialization-closed subsets}\\
\text{of $\sing R$}
\end{matrix}\right\}}
\ar@<.7mm>[ll]^-{\NE^{-1}_\c(-)}&&
}
$$
\end{prop}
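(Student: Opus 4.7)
The plan is to show that $\NE(-)$ and $\NE^{-1}_\c(-)$ are mutually inverse bijections. First, well-definedness: given a resolving subcategory $\X\subseteq\c(R)$, each $X\in\X$ satisfies $\NE(X)\subseteq\sing R$ by Proposition \ref{58}(1), and combined with Proposition \ref{25} this shows $\NE(\X)$ is a specialization-closed subset of $\sing R$. Conversely, since a hypersurface is Cohen--Macaulay, Proposition \ref{60}(2) guarantees that $\NE^{-1}_\c(\Phi)$ is a resolving subcategory of $\d(R)$ contained in $\c(R)$ for every specialization-closed $\Phi\subseteq\sing R$.

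For $\NE\circ\NE^{-1}_\c=\id$, the inclusion $\NE(\NE^{-1}_\c(\Phi))\subseteq\Phi$ is immediate. For the reverse, fix $\p\in\Phi$ and set $M:=\syz_R^d(R/\p)$ with $d=\depth R$. Since $R$ is Cohen--Macaulay of depth $d$, $M$ is a maximal Cohen--Macaulay $R$-module. Since $\p\in\sing R$, the local ring $R_\p$ is a singular hypersurface, so $\kappa(\p)$ has infinite projective dimension over $R_\p$ and every syzygy of $\kappa(\p)$ is non-free; thus $M_\p$ is non-free and $\p\in\nf(M)=\NE(M)$. Because $\NE(M)$ is Zariski-closed by Proposition \ref{25}, it contains $\V(\p)$. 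Applying Theorem \ref{26}(1) with $W=\V(\p)$ produces $Y\in\res_{\d(R)}(M)$ with $\NE(Y)=\V(\p)$. Since $\c(R)$ is a resolving subcategory of $\d(R)$ by Proposition \ref{58}(2), $Y\in\c(R)$; and since $\Phi$ is specialization-closed, $\V(\p)\subseteq\Phi$. Therefore $Y\in\NE^{-1}_\c(\Phi)$ with $\p\in\NE(Y)\subseteq\NE(\NE^{-1}_\c(\Phi))$.

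For $\NE^{-1}_\c\circ\NE=\id$, the inclusion $\X\subseteq\NE^{-1}_\c(\NE(\X))$ is clear. For the reverse, by Theorem \ref{76} it suffices to prove the equality $\thick_{\d(R)}\X=\thick_{\d(R)}\NE^{-1}_\c(\NE(\X))$, since intersecting with $\c(R)$ then gives $\X=\NE^{-1}_\c(\NE(\X))$. Both sides are thick subcategories of $\d(R)$ containing $R$, so by Theorem \ref{77}(a) they coincide if and only if they correspond to the same specialization-closed subset of $\sing R$. In the hypersurface case, Stevenson's bijection assigns to a thick subcategory $\T$ its singular support $\bigcup_{X\in\T}\{\p\in\sing R\mid\pd_{R_\p}X_\p=\infty\}$; this invariant is a triangulated support function (stable under shifts, finite direct sums, extensions and direct summands), hence equals $\bigcup_{X\in\T}\ipd(X)$ and is unchanged by passing to the thick closure. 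For $X\in\c(R)$, if $\pd_{R_\p}X_\p<\infty$ then the Auslander--Buchsbaum formula together with the MCM condition forces $\pd_{R_\p}X_\p=\depth R_\p-\depth X_\p\le 0$; therefore $\NE(X)=\ipd(X)$ on $\c(R)$. Consequently, for any subcategory $\Y\subseteq\c(R)$, Stevenson's bijection sends $\thick_{\d(R)}\Y$ to $\NE(\Y)$. Applying this to $\Y=\X$ and to $\Y=\NE^{-1}_\c(\NE(\X))$, and using the previous step to see that the latter also has NE-locus equal to $\NE(\X)$, both thick closures correspond to $\NE(\X)$ and thus coincide.

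The main obstacle is the last step, specifically identifying the abstract bijection of Theorem \ref{77}(a) with the singular-support map on the subcategory $\c(R)$. Although this identification is a standard feature of Stevenson's construction in the hypersurface case (equivalently, of Takahashi's classification in the local hypersurface case), the present paper quotes Theorem \ref{77} as a black box, so one must either cite this feature explicitly or trace it through the construction in Stevenson's original proof to connect it with our NE-locus machinery.
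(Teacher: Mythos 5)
Your core argument coincides with the paper's: the paper also reduces the problem, via Theorem \ref{76}, to the classification of thick subcategories of $\d(R)$ containing $R$, uses the Auslander--Buchsbaum formula (Proposition \ref{27}(2)) to see that $\NE$ agrees with the infinite-projective-dimension locus $\ipd$ on objects of $\c(R)$, and then needs exactly the point you flag as the ``main obstacle'': that the bijection (a) of Theorem \ref{77} is realized concretely by $\ipd(-)$ and $\ipd^{-1}(-)$. The paper settles this by citation, invoking \cite[Remark 10.2(8)]{dlr} and \cite[Theorem 3.13(1)]{thd} to get $\ipd(\thick_{\d(R)}\X)=\NE(\X)$ and $\ipd^{-1}(W)\cap\c(R)=\NE^{-1}_\c(W)$, so your acknowledged reliance on the explicit form of Stevenson's/Takahashi's bijection is not a gap relative to the paper; it is the same move. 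Two comments on the parts where you diverge. First, your hands-on surjectivity step is redundant: once the thick-subcategory classification is known to be given by $\ipd$, one gets $\NE(\NE^{-1}_\c(\Phi))=\ipd\bigl(\thick_{\d(R)}(\ipd^{-1}(\Phi)\cap\c(R))\bigr)=\ipd(\ipd^{-1}(\Phi))=\Phi$, the middle equality being Theorem \ref{76} applied to the thick subcategory $\ipd^{-1}(\Phi)$ of $\d(R)$ containing $R$; this is in effect how the paper obtains both compositions at once. Second, as written that step is only valid when $R$ is local or of finite Krull dimension: for a non-local hypersurface the quantity ``$d=\depth R$'' is not a meaningful uniform bound, and $\syz_R^d(R/\p)$ is maximal Cohen--Macaulay (in the pointwise sense of this paper) only when $d\ge\depth R_\q-\depth_{R_\q}(R/\p)_\q$ for all $\q\in\V(\p)$, which can fail for any fixed $d$ if $\dim R=\infty$; since Setup \ref{96}(1) imposes no dimension bound, you should either drop this detour in favor of the argument above or restrict it to the finite-dimensional case (where taking $d\ge\dim R$ works). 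With those adjustments your proof is correct and follows essentially the paper's route.
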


\begin{proof}
Fix a resolving subcategory $\X$ of $\d(R)$ contained in $\c(R)$, and a specialization-closed subset $W$ of $\sing R$.
By Proposition \ref{27}(2) and \cite[Remark 10.2(8)]{dlr}, we get $\ipd(\thick_{\d(R)}\X)=\NE(\X)$ and $\ipd^{-1}(W)\cap\c(R)=\NE^{-1}_\c(W)$.
The assertion follows by combining this with Theorem \ref{76} and \cite[Theorem 3.13(1)]{thd}.
\end{proof}

\begin{rem}
Another way in the case where $R$ is a hypersurface to deduce the equality given in Theorem \ref{76} is obtained by the combination of Propositions \ref{65} and \ref{60}.
\end{rem}

\section{Restricting the classification of resolving subcategories of $\d(R)$}

In this section, restricting the classification theorem of resolving subcategories of $\d(R)$ obtained in the previous section, we consider the resolving subcategories of $\mod R$.
We begin with establishing a lemma.

\begin{lem}\label{82}
Let $R$ be a complete intersection.
Let $\X$ be a resolving subcategory of $\mod R$.
\begin{enumerate}[\rm(1)]
\item
Let $\p$ be a prime ideal of $R$.
One has the equality $\sup_{X\in\X}\{\depth R_\p-\depth X_\p\}=\sup_{Y\in\X\cap\fpd R}\{\pd Y_\p\}$.
\item
There is an equality $\thick_{\d(R)}\X=\thick_{\d(R)}(\X\cap\cm(R))$ of thick closures in $\d(R)$.
\end{enumerate}
\end{lem}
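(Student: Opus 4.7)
The easy direction $\ge$ in (1) is immediate: for any $Y\in\X\cap\fpd R$ the Auslander--Buchsbaum formula (Proposition \ref{27}(2)) gives $\pd_{R_\p}Y_\p=\depth R_\p-\depth_{R_\p}Y_\p$, and since $\X\cap\fpd R\subseteq\X$ the right-hand supremum is bounded by the left-hand one.

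Part (2) is the cleaner of the two. For $X\in\X$, take $n\ge\depth R-\depth X$ (finite since $R$ is Cohen--Macaulay) and set $M=\syz^n X$. Iterated applications of the depth lemma to the short exact sequences $0\to\syz^{i+1}X\to F_i\to\syz^i X\to0$ (with $F_i$ finitely generated free) put $M$ in $\cm(R)$, and closure of $\X$ under syzygies then places $M$ in $\X\cap\cm(R)$. These same sequences give exact triangles in $\d(R)$, so iterating and using $R\in\X\cap\cm(R)$ (as $R$ is Cohen--Macaulay and $R\in\X$) yields
$$X\in\thick_{\d(R)}(M,R)\subseteq\thick_{\d(R)}(\X\cap\cm(R)).$$
The reverse inclusion is immediate from $\X\cap\cm(R)\subseteq\X$.

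For the remaining direction $\le$ of (1), fix $X\in\X$ and set $d=\depth R_\p-\depth_{R_\p}X_\p$; the goal is to produce $Y\in\X\cap\fpd R$ with $\pd_{R_\p}Y_\p\ge d$. Since $R$ is Gorenstein (being a complete intersection), Lemma \ref{66}(5) identifies $d$ with the Gorenstein dimension $\gdim_{R_\p}X_\p$, so we may invoke Cohen--Macaulay approximation over the Gorenstein ring $R$ to build a short exact sequence $0\to Y\to B\to X\to0$ in $\mod R$ with $B$ maximal Cohen--Macaulay and $Y$ of finite projective dimension. The key is to force $B\in\X$, so that the resolving property of $\X$ then places $Y$ in $\X\cap\fpd R$; the depth lemma applied to this sequence, combined with $\depth_{R_\p}B_\p\ge\depth R_\p$, gives $\depth_{R_\p}Y_\p\le\depth_{R_\p}X_\p$, and Auslander--Buchsbaum applied to $Y$ then forces $\pd_{R_\p}Y_\p\ge d$.

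The main obstacle is precisely arranging $B\in\X$: a generic Auslander--Buchweitz approximation does not place $B$ inside $\X$. The plan is to use the high syzygy $M=\syz^n X\in\X\cap\cm(R)$ supplied by (2) as a crutch, stabilizing $X$ (for example by building the approximation from the truncated free resolution of $X$ cut off at $\syz^d X\in\X\cap\cm(R)$, or via a pushout against $M$) so that the MCM term $B$ can be produced inside $\X\cap\cm(R)$ without increasing the depth at $\p$.
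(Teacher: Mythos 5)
Your treatment of part (2) is essentially correct, and in fact more economical than the paper's: the paper routes through cosyzygies and Proposition \ref{71}(1), while your argument only needs that a sufficiently high syzygy is maximal Cohen--Macaulay, that $R\in\X\cap\cm(R)$, and the two-out-of-three property of thick subcategories. (For non-local $R$ replace ``$n\ge\depth R-\depth X$'' by $n\ge\rfd_RX$, or check the depth condition prime by prime; this is cosmetic.) The same goes for the easy inequality $(\ge)$ in (1).

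The genuine gap is in the direction $(\le)$ of part (1): the depth estimate you extract from the Auslander--Buchweitz approximation goes the wrong way. From $0\to Y\to B\to X\to0$ with $B_\p$ maximal Cohen--Macaulay, the depth lemma gives $\depth_{R_\p}Y_\p\ge\inf\{\depth_{R_\p}B_\p,\,\depth_{R_\p}X_\p+1\}$, a \emph{lower} bound; and in the only relevant case $\depth_{R_\p}X_\p<\depth R_\p\le\depth_{R_\p}B_\p$ one actually gets $\depth_{R_\p}Y_\p=\depth_{R_\p}X_\p+1$, hence $\pd_{R_\p}Y_\p=d-1$, one short of the target. Concretely, for $X=k$ over a one-dimensional Cohen--Macaulay local ring the kernel $Y$ of an MCM approximation is free, so $\pd_{R_\p}Y_\p=0<1=d$. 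Thus even if you succeeded in forcing $B\in\X$ (which you only sketch as a plan), this construction cannot produce the required module. The paper uses the dual approximation instead: a sequence $0\to X\to L\to D\to0$ with $L\in\fpd R$ and $D=\syz^{-n-1}\syz^nX$ maximal Cohen--Macaulay, where $n=\rfd_RX$. The complete intersection hypothesis enters exactly here, through Proposition \ref{71}(1), which keeps the cosyzygy $D$ of $\syz^nX\in\X\cap\cm(R)$ inside $\X$; then $L\in\X\cap\fpd R$ follows from closure under \emph{extensions} (no need for the kernel-type closure your setup requires), and the depth lemma now goes the right way: since $\depth_{R_\p}D_\p+1>\depth R_\p\ge\depth_{R_\p}L_\p$, one gets $\depth_{R_\p}X_\p\ge\depth_{R_\p}L_\p$, whence $\pd_{R_\p}L_\p=\depth R_\p-\depth_{R_\p}L_\p\ge\depth R_\p-\depth_{R_\p}X_\p$, applied to an $X$ attaining the supremum.
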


\begin{proof}
(1) The inequality $(\ge)$ holds by the Auslander--Buchsbaum formula.
To show the opposite inequality $(\le)$, put $t=\sup_{X\in\X}\{\depth R_\p-\depth X_\p\}$.
Then $t=\depth R_\p-\depth X_\p$ for some $X\in\X$.
Set $n=\rfd_RX$.
We see that $\syz^nX$ is a maximal Cohen--Macaulay $R$-module.
By \cite[Proof of Theorem 7.4]{crspd} there is an exact sequence $0\to X\to L\to D\to0$ in $\mod R$ such that $L$ has finite projective dimension and $D=\syz^{-n-1}\syz^nX$ is maximal Cohen--Macaulay.
Applying Proposition \ref{71}(1) to $\syz^nX\in\X$, we get $D\in\X$, and hence $L\in\X\cap\fpd R$.
As $D_\p$ is a maximal Cohen--Macaulay $R_\p$-module,  we have $\depth D_\p\ge\height\p$.
The depth lemma implies
$$
\depth X_\p\ge\inf\{\depth L_\p,\depth D_\p+1\}\ge\inf\{\depth L_\p,\height\p+1\}=\depth L_\p.
$$
Hence $\pd L_\p=\depth R_\p-\depth L_\p\ge\depth R_\p-\depth X_\p=t$.
We obtain $\sup_{Y\in\X\cap\fpd R}\{\pd Y_\p\}\ge\pd L_\p\ge t$.

(2) It suffices to show that $\X$ is contained in $\thick_{\d(R)}(\X\cap\cm(R))$.
Fix an $R$-module $X\in\X$, and put $n=\rfd_RX$.
Since $R$ is Gorenstein, there is an exact sequence $0\to P\to\syz^{-n}\syz^nX\to X\to0$ in $\mod R$ such that $P$ has finite projective dimension; see \cite[(2.21) and (4.22)]{ABr}.
The $R$-module $\syz^nX$ is maximal Cohen--Macaulay.
Proposition \ref{71}(1) implies $\syz^{-n}\syz^nX\in\res_{\mod R}(\syz^nX)\subseteq\X$.
Hence $\syz^{-n}\syz^nX$ is in $\X\cap\cm(R)$.
As $P\in\thick_{\d(R)}R$ and $R\in\X\cap\cm(R)$, both $P$ and $\syz^{-n}\syz^nX$ are in $\thick_{\d(R)}(\X\cap\cm(R))$, and so is $X$.
\end{proof}

Using the above lemma and results in the previous sections, we can show that for each resolving subcategory of $\mod R$, taking the resolving closure in $\d(R)$ commutes with taking the restriction to $\k(R)$ and $\c(R)$.

\begin{prop}\label{80}
Suppose that $R$ is a complete intersection.
Let $\X$ be a resolving subcategory of $\mod R$.
\begin{enumerate}[\rm(1)]
\item
There are equalities $\res_{\d(R)}(\X\cap\k(R))=\res_{\d(R)}(\X\cap\fpd R)=(\res_{\d(R)}\X)\cap\k(R)$.
\item
There are equalities $\res_{\d(R)}(\X\cap\c(R))=\res_{\d(R)}(\X\cap\cm(R))=(\res_{\d(R)}\X)\cap\c(R)$.
\end{enumerate}
\end{prop}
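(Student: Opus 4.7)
The inclusions $\res_{\d(R)}(\X\cap\fpd R)\subseteq\res_{\d(R)}(\X\cap\k(R))\subseteq(\res_{\d(R)}\X)\cap\k(R)$ in (1) and the analogous ones in (2) are immediate, so only the reverse containments $(\res_{\d(R)}\X)\cap\k(R)\subseteq\res_{\d(R)}(\X\cap\fpd R)$ and $(\res_{\d(R)}\X)\cap\c(R)\subseteq\res_{\d(R)}(\X\cap\cm(R))$ require work. The plan is to handle (1) via Theorem \ref{2} and (2) via Theorem \ref{76}, after verifying that each side is a resolving subcategory of the appropriate ambient category.

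For (1), both sides are resolving subcategories of $\k(R)$: indeed, $\res_{\d(R)}(\X\cap\fpd R)=\res_{\k(R)}(\X\cap\fpd R)$ by Proposition \ref{43}(3), while $(\res_{\d(R)}\X)\cap\k(R)$ is a resolving subcategory of $\d(R)$ sitting inside $\k(R)$ and is thus a resolving subcategory of $\k(R)$ by Proposition \ref{43}(2). Thanks to Theorem \ref{2} it then suffices to compare the corresponding order-preserving functions pointwise. Fix $\p\in\spec R$ and set $d=\inf_{X\in\X}\{\depth_{R_\p}X_\p\}$; since $R\in\X$ and $R_\p$ is Cohen--Macaulay (as $R$ is a complete intersection), one has $d\le\depth R_\p$. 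Using Proposition \ref{27}(1)(3)(4), the subcategory $\ZZ=\{W\in\d(R)\mid\depth_{R_\p}W_\p\ge d\}$ is a resolving subcategory of $\d(R)$ containing $\X$, and hence $\res_{\d(R)}\X\subseteq\ZZ$. For any $Z\in(\res_{\d(R)}\X)\cap\k(R)$ one has $\pd_{R_\p}Z_\p<\infty$, so Auslander--Buchsbaum (Proposition \ref{27}(2)) yields $\pd_{R_\p}Z_\p=\depth R_\p-\depth_{R_\p}Z_\p\le\depth R_\p-d=\sup_{X\in\X}\{\depth R_\p-\depth_{R_\p}X_\p\}$, which by Lemma \ref{82}(1) equals $\sup_{Y\in\X\cap\fpd R}\{\pd_{R_\p}Y_\p\}$; an analogous resolving-subcategory argument inside $\k(R)$ identifies this with $\Phi(\res_{\k(R)}(\X\cap\fpd R))(\p)$. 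Since the reverse inequality is immediate from $\X\cap\fpd R\subseteq(\res_{\d(R)}\X)\cap\k(R)$, the two functions coincide and (1) follows from Theorem \ref{2}.

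For (2), since $R$ is Cohen--Macaulay the subcategory $\c(R)$ is itself resolving in $\d(R)$ by Proposition \ref{58}(2), so both $(\res_{\d(R)}\X)\cap\c(R)$ and $\res_{\d(R)}(\X\cap\cm(R))$ are resolving subcategories of $\d(R)$ contained in $\c(R)$. Theorem \ref{76} identifies such subcategories bijectively with thick subcategories of $\d(R)$ containing $R$ via $\ZZ\mapsto\thick_{\d(R)}\ZZ$, so (2) reduces to the equality $\thick_{\d(R)}((\res_{\d(R)}\X)\cap\c(R))=\thick_{\d(R)}(\X\cap\cm(R))$. The right-hand side equals $\thick_{\d(R)}\X$ by Lemma \ref{82}(2). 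For the left-hand side I will verify $\X\subseteq\thick_{\d(R)}((\res_{\d(R)}\X)\cap\c(R))$: given $X\in\X$, Proposition \ref{71}(3) produces an exact triangle $X\to P\to Y\rightsquigarrow$ with $P\in\k(R)$ and $Y\in(\res_{\d(R)}X)\cap\c(R)\subseteq(\res_{\d(R)}\X)\cap\c(R)$; since $R\in(\res_{\d(R)}\X)\cap\c(R)$ and $\k(R)=\thick_{\d(R)}R$ by Proposition \ref{43}(3), both $P$ and $Y$, and hence $X$, lie in $\thick_{\d(R)}((\res_{\d(R)}\X)\cap\c(R))$. Together with the obvious reverse containment $(\res_{\d(R)}\X)\cap\c(R)\subseteq\thick_{\d(R)}\X$, this settles (2).

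The main obstacle is the depth estimate in (1): converting membership in a derived resolving closure of $\X$ into a pointwise depth lower bound, then trading it for a bound on projective dimensions via Auslander--Buchsbaum, and finally aligning with the module-theoretic invariant furnished by Lemma \ref{82}(1)---which itself rests on the complete-intersection hypothesis through Proposition \ref{71}(1). Once this pointwise translation is in hand, both parts are clean applications of the classification theorems \ref{2} and \ref{76}.
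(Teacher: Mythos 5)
Your argument is correct and essentially follows the paper's own proof: part (1) reduces via Theorem \ref{2} to comparing the functions $\Phi$ pointwise, trading projective dimension for depth through the Auslander--Buchsbaum formula, bounding depth over $\res_{\d(R)}\X$ by an explicit depth-defined resolving subcategory, and invoking Lemma \ref{82}(1), while part (2) reduces via Theorem \ref{76} to an equality of thick closures settled by Lemma \ref{82}(2). The only (harmless) deviation is that in (2) you use Proposition \ref{71}(3) to obtain $\X\subseteq\thick_{\d(R)}((\res_{\d(R)}\X)\cap\c(R))$, whereas the paper gets the same sandwich of thick closures more cheaply from the inclusion $\X\cap\cm(R)\subseteq(\res_{\d(R)}\X)\cap\c(R)$ together with $\thick_{\d(R)}(\X\cap\cm(R))=\thick_{\d(R)}\X$.
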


\begin{proof}
The first equalities in the two assertions hold since $\X\cap\k(R)=\X\cap\fpd R$ and $\X\cap\c(R)=\X\cap\cm(R)$.
In what follows, we show the second equalities.

(1) Since $\k(R)$ is a resolving subcategory of $\d(R)$ by Proposition \ref{43}(3), we see that both $\res_{\d(R)}(\X\cap\fpd R)$ and $(\res_{\d(R)}\X)\cap\k(R)$ are resolving subcategories of $\d(R)$ contained in $\k(R)$.
Put
$$
\textstyle
a=\sup_{X\in\res_{\d(R)}(\X\cap\fpd R)}\{\pd X_\p\},\quad
b=\sup_{X\in(\res_{\d(R)}\X)\cap\k(R)}\{\pd X_\p\},\quad
c=\sup_{X\in\X\cap\fpd R}\{\pd X_\p\}.
$$
By Theorem \ref{2}, it is enough to verify that $a=b$.
Since $\X\cap\fpd R\subseteq\res_{\d(R)}(\X\cap\fpd R)\subseteq(\res_{\d(R)}\X)\cap\k(R)$, we have $c\le a\le b$.
Thus it suffices to show that $b\le c$, which can be shown as follows.
$$
\begin{array}{l}
b
\overset{\rm(i)}{=}\sup_{X\in(\res_{\d(R)}\X)\cap\k(R)}\{\depth R_\p-\depth X_\p\}
\overset{\rm(ii)}{\le}\sup_{X\in\res_{\d(R)}\X}\{\depth R_\p-\depth X_\p\}\\
\phantom{b=\sup_{X\in(\res_{\d(R)}\X)\cap\k(R)}\{\depth R_\p-\depth X_\p\}}
\overset{\rm(iii)}{=}\sup_{X\in\X}\{\depth R_\p-\depth X_\p\}
\overset{\rm(iv)}{=}c.
\end{array}
$$
Here, (i) and (iv) follow from Proposition \ref{27}(2) and Lemma \ref{82}(1), respectively.
The inclusion $(\res_{\d(R)}\X)\cap\k(R)\subseteq\res_{\d(R)}\X$ implies (ii).
As for (iii), the inequality $(\ge)$ holds since $\X$ is contained in $\res_{\d(R)}\X$.
It is observed from Proposition \ref{27}(2)(3) that the subcategory $\Y$ of $\d(R)$ consisting of objects $Y$ such that
$$
\textstyle
\depth R_\p-\depth Y_\p\le\sup_{X\in\X}\{\depth R_\p-\depth X_\p\}
$$
is resolving and contains $\X$.
Therefore, the subcategory $\Y$ contains $\res_{\d(R)}\X$.
Thus $(\le)$ follows.

(2) Note that both $\res_{\d(R)}(\X\cap\cm(R))$ and $(\res_{\d(R)}\X)\cap\c(R)$ are resolving subcategories of $\d(R)$ contained in $\c(R)$; see Proposition \ref{58}(2).
By virtue of Theorem \ref{76}, it is enough to show that $\thick_{\d(R)}(\res_{\d(R)}(\X\cap\cm(R)))$ coincides with $\thick_{\d(R)}((\res_{\d(R)}\X)\cap\c(R))$.
We have
$$
\begin{array}{l}
\thick_{\d(R)}(\res_{\d(R)}(\X\cap\cm(R)))
=\thick_{\d(R)}(\X\cap\cm(R))
=\thick_{\d(R)}\X
=\thick_{\d(R)}(\res_{\d(R)}\X)\\
\phantom{\thick_{\d(R)}(\res_{\d(R)}(\X\cap\cm(R)))}
\supseteq\thick_{\d(R)}((\res_{\d(R)}\X)\cap\c(R))
\supseteq\thick_{\d(R)}(\X\cap\cm(R)),
\end{array}
$$
where the first and third equalities and the inclusions are clear, while the second equality follows from Lemma \ref{82}(2).
Thus those two inclusions are equalities, and we obtain the desired equality of thick closures.
\end{proof}

Now we can state and prove the following proposition, where $\lcm(R)$ denotes the {\em stable category} of $\cm(R)$ (the definition of a thick subcategory of $\cm(R)$ is given in Section 4).
This proposition particularly says that, over a complete intersection, the resolving subcategories of maximal Cohen--Macaulay modules bijectively and naturally correspond to the resolving subcategories of maximal Cohen--Macaulay complexes.

\begin{prop}\label{94}
Let $R$ be a complete intersection.
Then there are natural one-to-one correspondences
\begin{align*}
{\left\{\begin{matrix}
\text{resolving subcategories}\\
\text{of $\mod R$}\\
\text{contained in $\cm(R)$}
\end{matrix}\right\}}
&=
{\left\{\begin{matrix}
\text{thick subcategories}\\
\text{of $\cm(R)$}\\
\text{containing $R$}
\end{matrix}\right\}}
\cong
{\left\{\begin{matrix}
\text{thick subcategories}\\
\text{of $\lcm(R)$}
\end{matrix}\right\}}
\cong
{\left\{\begin{matrix}
\text{thick subcategories}\\
\text{of $\s(R)$}
\end{matrix}\right\}}\\
&\cong
{\left\{\begin{matrix}
\text{thick subcategories}\\
\text{of $\d(R)$}\\
\text{containing $R$}
\end{matrix}\right\}}
\cong
{\left\{\begin{matrix}
\text{thick subcategories}\\
\text{of $\c(R)$}\\
\text{containing $R$}
\end{matrix}\right\}}
=
{\left\{\begin{matrix}
\text{resolving subcategories}\\
\text{of $\d(R)$}\\
\text{contained in $\c(R)$}
\end{matrix}\right\}}.
\end{align*}
In particular, one has the following one-to-one correspondence.
$$
\xymatrix{
{\left\{\begin{matrix}
\text{resolving subcategories}\\
\text{of $\mod R$ contained in $\cm(R)$}
\end{matrix}\right\}}
\ar@<.7mm>[rr]^-{\res_{\d(R)}(-)}&&
{\left\{\begin{matrix}
\text{resolving subcategories}\\
\text{of $\d(R)$ contained in $\c(R)$}
\end{matrix}\right\}}.
\ar@<.7mm>[ll]^-{(-)\cap\cm(R)}&
}
$$
\end{prop}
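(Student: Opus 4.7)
The plan is to establish each of the indicated links in the displayed chain, and then identify the resulting composite bijection with the maps $\res_{\d(R)}(-)$ and $(-)\cap\cm(R)$ of the ``in particular'' statement.

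For the chain, the first equality (resolving subcategories of $\mod R$ contained in $\cm(R)$ equal thick subcategories of $\cm(R)$ containing $R$) is classical; the nontrivial implication asks that a resolving $\X\subseteq\cm(R)$ be closed under $N$ whenever $0\to L\to M\to N\to0$ is exact in $\cm(R)$ with $L,M\in\X$, and this is handled via a pushout with a cosyzygy sequence $0\to L\to P\to\syz^{-1}L\to0$ (with $P$ projective), using Proposition \ref{71}(1) to place $\syz^{-1}L\in\X$. The bijection with thick subcategories of $\lcm(R)$ is the standard passage to the stable category (sending $R$ to zero). The bijection with thick subcategories of $\s(R)$ is Buchweitz's equivalence $\lcm(R)\simeq\s(R)$ for Gorenstein rings, applicable since a complete intersection is Gorenstein. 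The remaining bijections, together with the final equality, are Lemma \ref{74}(1) and Theorem \ref{76}.

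For the ``in particular'' statement, I would first verify well-definedness. Proposition \ref{80}(2) applied to $\X\subseteq\cm(R)$ gives $\res_{\d(R)}\X=\res_{\d(R)}(\X\cap\cm(R))=(\res_{\d(R)}\X)\cap\c(R)$, so $\res_{\d(R)}\X\subseteq\c(R)$ and is a resolving subcategory of $\d(R)$ contained in $\c(R)$; conversely, for $\Y\subseteq\c(R)$ resolving, $\Y\cap\cm(R)=\Y\cap\mod R$ is resolving in $\mod R$ by Proposition \ref{43}(4) and lies in $\cm(R)$. I then identify $\res_{\d(R)}(-)$ with the composite of the chain. Tracing through, the chain sends $\X$ to a thick subcategory of $\d(R)$ containing $R$, obtained as the Verdier preimage of the image of $\X$ in $\s(R)$, which equals $\thick_{\d(R)}(\X\cup\k(R))=\thick_{\d(R)}\X$ (the last equality because $R\in\X$ gives $\k(R)=\thick_{\d(R)}R\subseteq\thick_{\d(R)}\X$ by Proposition \ref{43}(3)); the final step of the chain intersects with $\c(R)$ to yield $(\thick_{\d(R)}\X)\cap\c(R)$. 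Since $\res_{\d(R)}\X$ is a resolving subcategory of $\d(R)$ contained in $\c(R)$ with $\thick_{\d(R)}(\res_{\d(R)}\X)=\thick_{\d(R)}\X$, applying Theorem \ref{76} gives $\res_{\d(R)}\X=(\thick_{\d(R)}\X)\cap\c(R)$, matching the chain's composite. A symmetric argument, using $\Y=(\thick_{\d(R)}\Y)\cap\c(R)$ from Theorem \ref{76}, identifies the reverse chain with $(-)\cap\cm(R)$.

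The main obstacle will be the bookkeeping in the identification: namely, verifying that the composition of Buchweitz's equivalence with the Verdier-quotient bijection (Lemma \ref{74}(1)) sends the thick subcategory of $\cm(R)$ associated to $\X$ exactly to $\thick_{\d(R)}(\X\cup\k(R))$. Once this identification is in place, the mutual-inverse property of $\res_{\d(R)}(-)$ and $(-)\cap\cm(R)$ follows automatically from the chain being a bijection.
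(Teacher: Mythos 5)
Your proposal is correct and takes essentially the same route as the paper: the identical chain of correspondences (the cosyzygy/pushout argument via Proposition \ref{71}(1) for the first equality, passage to the stable category, Buchweitz's equivalence $\lcm(R)\simeq\s(R)$, Lemma \ref{74}(1), and Theorem \ref{76}), followed by identifying the composite with $(\thick_{\d(R)}(-))\cap\c(R)$ and invoking the mutually inverse bijections of Theorem \ref{76}. The one step you defer as bookkeeping---that the Verdier preimage of the image of $\X$ in $\s(R)$ equals $\thick_{\d(R)}(\X\cup\k(R))=\thick_{\d(R)}\X$---is exactly the point the paper settles concretely via \cite[Proposition 2.1.35]{N2}: if $\pi(C)\cong\pi(X)$ with $X\in\X$, there are exact triangles $E\to C\to A\rightsquigarrow$ and $E\to X\to B\rightsquigarrow$ with $A,B\in\k(R)$, forcing $C\in\thick_{\d(R)}\X$; so your reduction is sound and that remaining verification is indeed routine.
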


\begin{proof}
We start by showing the first assertion.
The first equality can be obtained by \cite[Corollary 4.16]{radius}, where the ring is assumed to be local, but the argument works if we replace \cite[Theorem 4.15(1)]{radius} used there with Proposition \ref{71}(1).
The first bijection follows from \cite[Proposition 6.2]{stcm}, where the ring is again assumed to be local but it is not used.
Since $R$ is Gorenstein, the assignment $M\mapsto M$ gives a triangle equivalence
$$
\eta:\lcm(R)=\lgp(R)\xrightarrow{\cong}\s(R),
$$
where $\lgp(R)$ denotes the stable category of the category $\gp(R)$ of totally reflexive $R$-modules; see \cite[1.3]{BDZ}.
The second bijection in the assertion is induced from the equivalence $\eta$. 
The third bijection is given in Lemma \ref{74}(1).
The last bijection and the last equality follow from Theorem \ref{76}.

From now on, we give a proof of the last assertion of the proposition.
There is a commutative diagram
$$
\xymatrix@R-1pc@C5pc{
\cm(R)\ar[r]^\inc\ar[d]^\varepsilon&\d(R)\ar[d]^\pi\\
\lcm(R)\ar[r]^\eta&\s(R)
}
$$
where $\inc$ is the inclusion functor, $\eta$ is the triangle equivalence, and $\varepsilon,\pi$ are the canonical quotient functors.

Fix a resolving subcategory $\X$ of $\mod R$ contained in $\cm(R)$.
The resolving subcategory of $\d(R)$ contained in $\c(R)$ that corresponds to $\X$ is $\pi^{-1}\eta\varepsilon(\X)\cap\c(R)$, which coincides with $\pi^{-1}\pi(\X)\cap\c(R)$.
As this is a resolving subcategory of $\d(R)$ containing $\X$, it contains $\res_{\d(R)}\X$ as well.
Pick an object $C\in\pi^{-1}\pi(\X)\cap\c(R)$.
Then $\pi(C)$ is in $\pi(\X)$, and $\pi(C)$ is isomorhic to $\pi(X)$ for some $X\in\X$.
There are exact triangles $\sigma:E\to C\to A\rightsquigarrow$ and $\tau:E\to X\to B\rightsquigarrow$ in $\d(R)$ with $A,B\in\k(R)$; see \cite[Proposition 2.1.35]{N2}.
We see from $\tau$ that $E$ is in $\thick_{\d(R)}\X$, and from $\sigma$ that $C$ is in $\thick_{\d(R)}\X$.
Hence $\pi^{-1}\pi(\X)\cap\c(R)\subseteq(\thick_{\d(R)}\X)\cap\c(R)=\res_{\d(R)}\X$, where the equality follows from Theorem \ref{76}.
We now conclude that $\pi^{-1}\pi(\X)\cap\c(R)=\res_{\d(R)}\X$.

Fix a resolving subcategory $\X$ of $\d(R)$ contained in $\c(R)$.
The resolving subcategory of $\mod R$ contained in $\cm(R)$ that corresponds to $\X$ is $\varepsilon^{-1}\eta^{-1}\pi(\thick_{\d(R)}\X)$.
Note that the equality $\pi^{-1}\pi(\Y)=\Y$ holds for each thick subcategory $\Y$ of $\d(R)$ containing $R$.
We get the following equalities of subcategories of $\cm(R)$.
$$
\begin{array}{l}
\varepsilon^{-1}\eta^{-1}\pi(\thick_{\d(R)}\X)
=\pi^{-1}\pi(\thick_{\d(R)}\X)\cap\cm(R)
=(\thick_{\d(R)}\X)\cap\cm(R)\\
\phantom{\varepsilon^{-1}\eta^{-1}\pi(\thick_{\d(R)}\X)}=(\thick_{\d(R)}\X)\cap\c(R)\cap\cm(R)
=\X\cap\cm(R).
\end{array}
$$
Here, the last equality follows from Theorem \ref{76}.

Now we obtain the mutually inverse bijections in the last assertion of the proposition.
\end{proof}

Proposition \ref{94} says that when $R$ is a complete intersection, the equality $\X=\res_{\d(R)}(\X\cap\cm(R))$ holds for every resolving subcategory $\X$ of $\d(R)$ contained in $\c(R)$.
This equality holds in a more general setting.

\begin{prop}\label{101}
The equality $\X=\res_{\d(R)}(\X\cap\gp(R))$ holds for every resolving subcategory $\X$ of $\d(R)$ contained in $\g(R)$.
In particular, if the ring $R$ is Gorenstein, then the equality $\X=\res_{\d(R)}(\X\cap\cm(R))$ holds for every resolving subcategory $\X$ of $\d(R)$ contained in $\c(R)$.
\end{prop}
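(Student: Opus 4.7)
The inclusion $\res_{\d(R)}(\X\cap\gp(R))\subseteq\X$ is immediate since $\X$ itself is a resolving subcategory of $\d(R)$ containing $\X\cap\gp(R)$. So the plan is to prove the reverse inclusion: starting from an arbitrary $X\in\X$, produce a single exact triangle that displays $X$ as built from an object of $\X\cap\gp(R)$ and an object of $\E_R$.

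First I would apply Remark \ref{40}(2) to $X$ to obtain an exact triangle $P\to X\to Y\rightsquigarrow$ in $\d(R)$ with $\sup Y\le0$ and $P,P[1]\in\E_R$. Rotating this triangle yields $X\to Y\to P[1]\rightsquigarrow$. By Proposition \ref{43}(1) we have $\E_R\subseteq\X$, so $P[1]\in\X$; together with $X\in\X$ and closure of $\X$ under extensions, this forces $Y\in\X$. The hypothesis $\X\subseteq\g(R)$ now gives $\gdim_R Y\le0$, which combined with $\sup Y\le0$ lets Lemma \ref{66}(4) identify $Y$ (up to isomorphism in $\d(R)$) with a totally reflexive module, so that $Y\in\X\cap\gp(R)$.

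Reading the original triangle $P\to X\to Y\rightsquigarrow$ again, both $P$ (via $\E_R\subseteq\res_{\d(R)}(\X\cap\gp(R))$) and $Y$ (just established) lie in $\res_{\d(R)}(\X\cap\gp(R))$, so closure of this resolving subcategory under extensions delivers $X\in\res_{\d(R)}(\X\cap\gp(R))$. For the "in particular" clause, Lemma \ref{66}(5) gives $\g(R)=\c(R)$ whenever $R$ is Gorenstein and, applied to a module $M$, identifies $\gp(R)$ with $\cm(R)$ (since $\gdim_R M=\rfd_R M\le0$ exactly when $M$ is maximal Cohen--Macaulay), so the second assertion specializes from the first.

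The delicate point—and the reason to invoke Remark \ref{40}(2) rather than, say, Lemma \ref{69}(1)—is that resolving subcategories are not closed under positive shifts. The rotated triangle lands on $P[1]$, so the argument needs $P[1]\in\E_R$ and not just $P\in\E_R$; Remark \ref{40}(2) supplies exactly this, which is what makes the single-triangle reduction go through.
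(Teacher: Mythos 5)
Your proof is correct and follows essentially the same route as the paper's: both apply Remark \ref{40}(2) to get a triangle $P\to X\to Y\rightsquigarrow$ with $\sup Y\le0$ and $P,P[1]\in\E_R$, deduce $Y\in\X$, identify $Y$ with a totally reflexive module via Lemma \ref{66}(4), and conclude by extension-closure of $\res_{\d(R)}(\X\cap\gp(R))$, with Lemma \ref{66}(5) handling the Gorenstein specialization. The only (harmless) difference is that you get $\gdim_RY\le0$ directly from $Y\in\X\subseteq\g(R)$, whereas the paper reads it off the triangle via Lemma \ref{66}(3).
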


\begin{proof}
The last assertion follows from the first and Lemma \ref{66}(5).
To show the first assertion, let $\X$ be a resolving subcategory of $\d(R)$ contained in $\g(R)$.
It clearly holds that $\X$ contains $\res_{\d(R)}(\X\cap\gp(R))$.
Pick any $X\in\X$.
Remark \ref{40}(2) gives an exact triangle $X\to Y\to E\rightsquigarrow$ in $\d(R)$ with $\sup Y\le0$ and $E\in\E_R$.
By Lemma \ref{66}(3)(4) there exists a totally reflexive $R$-module $T$ such that $Y\cong T$ in $\d(R)$.
Since $Y$ is in $\X$, we have $T\in\X\cap\gp(R)$.
Hence $Y$ is in $\res_{\d(R)}(\X\cap\gp(R))$, and so is $X$.
Thus the first assertion follows.
\end{proof}

In view of Propositions \ref{94} and \ref{101}, it is quite natural to ask the following question.
Proposition \ref{94} guarantees that the question has an affirmative answer in the case where $R$ is a complete intersection.

\begin{ques}
Suppose that the ring $R$ is Gorenstein.
Let $\X$ be a resolving subcategory of $\mod R$ contained in $\cm(R)$.
Then, does the equality $\X=(\res_{\d(R)}\X)\cap\cm(R)$ hold?
\end{ques}

To show our next result, we establish a lemma on projective dimension.

\begin{lem}\label{37}
Let $\X$ be a resolving subcategory of $\mod R$ contained in $\fpd R$.
Let $Y$ be an object in $\res_{\d(R)}\X$, and let $\p$ be a prime ideal of $R$.
Then one has the inequality $\pd_{R_\p}Y_\p\le\pd_{R_\p}X_\p$ for some object $X\in\X$.
\end{lem}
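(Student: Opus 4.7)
The plan is to exhibit a resolving subcategory $\Y$ of $\d(R)$ containing $\X$ whose membership automatically guarantees the desired inequality, and then to invoke minimality of $\res_{\d(R)}\X$. Concretely, I will set
$$
\Y=\{Z\in\d(R)\mid\text{there exists }X\in\X\text{ with }\pd_{R_\p}Z_\p\le\pd_{R_\p}X_\p\},
$$
and show $\X\subseteq\Y$ and that $\Y$ is resolving, so that $\res_{\d(R)}\X\subseteq\Y$, from which the lemma follows immediately by taking $Z=Y$.

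The containment $\X\subseteq\Y$ is trivial (use $X=Z$ as the witness), and $R\in\Y$ is a special case. Closure of $\Y$ under direct summands is the projective-dimension equality in Proposition \ref{27}(4): a summand has projective dimension at $\p$ no greater than the whole, so the same witness $X$ works. Closure under negative shifts uses Proposition \ref{27}(1): $\pd_{R_\p}(Z[-1])_\p=\pd_{R_\p}Z_\p-1$, which is bounded above by $\pd_{R_\p}X_\p$ with the same witness.

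The key step is closure under extensions (equivalently, under the taking-the-third-vertex operation of Proposition \ref{42}(1)). Suppose $A\to B\to C\rightsquigarrow$ in $\d(R)$ with witnesses $X_A,X_C\in\X$ for $A,C\in\Y$. By the inequalities in Proposition \ref{27}(3), each of $\pd_{R_\p}B_\p$ and, dually, $\pd_{R_\p}A_\p$ (when $B,C\in\Y$) is bounded above by $\sup\{\pd_{R_\p}(X_A)_\p,\pd_{R_\p}(X_C)_\p\}$. By Proposition \ref{27}(4) this supremum equals $\pd_{R_\p}(X_A\oplus X_C)_\p$. The essential observation is then that $X_A\oplus X_C\in\X$: since $\X$ is a resolving subcategory of $\mod R$, closure under extensions applied to the split short exact sequence $0\to X_A\to X_A\oplus X_C\to X_C\to0$ gives the direct sum. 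Hence $X_A\oplus X_C$ serves as a single witness, and $\Y$ is closed under extensions and satisfies (iv) of the resolving axioms.

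The hypothesis $\X\subseteq\fpd R$ is not needed for the construction of $\Y$ itself but ensures the statement is meaningful: since $\fpd R=\k(R)\cap\mod R$ (by the definition of $\fpd R$) and $\k(R)$ is a thick, hence resolving, subcategory of $\d(R)$ containing $R$ by Proposition \ref{43}(3), the closure $\res_{\d(R)}\X$ is contained in $\k(R)$, so every $Y\in\res_{\d(R)}\X$ has finite projective dimension at every $\p$. The only subtle point in the whole argument is remembering to close up $\X$ under direct sums when assembling the witness, but this is automatic from the resolving axioms on $\mod R$; there is no real obstacle.
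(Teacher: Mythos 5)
Your proposal is correct and follows essentially the same route as the paper: you define the identical auxiliary subcategory $\Y$ of $\d(R)$, verify it is resolving via Proposition \ref{27}(1)(3)(4) with the same direct-sum witness $X_A\oplus X_C\in\X$, and conclude by minimality of $\res_{\d(R)}\X$. Your observation that the hypothesis $\X\subseteq\fpd R$ is not needed for the argument also matches the paper's proof, which never uses it.
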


\begin{proof}
Let $\ZZ$ be the subcategory of $\d(R)$ consisting of complexes $Z$ such that $\pd Z_\p\le\pd X_\p$ for some $X\in\X$.
Clearly, $\X$ is contained in $\ZZ$, and in particular, $R$ is in $\ZZ$.
If $Z$ is an object in $\ZZ$ and $W$ is a direct summand of $Z$, then $\pd W_\p\le\pd Z_\p\le\pd X_\p$ for some $X\in\X$ by Proposition \ref{27}(4), and hence $W$ is also in $\ZZ$.
Let $A\to B\to C\rightsquigarrow$ be an exact triangle in $\d(R)$ with $C\in\ZZ$.
Then $\pd C_\p\le\pd X_\p$ for some $X\in\X$.
If $\pd A_\p$ (resp. $\pd B_\p$) is at most $\pd X'_\p$ for some $X'\in\X$, then $\pd B_\p$ (resp. $\pd A_\p$) is at most $\sup\{\pd A_\p,\pd C_\p\}$ (resp. $\sup\{\pd B_\p,\pd C_\p-1\}$) by Proposition \ref{27}(3), which is at most $\pd X''_\p$ where $X''=X\oplus X'\in\X$ by Proposition \ref{27}(4).
Hence $A\in\ZZ$ if and only if $B\in\ZZ$.
Thus, $\ZZ$ is a resolving subcategory of $\d(R)$ containing $\X$.
Then $\ZZ$ contains $\res_{\d(R)}\X$, and we get $Y\in\ZZ$.
We conclude $\pd Y_\p\le\pd X_\p$ for some $X\in\X$.
\end{proof}

Now we find out a close relationship of each resolving subcategory of $\mod R$ with its resolving closure in $\d(R)$ when $R$ is a complete intersection.
In the proof we use the map $\Phi$ which was defined in Definition \ref{53}.

\begin{prop}\label{84}
Let $\X$ be a resolving subcategory of $\mod R$.
Suppose either that $\X$ is contained in $\fpd R$ or that $R$ is a complete intersection.
Then the equality $\X=(\res_{\d(R)}\X)\cap\mod R$ holds true.
\end{prop}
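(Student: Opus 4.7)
The containment $\X\subseteq(\res_{\d(R)}\X)\cap\mod R$ is immediate, so the task is to prove the reverse: given $M\in(\res_{\d(R)}\X)\cap\mod R$, show $M\in\X$. Under the first hypothesis ($\X\subseteq\fpd R$), the plan is quick: Lemma \ref{37} gives $\pd_{R_\p}M_\p\le\Phi(\X)(\p)$ for every prime ideal $\p$, and Theorem \ref{36} identifies $\X$ with $\Psi(\Phi(\X))\cap\mod R$, which yields $M\in\X$ at once.

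Under the second hypothesis ($R$ a complete intersection), I first record two reductions. Applying the first-hypothesis case to the resolving subcategory $\X\cap\fpd R$ of $\mod R$, together with Proposition \ref{80}(1), gives
$$\X\cap\fpd R=(\res_{\d(R)}\X)\cap\fpd R;$$
applying Proposition \ref{94} to $\X\cap\cm(R)$, together with Proposition \ref{80}(2), gives
$$\X\cap\cm(R)=(\res_{\d(R)}\X)\cap\cm(R).$$
Since $R$ is Gorenstein, $n:=\gdim_RM<\infty$. Setting $D=\syz^{-n}\syz^nM$, the proof of Lemma \ref{82}(2) supplies a short exact sequence $0\to P\to D\to M\to 0$ with $P\in\fpd R$ and $D\in\cm(R)$; take a further short exact sequence $0\to D\to P_D\to\syz^{-1}D\to 0$ with $P_D$ projective. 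Forming the pushout of $D\to M$ along $D\to P_D$ produces a short exact sequence of $R$-modules
$$0\to M\to L\to\syz^{-1}D\to 0,$$
where $L$ also fits into $0\to P\to P_D\to L\to 0$ (so $L\in\fpd R$) and $\syz^{-1}D\in\cm(R)$. Iterating Proposition \ref{71}(1) places $D=\syz^{-n}\syz^nM$ in $\res_{\mod R}(\syz^nM)\subseteq\res_{\mod R}M$, and places $\syz^{-1}D$ in $\res_{\mod R}D\subseteq\res_{\mod R}M$.

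Putting these together, $\syz^{-1}D$ lies in $\res_{\mod R}M\subseteq\res_{\d(R)}\X$ and in $\cm(R)$, so by the second reduction $\syz^{-1}D\in\X\cap\cm(R)\subseteq\X$. The exact triangle $M\to L\to\syz^{-1}D\rightsquigarrow$ in $\d(R)$ together with the closure of $\res_{\d(R)}\X$ under extensions forces $L\in\res_{\d(R)}\X$; by the first reduction $L\in\X\cap\fpd R\subseteq\X$. Finally, condition (iv) of the definition of a resolving subcategory of $\mod R$ applied to $0\to M\to L\to\syz^{-1}D\to 0$ gives $M\in\X$.

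\textbf{Main obstacle.} The key move is to convert the right-approximation sequence $0\to P\to D\to M\to 0$ (in which $M$ appears as a quotient, so that the syzygy-closedness of $\X$ cannot directly recover $M$) into the left-approximation sequence $0\to M\to L\to\syz^{-1}D\to 0$ (in which $M$ appears as a kernel, so that syzygy-closedness does conclude). The pushout along $D\to P_D$ achieves this while keeping $L$ of finite projective dimension and ensuring that the MCM quotient $\syz^{-1}D$ lies in $\res_{\mod R}M$; this latter containment is exactly what iterated Proposition \ref{71}(1) delivers.
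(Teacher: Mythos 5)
Your proof is correct, and while its skeleton matches the paper's (the $\fpd R$ case via Lemma \ref{37} and Theorem \ref{36}, and the two reductions $\X\cap\fpd R=(\res_{\d(R)}\X)\cap\fpd R$ and $\X\cap\cm(R)=(\res_{\d(R)}\X)\cap\cm(R)$ via Propositions \ref{80} and \ref{94} are exactly the computations in the paper's proof), the final gluing step is genuinely different. The paper finishes by setting $\Y=(\res_{\d(R)}\X)\cap\mod R$, noting it is a resolving subcategory of $\mod R$, and invoking the separation theorem \cite[Theorem 7.4]{crspd}: two resolving subcategories of $\mod R$ over a complete intersection with the same intersections with $\fpd R$ and with $\cm(R)$ coincide. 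You instead re-derive the half of that theorem you need, by building for each $M\in(\res_{\d(R)}\X)\cap\mod R$ a finite projective hull $0\to M\to L\to\syz^{-1}D\to0$ (via the Auslander--Bridger sequence quoted in the proof of Lemma \ref{82}(2) and a pushout along $D\hookrightarrow P_D$), placing $\syz^{-1}D$ in $\X\cap\cm(R)$ and $L$ in $\X\cap\fpd R$ by the two reductions, and then using syzygy-closedness of $\X$; this is essentially the Auslander--Buchweitz-type approximation argument that underlies \cite[Theorem 7.4]{crspd} itself, which the paper's Lemma \ref{82}(1) also quotes. The trade-off: your route is more self-contained (no appeal to the external bijection beyond what Lemma \ref{82} already uses), at the cost of redoing the approximation and pushout bookkeeping, while the paper's citation makes the step immediate. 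One tiny point worth making explicit in your write-up: the containment $\res_{\mod R}M\subseteq\res_{\d(R)}\X$ follows because $(\res_{\d(R)}\X)\cap\mod R$ is a resolving subcategory of $\mod R$ containing $M$ by Proposition \ref{43}(4).
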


\begin{proof}
We set up three steps, and in each step we prove the equality given in the proposition.

(1) Assume that $\X$ is contained in $\fpd R$.
Then $\X$ is contained in $\k(R)$, and so is $\res_{\d(R)}\X$ by Proposition \ref{43}(3).
Proposition \ref{43}(4) says that $(\res_{\d(R)}\X)\cap\mod R$ is a resolving subcategory of $\mod R$ contained in $\fpd R$.
There are inclusions $\X\subseteq\res_{\d(R)}\X\cap\mod R\subseteq\res_{\d(R)}\X$, which induce the inequalities $\Phi(\X)(\p)\le\Phi((\res_{\d(R)}\X)\cap\mod R)(\p)\le\Phi(\res_{\d(R)}\X)(\p)$ for each prime ideal $\p$ of $R$.
Lemma \ref{37} yields that
$$
\textstyle\Phi(\res_{\d(R)}\X)(\p)=\sup_{Y\in\res_{\d(R)}\X}\{\pd Y_\p\}\le\sup_{X\in\X}\{\pd X_\p\}=\Phi(\X)(\p),
$$
and therefore the equalities $\Phi(\X)(\p)=\Phi((\res_{\d(R)}\X)\cap\mod R)(\p)=\Phi(\res_{\d(R)}\X)(\p)$ hold.
This shows that $\Phi(\X)$ coincides with $\Phi((\res_{\d(R)}\X)\cap\mod R)$.
By Theorem \ref{36}, we obtain $\X=(\res_{\d(R)}\X)\cap\mod R$.

(2) Assume that $\X$ is contained in $\cm(R)$ and that $R$ is a complete intersection.
Proposition \ref{94} implies $\X=(\res_{\d(R)}\X)\cap\cm(R)$.
As $\c(R)$ is a resolving subcategory of $\d(R)$ by Proposition \ref{58}(2), it contains $\res_{\d(R)}\X$.
We obtain $\X=(\res_{\d(R)}\X)\cap\cm(R)=(\res_{\d(R)}\X)\cap\c(R)\cap\mod R=(\res_{\d(R)}\X)\cap\mod R$.

(3) Suppose that $R$ is a complete intersection.
Put $\Y=(\res_{\d(R)}\X)\cap\mod R$.
We want to prove $\X=\Y$.
By \cite[Theorem 7.4]{crspd}, it suffices to show $\X\cap\fpd R=\Y\cap\fpd R$ and $\X\cap\cm(R)=\Y\cap\cm(R)$.
We have 
$$
\begin{array}{l}
\Y\cap\fpd R
=(\res_{\d(R)}\X)\cap\fpd R
=(\res_{\d(R)}\X)\cap\k(R)\cap\mod R\\
\phantom{\Y\cap\fpd R
=(\res_{\d(R)}\X)\cap\fpd R}=\res_{\d(R)}(\X\cap\fpd R)\cap\mod R
=\X\cap\fpd R,
\end{array}
$$
where the fourth and third equalities follow by (1) and Proposition \ref{80}(1), respectively.
Similarly, we have
$$
\begin{array}{l}
\Y\cap\cm(R)
=(\res_{\d(R)}\X)\cap\cm(R)
=(\res_{\d(R)}\X)\cap\c(R)\cap\mod R\\
\phantom{\Y\cap\cm(R)
=(\res_{\d(R)}\X)\cap\cm(R)}=\res_{\d(R)}(\X\cap\cm(R))\cap\mod R
=\X\cap\cm(R),
\end{array}
$$
where the fourth and third equalities follow from (2) and Proposition \ref{80}(2), respectively.
\end{proof}

Let $f:A\to B$ and $g:B\to A$ be maps.
We call $(f,g)$ a {\em section-retraction pair} (resp. {\em bijection pair}) if $gf$ is an identity map (resp. $gf,fg$ are identity maps).
In this case, we denote it by $f\dashv g$ (resp. $f\sim g$).
Now we can state and prove the following theorem, which describes a natural relationship between the resolving subcategories of $\d(R)$ and the resolving subcategories of $\mod R$ in the case where $R$ is a complete intersection.

\begin{thm}\label{89}
Let $R$ be a complete intersection.
Then there is a diagram
$$
\xymatrix{
{\left\{\begin{matrix}
\text{resolving}\\
\text{subcategories}\\
\text{of $\d(R)$}
\end{matrix}\right\}}\ar@<2mm>[rrr]_-\wr^-{((-)\cap\k(R),(-)\cap\c(R))}\ar@<2mm>[d]_-\dashv^-{(-)\cap\mod R}&&&
{\left\{\begin{matrix}
\text{resolving}\\
\text{subcategories of $\d(R)$}\\
\text{contained in $\k(R)$}
\end{matrix}\right\}}
\times
{\left\{\begin{matrix}
\text{resolving}\\
\text{subcategories of $\d(R)$}\\
\text{contained in $\c(R)$}
\end{matrix}\right\}}\ar@<2mm>[d]_-\dashv^-{((-)\cap\mod R)\times((-)\cap\mod R)}\ar@<2mm>[lll]^-{\res_{\d(R)}(-\cup\cdots)}
\\
{\left\{\begin{matrix}
\text{resolving}\\
\text{subcategories}\\
\text{of $\mod R$}
\end{matrix}\right\}}\ar@<2mm>[rrr]_-\wr^-{((-)\cap\fpd R,(-)\cap\cm(R))}\ar@<2mm>[u]^-{\res_{\d(R)}(-)}&&&
{\left\{\begin{matrix}
\text{resolving}\\
\text{subcategories of $\mod R$}\\
\text{contained in $\fpd R$}
\end{matrix}\right\}}
\times
{\left\{\begin{matrix}
\text{resolving}\\
\text{subcategories of $\mod R$}\\
\text{contained in $\cm(R)$}
\end{matrix}\right\}}.\ar@<2mm>[u]^-{\res_{\d(R)}(-)\times\res_{\d(R)}(-)}\ar@<2mm>[lll]^-{\res_{\mod R}(-\cup\cdots)}
}
$$
The pairs of top (resp. bottom) horizontal arrows are bijection pairs given in Theorem \ref{72} (resp. \cite[Theorem 7.4]{crspd}).
The pairs of vertical arrows are section-retraction pairs.
The diagram with vertical arrows from the bottom (resp. top) to the top (resp. bottom) is commutative.
\end{thm}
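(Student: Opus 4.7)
The plan is to assemble the pieces already established in Sections 8--10; no essentially new ideas are required. The bijections constituting the top and bottom horizontal rows are precisely Theorem \ref{72} and \cite[Theorem 7.4]{crspd}, so no additional work is needed for the horizontal arrows. What remains is to check that the four vertical maps are well-defined, that each pair of vertical arrows forms a section-retraction pair, and that the resulting diagrams commute in both directions.

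First I would confirm well-definedness. The downward maps $(-)\cap\mod R$ send resolving subcategories to resolving subcategories by Proposition \ref{43}(4), and the identifications $\k(R)\cap\mod R=\fpd R$ and $\c(R)\cap\mod R=\cm(R)$ ensure that the right downward arrow lands in the correct factor. The upward maps $\res_{\d(R)}(-)$ are well-defined because both $\k(R)$ and $\c(R)$ are themselves resolving subcategories of $\d(R)$ by Proposition \ref{43}(3) and Proposition \ref{58}(2); thus the resolving closure of a subcategory contained in $\fpd R\subseteq\k(R)$ (resp.\ $\cm(R)\subseteq\c(R)$) again lies in $\k(R)$ (resp.\ $\c(R)$).

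Next, to show that each pair of vertical arrows forms a section-retraction pair, I would invoke Proposition \ref{84}: for every resolving subcategory $\X$ of $\mod R$ one has $(\res_{\d(R)}\X)\cap\mod R=\X$, and this equality restricts to the corresponding statements for $\X\subseteq\fpd R$ and $\X\subseteq\cm(R)$. Thus $(-)\cap\mod R$ is a retraction of $\res_{\d(R)}(-)$ in both columns, as required.

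The last step is commutativity. In the direction top-to-bottom, both compositions send a resolving subcategory $\X$ of $\d(R)$ to the pair $(\X\cap\fpd R,\X\cap\cm(R))$, which is purely set-theoretic bookkeeping using $\k(R)\cap\mod R=\fpd R$ and $\c(R)\cap\mod R=\cm(R)$. In the direction bottom-to-top, the two compositions send a resolving subcategory $\X$ of $\mod R$ to the pairs $((\res_{\d(R)}\X)\cap\k(R),(\res_{\d(R)}\X)\cap\c(R))$ and $(\res_{\d(R)}(\X\cap\fpd R),\res_{\d(R)}(\X\cap\cm(R)))$ respectively, and these agree by the two equalities established in Proposition \ref{80}. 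This is the only non-trivial input and, having already been proved, presents no further obstacle; the theorem therefore follows by combining Propositions \ref{80} and \ref{84} with the bijections of Theorem \ref{72} and \cite[Theorem 7.4]{crspd}.
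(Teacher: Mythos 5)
Your proposal is correct and follows essentially the same route as the paper: the section-retraction property comes from Proposition \ref{84}, the bottom-to-top commutativity from Proposition \ref{80}, and the top-to-bottom commutativity is the straightforward identification $\k(R)\cap\mod R=\fpd R$ and $\c(R)\cap\mod R=\cm(R)$, with the horizontal bijections imported from Theorem \ref{72} and \cite[Theorem 7.4]{crspd}. The only difference is that you spell out the well-definedness of the vertical maps (via Propositions \ref{43} and \ref{58}), which the paper leaves implicit.
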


\begin{proof}
It follows from Proposition \ref{84} that the pairs of maps $(\res_{\d(R)}(-),(-)\cap\mod R)$ and $(\res_{\d(R)}(-)\times\res_{\d(R)}(-),((-)\cap\mod R)\times((-)\cap\mod R)$ are section-retraction pairs.
Also, it holds that
$$
\begin{array}{l}
((-)\cap\k(R),(-)\cap\c(R))\circ\res_{\d(R)}(-)=(\res_{\d(R)}(-)\times\res_{\d(R)}(-))\circ((-)\cap\fpd R,(-)\cap\cm(R)),\\
((-)\cap\fpd R,(-)\cap\cm(R))\circ((-)\cap\mod R)=(((-)\cap\mod R)\times((-)\cap\mod R))\circ((-)\cap\k(R),(-)\cap\c(R)).
\end{array}
$$
Indeed, the first equality follows from Proposition \ref{80}, while it is straightforward to verify the second.
\end{proof}

\begin{rem}
The section-retraction pair $(\res_{\d(R)}(-),\,(-)\cap\mod R)$ in Theorem \ref{89} is {\em never} a bijection pair.
Indeed, if so, then $\res_{\d(R)}(\X\cap\mod R)=\X$ for every resolving subcategory $\X$ of $\d(R)$.
However, this equality does not hold even for $\X=\d(R)$, because in this case we have the following equalities
$$
\res_{\d(R)}(\X\cap\mod R)=\res_{\d(R)}(\mod R)=\{X\in\d(R)\mid\h^{<0}X=0\}
$$
by Proposition \ref{79}, which is strictly contained in $\X=\d(R)$.
\end{rem}

The corollary below is an immediate consequence of Theorem \ref{89}, Corollary \ref{41} and \cite[Theorem 1.5]{crspd}.
This corollary says that the classification of resolving subcategories of $\mod R$ due to Dao and Takahashi \cite{crspd} is a restriction of our classification of resolving subcategories of $\d(R)$.

\begin{cor}\label{83}
Let $(R,V)$ be as in Setup \ref{96}.
Then there is a commutative diagram
$$
\xymatrix{
{\left\{\begin{matrix}
\text{resolving subcategories}\\
\text{of $\d(R)$}
\end{matrix}\right\}}\ar@{<->}[r]^-\cong_-{(\alpha)}&
{\left\{\begin{matrix}
\text{order-preserving maps}\\
\text{from $\spec R$ to $\N\cup\{\infty\}$}
\end{matrix}\right\}}
\times
{\left\{\begin{matrix}
\text{specialization-closed}\\
\text{subsets of $V$}
\end{matrix}\right\}}
\\
{\left\{\begin{matrix}
\text{resolving subcategories}\\
\text{of $\mod R$}
\end{matrix}\right\}}\ar@{<->}[r]^-\cong_-{(\beta)}\ar[u]^-{\res_{\d(R)}(-)}&
{\left\{\begin{matrix}
\text{grade-consistent}\\
\text{functions on $\spec R$}
\end{matrix}\right\}}
\times
{\left\{\begin{matrix}
\text{specialization-closed}\\
\text{subsets of $V$}
\end{matrix}\right\}},\ar[u]^{\inc\times\id}
}
$$
where the bijections $(\alpha)$ and $(\beta)$ are the ones given in Corollary \ref{41} and \cite[Theorem 1.5]{crspd}, respectively. 
\end{cor}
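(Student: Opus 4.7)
The plan is to verify commutativity on each factor of the target product separately, using the fact that both $(\alpha)$ and $(\beta)$ are built by splitting a given resolving subcategory into its ``finite projective dimension'' part and its ``maximal Cohen--Macaulay'' part, and then classifying each part.

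Let $\X$ be a resolving subcategory of $\mod R$, set $\widetilde\X=\res_{\d(R)}\X$, and write $(\beta)(\X)=(f_\X,W_\X)$ and $(\alpha)(\widetilde\X)=(f_{\widetilde\X},W_{\widetilde\X})$. For the first factor, by Theorem \ref{36} one has $f_\X=\Phi(\X\cap\fpd R)$, while by Theorem \ref{2} one has $f_{\widetilde\X}=\Phi(\widetilde\X\cap\k(R))$. Proposition \ref{80}(1) gives $\widetilde\X\cap\k(R)=\res_{\d(R)}(\X\cap\fpd R)$, and Lemma \ref{37} (applied with $\X\cap\fpd R$ in the role of the resolving subcategory of $\mod R$ contained in $\fpd R$) shows that for each prime ideal $\p$ of $R$ the value $\sup_{Y\in\res_{\d(R)}(\X\cap\fpd R)}\{\pd_{R_\p}Y_\p\}$ is bounded above by $\sup_{M\in\X\cap\fpd R}\{\pd_{R_\p}M_\p\}$, and the reverse inequality is trivial. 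Hence $f_\X=f_{\widetilde\X}$ under the embedding of grade-consistent functions (which take values in $\N$) into order-preserving maps to $\N\cup\{\infty\}$, which is exactly the right vertical map on this factor.

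For the second factor, $(\beta)$ produces $W_\X$ by applying the Stevenson-type classification to $\X\cap\cm(R)$ through the bijection between resolving subcategories of $\mod R$ contained in $\cm(R)$ and thick subcategories of $\s(R)$, while $(\alpha)$ produces $W_{\widetilde\X}$ from $\widetilde\X\cap\c(R)$ through Theorem \ref{76}, Lemma \ref{74}(1), and Theorem \ref{77}. Proposition \ref{80}(2) yields $\widetilde\X\cap\c(R)=\res_{\d(R)}(\X\cap\cm(R))$, and the chain of bijections in Proposition \ref{94} shows that the correspondence between resolving subcategories of $\mod R$ contained in $\cm(R)$ and resolving subcategories of $\d(R)$ contained in $\c(R)$, given by $\res_{\d(R)}(-)$ and $(-)\cap\cm(R)$, factors through the common set of thick subcategories of $\s(R)$. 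Applying Stevenson's Theorem \ref{77} at this common intermediate set forces $W_\X=W_{\widetilde\X}$, and since the right vertical map is the identity on this factor, the second factor also commutes.

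The only real subtlety is ensuring that the two specialization-closed subsets assigned to the maximal Cohen--Macaulay part of $\X$ through the module-level and derived-level Stevenson classifications actually agree; this is precisely what Proposition \ref{94} packages, by routing both sides through the identical set of thick subcategories of $\s(R)$.
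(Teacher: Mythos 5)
Your proposal is correct and follows essentially the same route as the paper: the paper deduces the corollary from Theorem \ref{89} (whose commutativity is exactly Proposition \ref{80}, with Proposition \ref{84} resting on Lemma \ref{37} and Proposition \ref{94}), and you simply unwind that packaging into the factor-wise checks, using the same key ingredients (Propositions \ref{80} and \ref{94}, Lemma \ref{37}, Theorems \ref{2}/\ref{36} and \ref{76}/\ref{74}/\ref{77}). The subtlety you flag about the two Stevenson-type assignments agreeing is handled in the paper at the same level, namely by the natural chain of bijections in Proposition \ref{94}, so nothing is missing.
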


Finally, we give a proof of our main result stated in the Introduction.

\begin{proof}[Proof of Theorem \ref{1}]
The assertion follows from Corollaries \ref{41}, \ref{83}, Proposition \ref{84} and Remark \ref{98}.
\end{proof}

\begin{ac}
The author thanks Hiroki Matsui, Tsutomu Nakamura and an anonymous reader for giving him useful and helpful comments.
\end{ac}


\begin{thebibliography}{99}
\bibitem{ddc}
{\sc T. Aihara; R. Takahashi}, Generators and dimensions of derived categories of modules, {\em Comm. Algebra} {\bf 43} (2015), no. 11, 5003--5029.
\bibitem{AJS}
{\sc L. Alonso Tarr\'io; A. Jerem\'ias L\'opez; M. Saor\'in}, Compactly generated $t$-structures on the derived category of a Noetherian ring, {\em J. Algebra} {\bf 324} (2010), no. 3, 313--346.
\bibitem{APST}
{\sc L. Angeleri H\"ugel; D. Posp\'i\v{s}il; J. \v{S}\v{t}ov{\'{\i}}\v{c}ek; J. Trlifaj}, Tilting, cotilting, and spectra of commutative Noetherian rings, {\em Trans. Amer. Math. Soc.} {\bf 366} (2014), no. 7, 3487--3517. 
\bibitem{AS}
{\sc L. Angeleri H\"ugel; M. Saor\'in}, $t$-Structures and cotilting modules over commutative noetherian rings, {\em Math. Z.} {\bf 277} (2014), no. 3-4, 847--866.
\bibitem{AGH}
{\sc B. Antieau; D. Gepner; J. Heller}, $K$-theoretic obstructions to bounded $t$-structures, {\em Invent. Math.} {\bf 216} (2019), no. 1, 241--300.
\bibitem{ABr}
{\sc M. Auslander; M. Bridger}, Stable module theory, {\em Mem. Amer. Math. Soc.} {\bf 94}, {\em American Mathematical Society, Providence, RI}, 1969. 
\bibitem{AB}
{\sc M. Auslander; R.-O. Buchweitz}, The homological theory of maximal Cohen--Macaulay approximations, {\em M\'em. Soc. Math. France (N.S.)} (1989), no. 38, 5--37.
\bibitem{AR}
{\sc M. Auslander; I. Reiten}, Applications of contravariantly finite subcategories, {\em Adv. Math.} {\bf 86} (1991), no. 1, 111--152.
\bibitem{A}
{\sc L. L. Avramov}, Infinite free resolutions, {\em Six lectures on commutative algebra (Bellaterra, 1996)}, 1--118, Progr. Math. {\bf 166}, {\em Birkh\"auser Verlag, Basel}, 1998.
\bibitem{AF}
{\sc L. L. Avramov; H.-B. Foxby}, Homological dimensions of unbounded complexes, {\em J. Pure Appl. Algebra} {\bf 71} (1991), no. 2-3, 129--155.
\bibitem{AGP}
{\sc L. L. Avramov; V. N. Gasharov; I. V. Peeva}, Complete intersection dimension,  {\em Inst. Hautes \'Etudes Sci. Publ. Math.} (1997), no. 86, 67--114.
\bibitem{AIL}
{\sc L. L. Avramov; S. B. Iyengar; J. Lipman}, Reflexivity and rigidity for complexes, I, Commutative rings, {\em Algebra Number Theory} {\bf 4} (2010), no. 1, 47--86.
\bibitem{BS}
{\sc P. Balmer; B. Sanders}, The spectrum of the equivariant stable homotopy category of a finite group, {\em Invent. Math.} {\bf 208} (2017), no. 1, 283--326.
\bibitem{BDZ}
{\sc Y. Bao; X. Du; Z. Zhao}, Gorenstein singularity categories, {\em J. Algebra} {\bf 428} (2015), 122--137.
\bibitem{BM}
{\sc H. Bass; M. P. Murthy}, Grothendieck groups and Picard groups of abelian group rings, {\em Ann. of Math. (2)} {\bf 86} (1967), 16--73.
\bibitem{BBD}
{\sc A. A. Be\u{\i}linson; J. Bernstein; P. Deligne}, Faisceaux pervers, {\em Analysis and topology on singular spaces, I (Luminy, 1981)}, 5--171, Ast\'erisque {\bf 100}, {\em Soc. Math. France, Paris}, 1982.
\bibitem{BCR}
{\sc D. J. Benson; J. F. Carlson; J. Rickard}, Thick subcategories of the stable module category, {\em Fund. Math.} {\bf 153} (1997), no. 1, 59--80.
\bibitem{BIK}
{\sc D. J. Benson; S. B. Iyengar; H. Krause}, Stratifying modular representations of finite groups, {\em Ann. of Math. (2)} {\bf 174} (2011), no. 3, 1643--1684.
\bibitem{BIKP}
{\sc D. Benson; S. B. Iyengar; H. Krause; J. Pevtsova}, Stratification for module categories of finite group schemes, {\em J. Amer. Math. Soc.} {\bf 31} (2018), no. 1, 265--302.
\bibitem{BOJ}
{\sc P. A. Bergh; S. Oppermann; D. A. Jorgensen}, The Gorenstein defect category, {\em Q. J. Math.} {\bf 66} (2015), no. 2, 459--471. 
\bibitem{BH}
{\sc W. Bruns; J. Herzog}, Cohen--Macaulay rings, revised edition, Cambridge Studies in Advanced Mathematics {\bf 39}, {\it Cambridge University Press, Cambridge}, 1998.
\bibitem{Bu}
{\sc R.-O. Buchweitz}, Maximal Cohen--Macaulay modules and Tate cohomology, With appendices and an introduction by Luchezar L. Avramov, Benjamin Briggs, Srikanth B. Iyengar and Janina C. Letz, Mathematical Surveys and Monographs {\bf 262}, {\em American Mathematical Society, Providence, RI}, 2021.
\bibitem{BW}
{\sc J. Burke; M. E. Walker}, Matrix factorizations in higher codimension, {\em Trans. Amer. Math. Soc.} {\bf 367} (2015), no. 5, 3323--3370.
\bibitem{Ce}
{\sc K. \v{C}esnavi\v{c}ius}, Macaulayfication of Noetherian schemes, {\em Duke Math. J.} {\bf 170} (2021), no. 7, 1419--1455. 
\bibitem{C}
{\sc L. W. Christensen}, Gorenstein dimensions, Lecture Notes in Mathematics {\bf 1747}, {\em Springer-Verlag, Berlin}, 2000.
\bibitem{C0}
{\sc L. W. Christensen}, Sequences for complexes, {\em Math. Scand.} {\bf 89} (2001), no. 2, 161--180. 
\bibitem{CFF}
{\sc L. W. Christensen; H.-B. Foxby; A. Frankild}, Restricted homological dimensions and Cohen--Macaulayness, {\em J. Algebra} {\bf 251} (2002), no. 1, 479--502.
\bibitem{radius}
{\sc H. Dao; R. Takahashi}, The radius of a subcategory of modules, {\em Algebra Number Theory} {\bf 8} (2014), no. 1, 141--172.
\bibitem{dim}
{\sc H. Dao; R. Takahashi}, The dimension of a subcategory of modules, {\em Forum Math. Sigma} {\bf 3} (2015), Paper No. e19, 31 pp.
\bibitem{crspd}
{\sc H. Dao; R. Takahashi}, Classification of resolving subcategories and grade consistent functions, {\em Int. Math. Res. Not. IMRN} {\bf 2015}, no. 1, 119--149.
\bibitem{DHS}
{\sc E. S. Devinatz; M. J. Hopkins; J. H. Smith}, Nilpotence and stable homotopy theory I, {\em Ann. of Math. (2)} {\bf 128} (1988), no. 2, 207--241.
\bibitem{DS}
{\sc U. V Dubey; G. Sahoo}, Compactly generated tensor $t$-structures on the derived categories of Noetherian schemes, {\em Math. Z.} {\bf 303} (2023), no. 4, Paper No. 100, 22 pp. 
\bibitem{FI}
{\sc H.-B. Foxby; S. Iyengar}, Depth and amplitude for unbounded complexes, {\em Commutative algebra (Grenoble/Lyon, 2001)}, 119–137, Contemp. Math. {\bf 331}, {\em Amer. Math. Soc., Providence, RI}, 2003.
\bibitem{FP}
{\sc E. M. Friedlander; J. Pevtsova}, $\Pi$-supports for modules for finite group schemes, {\em Duke Math. J.} {\bf 139} (2007), no. 2, 317--368.
\bibitem{HKV}
{\sc R. Henrard; S. Kvamme; A.-C. van Roosmalen}, Auslander's formula and correspondence for exact categories, {\em Adv. Math.} {\bf 401} (2022), Paper No. 108296, 65 pp.
\bibitem{H}
{\sc M. J. Hopkins}, Global methods in homotopy theory, {\em Homotopy theory (Durham, 1985)}, 73--96, London Math. Soc. Lecture Note Ser. {\bf 117}, {\em Cambridge Univ. Press, Cambridge}, 1987.
\bibitem{HS}
{\sc M. J. Hopkins; J. H. Smith}, Nilpotence and stable homotopy theory II, {\em Ann. of Math. (2)} {\bf 148} (1998), no. 1, 1--49.
\bibitem{H-Z}
{\sc B. Huisgen-Zimmermann}, Dualities from iterated tilting, {\em Israel J. Math.} {\bf 243} (2021), no. 1, 315--353.
\bibitem{KN}
{\sc B. Keller; P. Nicol\'as}, Weight structures and simple dg modules for positive dg algebras, {\em Int. Math. Res. Not. IMRN} {\bf 2013}, no. 5, 1028--1078.
\bibitem{KV}
{\sc B. Keller; D. Vossieck}, Aisles in derived categories, Deuxi\`eme Contact Franco-Belge en Alg\`ebre (Faulx-les-Tombes, 1987), {\em, Bull. Soc. Math. Belg. S\'er. A} {\bf 40} (1988), no. 2, 239–-253.
\bibitem{KS}
{\sc H. Krause; \O. Solberg}, Applications of cotorsion pairs, {\em J. London Math. Soc. (2)} {\bf 68} (2003), no. 3, 631--650.
\bibitem{sg}
{\sc H. Matsui; R. Takahashi,} Singularity categories and singular equivalences for resolving subcategories, {\em Math. Z.} {\bf 285} (2017), no. 1-2, 251--286.
\bibitem{dm}
{\sc H. Matsui; R. Takahashi}, Thick tensor ideals of right bounded derived categories, {\em Algebra Number Theory} {\bf 11} (2017), no. 7, 1677--1738.
\bibitem{N}
{\sc A. Neeman}, The chromatic tower for $D(R)$, With an appendix by Marcel B\"okstedt, {\em Topology} {\bf 31} (1992), no. 3, 519--532.
\bibitem{N2}
{\sc A. Neeman}, Triangulated categories, Annals of Mathematics Studies {\bf 148}, {\em Princeton University Press, Princeton, NJ}, 2001.
\bibitem{Na}
{\sc A. Neeman}, Strong generators in $\mathbf{D}^{\mathrm{perf}}(X)$ and $\mathbf{D}^b_{\mathrm{coh}}(X)$, {\em Ann. of Math. (2)} {\bf 193} (2021), no. 3, 689--732. 
\bibitem{N3}
{\sc A. Neeman}, Bounded $t$-structures on the category of perfect complexes, {\tt arXiv:2202.08861v3}.
\bibitem{Sw}
{\sc S. Sather-Wagstaff}, Complete intersection dimensions for complexes, {\em J. Pure Appl. Algebra} {\bf 190} (2004), no. 1-3, 267--290. 
\bibitem{SSW}
{\sc S. Sather-Wagstaff; T. Sharif; D. White}, Comparison of relative cohomology theories with respect to semidualizing modules, {\em Math. Z.} {\bf 264} (2010), no. 3, 571--600.
\bibitem{S}
{\sc H. Smith}, Bounded $t$-structures on the category of perfect complexes over a Noetherian ring of finite Krull dimension, {\em Adv. Math.} {\bf 399} (2022), Paper No. 108241, 21 pp.
\bibitem{St}
{\sc G. Stevenson}, Subcategories of singularity categories via tensor actions, {\em Compos. Math.} {\bf 150} (2014), no. 2, 229--272.
\bibitem{St2}
{\sc G. Stevenson}, Duality for bounded derived categories of complete intersections, {\em Bull. Lond. Math. Soc.} {\bf 46} (2014), no. 2, 245--257.
\bibitem{catgp}
{\sc R. Takahashi}, Remarks on modules approximated by G-projective modules, {\em J. Algebra} {\bf 301} (2006), no. 2, 748--780.
\bibitem{res}
{\sc R. Takahashi}, Modules in resolving subcategories which are free on the punctured spectrum, {\em Pacific J. Math.} {\bf 241} (2009), no. 2, 347--367.
\bibitem{stcm}
{\sc R. Takahashi}, Classifying thick subcategories of the stable category of Cohen--Macaulay modules, {\em Adv. Math.} {\bf 225} (2010), no. 4, 2076--2116.
\bibitem{arg}
{\sc R. Takahashi}, Contravariantly finite resolving subcategories over commutative rings, {\em Amer. J. Math.} {\bf 133} (2011), no. 2, 417--436.
\bibitem{crs}
{\sc R. Takahashi}, Classifying resolving subcategories over a Cohen--Macaulay local ring, {\em Math. Z.} {\bf 273} (2013), no. 1-2, 569--587.
\bibitem{thd}
{\sc R. Takahashi}, Thick subcategories over Gorenstein local rings that are locally hypersurfaces on the punctured spectra, {\em J. Math. Soc. Japan} {\bf 65} (2013), no. 2, 357--374. 
\bibitem{ft}
{\sc R. Takahashi}, Faltings' annihilator theorem and $t$-structures of derived categories, {\em Math. Z.} {\bf 304} (2023), no. 1, Paper No. 10, 13pp.
\bibitem{dlr}
{\sc R. Takahashi}, Dominant local rings and subcategory classification, {\em Int. Math. Res. Not. IMRN} {\bf 2023}, no. 9, 7259--7318.
\bibitem{Th}
{\sc R. W. Thomason}, The classification of triangulated subcategories, {\em Compos. Math.} {\bf 105} (1997), no. 1, 1--27.
\bibitem{V}
{\sc J.-L. Verdier}, Des cat\'egories d\'eriv\'ees des cat\'egories ab\'eliennes [On derived categories of abelian categories], With a preface by Luc Illusie, Edited and with a note by Georges Maltsiniotis, {\em Ast\'erisque} (1996), no. 239, xii+253 pp. (1997). 
\end{thebibliography}
\end{document}